\newcommand{\clevertheorem}[3]{%
	\newtheorem{#1}[thm]{#2}
	\crefname{#1}{#2}{#3}
}
\numberwithin{equation}{section} %% Comment out for sequentially-numbered
\numberwithin{figure}{section} %% Comment out for sequentially-numbered
\theoremstyle{plain} % bold environment name, italic text
\newtheorem{thm}{Theorem}[section]
\crefname{thm}{Theorem}{Theorems}
\newtheorem*{thm*}{Theorem}
\newtheorem*{prop*}{Proposition}
\theoremstyle{definition} % bold environment name, plain text
\DeclareMathSymbol\bbDelta \mathord{bbold}{"01}
\DeclareMathSymbol\bDelta \mathord{bbold}{"01}
\newtheorem{remark*}{Remark}
\newtheorem{construction}[thm]{Construction}
\newtheorem{notation}[thm]{Notation}
\newcommand{\bE}{{\mathbb E}}
\renewcommand{\P}{{\mathbb P}}
\newcommand{\bQ}{{\mathbb Q}}
\newcommand{\bN}{{\mathbb N}}
\newcommand{\mB}{{\mathcal B}}
\newcommand{\mC}{{\mathcal C}}
\newcommand{\mD}{{\mathcal D}}
\newcommand{\mE}{{\mathcal E}}
\newcommand{\mK}{{\mathcal K}}
\newcommand{\mL}{{\mathcal L}}
\newcommand{\mM}{{\mathcal M}}
\newcommand{\mO}{{\mathcal O}}
\newcommand{\mP}{{\mathcal P}}
\newcommand{\mQ}{{\mathcal Q}}
\newcommand{\mS}{{\mathcal S}}
\newcommand{\mU}{{\mathcal U}}
\newcommand{\mV}{{\mathcal V}}
\newcommand{\A}{{\mathrm A}}
\newcommand{\B}{{\mathrm B}}
\newcommand{\C}{{\mathrm C}}
\newcommand{\D}{{\mathrm D}}
\newcommand{\F}{{\mathrm F}}
\newcommand{\G}{{\mathrm G}}
\newcommand{\rH}{{\mathrm H}}
\newcommand{\I}{{\mathrm I}}
\newcommand{\K}{{\mathrm K}}
\renewcommand{\L}{{\mathrm L}}
\newcommand{\M}{{\mathrm M}}
\renewcommand{\P}{{\mathrm P}}
\newcommand{\Q}{{\mathrm Q}}
\newcommand{\R}{{\mathrm R}}
\newcommand{\rS}{{\mathrm S}}
\newcommand{\T}{{\mathrm T}}
\newcommand{\V}{{\mathrm V}}
\newcommand{\W}{{\mathrm W}}
\newcommand{\X}{{\mathrm X}}
\newcommand{\Y}{{\mathrm Y}}
\newcommand{\Z}{{\mathrm Z}}
\newcommand{\rc}{{c}}
\newcommand{\rd}{{d}}
\newcommand{\bj}{{j}}
\newcommand{\bi}{{i}}
\newcommand{\m}{{m}}
\newcommand{\bk}{{k}}
\newcommand{\n}{{n}}
\newcommand{\br}{{r}}
\newcommand{\op}{\mathrm{op}}
\newcommand{\Ch}{\mathsf{Ch}}
\newcommand{\s}{s}
\newcommand{\colim}{\mathrm{colim}}
\newcommand{\Mod}{{\mathrm{Mod}}}
\newcommand{\LMod}{{\mathrm{LMod}}}
\newcommand{\RMod}{{\mathrm{RMod}}}
\newcommand{\BMod}{{\mathrm{BMod}}}
\newcommand{\Ass}{  {\mathrm {   Ass  } }   }
\newcommand{\Env}{{\mathrm{Env}}}  
\newcommand{\ot}{\otimes}
\newcommand{\co}{\mathrm{co}}
\newcommand{\id}{\mathrm{id}}
\newcommand{\Cat}{\mathrm{Cat}}
\newcommand{\Set}{\mathrm{Set}}
\renewcommand{\Pr}{\mathrm{Pr}}
\newcommand{\Coalg}{\mathrm{Coalg}}
\newcommand{\Alg}{\mathrm{Alg}}
\newcommand{\Comm}{\mathrm{Comm}}
\newcommand{\Mon}{\mathrm{Mon}}
\newcommand{\Fun}{\mathrm{Fun}}
\newcommand{\Op}{{\mathrm{Op}}}
\newcommand{\tu}{{\mathbb 1}}
\newcommand{\Ind}{{\mathrm{Ind}}}
\newcommand{\bZ}{{\mathbb{Z}}}
\newcommand{\cofib}{\mathrm{cofib}}
\newcommand{\fib}{\mathrm{fib}}
\newcommand{\Sp}{{\mathrm{Sp}}} 
\newcommand{\Vect}{{\mathrm{Vect}}} 
\newcommand{\map}{{\mathrm{map}}} 
\newcommand{\Fin}{{\mathrm{Fin}}}
\newcommand{\Sym}{{\mathrm{Sym}}}
\newcommand{\f}{{\mathrm{f}}}
\newcommand{\Ho}{{\mathrm{Ho}}}
\newcommand{\Grp}{{\mathrm{Grp}}}
\newcommand{\coComm}{{\mathrm{coComm}}}  
\newcommand{\sSeq}{{\mathrm{sSeq}}}  
\newcommand{\coOp}{{\mathrm{coOp}}}  
\newcommand{\nun}{{\mathrm{nun}}}  
\newcommand{\Barc}{{\mathrm{Barc}}}  
\newcommand{\Cobar}{{\mathrm{Cobar}}}  
\newcommand{\Bialg}{{\mathrm{Bialg}}}  
\newcommand{\pr}{{\mathrm{pr}}}  
\newcommand{\Lie}{{\mathrm{Lie}}}  
\newcommand{\Hopf}{{\mathrm{Hopf}}}  
\newcommand{\Prim}{{\mathrm{Prim}}}  
\newcommand{\triv}{{\mathrm{triv}}}  
\newcommand{\conil}{{\mathrm{conil}}}  
\newcommand{\LM}{{\mathrm{LM}}}  
\newcommand{\RM}{{\mathrm{RM}}}  
\newcommand{\BM}{{\mathrm{BM}}}  
\author{Hadrian Heine}
\begin{document}
	
\title{A derived Milnor-Moore theorem}
	
\maketitle 

\begin{abstract}
	
For every stable presentably symmetric monoidal $\infty$-category
$\mC$ we use the Koszul duality between the spectral Lie operad and the cocommutative cooperad
to construct an enveloping Hopf algebra functor
$\mU: \Alg_\Lie(\mC) \to \Hopf(\mC)$ from Lie algebras in $\mC$ to
cocommutative Hopf algebras in $\mC$ left adjoint to a functor of derived primitive elements $\Prim$.
We study the unit of this adjunction in rational and chromatic homotopy theory:
we prove that if $\mC$ is a rational stable presentably symmetric monoidal $\infty$-category, the enveloping Hopf algebra functor $\mU: \Alg_\Lie(\mC) \to \Hopf(\mC)$ is fully faithful
reproving a result of Gaitsgory-Rozenblyum.
Let $n \geq 1 $ be a natural and $\Phi[-1]: \mS_{v_n} \to \Alg_\Lie(\Sp_{T_n})$ the shifted Bousfield-Kuhn functor from $v_n$-periodic homotopy types to spectral Lie algebras in $T_n$-local spectra.
We prove that for every $v_n$-periodic homotopy type $X$ the unit $\Phi(X)[-1] \to \Prim \mU(\Phi(X)[-1])$ identifies with the Goodwillie completion
$ \Phi \to \lim_{n \geq 0} P_n(\Phi)$ evaluated at the loop space of $X.$

\end{abstract}

\tableofcontents

\section{Introduction}

In the seminal paper \cite{10.2307/1970615} Milnor and Moore prove that
every graded Lie algebra over a field of characteristic zero
arises as the primitive elements of its universal enveloping Hopf algebra
%$\mU(\L)$
\cite[Proposition 5.17.]{10.2307/1970615}.
%To prove this result the authors construct and identify two filtrations on the enveloping Hopf algebra: the Lie filtration  filtering the enveloping Hopf algebra
%by the number of factors in a product expression of generators, and the primitive filtration filtering the enveloping Hopf algebra by the number of factors in a product expression of primitive elements.
From this result Milnor and Moore reveal a fundamental connection between Lie algebras and Hopf algebras over any field $K$ of characteristic zero.
Forming the universal enveloping Hopf algebra and forming the primitive elements 
are part of an adjunction 
\begin{equation}\label{adj}
\mU: \Lie_K \rightleftarrows \Hopf_K: \Prim 
\end{equation} between the categories of graded Lie algebras and graded Hopf algebras over $K$.
%Any Lie algebra universally embeds into a Hopf algebra 
By the Milnor-Moore theorem the unit of this adjunction is invertible,
which implies that the enveloping Hopf algebra functor $\mU$ is fully faithful.
%the following theorem:
% as an immediate consequence.
%\begin{theorem}\cite[Theorem 5.18.]{10.2307/1970615}
%Let $K$ be a field of characteristic zero. The unit of adjunction (\ref{adj}) is an isomorphism.
%%The enveloping Hopf algebra functor$$\mU: \Lie_K \to \Hopf_K$$ 
%%from graded Lie algebras to graded Hopf algebras over $K$ is fully faithful.
%\end{theorem}
Moreover Milnor-Moore \cite[Theorem 5.18.]{10.2307/1970615} characterize the Hopf algebras in the essential image of the enveloping Hopf algebra $\mU$ as the primitively generated Hopf algebras, those graded Hopf algebras whose underlying graded algebra is generated by the subspace of primitive elements.

%This implies that %as a part of \cite[5.18. Theorem]{10.2307/1970615} 
%a graded Hopf algebra $\mH$ over a field of characteristic zero arises as a universal enveloping Hopf algebra if and only if the counit at $\mH$ is surjective, which is equivalent to say that $\mH$ is primitively generated, i.e. generated as a graded algebra by its subspace of primitive elements.

The Milnor-Moore theorem has significant importance for rational homotopy theory.
Quillen \cite{Mont} assigns to any simply connected space $X$ a rational differentially graded Lie algebra $\mL(X)$, the Lie model of the space, and a rational differentially graded cocommutative coalgebra $\mC(X)$, the coalgebra model of the space, where the Lie model computes the rational homotopy and the coalgebra model computes the rational homology.
More precisely, the coalgebra model endows the rational chains on a simply connected space with the structure of a differentially graded cocommutative coalgebra inducing on homology the graded coalgebra coming from the diagonal of the space, while the Lie model induces on homology the graded Lie structure on the rational homotopy groups of a loop space given by the Whitehead product.
In other words $$H_*(\mC(X)) \cong H_*(X;\bQ), H_*(\mL(X)) \cong \pi_{*}(\Omega(X)) \ot \bQ.$$

%In other words $H_*(\mC(X)) \simeq H_*(X).$
%, which is possible for rational chains by the vanishing of the Steenrod operations.As a consequence for any simply connected space $X$ we find that $H_*(\mC(X)) \simeq H_*(X).$

Quillen \cite{Mont} proves that the rational homotopy type of a space is determined by either its Lie model or coalgebra model:
he constructs a commutative triangle of equivalences:
\begin{equation}\label{tri}
\begin{xy}
\xymatrix{
& \ar[ld]_\mC
\ar[rd]^\mL
\Ho(\mS^{\geq2}_{\bQ})
\\
\Ho(\co\Alg(\bQ)^{\geq 2}) \ar[rr] && \Ho(\Lie(\bQ)^{\geq 1})
}
\end{xy} 
\end{equation}
between the homotopy categories of simply connected rational spaces,
simply connected rational differentially graded cocommutative coalgebras and connected rational differentially graded Lie algebras.
%The coalgebra model endows the chains on a simply connected space with the structure of a differentially graded cocommutative coalgebrainducing on homology the graded coalgebra structure coming from the diagonal of the space.In other words $H_*(\mC(X)) \simeq H_*(X).$
%, which is possible for rational chains by the vanishing of the Steenrod operations.As a consequence for any simply connected space $X$ we find that $H_*(\mC(X)) \simeq H_*(X).$
%The Lie model induces on homology the graded Lie structure on the shifted homotopy groups given by the Whitehead product, in other words $H_*(\mL(X)) \simeq \pi_{*+1}(X).$
%on the shifted 
%where the left diagonal functor assigns the coalgebra model and the right diagonal functor assigns the Lie model to a rational space.

Quillen builds the Lie model of a simply connected space $X$
%, incarnated by a simplicial rational Lie algebra, 
in three steps.
First he forms Kan's loop group, a simplicial group that endows the loop space $\Omega(\X)$ with a group structure. As a second step he applies rational chains to Kan's loop group to obtain a simplicial rational cocommutative Hopf algebra.
Finally, he takes the primitive elements of this simplicial Hopf algebra to obtain a 
simplicial rational Lie algebra, which corresponds to a rational differentially graded Lie algebra via Dold-Kan's correspondence.
%Finally, he takes the primitive elements 
%incarnated by a simplicial rational Lie algebra, as the primitive elements of the simplicial rational cocommutative Hopf algebra of rational chains on Kan's loop group construction $\Omega(X)$ endowing the loop space with a group structure.
Quillen's construction guarantees that the homology of the Lie model
identifies with the primitive elements of the rational homology of the loop space, in symbols $H_*(\mL(X)) \cong \Prim(H_*(\Omega(X);\bQ)).$
%where $\Prim$ are the primitives.
Since $X$ is simply connected, $H_*(\Omega(X);\bQ)$ is connected, which ensures that the graded Hopf algebra $H_*(\Omega(X);\bQ)$ is primitively generated. 
As a consequence the Milnor-Moore theorem identifies the enveloping Hopf algebra 
of $$ \pi_{*}(\Omega(X)) \ot \bQ \cong H_*(\mL(X)) \cong \Prim(H_*(\Omega(X);\bQ))$$
with $H_*(\Omega(X);\bQ)$
%$\mU(\pi_{*}(\Omega(X)) \ot \bQ) \cong \mU(H_*(\mL(X))) \cong \mU(\Prim(H_*(\Omega(X);\bQ)))$gives a chain of isomorphisms$$\mU(\pi_{*}(\Omega(X)) \ot \bQ) \cong \mU(H_*(\mL(X))) \cong \mU(\Prim(H_*(\Omega(X);\bQ))) \cong H_*(\Omega(X);\bQ)$$
%expressing 
and so reveals a fundamental relationship between rational homotopy and rational homology of a loop space.

%The latter relationship admits several generalizations and enhancements.was enhanced by ... using $L_\infty$-algebras generalized to tame homotopy theory.

%By \cite{HINICH2001209} differentially graded cocommutative coalgebrasmay be interpreted as formal stacks and the right adjoint associates the tangent Lie algebra to a formal stack.

%In modern language may be interpreted as an
From modern perspective Quillen's equivalence (\ref{tri}) between the Lie model and the coalgebra model is an instance of Koszul duality. Koszul duality between Lie algebras and cocommutative coalgebras
or rather the homotopically refined notions of $L_\infty$-algebras and $\bE_\infty$-coalgebras arises from the operadic Koszul duality between the shifted Lie operad $\Lie(1)$ and the non-unital cocommutative cooperad \cite[Theorem 6.8.]{Koszul}. It takes the form of an adjunction 
\begin{equation}\label{ad1} \Alg_{\Lie(1)}(\bQ) \rightleftarrows \co\Alg_{\bE_\infty}(\bQ): \Prim
\end{equation}
between rational $\Lie(1)$-algebras and rational coaugmented $\bE_\infty$-coalgebras, where the right adjoint assigns a derived version of primitive elements and the left adjoint 
takes the shifted Chevalley-Eilenberg complex computing Lie homology.
By a theorem of Ching-Harper \cite[Theorem 1.2.]{CHING2019118} adjunction (\ref{ad1}) restricts to an equivalence between simply connected objects.
Since a $\Lie(1)$-algebra structure on a rational chain complex is a $L_\infty$-structure on the negative shift, adjunction  (\ref{ad1}) restricts to an equivalence.
\begin{equation}\label{ad16} \Alg_{\Lie}(\bQ)^{\geq 1} \simeq \co\Alg_{\bE_\infty}(\bQ)^{\geq 2}
\end{equation}
between connected rational $L_\infty$-algebras and simply connected rational coaugmented $\bE_\infty$-coalgebras, recovering Quillen's result via the machinery of Koszul duality.
%which restricts to an equivalence between connected $L_\infty$-algebras and simply connected $\bE_\infty$-coalgebras.
%The latter adjunction restricts to an equivalence$$ \Ho(\Alg^{\geq1}_{L_\infty}(\bQ)) \simeq \Ho(\co\Alg^{\geq2}_{\bE_\infty}(\bQ))$$giving a variant of Quillen's equivalence.
%The left adjoint assigns the Chevalley-Eilenberg complex computing Lie homology, while the right adjoint %forms a derived version of the primitive elements.takes a negative shift of a derived version of the primitive elements.
% associated to the cobar construction, a derived Hopf algebra % applied to a differentially graded cocommutative coalgebra,that models loops in $\bE_\infty$-coalgebras. %in the category of differentially graded cocommutative coalgebras,
%Since the cobar construction sends $\mC(X)$ to $\mC(\Omega(X))$,the right adjoint $\gamma$ sends $\mC(\X)$ to the derived primitive elements of $\mC(\Omega(X))$ in analogy with Quillen's construction of the Lie model.
%This proves the commutativity of triangle \ref{tri}.
%as a derived version of primitive elements associated to the rational differentially graded Hopf algebra of rational chains on Kan's loop space construction.
%Lie structure on the shifted rational homotopy groups given by the Whitehead product and arises as the primitive elements of the Hopf algebra$H_*(\Omega(X);\bQ).$ In other words $$H_*(\mC(X)) \simeq H_*(X), H_*(\mL(X)) \simeq \pi_{*+1}(X) \ot \bQ.$$
%The bottom equivalence connecting cocommutative coalgebras with Lie algebrasassigns a 

The interpretation of equivalence (\ref{tri}) %of Quillen's equivalence % between the Lie model and the coalgebra model  
as Koszul duality 
%makes it possible to construct the functor of derived primitive elements in terms of Koszul duality and therefore 
suggests a construction of the derived universal enveloping Hopf algebra purely expressed in terms of Koszul duality:
%, where we assume the principle that theuniversal enveloping Hopf algebra is left adjoint to the 
derived Hopf algebras are group structures on $\bE_\infty$-coalgebras and %by Proposition \ref{caaaaart} 
adjunction (\ref{ad1}) descends to an adjunction on group objects
\begin{equation}\label{adul9}
\Grp(\Alg_{\Lie(1)}(\bQ)) \rightleftarrows \Hopf(\bQ):= \Grp(\co\Alg_{\bE_\infty}(\bQ)): \Prim.
\end{equation}
%the right adjoint $\gamma$ in adjunction (\ref{ad1}) preserves group objects and thus sends rational derived Hopf algebras defined as group objects in the $\infty$-category of rational $\bE_\infty$-coalgebras to group objects in the $\infty$-category of rational $L_\infty$-algebras.
By Corollary \ref{qqqp} every group object in the $\infty$-category of $\Lie(1)$-algebras admits a unique delooping identifying group objects in $\Lie(1)$-algebras with 
$L_\infty$-algebras.
%a group object among $\Lie(1)$-algebra structures on a rational complex $\X$ with a $\Lie(1)$-algebra structure on the shift of $\X$, which is a $\L_\infty$-algebra structure on $\X.$
%an inverse of the loops functor $\Omega: \Alg_{\Lie(1)}(\bQ) \to \Grp(\Alg_{\Lie(1)}(\bQ))$that sends a group structure on  among rational differentially graded $\Lie(1)$-algebrasto a rational differentially graded $\Lie(1)$-algebra on the shift, which is a $\L_\infty$-algebra.
This way we can identify the left hand side of adjunction (\ref{adul9}) with $L_\infty$-algebras
and obtain an adjunction
%Combining the latter equivalence with the identification$\Alg_{\Lie}(\bQ) \simeq \Alg_{\Lie(1)}(\bQ)$ gives an adjunction
\begin{equation}\label{ad2}
	\mU: \Alg_\Lie(\bQ) \rightleftarrows \Hopf(\bQ): \Prim.
\end{equation}
%Being left adjoint to the functor of derived primitive elements the functor $\mU$
%that exhibits the functor $\mU$ as left adjoint tothe functor of derived primitive elements. % and so suggests to consider $\mU:= \mathrm{Chev} \circ \Omega$ as a model for the derived universal enveloping Hopf algebra.
%The functor of derived primitive elements $\Prim: \Hopf(\bQ) \to \Alg_{L_\infty}(\bQ)$ also descends to a functor on group objects $\bar{\Prim}: \Grp(\Hopf(\bQ)) \to \Grp(\Alg_{L_\infty}(\bQ)) $that turns the cobar construction $\Cobar(\mH)$ associated to a derived Hopf algebra $\mH$ into loops associated to $\Prim(\mH).$
%Consequently, the derived primitive elements can be purely expressed in terms of Koszul duality. 
%: for any derived Hopf algebra $\mH$ the derived primitive elements $\Prim(\mH)$ are the delooping of $$\bar{\gamma}(\mH) \simeq \bar{\Prim}(\Cobar(\mH)) \simeq \Omega(\Prim(\mH)).$$
Inspired by the classical situation %, where the universal enveloping Hopf algebra is left adjoint to the functor of primitive elements, 
it seems reasonable to view the left adjoint $\mU$ of the functor of derived primitive elements as a model for the functor of derived universal enveloping Hopf algebra. % as the left adjoint of the functor of derived primitive elements.
%So by adjointess the derived universal enveloping Hopf algebra $\mU$ is purely expressed in terms of Koszul duality as $\bar{\beta}\circ \Omega: \Alg_{L_\infty}(\bQ) \simeq \Grp(\Alg_{L_\infty}(\bQ)) \to \Hopf(\bQ).$
This model was identified by Gaitsgory-Rozenblyum with the classical definition \cite[Theorem 6.1.2.]{MR3701353} and generalized by Brantner-Camarena-Heuts \cite{BrantnerCamarenaHeuts},
Knudsen \cite{2016arXiv160501391K} and Shi \cite{Shi} to stable homotopy theory to produce enveloping $\bE_\n$-algebras replacing loops by n-fold loops.

Motivated by these results we apply Koszul duality to construct and study a derived universal enveloping Hopf algebra for algebras over Ching's spectral Lie operad \cite{Ching} in any stable presentably symmetric monoidal $\infty$-category.
Lurie \cite[5.2.]{lurie.higheralgebra} constructs Koszul duality between augmented associative algebras and coaugmented coassociative coalgebras in any monoidal $\infty$-category that admits the necessary limits and colimits to perform the construction. Applied to the monoidal $\infty$-category of symmetric sequences Lurie's Koszul duality
gives rise to a Koszul duality between augmented non-unital $\infty$-operads and coaugmented non-counital $\infty$-cooperads \cite[§4.3.]{articles}.
Based on this operadic Koszul duality Amabel \cite{Amab}, Branter-Campos-Nuiten \cite{https://doi.org/10.48550/arxiv.2104.03870},
Ching-Harper \cite{CHING2019118} and Francis-Gaitsgory \cite{articlet} describe and study Koszul duality between algebras over any augmented non-unital $\infty$-operad $\mO$ and coalgebras over the Koszul dual $\infty$-cooperad 
$\mO^\vee$ in any stable presentably symmetric monoidal $\infty$-category $\mC.$
This Koszul duality takes the form of an adjunction
\begin{equation}\label{addil}
\mathrm{TQ}_\mO: \Alg_\mO(\mC) \rightleftarrows \co\Alg^{\mathrm{dp}, \hspace{0,2mm} \conil}_{\mO^\vee}(\mC) \end{equation}
between the $\infty$-category of $\mO$-algebras in $\mC$ and the $\infty$-category of conilpotent $\mO^\vee$-coalgebras that are equipped with divided powers.
The left adjoint takes topological Quillen homology.
%, which was intensively studied by Basterra \cite{Basterra}.
%For the Koszul duality between the shift of Ching's spectral Lie operad and the non-unital cocommutative cooperad this Koszul duality takes the form of an adjunction\begin{equation*}\label{adul}\Alg_{\Lie}(\mC) \rightleftarrows \co\Alg^{\mathrm{dp}, \vspace{0,2mm} \conil}_{\bE_\infty}(\mC) \end{equation*}between the $\infty$-categories of spectral Lie algebras in $\mC$ and the $\infty$-category of coaugmented, conilpotent $\bE_\infty$-coalgebras in $\mC$ that are equipped with divided powers.
%The left adjoint assigns a derived version of indecomposables.
Forgetting divided powers and conilpotence gives an adjunction
\begin{equation*}\label{addi}
\mathrm{TQ}_\mO: \Alg_\mO(\mC) \rightleftarrows \co\Alg_{\mO^\vee}(\mC) : \Prim_\mO \end{equation*}
between the $\infty$-category of $\mO$-algebras in $\mC$ and the $\infty$-category of $\mO^\vee$-coalgebras in $\mC$, where the right adjoint takes a derived variant of primitive elements.
Specializing to the Koszul duality \cite{Ching} between the shifted spectral Lie operad $\Lie(1)$ and the non-unital cocommutative cooperad the latter adjunction
specializes to an adjunction
%\begin{equation*}\label{adul}\Alg_{\Lie}(\mC) \rightleftarrows \co\Alg_{\bE_\infty}(\mC) \end{equation*}between the $\infty$-categories of spectral Lie algebras in $\mC$ and the $\infty$-category of coaugmented $\bE_\infty$-coalgebras in $\mC$.
%\begin{equation*}\label{adu} \Alg_\mO(\mC) \rightleftarrows \co\Alg_{\mO^\vee}(\mC)\end{equation*}to the $\infty$-category of $\mO^\vee$-coalgebras in $\mC$,where the right adjoint assigns a derived version of primitive elements.
% (Lemma \ref{gghjjgh}).
%Since Francis and Gaitsgory \cite{articles} describe Koszul-duality in an axiomatic way without giving a construction of it, our first goal was it to extend Lurie's framework of Koszul duality \cite[5.2.]{lurie.higheralgebra} to Francis-Gaitsgory's setting (Appendix A, section 3).
\begin{equation}\label{adul}
\mathrm{TQ}_{\Lie(1)}: \Alg_{\Lie(1)}(\mC) \rightleftarrows \co\Alg_{\bE_\infty}(\mC): \Prim_{\Lie(1)} \end{equation}
%whose right adjoint takes a negative shift of a derived version of primitive elements.
between $\Lie(1)$-algebras and coaugmented $\bE_\infty$-coalgebras generalizing adjunction (\ref{ad1}) to stable homotopy theory.
%Using the equivalences $$\Alg_{\Lie(1)}(\mC) \simeq \Alg_\Lie(\mC), \Alg_{\Lie(1)}(\mC) \simeq \Grp(\Alg_{\Lie(1)}(\mC))$$covering the loops functor and 
By Corollary \ref{qqqp} every group object in the $\infty$-category of $\Lie(1)$-algebras in $\mC$ admits a unique delooping identifying group objects in $\Lie(1)$-algebras in $\mC$ with 
$L_\infty$-algebras in $\mC.$
%By Corollary \ref{qqqp} every group object in the $\infty$-category of $\Lie(1)$-algebra in $\mC$ admits a unique delooping producing an inverse of the loops functor $\Omega: \Alg_{\Lie(1)}(\mC) \to \Grp(\Alg_{\Lie(1)}(\mC))$.
%This way %Combining the latter equivalence with the identification$\Alg_{\Lie(1)}(\mC) \simeq \Alg_{\Lie}(\mC)$ viewing a $\Lie(1)$-algebra as a Lie structure on the loops %Like adjunction (\ref{ad1}) gives rise to adjunction (\ref{ad20}),the latter adjunction (\ref{adul})produces an equivalence \begin{equation}\label{pressp}\Alg_{\Lie}(\mC) \simeq  \Grp(\Alg_{\Lie(1)}(\mC))\end{equation}preserving the underlying object.
So adjunction (\ref{adul}) induces on group objects an adjunction
%An algebra over the operadic shift $\Lie(1)$ is a spectral Lie structure on the loop object producing a canonical equivalence $\Alg_\Lie(\mC) \simeq \Alg_{\Lie(1)}(\mC)$.Adding the tensor unit gives an equivalence $ \co\Alg_{\coComm^\nun}(\mC) \simeq \co\Alg_{\coComm}(\mC)_{\tu/},$ where $\coComm$ is the cocommutative cooperad.This way we arrive at an adjunction\begin{equation}\beta: \label{adul}\Alg_{\Lie}(\mC) \rightleftarrows \co\Alg_{\coComm}(\mC)_{\tu/}: \gamma.\end{equation}Consequently, to extend the definition of universal enveloping Hopf algebrafrom rational homotopy theory to stable homotopy theory we prove thefollowing proposition:\begin{proposition}\label{fin}Let $\mC$ be an additive presentably symmetric monoidal $\infty$-category.The functor $$\beta: \label{adul}\Alg_{\Lie}(\mC) \to \co\Alg_{\coComm}(\mC)_{\tu/}$$ preserves finite products.\end{proposition}
%By the last theorem adjunction (\ref{adul}) descends to an adjunction on group objects:\begin{equation}\mU:= \mathrm{Chev} \circ \Omega: \Alg_\Lie(\mC) \rightleftarrows \Hopf(\mC):= \Grp(\co\Alg_{\bE_\infty}(\mC)) : B \circ \Prim[-1] simeq \Prim\end{equation}	
\begin{equation}\label{ehbp}
\mU:= \mathrm{TQ}_{\Lie(1)}: \Alg_\Lie(\mC) \simeq \Grp(\Alg_{\Lie(1)}(\mC)) \rightleftarrows$$$$ \Hopf(\mC):= \Grp(\co\Alg_{\bE_\infty}(\mC)) : \Prim:=\Prim_{\Lie(1)}.
\end{equation}	
%In analogy to the situation of rational homotopy theory we define the universal enveloping Hopf algebra $\mU$ by the composition$$\Alg_\Lie(\mC) \xrightarrow{\Omega} \Grp(\Alg_\Lie(\mC)) \xrightarrow{\bar{\beta}} \Hopf^{\mathrm{dp}, \conil}(\mC) \xrightarrow{\mathrm{forget}} \Hopf(\mC):=\Grp(\co\Alg_{\bE_\infty}(\mC))$$whose right adjoint is the functor of derived primitive elements$\Prim: \Hopf(\mC) \to \Alg_\Lie(\mC).$
The latter adjunction extends adjunction (\ref{ad2}) to stable homotopy theory
and gives a model for the derived universal enveloping Hopf algebra.

In this article we apply Koszul duality to deduce a derived version of the Milnor-Moore theorem. 
%One is generally interested how far Koszul duality, i.e. adjunction (\ref{addil}), is from being an equivalence. 
%Ching-Harper \cite[Theorem 1.2.]{CHING2019118} prove that for every connective $\bE_\infty$-ring spectrum $\R$ and $\mC=\Mod_\R$ the unit of the Koszul-duality adjunction (\ref{addi}) is an equivalence on a connected $\R$-module.
%In this article we analyze the unit of adjunction (\ref{addil}) to generalize the connectedness assumption if all norm maps associated to symmetric groups are equivalences.
Harper-Hess \cite{HarperHess} construct for every spectral $\infty$-operad $\mO$,
where $\mO_0=0, \mO_1=S,$ a tower of $(\mO, \mO)$-bimodules $$ ... \to \tau_{\n}(\mO) \to \tau_{\n-1}(\mO) \to ... \to \tau_1(\mO)=\triv $$ consisting of operadic truncations $\mO \to \tau_{\n}(\mO)$,
where $\tau_{\n}(\mO)_\ell:=\left\{
\begin{array}{ll}
\mO_\ell, & \ell \leq \n \\
0, &  \textrm{else}. \\
\end{array}
\right.$
The latter tower gives for every $\mO$-algebra $\X$ a tower of $\mO$-algebras
$$ ... \to \tau_{\n}(\mO) \circ_\mO \X \to \tau_{\n-1}(\mO)\circ_\mO \X  \to ... \to \triv \circ_\mO \X \simeq \mathrm{TQ}_\mO(\X),$$
where $\circ$ is the composition product, interpolating between $\mathrm{TQ}_\mO(\X)$ and the $\mathrm{TQ}_\mO$-completion
$\mathrm{TQ}_\mO(X)^\wedge := \lim_{\n \geq 1}  \tau_{\n}(\mO) \circ_\mO \X.$
We prove the following theorem:
\begin{theorem}\label{0}(Theorem \ref{map2})
Let $\mC$ be a stable presentably symmetric monoidal $\infty$-category such that all norm maps of objects in $\mC$ with an action of a symmetric group are equivalences.
Let $\X$ be a Lie algebra in $\mC$.
The unit $X \to \Prim \mU(X)$ identifies with the completion map
$$ X \to \mathrm{TQ}_\Lie(X)^\wedge.$$
	
\end{theorem}

Before we explain how we prove \cref{0}, we explain important corollaries.
We apply Theorem \ref{0} to rational and chromatic homotopy theory.
Rationally, we immediately obtain the following derived Milnor-Moore theorem that was proven by  
Gaitsgory-Rozenblyum \cite[Theorem 4.4.6.]{MR3701353} via an elaborate filtration of the primitive elements.

\begin{theorem}\label{uuum}(Theorem \ref{thg})
Let $\mC $ be a $\bQ$-linear stable presentably symmetric monoidal $\infty$-category. The enveloping Hopf algebra functor $$\mU: \Alg_{\Lie}(\mC) \to \Hopf(\mC)$$ is fully faithful.
	
\end{theorem}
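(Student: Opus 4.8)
The plan is to show that the unit $\eta\colon\id\Rightarrow\Prim\circ\mU$ of the adjunction~(\ref{ehbp}) is a natural equivalence, which is equivalent to $\mU$ being fully faithful. Since $\Prim=\Grp(\Prim_{\Lie(1)})$ is computed levelwise (a right adjoint preserves the finite products that carve out group objects from simplicial objects) and the forgetful functor $\Grp(\Alg_{\Lie(1)}(\mC))\to\Alg_{\Lie(1)}(\mC)$ evaluating the underlying simplicial object at $[1]$ is conservative, it suffices to check that $\eta$ becomes an equivalence after this evaluation. Write a group object $G$ as $\Omega(BG)$ for its unique delooping $BG\in\Alg_{\Lie(1)}(\mC)$ provided by Corollary~\ref{qqqp}. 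Because $\mathrm{TQ}_{\Lie(1)}$ is a left adjoint it preserves the geometric realization computing $B$, so $\mathrm{TQ}_{\Lie(1)}(BG)$ is again a delooping and $\mU(G)_{[1]}\simeq\Omega_{\coAlg_{\bE_\infty}(\mC)}\mathrm{TQ}_{\Lie(1)}(BG)$; as $\Prim_{\Lie(1)}$ preserves the finite limit $\Omega$, the evaluation of $\eta$ at $G$ is identified with $\Omega$ applied to the unit $BG\to\Prim_{\Lie(1)}\mathrm{TQ}_{\Lie(1)}(BG)$ of the Koszul-duality adjunction~(\ref{adul}). Hence it is enough to prove that the unit of~(\ref{adul}) is an equivalence on every $\Lie(1)$-algebra of the form $BG$.

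Next I would apply Proposition~\ref{tttu} to $\X:=BG$ with $\mO=\Lie(1)$, which satisfies $\mO_1=\tu$. Since $\mC$ is $\bQ$-linear, homotopy orbits along finite groups are exact and all norm maps are equivalences, so by Proposition~\ref{tttu} the right-hand vertical and bottom horizontal maps of its square are equivalences; consequently the unit $BG\to\Prim_{\Lie(1)}\mathrm{TQ}_{\Lie(1)}(BG)$ is an equivalence if and only if the Harper--Hess completion map $BG\to\lim_{\n\geq1}\bigl(\tau_{\n}(\Lie(1))\circ_{\Lie(1)}BG\bigr)$ is one. This reduces the theorem to the single statement that \emph{every delooping is complete with respect to its Harper--Hess tower}.

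For this step I would argue that the delooping (equivalently, the group) structure is exactly what forces completeness. Using $\bQ$-linearity again, the $\n$-th fibre of the tower of $BG$ is $\Lie(1)(\n)\otimes_{h\Sigma_\n}\mathrm{TQ}_{\Lie(1)}(BG)^{\otimes\n}$; the operadic desuspension carried by $\Lie(1)(\n)$ (rationally $\Lie(1)(\n)\simeq\Sigma^{1-\n}(\Lie(\n)\otimes\mathrm{sgn}_\n)$), together with the fact that a delooping sits one suspension above the underlying object of its group object — so that iterating $B$ produces arbitrarily highly connected $\Lie(1)$-algebras $B^{k}G$ — makes the successive fibres of the tower arbitrarily highly connected, whence a $\lim^{1}$/Milnor-sequence argument gives the desired equivalence. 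As a sanity check, on a free Lie algebra this unwinds to the classical fact (equivalently, $\T\simeq\mU\circ\Lie$) that the primitives of the tensor Hopf algebra on $V$ form the free Lie algebra on $V$; and a general — even free — $\Lie(1)$-algebra need not be Harper--Hess complete, in accordance with the Ching--Harper phenomenon that the ungrouped adjunction~(\ref{adul}) is an equivalence only on simply connected objects.

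I expect the third paragraph to be the main obstacle: proving completeness of the Harper--Hess tower for deloopings in an arbitrary $\bQ$-linear $\mC$, where no $t$-structure is available and ``highly connected'' has to be phrased internally — either through a pro-object argument bootstrapped along the iterated deloopings $B^{k}G$, or by matching the tower with the exhaustive and complete PBW-type filtration of $\mU(BG)$, which is the route of Gaitsgory--Rozenblyum that Proposition~\ref{tttu} is designed to reproduce. The reductions carried out in the first two paragraphs are purely formal.
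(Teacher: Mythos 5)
Your first two paragraphs set up the right framework (reduce to the unit of the ungrouped Koszul duality adjunction, feed it into Proposition \ref{tttu}, use $\bQ$-linearity to kill the norm maps), but the reduction overshoots and lands on a false statement. You reduce to: ``the unit of (\ref{adul}) is an equivalence on every $\Lie(1)$-algebra of the form $BG$.'' By Corollary \ref{qqqp} the delooping functor $\Barc$ is an \emph{equivalence} onto $\Alg_{\Lie(1)}(\mC)$, so every $\Lie(1)$-algebra is of the form $BG$, and your target becomes ``the unit of (\ref{adul}) is an equivalence on all $\Lie(1)$-algebras'' --- which is exactly the statement your own closing sanity check (the Ching--Harper phenomenon) says is false. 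What you actually need is only that $\Omega$ of the unit at $BG$, equivalently the unit at the \emph{underlying} $\Lie(1)$-algebra $\V(G)\simeq\Omega(BG)$ of the group object, is an equivalence; and $\Omega$ is very far from essentially surjective on $\Alg_{\Lie(1)}(\mC)$. Consequently no connectivity or pro-object argument in your third paragraph can close the gap: the statement you would be proving there is not true.

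The missing idea is the paper's Lemma \ref{trivo} / Corollary \ref{trivosa}: in a $\bQ$-linear stable $\mC$, the loops functor on $\Alg_{\Lie(1)}(\mC)$ factors through the \emph{trivial} algebra functor (this rests on the computation that $\rH(\bQ)[-1]$ is a trivial non-unital $\bE_\infty$-algebra, Lemma \ref{lemon}). Hence $\V(G)\simeq\Omega(BG)$ carries the trivial $\Lie(1)$-structure, and Harper--Hess completeness is needed only for \emph{trivial} algebras. That case is Lemma \ref{calc}, proved not by a connectivity/$\lim^1$ argument (which, as you note, is unavailable without a $t$-structure) but by showing the rational homology of $(\tau_\n(\mO)\circ_\mO\triv)_\bk$ is concentrated in the single degree $1-\n$, so the transition maps between successive cofibers of the tower are zero. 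So: keep your paragraphs 1--2 but stop the reduction at $\epsilon_{\V(G)}$ rather than $\epsilon_{BG}$, insert the triviality of $\Omega$ on $\Lie(1)$-algebras, and replace paragraph 3 by the degree-concentration argument for trivial algebras.
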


Moreover we explain how to deduce the classical Milnor-Moore theorem (Theorem \ref{Mil}) from Theorem \ref{uuum}, which gives an alternative proof of the classical version.

We apply Theorem \ref{0} to chromatic homotopy theory.
Chromatic homotopy theory \cite{DevinatzHopkinsSmith}, \cite{HopkinsSmith}, \cite{Ravenel},  \cite{Ravenel2},  \cite{Ravenel3} is an extension of rational homotopy.
% which appears as height zero, to higher chromatic heights. 
For a fixed prime $p$ and height $n \geq 1$ one considers $v_n$-periodic homotopy groups, an extension of rational homotopy groups that captures periodic phenomena in homotopy groups.
Extending the concept of rational equivalences chromatic homotopy theory studies
$v_n$-periodic equivalences, those maps of pointed $p$-local spaces inducing an equivalence on $v_n$-periodic homotopy groups.
Extending the concept of rational spaces chromatic homotopy theory studies 
$p$-local pointed spaces up to $v_n$-periodic equivalences, i.e. the Bousfield localization $\mS_{(p)} \rightleftarrows \mS_{v_n}$ of the category $ \mS_{(p)}$ of pointed $p$-local spaces with respect to the class of $v_n$-periodic equivalences. The latter exists by \cite[Theorem 2.2.]{2018arXiv180306325H}.
Similarly, one considers the stable homotopy theory of $v_n$-periodic homotopy types,
which by \cite[Remark 3.20.]{2018arXiv180306325H}  is also known as the homotopy theory of $T_n$-local spectra .
The $v_n$-periodic homotopy groups are elegantly described via the Bousfield-Kuhn functor \cite{Bousfield1}, \cite{Bousfield2}, \cite{Bousfield3}, which is a key player in chromatic homotopy theory \cite{Kuhn}, \cite{behrensrezk}, \cite{2018arXiv180306325H}.

The latter is a functor $\Phi: \mS_{(p)} \to \Sp_{T_n}$ from $p$-local pointed spaces to $T_n$-local spectra that assigns to a $p$-local space $X$ a $T_n$-local spectrum $\Phi(X)$ whose homotopy groups are the $v_n$-periodic homotopy groups of $X$, and such that for every $p$-local spectrum $Y$ the $T_n$-local spectrum $\Phi(\Omega^\infty(Y))$
is the $T_n$-localization of $Y$ (see \cite[Part I]{Shi2} for details).

Heuts \cite[Theorem 2.6.]{2018arXiv180306325H} extends Quillen's rational homotopy theory \cite{Mont} to higher chromatic heights. Heuts proves that every $v_n$-periodic homotopy type $X$ is determined by a shifted Lie structure on the $T_n$-local spectrum $\Phi(X) $ in analogy that every simply connected rational space is determined by a rational differently graded Lie algebra.
% that induces the Whitehead bracket on shifted homotopy groups.
%, a Lie algebra in $H(\bQ)$-module spectra. 
Precisely, Heuts proves that the Bousfield-Kuhn functor $\Phi: \mS_{v_n} \to \Sp_{T_n}$ lifts to an equivalence $ \mS_{v_n} \to \Alg_{\Lie(1)}(\Sp_{T_n})$.
Composing with the equivalence $\Alg_{\Lie(1)}(\Sp_{T_n}) \simeq \Alg_{\Lie}(\Sp_{T_n})$ that loops the underlying spectrum, one obtains an equivalence
$\Phi[-1] : \mS_{v_n} \to \Alg_{\Lie}(\Sp_{T_n})$.

We deduce the following chromatic variant of the Milnor-Moore theorem: % for spaces whose loop space is $\Phi$-good: \simeq \bar{\Phi} \circ \Omega :

\begin{theorem}\label{2} (Theorem \ref{20})
Let $n$ be a natural and $X$ a $v_n$-periodic homotopy type.
The unit $$\Phi(X)[-1] \to \Prim \mU(\Phi(X)[-1])$$ identifies with the Goodwillie completion
$ \Phi \to \lim_{n \geq 0} P_n(\Phi)$ evaluated at $\Omega(X)$.

\end{theorem}

In view of Theorem \ref{2} the question arises which $v_n$-periodic homotopy types have a $\Phi$-good loop space. This is in general a hard question. However 
%Examples are the $v_1$-localization of 2-connected spaces $X$ for an odd prime $p$ such that $H_*(\Omega(X); \bZ_{(p)})$ is finitely generated over $\bZ_{(p)}.$
it is expected that any $v_n$-periodic homotopy type has a $\Phi$-good loop space for any prime and chromatic height $n$ (see \cite{behrens2024unstable} last paragraph).
See \cite{KjaerUnstable} for examples.

\vspace{1mm}

%In the following we explain how we prove our main theorem. 
We deduce Theorem \ref{0} from the following theorem:
\begin{theorem}\label{tttu}(Theorem \ref{map})
Let $\mC$ be a stable presentably symmetric monoidal $\infty$-category, $\mO$ a non-unital $\infty$-operad in $\mC$ such that $\mO_1 = \tu $ and $\X$ an $\mO$-algebra in $\mC$.
There is a commutative square:
$$\begin{xy}
\xymatrix{\X \ar[d] 
\ar[rr]
&& \Prim_\mO \T\Q_\mO (\X) \ar[d] 
\\
\lim_{\n \geq 1}(\tau_\n(\mO) \circ_{\mO} \X) \ar[rr] && \lim_{\n \geq 1}((\tau_\n(\mO) \circ_{\mO} \triv) *^{\mO^\vee} (\triv \circ_\mO \X)),
}
\end{xy} $$
where $*$ is the cocomposition product.
If all norm maps of objects in $\mC$ with an action of a symmetric group are equivalences,
the right vertical morphism and bottom horizontal morphism are equivalences.
\end{theorem}

To prove Theorem \ref{tttu} we analyze Koszul duality and the interaction between the composition product and its dual version, the cocomposition product.
Although Theorem \ref{tttu} requires to assume that all norm maps of objects with an action of a symmetric group are invertible, a condition satisfied in all our examples of interest, we recently learned that this condition is not logically necessary but only necessary to apply our techniques:
Heuts \cite[Theorem 2.1.]{Heuts} recently proved a generalization of Theorem \ref{tttu} without assuming this condition but using different techniques.

\subsection{Acknowledgements}

%I thank Markus Spitzweck for proposing this topic. 
We thank Lukas Brantner, Gijs Heuts, Ben Knudsen, Markus Spitzweck and Manfred Stelzer for helpful discussions.
We are especially grateful to Gijs Heuts for communicating to us the proof of Lemma \ref{calc}.
This work are main parts of my thesis written during my PhD 2014-2018 at the university of Osnabr\"uck. I am grateful for this opportunity. 
%In particular, the main results can be found in my thesis: Proposition \ref{dghcvbhhh} and Corollary \ref{dghcvbhhhh} is \cite[Lemma 4.9.]{Restricted}.Lemma \ref{gggg} is \cite[Lemma 4.11.]{Restricted}. Lemma \ref{calc} is \cite[Lemma 4.12.]{Restricted}.Proposition \ref{caaaaart} is \cite[Lemma 4.18.]{Restricted}.

\subsection{Notation and Terminologie} 

%\subsubsection*{Notation and terminology}

We fix a hierarchy of Grothendieck universes whose objects we call small, large, etc.
We call a space small, large, etc. if its set of path components and its homotopy groups are so for any choice of base point. We call an $\infty$-category small, large, etc if its maximal subspace and all its mapping spaces are small, large, respectively.

We write 
\begin{itemize}
\item $\Set$ for the category of small sets.
\item $\Delta$ for the category of finite, non-empty, partially ordered sets and order preserving maps, whose objects we denote by $[\n] = \{0 < ... < \n\}$ for $\n \geq 0$.
\item $\Fin$ for (a skeleton of) the category of finite sets, whose objects we denote by $\n = \{1, ... , \n\}$ for $\n \geq 0$.
\item $\Fin_*$ for (a skeleton of) the category of finite pointed sets, whose objects we denote by $\langle\n\rangle = \{*, 1, ..., \n\}$ for $\n \geq 0$.
\item $\mS$ for the $\infty$-category of small spaces.
\item $ \Cat_\infty$ for the $\infty$-category of small $\infty$-categories.
\item $\Cat_\infty^{\rc \rc} $ for the $\infty$-category of large $\infty$-categories with small colimits and small colimits preserving functors.
\end{itemize}

We often indicate $\infty$-categories of large objects by $\widehat{(-)}$, for example we write $\widehat{\mS}, \widehat{\Cat}_\infty$ for the $\infty$-categories of large spaces, $\infty$-categories.

%We denote by $\Cat_\infty^{\rc \rc} \subset \widehat{\Cat}_\infty$ the subcategory of $\infty$-categories with small colimits and small colimits preserving functors.
%By \cite{lurie.higheralgebra} Proposition 4.8.1.3. the $\infty$-category $\Cat_\infty^{\rc \rc}$ carries a closed symmetric monoidal structure such that the subcategory inclusion $\Cat_\infty^{\rc \rc} \subset \widehat{\Cat}_\infty$ is lax symmetric monoidal.

\vspace{1mm}

For any $\infty$-category $\mC$ containing objects $\A, \B$ we write
\begin{itemize}
\item $\mC(\A,\B)$ for the space of maps $\A \to \B$ in $\mC$,
\item $\mC_{/\A}$ %:= \{\A\} \times_{\Fun(\{1\},\mC)} \Fun([1],\mC)$ 
for the $\infty$-category of objects over $\A$,
%\item $\Ho(\mC)$ for its homotopy category,
%\item $\mC^{\triangleleft}, \mC^{\triangleright}$ for the $\infty$-category arising from $\mC$ by adding an initial, final object, respectively,
\item $\mC^\simeq $ for the maximal subspace in $\mC$.
\item %Note that $\Ho(\Cat_\infty)$ is cartesian closed and for small $\infty$-categories $\mC,\mD$ we write 
$\Fun(\mC,\mD)$ for the $\infty$-category of functors $\mC \to \mD$ to any $\infty$-category $\mD.$
\end{itemize}

%\cite[Definition 2.1.1.10., Definition 4.1.3.2.]{lurie.higheralgebra}, where we drop the term symmetric, (symmetric) monoidal $\infty$-categories \cite[Definition 2.0.0.7, Definition 4.1.3.6.]{lurie.higheralgebra}, and the corresponding notions of maps of (non-symmetric) $\infty$-operads and (lax) (symmetric) monoidal functors.

%\subsubsection{Symmetric monoidal $\infty$-categories}

We heavily use symmetric $\infty$-operads \cite[Definition 2.1.1.10.]{lurie.higheralgebra} and symmetric monoidal $\infty$-categories \cite[Definition 2.0.0.7.]{lurie.higheralgebra}.
%maps of $\infty$-operads.
For every $\infty$-operad $\mO^\ot \to \Fin_*$ we write $\mO$ for the fiber over $\langle 1 \rangle \in \Fin_*$ and call $\mO$ the underlying $\infty$-category.
More generally, we use $\mO$-operads and $\mO$-monoidal $\infty$-categories \cite[Definition 2.1.2.13.]{lurie.higheralgebra},
%and the notion of (lax) $\mO$-monoidal functor, 
which for $\mO^\ot=\Fin_*$ give 
symmetric $\infty$-operads and symmetric monoidal $\infty$-categories, for $\mO^\ot=\Ass$
give non-symmetric $\infty$-operads and monoidal $\infty$-categories and for $\mO^\ot=\LM,\RM,\BM$ give $\LM,\RM,\BM$-operads and left, right, bitensored $\infty$-categories. %, respectively.

\begin{notation}
For any $\mO$-operad $\mD^\ot \to \mO^\ot$
%We write $\Alg_{\mC}(\mD)$ for the full subcategory of$$ \Fun_{\Delta^\op}(\mC^\ot, \mD^\ot), \ \Fun_{\Fin_*}(\mC^\ot, \mD^\ot),$$ respectively, spanned by the maps of (non-symmetric) $\infty$-operads.
we write $\Alg_\mO(\mD)$ for the $\infty$-category of $\mO$-algebras in $\mD.$
\begin{itemize}
\item For $\mO^\ot=\Fin_*$ we set $\Alg_{\bE_\infty}(\mD):=\Alg_{\Fin_*}(\mD)$.
\item For $\mO^\ot=\Ass$ we set $\Alg(\mD):=\Alg_{\Ass}(\mD)$.
\item If $\mD^\ot \to \Fin_*$ is a symmetric monoidal $\infty$-category, let
$\co\Alg_{\bE_\infty}(\mD):=\Alg_{\bE_\infty}(\mD^\op)^\op$.
\end{itemize}
\end{notation}
%A lax (symmetric) monoidal functor is a map of (symmetric) $\infty$-operadsbetween (symmetric) monoidal $\infty$-categories.A (symmetric) monoidal functor is a lax (symmetric) monoidal functor that is a map of cocartesian fibrations over $\Fin_*, \Delta^\op$, respectively.

%We write $\Fun^\ot(\mC, \mD)$ for the full subcategory of$$ \Fun_{\Delta^\op}(\mC^\ot, \mD^\ot), \ \Fun_{\Fin_*}(\mC^\ot, \mD^\ot),$$ respectively, spanned by the 

%\subsubsection*{Inclusions and embeddings}We often call a fully faithful functor $\mC \to \mD$ an embedding.We call a functor $\phi: \mC \to \mD$ an inclusion (or subcategory inclusion)if one of the following equivalent conditions holds:\begin{itemize}\item For any $\infty$-category $\mB$ the induced map$\Cat_\infty(\mB,\mC) \to \Cat_\infty(\mB,\mD)$ is an embedding.\item $\phi: \mC \to \mD$ induces an embedding on maximal subspaces and on all mapping spaces.\item The functor $\Ho(\phi):\Ho(\mC) \to \Ho(\mD)$ is an inclusion and the functor $\mC \to \Ho(\mC) \times_{\Ho(\mD)} \mD$ is an equivalence.\end{itemize}
%or say that $\phi$ exhibits $\mC$ as a subcategory of $\mD$

%In this case $\phi$ is uniquely determined by $\mD$ and $\Ho(\phi): \Ho(\mC) \to \Ho(\mD).$

\vspace{2mm}

%We call a monoidal $\infty$-category compatible with small colimits if it admits small colimits, which are preserved by the tensor product in each component.
%We call a monoidal $\infty$-category presentable if it is compatible with small colimits and its underlying $\infty$-category is presentable.
%\vspace{2mm}

%\subsubsection*{Relative cocartesian fibrations}Let $\rS$ be an $\infty$-category and $\mE \subset \Fun([1],\rS)$ a full subcategory. We call a functor $\X \to \rS$ a cocartesian fibration relative to $\mE$ if for every morphism $[1] \to \rS$ that belongs to $\mE$ the pullback$[1] \times_\rS \X \to [1]$ is a cocartesian fibration whose cocartesian morphisms are preserved by the projection $[1] \times_\rS \X \to \X.$We call a functor $\X \to \Y$ over $\rS$ a map of cocartesian fibrations relative to $\mE$ if it preserves cocartesian lifts of morphisms of $\mE.$We write $\Cat_{\infty/\rS}^\mE \subset \Cat_{\infty/\rS}$ for the subcategory of cocartesian fibrations relative to $\mE$ and maps of such.

\section{$\infty$-operads}

%In section \ref{Ooper} we endow the $\infty$-category $\sSeq(\mC)$ of symmetric sequences in $\mC$ with a monoidal structure encoding the composition product and define $\infty$-operads as associative algebras in the composition product.
%We show that the left action of $\sSeq(\mC)$ on itself restricts to a left action on $\mC$ and we define algebras over an $\infty$-operad $\mO$ as left modules over $\mO.$To define co$\infty$-operads, i.e. $\infty$-operads in $\mC^\op$, we cannot expect that$\mC^\op$ is a nice symmetric monoidal $\infty$-category.Thus we develop a more general composition product on $\sSeq(\mC)$ that endows $\sSeq(\mC)$ with the structure of a representable planar $\infty$-operad instead of a monoidal $\infty$-category and we define $\infty$-operads as associative algebras in this planar $\infty$-operad structure on $ \sSeq(\mC)$ and co$\infty$-operads as $\infty$-operads in $\mC^\op.$
 
\subsection{The composition product}\label{Ooper}

In this section we define the composition product following \cite[Definition 3.3.]{https://doi.org/10.48550/arxiv.2104.03870}.
	
\begin{notation}Let $\Sigma \simeq  \coprod_{\n \geq 0} \B\Sigma_\n$ be the groupoid of finite sets and bijections.
\end{notation}
	
%endowed with the symmetric monoidal structuregiven by disjoint union. % of sets.
%The cocartesian symmetric monoidal structure on $\Set$ restricts to a symmetric monoidal structure on $\Sigma$ that exhibits $\Sigma$ as the free symmetric monoidal $\infty$-category on the contractible $\infty$-category.
%{2mm}This follows for example from the canonical equivalence $\Sigma \simeq \Env(\Triv),$ where $\Triv^\ot \to \Fin_\ast$ denotes the trivial $\infty$-operad and prop. \ref{bghhccck}.
\begin{notation}
For every $\infty$-category $\mC$ let $$\sSeq(\mC):=\Fun(\Sigma, \mC)\simeq \prod_{\n \geq 0} \Fun(\B\Sigma_\n, \mC)$$
be the $\infty$-category of symmetric sequences in $\mC$.
\end{notation}

For every symmetric monoidal $\infty$-category $\mC$ compatible with small colimits
the $\infty$-category $\sSeq(\mC)$ carries a symmetric monoidal structure compatible with small colimits given by Day-convolution \cite[Proposition 4.8.1.10.]{lurie.higheralgebra}, where we use the symmetric monoidal structure on $\Sigma$ taking disjoint union.
For symmetric sequences $\X, \Y $ in $\mC$ we have
$$(\X \ot \Y)_\n:= \colim_{\bi \coprod \bj=\n} \X_\bi \ot \Y_\bj.$$
There is a symmetric monoidal embedding $\iota: \mC \hookrightarrow \sSeq(\mC) $ left adjoint to evaluation at the initial object that views an object of $\mC$ as a symmetric sequence concentrated in degree zero. 
\begin{remark}
By \cite[Proposition 4.8.1.17.]{lurie.higheralgebra} there is a symmetric monoidal equivalence $\sSeq(\mC)\simeq \mC \ot \sSeq(\mS)$ and $\iota$ is induced by the similarly defined embedding $\iota: \mS \to \sSeq(\mS).$
\end{remark}
\begin{notation}
Let $\triv$ be the symmetric sequence in $\mC$ concentrated in degree 1 with value the tensor unit of $\mC. $ 	
\end{notation}

\begin{lemma}Let $\mC, \mD$ be symmetric monoidal $\infty$-categories compatible with small colimits and $\mC \to \mD$ a symmetric monoidal functor preserving small colimits. The functor 
$$\Psi: \Fun^{\ot, \L}_{\mC/}(\sSeq(\mC), \mD) \to \mD $$ evaluating at $\triv$ is an equivalence, where the left hand side is the $\infty$-category of small colimits preserving symmetric monoidal functors under $\mC.$

\end{lemma}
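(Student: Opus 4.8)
The statement to prove is that evaluation at $\triv$ gives an equivalence $\Psi : \Fun^{\ot, \L}_{\mC/}(\sSeq(\mC), \mD) \xrightarrow{\sim} \mD$. The natural strategy is to recognize $(\sSeq(\mC), \iota)$ as the free symmetric monoidal $\infty$-category under $\mC$, compatible with small colimits, generated by the single object $\triv$ — i.e. a universal property. I would proceed in three stages: first reduce from $\mC$ to the ground case $\mC = \mS$ (or rather to spaces) using the symmetric monoidal equivalence $\sSeq(\mC) \simeq \mC \ot \sSeq(\mS)$ recalled in the excerpt; second identify $\sSeq(\mS)$ with a free object; third assemble these to get the general statement by a base-change/tensor argument in $\Cat_\infty^{\rc\rc}$ (or the relevant $\infty$-category of symmetric monoidal cocomplete $\infty$-categories).

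\textbf{Step 1: reduction to the universal case.} Using $\sSeq(\mC) \simeq \mC \ot \sSeq(\mS)$ as symmetric monoidal $\infty$-categories compatible with small colimits, and the fact that $\mC \ot (-)$ is the relative tensor product computing pushouts in $\Calg(\Cat_\infty^{\rc\rc})$ along the unique map $\mS \to \mC$, one gets a natural equivalence
\[
\Fun^{\ot,\L}_{\mC/}(\sSeq(\mC), \mD) \;\simeq\; \Fun^{\ot,\L}_{\mC/}(\mC \ot \sSeq(\mS), \mD) \;\simeq\; \Fun^{\ot,\L}_{\mS/}(\sSeg(\mS), \mD) \;\simeq\; \Fun^{\ot,\L}(\sSeq(\mS), \mD),
\]
where the last step uses that $\mS$ is the unit of $\Calg(\Cat_\infty^{\rc\rc})$, so that symmetric monoidal colimit-preserving functors out of $\mS$ (and functors "under $\mS$") carry no data. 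Here the symmetric monoidal functor $\mC \to \mD$ in the hypothesis is what supplies the basepoint on the right-hand side; I should be careful to track that it is exactly the data needed to land in $\Fun_{\mC/}$ rather than plain $\Fun^{\ot,\L}$ on the left. So it suffices to show $\Fun^{\ot,\L}(\sSeq(\mS),\mD) \xrightarrow{\ev_\triv} \mD$ is an equivalence.

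\textbf{Step 2: $\sSeq(\mS)$ is free on one object.} This is the crux, and the expected main obstacle. The composition-product / Day-convolution description makes $\sSeq(\mS) = \Fun(\Sigma, \mS)$ with Day convolution along $(\Sigma, \amalg)$, and the embedding $\iota$ is left adjoint to evaluation at $\emptyset$; one has $\iota(\ast) = \triv$. The claim is that $\sSeq(\mS)$ is the free presentably symmetric monoidal $\infty$-category on a single generator, i.e. $\Fun^{\ot,\L}(\sSeq(\mS), \mD) \simeq \mD$ naturally. I would deduce this from the universal property of Day convolution: $\Fun^{\ot}(\Fun(\Sigma,\mS), \mD) \simeq \Fun^{\ot}(\Sigma, \mD)$ for the appropriate notion, combined with the fact that $\Sigma \simeq \mathrm{Fin}^{\simeq}$ with disjoint union is the free symmetric monoidal $\infty$-category on one object (equivalently, the free $\bE_\infty$-monoid, i.e. the symmetric monoidal envelope of the point), so that symmetric monoidal functors $\Sigma \to \mD$ are the same as objects of $\mD$. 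The colimit-preservation is handled because Day convolution on presheaves is the universal cocontinuous symmetric monoidal extension: $\Fun^{\ot,\L}(\PSh^{\ot}(\Sigma), \mD) \simeq \Fun^{\ot}(\Sigma, \mD)$, and $\Sigma$ is already a space-valued thing so $\PSh(\Sigma) \simeq \Fun(\Sigma,\mS) = \sSeq(\mS)$ (using $\Sigma \simeq \Sigma^{\op}$, as it is a groupoid). Chasing the identifications, $\ev_\triv$ corresponds to $\ev_{\ast}$ on $\Fun^{\ot}(\Sigma,\mD) \simeq \mD$, which is the identity. The subtlety to get right here is that $\Sigma$ carries its free $\bE_\infty$-structure precisely because it is $\coprod_n \B\Sigma_n$ — the symmetric groups appear exactly so that $\Sigma = \Fin^{\simeq}$ is the free symmetric monoidal groupoid on a point; I should cite \cite[Proposition 4.8.1.10., 4.8.1.17.]{lurie.higheralgebra} for the Day-convolution universal property and its interaction with $\mC \ot (-)$, and the classical fact (e.g. via the symmetric monoidal envelope of the trivial operad) that $\Fin^{\simeq}$ is free on one object.

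\textbf{Step 3: assembling.} Combining Steps 1 and 2 gives the chain of natural equivalences $\Fun^{\ot,\L}_{\mC/}(\sSeq(\mC),\mD) \simeq \Fun^{\ot,\L}(\sSeq(\mS),\mD) \simeq \mD$, and an inspection shows the composite is $\Psi = \ev_\triv$ because $\iota(\ast) = \triv$ is sent through each identification to the tautological generator. Hence $\Psi$ is an equivalence. The only genuinely delicate point, as noted, is the base-change identification in Step 1 — specifically that $\Fun^{\ot,\L}_{\mC/}(\mC \ot \sSeq(\mS), \mD)$ computes the mapping space in the coslice $\Calg(\Cat_\infty^{\rc\rc})_{\mC/}$ and that this identifies with $\Fun^{\ot,\L}(\sSeq(\mS),\mD)$ once the structure map $\mC \to \mD$ is fixed — and I would spell that out carefully rather than wave at it, since this is where the hypothesis "$\mC \to \mD$ symmetric monoidal, colimit-preserving" is consumed.
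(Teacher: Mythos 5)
Your proposal is correct and follows essentially the same route as the paper: identify $\Sigma$ as the free symmetric monoidal $\infty$-category on a point, use the universal property of Day convolution to make $\sSeq(\mS)$ the free presentably symmetric monoidal $\infty$-category on a point, and then tensor with $\mC$ via $\sSeq(\mC)\simeq \mC\ot\sSeq(\mS)$ to obtain the freeness under $\mC$. The only difference is one of exposition — you isolate the base-change identification in the coslice as a separate step and flag it as the delicate point, whereas the paper compresses the whole argument into one chain of equivalences.
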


\begin{proof}
Observe that $\Sigma$ is the free symmetric monoidal $\infty$-category generated by a point. Thus $\sSeq(\mS) \simeq \Fun(\Sigma^\op, \mS) $ endowed with Day convolution is the free symmetric monoidal $\infty$-category compatible with small colimits generated by a point  \cite[Proposition 4.8.1.10.]{lurie.higheralgebra}.
Tensoring with $\mC$ we find that $ \mC \ot \sSeq(\mS) \simeq \sSeq(\mC)$ is the free symmetric monoidal $\infty$-category compatible with small colimits under $\mC$ generated by a point. So the composition 	
$$\Psi: \Fun^{\ot, \L}_{\mC/}(\sSeq(\mC), \mD) \simeq  \Fun^{\ot, \L}(\sSeq(\mS), \mD) \simeq \Fun^{\ot}(\Sigma, \mD) \simeq \mD $$ evaluating at $\triv$ is an equivalence.
	
\end{proof}

\begin{definition}
The composition product monoidal structure on $\sSeq(\mC) $ is the monoidal structure opposite to composition via the equivalence
$$\Psi: \Fun^{\ot, \L}_{\mC/}(\sSeq(\mC), \sSeq(\mC)) \simeq \sSeq(\mC) $$ evaluating at $\triv$. For $\X, \Y \in \sSeq(\mC) $ we write $\X \circ \Y $ for the composition product.
\end{definition}

\vspace{0,1mm}

\begin{remark}\label{dghfgh}
By definition $\triv$ becomes the tensor unit of the composition product. For every $\X \in \sSeq(\mC)$ there is a canonical equivalence $\X \simeq \coprod_{\bk \geq 0} \iota(\X_\bk) \ot_{\Sigma_\bk} \triv^{\ot \bk}. $ 

So the composition product of $\X, \Y \in \sSeq(\mC) $ is $$\X \circ \Y 
\simeq (\Psi^{-1}(\Y) \circ \Psi^{-1}(\X))(\triv) \simeq \Psi^{-1}(\Y) (\X) \simeq $$$$\Psi^{-1}(\Y)(\coprod_{\bk \geq 0} \iota(\X_\bk) \ot_{\Sigma_\bk} \triv^{\ot \bk}) \simeq \coprod_{\bk \geq 0} \iota(\X_\bk) \ot_{\Sigma_\bk} \Y^{\ot \bk}. $$ 

Thus for every $\n \in \Sigma $ there is an equivalence
$$ (\X \circ \Y ) _\n \simeq \underset{\bk \geq 0}{\coprod} ( \underset{ {\n_1 \coprod ... \coprod \n_\bk = \n} }{\colim} \X_\bk \ot( \bigotimes_{1 \leq \bj \leq \bk } \Y_{\n_\bj }))_{\Sigma_\bk} .$$

Another way to express the latter is the following one:

$$ (\X \circ \Y ) _\n \simeq  \underset{(\f: \n \to \bk) \in (\Fin_{\n/})^\cong}{\colim} \X_\bk \ot \bigotimes_{\bj \in \bk} \Y_{\f^{-1}(\bj)}.$$

The latter expression generalizes the following way: for $\ell \geq 2$ let $\Fin^\ell_\n$ be the groupoid
whose objects are sequences of maps of finite sets $\n \xrightarrow{\f_{\ell-1}} \n_{\ell-1} \xrightarrow{\f_{\ell-2}} ... \xrightarrow{\f_1} \n_1$ of length $\ell-1$ and the evident isomorphisms.	
For any $\mO_1, ..., \mO_\ell \in \sSeq(\mC)$ there is an equivalence 
$$(\mO_1 \circ ... \circ \mO_\ell)_\n \simeq \colim_{\f \in \Fin^\ell_\n}((\mO_1)_{\n_1} \ot \bigotimes_{\bi \in \n_1}(\mO_2)_{ \f_1^{-1}(\bi)} \ot ... \ot \bigotimes_{\bi \in \n_{\ell-1}}(\mO_\ell)_{\f_{\ell-1}^{-1}(\bi)})$$ 
(see \cite[Definition 2.12.]{2005math.....10490C}).

\end{remark}

\begin{notation}
Let $\Sigma_{ \geq 1}:= \coprod_{\n \geq 1} \B\Sigma_\n$ and for every $\infty$-category $\mC$ let $$\sSeq(\mC)_{\geq 1} := \Fun(\Sigma_{\geq 1}, \mC) \simeq \prod_{\n \geq 1} \Fun(\B\Sigma_\n, \mC).$$
\end{notation}

\begin{construction}\label{leftact}
There is an embedding $\sSeq(\mC)_{\geq 1} \subset \sSeq(\mC) $ left adjoint to restriction along the embedding $\Sigma_{\geq 1} \subset \Sigma$ that inserts the initial object in degree zero. If the symmetric monoidal structure on $\mC$ is compatible with the initial object, the composition product on $\sSeq(\mC)$ restricts to $\sSeq(\mC)_{\geq 1}.$
For every $\X \in \sSeq(\mC)$ and $\Y \in \mC$ the composition product
$$\X \circ \iota(\Y) \simeq \coprod_{\bk \geq 0} \iota(\X_\bk) \ot_{\Sigma_\bk} \iota(\Y)^{\ot \bk} \simeq \iota(\coprod_{\bk \geq 0} \X_\bk \ot_{\Sigma_\bk} \Y^{\ot \bk}) $$ belongs to the essential image of $\iota:\mC \to \sSeq(\mC).$
Thus the composition product gives rise to a left action of
$\sSeq(\mC)$ on $\mC.$
\end{construction}

\begin{remark}\label{remf}
	
Let $\phi: \mC \to \mD$ be a small colimits preserving symmetric monoidal functor between symmetric monoidal $\infty$-categories compatible with small colimits. There is a symmetric monoidal equivalence $$\mD \ot_\mC \sSeq(\mC)\simeq \mD \ot_\mC \mC \ot \sSeq(\mS)\simeq \mD \ot \sSeq(\mS)\simeq \sSeq(\mD).$$

Since the functor $\mD \ot_\mC (-): \Alg_{\bE_\infty}(\Cat_\infty^{\rc\rc})_{\mC/} \to \Alg_{\bE_\infty}(\Cat_\infty^{\rc\rc})_{\mD/}$ is lax $\Cat_\infty^{\rc\rc}$-linear, it yields a monoidal functor on composition products:
$$ \xi: \sSeq(\mC) \simeq \Fun^{\ot, \L}_{\mC/}(\sSeq(\mC), \sSeq(\mC)) \to \sSeq(\mD) \simeq \Fun^{\ot, \L}_{\mC/}(\sSeq(\mD), \sSeq(\mD)).$$

\end{remark}

%sends the endomorphism left action of $\Fun^{\ot, \L}_{\mC/}(\sSeq(\mC), \sSeq(\mC))$ on $\sSeq(\mC)$ in $\Alg_{\bE_\infty}(\Cat_\infty^{\rc\rc})_{\mC/} $ 
%to a left action of $\Fun^{\ot, \L}_{\mC/}(\sSeq(\mC), \sSeq(\mC)) $ on $\mD^{\Sigma}$ in $\Alg_{\bE_\infty}(\Cat_\infty^{\rc\rc})_{\mD/} $ corresponding to a small colimits preserving 

\vspace{1mm}

We will often consider the composition product in case that $\mC$ has small colimits but the tensor product of $\mC$ does not preserve small colimits
component-wise.
In this case the composition product does not define a monoidal $\infty$-category but a weaker structure based on the notion of non-symmetric $\infty$-operad \cite[Definition 4.1.3.2.]{lurie.higheralgebra}:

%If $\mC$ admits geometric realizations but the symmetric monoidal structure on $\mC$ is not compatible with  geometric realizations, describe this generalization of composition product,we use the following weakening of monoidal $\infty$-categorybased on the notion of non-symmetric (=planar) $\infty$-operad\cite[Definition 4.1.3.2.]{lurie.higheralgebra}:We will often use the following proposition providing the composition product in case that $\mC$ is not compatible with small colimits:To describe this generalization of composition product,we use the following weakening of monoidal $\infty$-categorybased on the notion of non-symmetric (=planar) $\infty$-operad\cite[Definition 4.1.3.2.]{lurie.higheralgebra}:

\begin{definition}
	
A lax monoidal $\infty$-category is a non-symmetric $\infty$-operad
$\mV^\ot \to \Ass$ such that $\mV^\ot \to \Ass$ is a locally cocartesian fibration and the full suboperad spanned by the tensor unit is a monoidal $\infty$-category.	

Let $\mM^\ot \to \BM^\ot$ be a $\BM$-operad that exhibits an $\infty$-category $\mD$ as weakly bitensored over lax monoidal $\infty$-categories $\mC, \mE$.
%Let $\mC, \mE$ be lax monoidal $\infty$-categories. 
We say that $\mM^\ot \to \BM^\ot$ exhibits $\mD$ as lax bitensored over $\mC, \mE$ 
%An $\infty$-category lax bitensored over $\mC, \mE$ is an $\infty$-category $\mD$ weakly bitensored over $\mC, \mE$ such that 
if the functor $\mM^\ot \to \BM^\ot$ is a locally cocartesian fibration and the restricted weak biaction of the monoidal subcategories of $\mC, \mE$ spanned by the tensor unit is a bitensored $\infty$-category.	

\end{definition}

\begin{remark}
	
Let $\mV^\ot \to \Ass$ be a non-symmetric $\infty$-operad and locally cocartesian fibration.
Then for every $\V_1, ..., \V_\n$ there is a tensor product
$\V_1 \ot ... \ot \V_\n \in \mV$.

Moreover for every $\bk \geq0$, $\n_1, ..., \n_\bk \geq 0$
and $\V^\bi_1, ..., \V^\bi_{\n_\bi} \in \mV$ for $1 \leq \bi \leq \bk$
there is a canonical morphism in $\mV:$
$$\theta:  \V^1_1 \ot ... \ot \V^1_{\n_1} \ot ... \ot \V^\bk_1 \ot ... \ot \V^\bk_{\n_\bk} \to (\V^1_1 \ot ... \ot \V^1_{\n_1}) \ot ... \ot (\V^\bk_1 \ot ... \ot \V^\bk_{\n_\bk}).$$
The functor $\mV^\ot \to \Ass$ is a monoidal $\infty$-category if and only if $\theta$ is an equivalence for all choices of inputs.
The functor $\mV^\ot \to \Ass$ exhibits $\mV$ as a lax monoidal
$\infty$-category if and only if $\theta$ is an equivalence
for $\V^\bi_1 =... = \V^\bi_{\n_\bi} =\tu$ for $1 \leq \bi \leq \bk$.
In particular, for $\n_1=...=\n_\bk=0$ we find that
the canonical morphism $\tu \to \tu^{\ot \bk}$ in $\mV$ is an equivalence.

\end{remark}

\begin{proposition}\label{weak}
Let $\mC$ be a symmetric monoidal $\infty$-category compatible with the initial object that admits small colimits.

\begin{enumerate}
\item There is a lax %corepresentable non-symmetric $\infty$-operad
monoidal $\infty$-category $\sSeq(\mC)^\ot \to \Ass $ whose $\infty$-category of colors is $\sSeq(\mC)$, which agrees with the composition product monoidal structure if $\mC$ is compatible with small colimits.

\item The tensor unit is $\triv$ and for every $\ell \geq 2$, any $\mO_1, ..., \mO_\ell \in \sSeq(\mC)$ and $\n \in \Sigma$ there is an equivalence 
$$(\mO_1 \circ ... \circ \mO_\ell)_\n \simeq \colim_{\f \in \Fin^\ell_\n}((\mO_1)_{\n_1} \ot \bigotimes_{\bi \in \n_1}(\mO_2)_{ \f_1^{-1}(\bi)} \ot ... \ot \bigotimes_{\bi \in \n_{\ell-1}}(\mO_\ell)_{\f_{\ell-1}^{-1}(\bi)}). $$ 

\item Let $\phi: \mC \to \mD$ be a symmetric monoidal functor between symmetric monoidal $\infty$-categories having small colimits.
There is a map of non-symmetric $\infty$-operads $\sSeq(\mC)^\ot \to \sSeq(\mD)^\ot $, which is a monoidal functor if $\phi$ preserves small colimits.

\item The underlying weak biaction of $\sSeq(\mC)$ on $\sSeq(\mC)$
exhibits $\sSeq(\mC)$ as lax bitensored over $\sSeq(\mC)$
and restricts to a lax biaction of $\sSeq(\mC)$ on $\mC$,
where the lax right action is trivial.

\end{enumerate}

\end{proposition}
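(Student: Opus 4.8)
The plan is to construct the $\BM$-operad of part (4) first — a locally cocartesian fibration $\mM^\ot \to \BM^\ot$ exhibiting $\sSeq(\mC)$ as lax bitensored over $\sSeq(\mC)$, with left action the composition product, right action trivial, and restricted left lax action on $\mC$ — and then to read off (1), (2) and (3) by restricting along the inclusions of $\Ass$ and of the left-module part into $\BM^\ot$ and inspecting the induced tensor operations. When $\mC$ is compatible with small colimits there is nothing new: $\sSeq(\mC)^\ot$ is the composition-product monoidal $\infty$-category of the Definition above, its (strict, a fortiori lax) self-biaction is the one transported through the equivalence $\Psi$ from composition of endofunctors, the left action on $\mC$ is Construction \ref{leftact}, the right action is visibly trivial, the colimit formula in (2) is Remark \ref{dghfgh}, and monoidal functoriality in $\mC$ is Remark \ref{remf}. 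So the entire content is the general case, in which $\mC$ has small colimits but $\ot_\mC$ need not preserve them; there the description of $\circ$ via composition of colimit-preserving endofunctors breaks down, and the binary composition product is no longer associative.

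For the general case I would use the author's theory of weakly and laxly (bi)tensored $\infty$-categories \cite{articles}, designed precisely to package monoidal-type data built from a tensor product that fails to preserve colimits. Concretely, I would first produce the underlying weak biaction of $\sSeq(\mC)$ on itself — a $\BM$-operad whose $n$-ary left operations are corepresented by the iterated composition products $\mO_1 \circ \cdots \circ \mO_n$ given by the plethystic colimit formula of Remark \ref{dghfgh}, and whose right operations are trivial — following the construction of the composition product \cite{https://doi.org/10.48550/arxiv.2104.03870}, but performing it over $\mS$ and only lax-naturally in the tensor of $\mC$; equivalently one straightens into $\mC$ the substitution structure of \cite[Definition 2.12.]{2005math.....10490C}, which uses only that $\mC$ admits the relevant colimits. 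To upgrade this weak biaction to a lax one, i.e. to show $\mM^\ot \to \BM^\ot$ is a locally cocartesian fibration, I would invoke the criterion in the remark preceding the statement: it suffices that the comparison maps $\theta$ are equivalences on the locus where every input equals the tensor unit $\triv$. This is immediate, since $\triv_\ell \simeq \tu$ only for $\ell = 1$ while $\tu^{\ot \ell}_\mC \simeq \tu_\mC$, so every iterated composition product of copies of $\triv$ is again $\triv$ and the full suboperad on $\triv$ is the trivial monoidal $\infty$-category. This proves (1) and (2).

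For (3), a symmetric monoidal functor $\phi \colon \mC \to \mD$ induces $\sSeq(\mC)^\ot \to \sSeq(\mD)^\ot$ exactly as in Remark \ref{remf}: applying $\phi$ levelwise is compatible with the colimit formulas of (2) up to the canonical colimit-comparison maps, so it is a map of non-symmetric $\infty$-operads, and those comparison maps are equivalences — whence a genuine monoidal functor — as soon as $\phi$ preserves small colimits. For (4), that $\mC$ embeds into the lax bitensoring of $\sSeq(\mC)$ over itself, with the left action restricting the composition product and the right action trivial, is essentially Construction \ref{leftact}: $\X \circ \iota(Y)$ lies in the essential image of $\iota \colon \mC \to \sSeq(\mC)$, which furnishes the left lax action on $\mC$, while there is no composition-product pairing $\mC \times \sSeq(\mC) \to \mC$ on the right, forcing triviality there; one then checks that these two structures are compatibly part of the $\BM$-operad $\mM^\ot$.

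The main obstacle is the coherence in the general case — assembling the colimit formulas of Remark \ref{dghfgh} into an honest locally cocartesian fibration over $\BM^\ot$, with all associativity and unit coherences, when $\ot_\mC$ does not preserve colimits and none of the standard "algebra object in $\Cat_\infty^{\rc\rc}$" machinery (which underlies the colimit-compatible case via $\Psi$ and \cite[Proposition 4.8.1.17.]{lurie.higheralgebra}) applies. Once that locally cocartesian fibration is in hand, the identification of its tensor with the plethystic colimit, the functoriality in $\mC$, and the triviality of the right action are all matters of unwinding the construction.
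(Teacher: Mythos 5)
Your proposal correctly isolates the crux --- assembling the plethystic colimit formula into a coherent locally cocartesian fibration when $\ot_\mC$ does not preserve colimits --- but it does not resolve it, and the one tool you offer for the upgrade from weak to lax cannot do the job. The remark preceding the statement characterizes when a non-symmetric $\infty$-operad \emph{that is already a locally cocartesian fibration} is lax monoidal: the $\theta$-criterion on unit inputs is a condition imposed on top of local cocartesianness (it says the full suboperad on $\triv$ is genuinely monoidal), so invoking it to ``show $\mM^\ot \to \BM^\ot$ is a locally cocartesian fibration'' is circular. Local cocartesianness is precisely the existence and coherence of all the iterated composition products, which is the thing you admit you do not know how to produce directly (``the main obstacle is the coherence in the general case\dots Once that locally cocartesian fibration is in hand\dots''). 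As written, the proofs of (1), (2) and (4) therefore rest on an unestablished construction.

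The paper avoids the direct construction entirely. By Proposition \ref{embe} there is a symmetric monoidal, initial-object-preserving, right adjoint embedding $\mC \subset \mC'$ into a symmetric monoidal $\infty$-category compatible with small colimits; one forms the honest composition-product monoidal $\infty$-category $\sSeq(\mC')^\ot \to \Ass$ there (where the $\Psi$-machinery applies), and defines $\sSeq(\mC)^\ot$ as the full suboperad spanned by $\sSeq(\mC)$. Since the fiberwise embedding $\sSeq(\mC) \subset \sSeq(\mC')$ admits a left adjoint $\L_*$, Lemma \ref{locos} makes the restriction a locally cocartesian fibration whose composition product is $\L_*(\mO_1 \circ \cdots \circ \mO_\ell)$; the colimit formula of (2), the unit statement, (4), and the functoriality of (3) then all follow by applying the colimit-preserving symmetric monoidal left adjoint $\L$ to the formula of Remark \ref{dghfgh}. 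This localization step is the missing idea in your argument; with it, the coherence problem you flag never has to be confronted head-on.
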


\begin{proof}
	
By Proposition \ref{embe} there is a right adjoint symmetric monoidal embedding $\mC \subset \mC'$ into a symmetric monoidal $\infty$-category compatible with small colimits that preserves the initial object.
So we can form the monoidal $\infty$-category $\sSeq(\mC')^\ot \to \Ass$
and define $\sSeq(\mC)^\ot \subset \sSeq(\mC')^\ot$ as the full non-symmetric suboperad spanned by $\sSeq(\mC).$
Since the embedding $\mC \subset \mC'$ admits a left adjoint $\L$,
the embedding $\sSeq(\mC) \subset \sSeq(\mC')$ admits a left adjoint $\L_*$.
Thus by \cite[Lemma 3.13.]{heine2024local} the restriction $\sSeq(\mC)^\ot \subset \sSeq(\mC')^\ot \to \Ass$ is a locally cocartesian fibration, where the composition product of $\mO_1 ,... , \mO_\ell$ is $\L_*(\mO_1 \circ ... \circ \mO_\ell)$.
So by Remark \ref{dghfgh} we obtain a canonical equivalence
$$\L_*(\mO_1 \circ ... \circ \mO_\ell)_\n \simeq \L((\mO_1 \circ ... \circ \mO_\ell)_\n) \simeq $$$$\colim_{\f \in \Fin^\ell_\n}\L((\mO_1)_{\n_1} \ot \bigotimes_{\bi \in \n_1}(\mO_2)_{ \f_1^{-1}(\bi)} \ot ... \ot \bigotimes_{\bi \in \n_{\ell-1}}(\mO_\ell)_{\f_{\ell-1}^{-1}(\bi)})
$$$$\simeq \colim_{\f \in \Fin^\ell_\n}((\mO_1)_{\n_1} \ot \bigotimes_{\bi \in \n_1}(\mO_2)_{ \f_1^{-1}(\bi)} \ot ... \ot \bigotimes_{\bi \in \n_{\ell-1}}(\mO_\ell)_{\f_{\ell-1}^{-1}(\bi)}).$$
Moreover the embedding $\sSeq(\mC) \subset \sSeq(\mC')$ preserves the tensor unit because the embedding $\mC \subset \mC'$ preserves the initial object.
This proves 1., 2. and 4. 

3.: The symmetric monoidal functor $\phi$ induces a small colimits preserving symmetric monoidal functor $\mC' \to \mD'$ between symmetric monoidal $\infty$-categories compatible with small colimits.
By Remark \ref{remf} there is an induced monoidal functor $\sSeq(\mC')^\ot \to \sSeq(\mD')^\ot $, which restricts to a map of non-symmetric $\infty$-operads $\sSeq(\mC)^\ot \to \sSeq(\mD)^\ot $.
The description of the composition product of 2. implies 3.
	
\end{proof}

%\begin{remark}A symmetric monoidal functor $ \phi: \mB \to \mC$ between symmetric monoidal $\infty$-categories having small colimits gives rise to a map $ (\sSeq(\mB))^\ot \to (\sSeq(\mC))^\ot $ of representable $\infty$-operads over $\Ass^\ot$ that is an embedding of $\infty$-operads over $\Ass^\ot$ if the functor $ \mB \to \mC$ is fully faithful (constr. \ref{laxx}).If $\phi$ preserves small colimits, the lax monoidal functor $ (\sSeq(\mB))^\ot \to (\sSeq(\mC))^\ot $ is monoidal by Remark \ref{monoidal}.\end{remark}

\begin{definition}
An oplax monoidal structure on an $\infty$-category is a lax monoidal structure on the opposite $\infty$-category. 	
	
\end{definition}

\begin{definition}Let $\mC$ be a symmetric monoidal $\infty$-category compatible with the final object that admits small limits.
By Proposition \ref{weak} applied to $\mC^\op$ there is a lax monoidal structure on the $\infty$-category $\sSeq(\mC^\op)$, i.e.
an oplax monoidal structure on $ \sSeq(\mC^\op)^\op \simeq \sSeq(\mC),$
which we call the cocomposition oplax monoidal structure.
We write $\X * \Y$ for the tensor product of $\X,\Y \in \sSeq(\mC)$ provided by the cocomposition oplax monoidal structure and call $\X * \Y$ the cocomposition product of $\X,\Y \in \sSeq(\mC)$.
So $\X * \Y$ is the composition product of $\X,\Y$ applied to $\mC^\op.$

\end{definition}

\subsection{$\infty$-operads and $\infty$-cooperads}

Next we use the composition product to define $\infty$-operads and $\infty$-cooperads in any symmetric monoidal $\infty$-category that admits small colimits.

\begin{definition}Let $\mC$ be a symmetric monoidal $\infty$-category having small colimits.
An $\infty$-operad in $\mC$ is an associative algebra in $\sSeq(\mC)$ with respect to the composition product.
We call an $\infty$-operad $\mO$ in $\mC$ non-unital if $\mO_0$ is initial in $\mC.$

\end{definition}

\begin{definition}
Let $\mC$ be a symmetric monoidal $\infty$-category $\mC$ having small limits. A (non-counital) $\infty$-cooperad in $\mC$ is a (non-unital) $\infty$-operad in $ \mC^\op $.
\end{definition}

\begin{notation}Let $\mC$ be a symmetric monoidal $\infty$-category $\mC$ having small colimits and $\mO$ an $\infty$-operad in $\mC$.
We write $\mO_1=\tu$ if the unit $\triv \to \mO$ induces an equivalence
$\tu = \triv_1 \simeq \mO_1$, and similar for $\infty$-cooperads.	
	
\end{notation}

\begin{notation}Let $\mC$ be a symmetric monoidal $\infty$-category $\mC$ having small colimits.

Let $$\Op(\mC) = \Alg(\sSeq(\mC))$$
be the $\infty$-category of $\infty$-operads in $\mC$.

Let $$\Op(\mC)^\nun \subset \Op(\mC)$$
be the $\infty$-category of non-unital $\infty$-operads in $\mC$.

Let $$\widetilde{\Op}(\mC) \subset \Op(\mC)^\nun$$
be the $\infty$-category of non-unital $\infty$-operads $\mO$ in $\mC$
such that $\mO_1 =\tu.$

For every $\infty$-operad $\mO \in \Op(\mC)$ we set $$\Alg_\mO(\mC):= \LMod_\mO(\mC),$$
where we use the left action of $\sSeq(\mC)$ on $\mC.$

\end{notation}

We fix the following dual notation:

\begin{notation}Let $\mC$ be a symmetric monoidal $\infty$-category $\mC$ having small limits.

Let $$\mathrm{coOp}(\mC):= \Op(\mC^\op)^\op$$
be the $\infty$-category of $\infty$-cooperads in $\mC$.

Let $$\mathrm{coOp}(\mC)^\nun \subset \mathrm{coOp}(\mC)$$
be the $\infty$-category of non-counital $\infty$-cooperads in $\mC$.

Let $$\widetilde{\mathrm{coOp}}(\mC) \subset \mathrm{coOp}(\mC)^\nun$$
be the $\infty$-category of non-counital $\infty$-cooperads $\mQ$ in $\mC$
such that $\mQ_1 =\tu.$

For every $\infty$-cooperad $\mQ \in \mathrm{coOp}(\mC)$ we set $$ \co\Alg_\mQ(\mC):= \Alg_\mQ(\mC^\op)^\op.$$
\end{notation}

\begin{remark}\label{monad}Let $\mC$ be a symmetric monoidal $\infty$-category compatible with small colimits.
Every $\infty$-operad in $\mC$ has an associated monad on $\mC:$	
the left action of the composition product on $\sSeq(\mC)$ on $\mC$ 
is the pullback of the endomorphism left action of $\Fun(\mC,\mC)$ on $\mC$ along
a unique monoidal functor $\sSeq(\mC) \to \Fun(\mC,\mC)$
sending an $\infty$-operad $\mO$ in $\mC$ to a monad $\T_\mO= \mO \circ (-)$ on $\mC.$
Hence $\Alg_\mO(\mC)$ identifies with $\LMod_{\T_\mO}(\mC)$,
the $\infty$-category of $\T_\mO$-algebras.	
Thus the monad associated to the free-forgetful adjunction $\mC \rightleftarrows \Alg_\mO(\mC)$ identifies with the monad associated to the free-forgetful adjunction $\mC \rightleftarrows \LMod_{\T_\mO}(\mC) $, which is $\T_\mO$
by \cite[Proposition 4.7.3.3. 2.]{lurie.higheralgebra}.

\end{remark}

\begin{proposition}\label{fghjjhbv}\label{dfghjvbn}
	
Let $\mC$ be a symmetric monoidal $\infty$-category compatible with the initial object that admits small colimits.

\begin{enumerate}
\item The $\infty$-category $\Op(\mC)$ admits an initial object lying over $\triv.$

\vspace{1mm}

\item If $\mC$ has a zero object, $\widetilde{\Op}(\mC)$ has a zero object
and the embedding $\widetilde{\Op}(\mC) \subset \Op(\mC)$ preserves the zero object.
\end{enumerate}

\end{proposition}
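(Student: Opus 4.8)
The plan is to show that in both parts the object in question is simply $\triv$, the unit of the composition product, and to verify its universal properties after enlarging $\mC$. For part~1, I would first invoke Proposition~\ref{embe} to pick a right adjoint symmetric monoidal embedding $\mC \subset \mC'$ into a symmetric monoidal $\infty$-category compatible with small colimits that preserves the initial object; by the proof of Proposition~\ref{weak} the lax monoidal $\infty$-category $\sSeq(\mC)^\ot$ is then the full non-symmetric suboperad of the genuinely monoidal $\infty$-category $\sSeq(\mC')^\ot$ spanned by $\sSeq(\mC)$, so that $\Op(\mC) = \Alg(\sSeq(\mC))$ is the full subcategory of $\Op(\mC') = \Alg(\sSeq(\mC'))$ on those operads whose underlying symmetric sequence lies in $\sSeq(\mC)$. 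Since $\mC \subset \mC'$ preserves the tensor unit and the initial object, $\triv$ lies in $\sSeq(\mC)$; and since the tensor unit, with its trivial associative algebra structure, is initial in the $\infty$-category of associative algebras in a monoidal $\infty$-category \cite{lurie.higheralgebra}, $\triv$ is initial in $\Op(\mC')$. An initial object lying in a full subcategory is initial there, so $\triv$ is initial in $\Op(\mC)$, and it manifestly lies over $\triv$.

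For part~2, assume $\mC$ has a zero object. Since $\mC \subset \mC'$ preserves the initial object and, being a right adjoint, preserves the terminal object, $\mC'$ also has a zero object $0'$, preserved by the embedding. Because $\triv_0$ is the initial object and $\triv_1 = \unit$, we have $\triv \in \widetilde{\Op}(\mC)$, and the inclusions $\widetilde{\Op}(\mC) \subset \widetilde{\Op}(\mC') \subset \Op(\mC')^\nun \subset \Op(\mC')$ are full; hence $\triv$, being initial in $\Op(\mC)$ by part~1, is initial in $\widetilde{\Op}(\mC)$. It remains to show $\triv$ is terminal in $\widetilde{\Op}(\mC)$.

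For the terminality I would exploit the arity-one functor. By Construction~\ref{leftact} the composition product restricts to $\sSeq(\mC')_{\geq 1}$ and $\Op(\mC')^\nun = \Alg(\sSeq(\mC')_{\geq 1})$. Let $\ev_1 \colon \sSeq(\mC')_{\geq 1} \to \mC'$ evaluate in degree $1$ and let $\delta_1 \colon \mC' \to \sSeq(\mC')_{\geq 1}$ place an object in degree $1$ and $0'$ in the remaining degrees. Using that the tensor product of $\mC'$ preserves colimits and $0'$ is a zero object, the colimit formula of Proposition~\ref{weak}(2) gives $(\mO \circ \mP)_1 \simeq \mO_1 \ot \mP_1$ and $\delta_1(\X) \circ \delta_1(\Y) \simeq \delta_1(\X \ot \Y)$, so that $\ev_1$ and $\delta_1$ are strong monoidal; moreover the constant functor at $0'$ is both initial and terminal in $\Fun(\B\Sigma_\n, \mC')$ for each $\n$, so $\delta_1 \dashv \ev_1 \dashv \delta_1$. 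Passing to associative algebras one obtains $\Alg(\ev_1) \dashv \Alg(\delta_1) \colon \Op(\mC')^\nun \rightleftarrows \Alg(\mC')$ with $\Alg(\delta_1)(\unit_{\Alg(\mC')}) \simeq \triv$, because $\delta_1$ is strong monoidal and $\delta_1(\unit_{\mC'}) \simeq \triv$. For any $\mO \in \widetilde{\Op}(\mC)$, applying $\Alg(\ev_1)$ to the operad unit $\triv \to \mO$ yields an algebra map $\unit_{\Alg(\mC')} \to \Alg(\ev_1)(\mO)$ whose underlying map is the equivalence $\unit_{\mC'} = \triv_1 \xrightarrow{\sim} \mO_1$, hence an equivalence; thus $\Alg(\ev_1)(\mO) \simeq \unit_{\Alg(\mC')}$. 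Therefore
\begin{align*}
\Map_{\Op(\mC)}(\mO,\triv) &\;\simeq\; \Map_{\Op(\mC')}(\mO,\triv) \;\simeq\; \Map_{\Alg(\mC')}\bigl(\Alg(\ev_1)(\mO),\,\unit_{\Alg(\mC')}\bigr)\\
&\;\simeq\; \Map_{\Alg(\mC')}\bigl(\unit_{\Alg(\mC')},\,\unit_{\Alg(\mC')}\bigr) \;\simeq\; \ast,
\end{align*}
the last equivalence because $\unit_{\Alg(\mC')}$ is initial in $\Alg(\mC')$. So $\triv$ is terminal, hence a zero object, in $\widetilde{\Op}(\mC)$; under the embedding $\widetilde{\Op}(\mC) \subset \Op(\mC)$ it is carried to $\triv$, which is the initial object of $\Op(\mC)$ from part~1 — this is the sense in which the embedding preserves the zero object.

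The step I expect to be the main obstacle is promoting the pointwise identifications to the assertions that $\ev_1$ and $\delta_1$ are strong monoidal functors of $\infty$-categories and that $\delta_1 \dashv \ev_1 \dashv \delta_1$ lifts to the adjunction on associative algebras. The monoidality of $\ev_1$ should reduce, via $\sSeq(\mC')_{\geq 1} \simeq \mC' \ot \sSeq(\mS)_{\geq 1}$, to the universal case $\mC' = \mS$, where $X \times \emptyset \simeq \emptyset$ collapses the higher composites; the adjunction $\ev_1 \dashv \delta_1$, by contrast, genuinely uses that $0'$ is a zero object of $\mC'$.
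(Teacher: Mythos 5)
Your part~1 is the paper's argument verbatim: reduce via the right adjoint symmetric monoidal embedding $\mC \subset \mC'$ to the cocomplete case, use that the initial object of $\Alg$ of a monoidal $\infty$-category lies over the tensor unit, and observe that an initial object lying in a full subcategory is initial there. For part~2 you take a genuinely different route. The paper does not compute mapping spaces: it splits the undercategory as $(\sSeq(\mC)_{\geq 1})_{\triv/} \simeq \mC_{\tu/} \times \sSeq(\mC)_{\geq 2}$, observes that the formula $(\X \circ \Y)_1 \simeq \X_1 \ot \Y_1$ makes the full subcategory $(\sSeq(\mC)_{\geq 1})_{\triv/}'$ (where $\tu \to \mO_1$ is an equivalence) closed under the composition product, identifies $\widetilde{\Op}(\mC) \simeq \Alg((\sSeq(\mC)_{\geq 1})_{\triv/}')$, and then invokes the general facts that the final object of the underlying category lifts to a final object of algebras and that the initial algebra lies over the unit, which here is a zero object because $(\sSeq(\mC)_{\geq 1})_{\triv/}' \simeq \sSeq(\mC)_{\geq 2}$. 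You instead exhibit $\triv$ as $\Alg(\delta_1)(\unit_{\Alg(\mC')})$ for an adjunction $\Alg(\ev_1) \dashv \Alg(\delta_1)$ and compute $\Map(\mO,\triv) \simeq \Map(\unit,\unit) \simeq \ast$ directly. Both arguments ultimately rest on the same arity-one computation, but the paper's version only needs the \emph{property} that a full subcategory is closed under the tensor product, whereas yours needs the \emph{structure} of a strong monoidal functor $\ev_1$ from the composition product to $\ot$ on $\mC'$, together with the induced adjunction on algebras — exactly the step you flag as the obstacle. That gap is real but fillable with the paper's own later machinery: Lemma \ref{fghfghkm} and Corollary \ref{cqop} with $\n=1$ exhibit $\ev_1$ as a monoidal localization onto $\sSeq(\mC')_{\geq 1}^{\leq 1}$, and the identification of the localized monoidal structure with $\ot$ on $\mC'$ is the same identification $\Op(\mC')^\nun_{\leq 1} = \Alg(\mC')$ the paper uses in Lemma \ref{dfghfghjfghj}; your adjunction is then precisely $(-)_{\leq 1} \dashv$ (extend by zero). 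What your approach buys is a concrete description of the reflection $\Op(\mC)^\nun \to \Alg(\mC)$ and an explicit reason why $\triv$ receives a unique map from any $\mO$ with $\mO_1 \simeq \tu$; what the paper's buys is that it never has to produce monoidal coherence data, only verify closure conditions, which is why it can be run at this early point in the text.
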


\begin{proof}
By Corollary \ref{cory} there is a symmetric monoidal embedding $\mC\subset \mC'$ into a symmetric monoidal $\infty$-category compatible with small colimits such that the embedding preserves initial and final objects.
The embedding $\mC \subset \mC'$ induces an embedding $\sSeq(\mC) \subset \sSeq(\mC')$ of non-symmetric $\infty$-operads that preserves the tensor unit.
Consequently, we can assume for 1. and 2. that the symmetric monoidal structure on $\mC$ is compatible with small colimits.

1.: Since the symmetric monoidal structure on $\mC$ is compatible with small colimits, the composition product on $\sSeq(\mC)$ defines a monoidal structure
whose tensor unit is $\triv.$
Hence by \cite[Proposition 3.2.1.8., Lemma 3.2.1.10.]{lurie.higheralgebra} the $\infty$-category $\Op(\mC)$ admits an initial object lying over $\triv.$

%In the general case $\mC$ embeds symmetric monoidally into a symmetric monoidal $\infty$-category $\mC'$ compatible with small colimits such that the embedding preserves the initial object (Corollary \ref{cory}).The embedding $\mC \subset \mC'$ induces an embedding $\sSeq(\mC) \subset \sSeq(\mC')$ of non-symmetric $\infty$-operads that preserves the tensor unit.By the first part $\Op(\mC')$ admits an initial object lying over $\triv \in \sSeq(\mC)$, which belongs to $\Op(\mC)$.

2.: Let $\Sigma_{ \geq 2}:= \coprod_{\n \geq 2} \B\Sigma_\n$ and $ \sSeq(\mC)_{\geq 2} := \Fun(\Sigma_{\geq 2}, \mC) \simeq \prod_{\n \geq 2} \Fun(\B\Sigma_\n, \mC). $
Let $$(\sSeq(\mC)_{\geq 1})_{\triv / }' \subset (\sSeq(\mC)_{\geq 1})_{\triv / } $$
be the full subcategory of symmetric sequences $\mO$ under $\triv$ concentrated in positive degrees such that the induced morphism $ \tu \simeq \triv_1\to \mO_1$ in $\mC$ is an equivalence.
There is an equivalence 
$(\sSeq(\mC)_{\geq 1})_{\triv / } \simeq \mC_{\tu / } \times \sSeq(\mC)_{\geq 2} $ that restricts to an equivalence
$ (\sSeq(\mC)_{\geq 1})_{\triv / }' \simeq \sSeq(\mC)_{\geq 2}. $ Under this equivalence $\triv$ corresponds to the initial object, which is the zero object since $\mC$ admits a zero object.
Thus the $\infty$-category $ (\sSeq(\mC)_{\geq 1})_{\triv / }'$ admits a zero object.

By 1. the $\infty$-category $\widetilde{\Op}(\mC)$ has an initial object, which is preserved by the forgetful functor
$ \widetilde{\Op}(\mC) \to   (\sSeq(\mC)_{\geq 1})_{\triv / }'$.
So it is enough to see that the latter functor reflects the final object.
For every $\X, \Y \in \sSeq(\mC)_{\geq 1}$ there is a natural equivalence $ (\X \circ \Y ) _1 \simeq \X_1 \ot \Y_1.$
So the monoidal structure on $(\sSeq(\mC)_{\geq 1})_{\triv / }$
induced by the composition product on $\sSeq(\mC)_{\geq 1}$
restricts to a monoidal structure on $(\sSeq(\mC)_{\geq 1})_{\triv / }'.$
Hence there is a canonical equivalence over $(\sSeq(\mC)_{\geq 1})_{\triv / }':$
$$ \Alg((\sSeq(\mC)_{\geq 1})_{\triv / }') \simeq (\sSeq(\mC)_{\geq 1})_{\triv / }' \times_{(\sSeq(\mC)_{\geq 1})_{\triv / }} \Alg((\sSeq(\mC)_{\geq 1})_{\triv / }) $$$$ \simeq \{ \id_\tu \} \times_{ \mC_{\tu / } } \Op^{\mathrm{}}(\mC)_{\triv / } \simeq\widetilde{\Op}(\mC).$$
So the result follows from the fact that final objects lift to algebras \cite{lurie.higheralgebra}.

\end{proof}

\begin{lemma}\label{leuma}
	
Let $\mC$ be a symmetric monoidal $\infty$-category compatible with the initial object such that $\mC$ has small colimits.
The forgetful functor $\Alg_\triv(\mC) \to \mC$ is an equivalence.

\end{lemma}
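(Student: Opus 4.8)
The substance of this lemma is that $\triv$ is the tensor unit of the composition product on $\sSeq(\mC)$ (Proposition \ref{weak}), so that an algebra over $\triv$ — that is, a left module over $\triv$ for the left action of $\sSeq(\mC)$ on $\mC$ from Construction \ref{leftact} — carries no genuine structure. The plan is to make this precise and then invoke the standard fact that left modules over the tensor unit recover the base $\infty$-category.

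First I would record the key computation: for every $\Y \in \mC$ there is a canonical equivalence $\triv \circ \iota(\Y) \simeq \iota(\Y)$, and more generally $\triv \circ \cdots \circ \triv \circ \iota(\Y) \simeq \iota(\Y)$ for every iterated composite. Indeed, by Construction \ref{leftact} we have $\triv \circ \iota(\Y) \simeq \iota(\coprod_{\bk \geq 0} \triv_\bk \ot_{\Sigma_\bk} \Y^{\ot \bk})$; since $\triv_\bk$ is the initial object for $\bk \neq 1$ and $\mC$ is compatible with the initial object, every summand with $\bk \neq 1$ is initial, while the $\bk = 1$ summand is $\tu \ot \Y \simeq \Y$, so the coproduct is $\Y$. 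Thus the endofunctor $\triv \circ (-)$ of $\mC$ is canonically $\id_\mC$, and — $\triv$ being the monoidal unit — with its identity monad structure. Consequently the only data entering a $\triv$-module structure on an object are built from these canonical equivalences, so the space of $\triv$-module structures on a fixed object is contractible; formally, the forgetful functor $\Alg_\triv(\mC) = \LMod_\triv(\mC) \to \mC$ is an equivalence because left modules over the tensor unit of a (lax) monoidal $\infty$-category acting on an $\infty$-category recover that $\infty$-category (\cite{lurie.higheralgebra}).

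If one prefers to avoid quoting the unit-module fact in this lax generality, I would instead reduce to the case where $\mC$ is compatible with small colimits: by Corollary \ref{cory} embed $\mC$ into such a $\mC'$ by a symmetric monoidal functor preserving initial and final objects, inducing a fully faithful map of non-symmetric $\infty$-operads $\sSeq(\mC)^\ot \subset \sSeq(\mC')^\ot$ preserving $\triv$ and compatible with the left actions. Since a $\triv$-module structure only involves iterated composites with $\triv$, which by the computation above are unchanged under this embedding, the canonical square relating $\Alg_\triv(\mC), \Alg_\triv(\mC'), \mC, \mC'$ is cartesian, so it suffices to treat $\mC'$. There $\sSeq(\mC')$ is genuinely monoidal with unit $\triv$, and by Remark \ref{monad} one has $\Alg_\triv(\mC') \simeq \LMod_{\triv \circ (-)}(\mC') \simeq \LMod_{\id_{\mC'}}(\mC')$, which is equivalent to $\mC'$ as left modules over the identity monad, compatibly with the forgetful functors. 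The main obstacle throughout is precisely this lax-monoidal bookkeeping — everything else is the observation that composing with $\triv$ does nothing.
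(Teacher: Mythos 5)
Your proposal is correct, and your fallback argument (embed $\mC$ into a $\mC'$ compatible with small colimits via Corollary \ref{cory}/Proposition \ref{embe}, note that the induced inclusion $\sSeq(\mC)^\ot \subset \sSeq(\mC')^\ot$ preserves the unit $\triv$, invoke the standard equivalence $\LMod_\triv(\mC') \simeq \mC'$ for a genuine monoidal $\infty$-category, and conclude by the pullback square) is exactly the paper's proof. Your first, more direct route via the computation $\triv \circ \iota(\Y) \simeq \iota(\Y)$ is a reasonable variant, but as you yourself note it leans on the unit-module fact in lax generality, which is precisely the bookkeeping the reduction to $\mC'$ is designed to avoid.
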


\begin{proof}
	
If $\mC$ has small colimits and the symmetric monoidal structure on $\mC$ is compatible with the initial object, by Proposition \ref{embe} there is a symmetric monoidal embedding $\mC \subset \mC'$ into a symmetric monoidal $\infty$-category compatible with small colimits such that the embedding $\mC \subset \mC'$ preserves the initial object.
Thus the induced embedding $\sSeq(\mC)^\ot \subset \sSeq(\mC')^\ot$ 
of non-symmetric $\infty$-operads preserves the tensor unit
$\triv.$ Since $ \sSeq(\mC')^\ot \to \Ass$ is a monoidal $\infty$-category,
the forgetful functor $\Alg_\triv(\mC') \to \mC'$ is an equivalence.
Because the embedding $\sSeq(\mC)^\ot \subset \sSeq(\mC')^\ot$ preserves the tensor unit, the forgetful functor $\Alg_\triv(\mC) \to \mC$ is the pullback of the latter forgetful functor. % and so is an equivalence.
	
\end{proof}

\begin{lemma}\label{lek}
	
Let $\mC$ be a symmetric monoidal $\infty$-category that admits small colimits
and $\mO$ an $\infty$-operad in $\mC$.

\begin{enumerate}
\item If $\mC$ admits a final object, $\Alg_\mO(\mC)$ admits a final object and
the forgetful functor $\Alg_\mO(\mC)\to \mC$ preserves the final object.

\item If $\mC$ is compatible with the initial object, $\Alg_\mO(\mC)$ has an initial object that lies over $\mO_0.$

\item If $\mC$ is compatible with the initial object and $\mO_0$ is a final object of $\mC$, then $\Alg_\mO(\mC)$ has a zero object that lies over the final object.
\end{enumerate}

\end{lemma}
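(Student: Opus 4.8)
The plan is to derive all three parts from the monadicity of the forgetful functor $\U\colon \Alg_\mO(\mC) = \LMod_\mO(\mC) \to \mC$, after a reduction to the case where the symmetric monoidal structure on $\mC$ is compatible with small colimits. For the reduction I would proceed as in the proofs of Lemma \ref{leuma} and Proposition \ref{fghjjhbv}: apply Proposition \ref{embe} (in case (2)) or Corollary \ref{cory} (in cases (1) and (3), where $\mC$ additionally has a final object) to obtain a symmetric monoidal embedding $\mC \subset \mC'$ into a symmetric monoidal $\infty$-category compatible with small colimits, which preserves the initial object, preserves the final object whenever $\mC$ has one, and admits a left adjoint. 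As in Proposition \ref{weak} this induces an embedding $\sSeq(\mC)^\ot \subset \sSeq(\mC')^\ot$ of non-symmetric $\infty$-operads preserving $\triv$, and hence identifies $\Alg_\mO(\mC)$ with the pullback $\Alg_\mO(\mC') \times_{\mC'} \mC$ compatibly with the forgetful functors to $\mC$, respectively $\mC'$; since $\mC \subset \mC'$ preserves the relevant initial and final objects, each of (1)--(3) for $\mC$ reduces to the corresponding statement for $\mC'$. So I may assume $\mC$ is compatible with small colimits. Then $\mO$ is an associative algebra in the monoidal $\infty$-category $\sSeq(\mC)$, it gives the monad $\T_\mO = \mO \circ (-)$ on $\mC$ with $\Alg_\mO(\mC) \simeq \LMod_{\T_\mO}(\mC)$ (Remark \ref{monad}), and $\U$ is the monadic forgetful functor: it is conservative, creates small limits, and admits the left adjoint $\X \mapsto \T_\mO(\X)$ by \cite[\S 4.2.3]{lurie.higheralgebra}.

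Part (1) is then immediate, since a final object is the limit of the empty diagram, which $\U$ creates. For part (2), the left adjoint $\T_\mO$ of $\U$ preserves initial objects, so the initial $\mO$-algebra is the free $\mO$-algebra on the initial object $0_\mC$ of $\mC$; by Remark \ref{dghfgh} and the compatibility of $\ot$ with the initial object, $\T_\mO(0_\mC) \simeq \coprod_{\bk \geq 0} \mO_\bk \ot_{\Sigma_\bk} 0_\mC^{\ot \bk} \simeq \mO_0$, as $0_\mC^{\ot \bk}$ is initial for $\bk \geq 1$ and a coproduct with copies of the initial object is unchanged. Hence the initial $\mO$-algebra lies over $\mO_0$.

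For part (3) I would combine (1) and (2): $\Alg_\mO(\mC)$ has an initial object $\I$ lying over $\mO_0$ and a final object $\F$ lying over the final object of $\mC$. By hypothesis the final object of $\mC$ is $\mO_0$, so the unique morphism $\I \to \F$ is carried by $\U$ to an endomorphism of the final object $\mO_0$ of $\mC$, hence to its identity; since $\U$ is conservative, $\I \to \F$ is an equivalence, so $\I \simeq \F$ is a zero object of $\Alg_\mO(\mC)$ lying over the final object of $\mC$. The one point in this plan that needs genuine care is the reduction step --- checking that $\Alg_\mO(\mC)$ is the pullback $\Alg_\mO(\mC') \times_{\mC'} \mC$ for arbitrary $\mO$ (Lemma \ref{leuma} records only the case $\mO = \triv$) and that this identification respects the forgetful functors and the initial and final objects, through the localization of $\sSeq(\mC')$ onto $\sSeq(\mC)$ from Proposition \ref{weak}; once this is in place, (1)--(3) follow formally.
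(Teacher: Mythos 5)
Your proof is correct and follows essentially the same route as the paper's: reduce to the case where the symmetric monoidal structure is compatible with small colimits via a limit-preserving symmetric monoidal embedding $\mC \subset \mC'$ (identifying $\Alg_\mO(\mC)$ with the full subcategory of $\Alg_\mO(\mC')$ of algebra structures on objects of $\mC$), then obtain the initial object as the free algebra $\mO \circ 0 \simeq \mO_0$, the final object from the fact that the forgetful functor creates the limits that $\mC$ has, and (3) formally from (1) and (2). Your explicit verification that the map $\I \to \F$ is an equivalence via conservativity just spells out the paper's one-line "3. follows from 1. and 2."
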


\begin{proof}
	
3. follows from 1. and 2. If $\mC$ is compatible with small colimits, the composition product on $\sSeq(\mC)$ defines a monoidal structure. So the forgetful functor
$\Alg_{\mO}(\mC) \to \mC$ admits a left adjoint sending $\X$ to
$\mO \circ \X \simeq \coprod_{\n \geq 0} (\mO_\n \ot \X^{\ot \n})_{\Sigma_\n}.$
Hence $\Alg_{\mO}(\mC)$ admits an initial object that lies over
$\mO_0.$
Moreover if $\mC$ has a final object, $\Alg_\mO(\mC)=\LMod_\mO(\mC)$ as a final object preserved by the forgetful functor to $\mC.$

In the general case we embed $\mC$ symmetric monoidally and small limits preserving into a symmetric monoidal $\infty$-category $\mC'$ compatible with small colimits to obtain an embedding $\Alg_{\mO}(\mC) \subset \Alg_{\mO}(\mC')$
whose essential image are the $\mO$-algebra structures on objects of $\mC.$
So the result follows.	
	
\end{proof}

\begin{remark}\label{imp}
	
Let $\mC $ be a symmetric monoidal $\infty$-category compatible with small colimits and $\mO$ an $\infty$-operad in $\mC.$

\begin{enumerate}
\item The $\infty$-category $\Alg_\mO(\mC)$ admits small sifted colimits, which are preserved by the forgetful functor to $\mC.$

\item The $\infty$-category $\Alg_\mO(\mC)$ admits those limits that $\mC$ admits and these limits are preserved by the forgetful functor to $\mC.$

\end{enumerate}
\end{remark}

\begin{proof}
	
1. follows from \cite[Corollary 4.2.3.3.]{lurie.higheralgebra} and
2. follows from \cite[Corollary 4.2.3.5.]{lurie.higheralgebra} since $\Alg_\mO(\mC)$ is the $\infty$-category of left $\mO$-modules with respect to the left action of the monoidal $\infty$-category $\sSeq(\mC)$ on $\mC$, where the left action functor $\mO \circ (-): \mC \to \mC$ preserves small sifted colimits.
	
\end{proof}

\begin{lemma}\label{gghjjgh}

Let $\mC$ be a preadditive symmetric monoidal $\infty$-category compatible with small colimits that has small limits.
The identity of $\sSeq(\mC)$ lifts to an oplax monoidal functor from
the composition product on $\sSeq(\mC)$ to the cocomposition product on $\sSeq(\mC).$
This oplax monoidal functor restricts to a monoidal equivalence
on $\sSeq(\mC)_{\geq 1}.$

\end{lemma}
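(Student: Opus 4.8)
The plan is to establish the two assertions in turn: the existence of the oplax monoidal structure on the identity of $\sSeq(\mC)$ taking $\circ$ to $*$, and the fact that this structure becomes invertible on $\sSeq(\mC)_{\geq1}$. First a remark that makes the statement meaningful: since $\mC$ is preadditive and compatible with small colimits it is also compatible with the final object (the zero object is initial and final and $\X\ot-$ preserves the initial object, so $\X\ot 0\simeq 0$), so the cocomposition oplax monoidal structure is defined; by Proposition \ref{weak} applied to $\mC$ and to $\mC^\op$ both $\circ$ and $*$ are (lax) monoidal structures on $\sSeq(\mC)$ with unit $\triv$, and by Construction \ref{leftact} and its dual both restrict to $\sSeq(\mC)_{\geq1}$.

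For the oplax functor I would use that, by Proposition \ref{weak}(2), both $(\mO_1\circ\cdots\circ\mO_\ell)_\n$ and $(\mO_1*\cdots*\mO_\ell)_\n$ are computed by one and the same functor $F\colon\Fin^\ell_\n\to\mC$, the former as $\colim_{\Fin^\ell_\n}F$ and the latter as $\lim_{\Fin^\ell_\n}F$; the structure maps of the oplax monoidal functor are the canonical comparisons $\colim_{\Fin^\ell_\n}F\to\lim_{\Fin^\ell_\n}F$, which exist because $\mC$ is preadditive (componentwise they are assembled, after decomposing the groupoid $\Fin^\ell_\n$ into its components, from the norm maps of the symmetric-group actions appearing). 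The main obstacle I anticipate is precisely promoting these pointwise comparisons to a single coherent oplax monoidal functor rather than a mere natural transformation of multiplication functors; I would organize this inside the non-symmetric $\infty$-operad picture of Proposition \ref{weak}, reducing (via the embedding already used in the excerpt, Corollary \ref{cory}) to a $\mC$ for which the composition-product construction is genuinely monoidal and the universal property of the lemma preceding its definition applies, so that the identity functor is an object of both $\Fun^{\ot,\L}_{\mC/}(\sSeq(\mC),\sSeq(\mC))$ and of its limit-preserving analogue obtained from $\mC^\op$, and the oplax datum becomes a morphism in an appropriate functor $\infty$-category.

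For the restriction to $\sSeq(\mC)_{\geq1}$, take $\mO_1,\dots,\mO_\ell\in\sSeq(\mC)_{\geq1}$. Then $F$ takes the value $0$ on every chain $\n\to\n_{\ell-1}\to\cdots\to\n_1$ in $\Fin^\ell_\n$ in which some map is not surjective, because a fibre is then empty and $(\mO_i)_\emptyset\simeq 0$ (each $\mO_i$ is concentrated in positive degrees) annihilates the tensor; since $\Fin^\ell_\n$ is a groupoid, both $\colim_{\Fin^\ell_\n}F$ and $\lim_{\Fin^\ell_\n}F$ agree with the corresponding colimit and limit over the full subgroupoid of chains of surjections. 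That subgroupoid is finite — up to isomorphism such chains are iterated set partitions of the fixed finite set $\n$ — and discrete: an automorphism over $\n$ is a compatible tuple of bijections $(\sigma_i)$ with $\sigma_{\ell-1}\circ\f_{\ell-1}=\f_{\ell-1}$, and surjectivity of $\f_{\ell-1}$ forces $\sigma_{\ell-1}=\mathrm{id}$, hence inductively all $\sigma_i=\mathrm{id}$. Therefore $(\mO_1\circ\cdots\circ\mO_\ell)_\n$ is a finite coproduct $\coprod_{[\f]}F(\f)$ and $(\mO_1*\cdots*\mO_\ell)_\n$ the finite product $\prod_{[\f]}F(\f)$ over the same index set, and the canonical map between them is an equivalence because $\mC$ is preadditive; unwinding the construction, this is exactly the $\n$-component of the $\ell$-fold oplax structure map, naturally in $\n$ and compatibly with the remaining operadic data. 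Hence on $\sSeq(\mC)_{\geq1}$ the oplax monoidal functor has invertible structure maps and identity underlying functor, i.e. it is a monoidal equivalence.
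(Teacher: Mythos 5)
Your proposal is correct and follows essentially the same route as the paper: reduce via the embedding of Corollary \ref{cory} to a setting where the composition product is genuinely monoidal, identify the oplax structure maps degreewise as the canonical colimit-to-limit comparisons over $\Fin^\ell_\n$, and use preadditivity together with the vanishing of the degree-$0$ terms to see these are equivalences on $\sSeq(\mC)_{\geq 1}$. The only (cosmetic) difference is in the last step: the paper factors the comparison as a norm map for a free $\Sigma_\bk$-action followed by a finite-coproduct-to-finite-product map, whereas you observe directly that the surviving subgroupoid of surjection chains is finite and discrete — these are the same observation in different packaging.
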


\begin{proof}
Because $\mC$ admits small colimits and small limits, there are non-symmetric $\infty$-operads $\sSeq(\mC)^\ot \to \Ass, \sSeq(\mC^\op)^\ot \to \Ass$. Since the symmetric monoidal structure on $\mC$ is compatible with small colimits, $\sSeq(\mC)^\ot \to \Ass$ is a monoidal $\infty$-category.
Let $ (\sSeq(\mC)^\op)^\ot \to \Ass$ be the opposite monoidal $\infty$-category of $ \sSeq(\mC)^\ot \to \Ass$.
We like to see that there is a map of non-symmetric $\infty$-operads
$ (\sSeq(\mC)^\op)^\ot \to \sSeq(\mC^\op)^\ot$ inducing the identity on underlying $\infty$-categories that restricts to an equivalence
$(\sSeq(\mC)^\op_{\geq 1})^\ot\to \sSeq(\mC^\op)_{\geq 1}^\ot$.
By Corollary \ref{cory} we find that $\mC^\op$ embeds symmetric monoidally into a preadditive symmetric monoidal $\infty$-category
$\mC'$ compatible with small colimits such that the embedding $\mC^\op \subset \mC'$ preserves small limits.
The embedding  $\mC^\op \subset \mC'$ yields an embedding of non-symmetric $\infty$-operads $\sSeq(\mC^\op)^\ot \subset \sSeq(\mC')^\ot$.
There is a map of non-symmetric $\infty$-operads
$ (\sSeq(\mC)^\op)^\ot \to \sSeq(\mC^\op)^\ot$ inducing the identity on underlying $\infty$-categories that restricts to an equivalence
$(\sSeq(\mC)^\op_{\geq 1})^\ot\to \sSeq(\mC^\op)_{\geq 1}^\ot$
if there is a lax monoidal functor
$\iota: (\sSeq(\mC)^\op)^\ot \to \sSeq(\mC')^\ot$ that induces the embedding $\sSeq(\mC)^\op \simeq \sSeq(\mC^\op) \subset \sSeq(\mC')$ on underlying $\infty$-categories and restricts to a monoidal functor
$(\sSeq(\mC)^\op_{\geq 1})^\ot \to \sSeq(\mC')_{\geq 1}^\ot.$

For $\X, \Y \in \sSeq(\mC) $ the structure morphism $ \iota(\X) \ot \iota(\Y) \to \iota(\X \ot \Y) $ in $\sSeq(\mC')$ of $\iota$ induces in degree $\n \in \Sigma$ the canonical morphism 
$$ \alpha: \underset{\bk \in \bN}{\coprod} ( \underset{\underset{ \n_\bi \cap \n_\bj = \emptyset, \bi \neq \bj }{\n_1 \coprod ... \coprod \n_\bk = \n} }{\coprod} \X_\bk \ot( \bigotimes_{1 \leq \bj \leq \bk } \Y_{\n_\bj }))_{\Sigma_\bk}  \to 
\underset{\bk \in \bN}{\prod} ( \underset{\underset{ \n_\bi \cap \n_\bj = \emptyset, \bi \neq \bj }{\n_1 \coprod ... \coprod \n_\bk = \n} }{\prod} \X_\bk \ot( \bigotimes_{1 \leq \bj \leq \bk } \Y_{\n_\bj }))^{\Sigma_\bk} $$ 
induced by the norm map, where colimits and limits are taken in $\mC'.$
We will show that $\alpha$ is an equivalence if $ \Y_0 $ is initial. 

We show that for any $\bk, \n \in \Sigma$ and $\X, \Y \in \sSeq(\mC) $ such that $ \Y_0 $ is initial both maps
$$\phi: ( \underset{\underset{ \n_\bi \cap \n_\bj = \emptyset, \bi \neq \bj }{\n_1 \coprod ... \coprod \n_\bk = \n} }{\coprod} \X_\bk \ot( \bigotimes_{1 \leq \bj \leq \bk } \Y_{\n_\bj }))_{\Sigma_\bk}  \xrightarrow{ } ( \underset{\underset{ \n_\bi \cap \n_\bj = \emptyset, \bi \neq \bj }{\n_1 \coprod ... \coprod \n_\bk = \n} }{\coprod} \X_\bk \ot( \bigotimes_{1 \leq \bj \leq \bk } \Y_{\n_\bj }))^{\Sigma_\bk},  $$$$
\phi': (\underset{\underset{ \n_\bi \cap \n_\bj = \emptyset, \bi \neq \bj }{\n_1 \coprod ... \coprod \n_\bk = \n} }{\coprod} \X_\bk \ot( \bigotimes_{1 \leq \bj \leq \bk } \Y_{\n_\bj }))^{\Sigma_\bk}  \xrightarrow{ }
( \underset{\underset{ \n_\bi \cap \n_\bj = \emptyset, \bi \neq \bj }{\n_1 \coprod ... \coprod \n_\bk = \n} }{\prod} \X_\bk \ot( \bigotimes_{1 \leq \bj \leq \bk } \Y_{\n_\bj }))^{\Sigma_\bk} $$
are equivalences.

Let $\W$ be the set of $\bk$-tuples $(\n_1, ..., \n_\bk) $ of finite pairwise disjoint sets such that $ \n_1 \coprod ... \coprod \n_\bk = \n$. Let $\W' \subset \W$ be the subset of $\bk$-tuples $(\n_1, ..., \n_\bk) $ such that $ \n_1, ..., \n_\bk$ are not empty.
Since $\W'$ is finite, $\phi'$ is an equivalence by preadditivity. 
To prove that $\phi$ is an equivalence, it is enough to check that the canonical $\Sigma_\bk$-action on $ \underset{\underset{ \n_\bi \cap \n_\bj = \emptyset, \bi \neq \bj }{\n_1 \coprod ... \coprod \n_\bk = \n} }{\coprod}  \bigotimes_{1 \leq \bj \leq \bk } \Y_{\n_\bj } $ 
is free because the preadditivity of $\mC$ guarantees that free and cofree $\Sigma_\bk$-actions coincide.

The set $\W$ carries a canonical $\Sigma_\bk$-action such that the canonical embedding $ \W \subset \Sigma^{\times \bk}$ is $\Sigma_\bk$-equivariant, where $\Sigma^{\times \bk}$ carries the permutation action.
The $\Sigma_\bk$-action on $\W$ restricts to a free $\Sigma_\bk$-action on $\W'$
since the $\Sigma_\bk$-action on $\W'$ does not have fixed points.
Composing with the $\Sigma_\bk$-equivariant functor $ \Sigma^{\times \bk} \xrightarrow{\Y^{\times \bk}} \mC'^{\times \bk} \to \mC'$
the map $\W' \to \W$ is $\Sigma_\bk$-equivariant as functor over $\mC'$,
where $\W'$ is free as category over $\mC'$. Taking colimits
we obtain the following $\Sigma_\bk$-equivariant equivalence, where the left hand side is free:
$$ \underset{\underset{ \n_\bi \cap \n_\bj = \emptyset, \bi \neq \bj,  \n_\bi \neq \emptyset }{\n_1 \coprod ... \coprod \n_\bk = \n} }{\coprod}  \bigotimes_{1 \leq \bj \leq \bk } \Y_{\n_\bj } \simeq \underset{\underset{ \n_\bi \cap \n_\bj = \emptyset, \bi \neq \bj }{\n_1 \coprod ... \coprod \n_\bk = \n} }{\coprod}  \bigotimes_{1 \leq \bj \leq \bk } \Y_{\n_\bj }.$$

\end{proof}

\begin{corollary}Let $\mC$ be a preadditive symmetric monoidal $\infty$-category compatible with small colimits that admits small limits.
There is a canonical functor
$$ \co\Alg(\sSeq(\mC)) \simeq \mathrm{coOp}(\mC)$$
compatible with the forgetful functors to $\sSeq(\mC)$ 
that restricts to an equivalence
$$ \co\Alg(\sSeq(\mC)_{\geq 1}) \simeq \mathrm{coOp}(\mC)^\nun. $$

\end{corollary}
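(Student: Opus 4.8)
The plan is to deduce the corollary formally from Lemma \ref{gghjjgh} by keeping careful track of opposite $\infty$-categories. Recall first that, by definition, $\coOp(\mC) = \Op(\mC^\op)^\op = \Alg(\sSeq(\mC^\op))^\op$, where $\sSeq(\mC^\op)$ carries the composition product — a lax monoidal structure by Proposition \ref{weak} — and that the cocomposition oplax monoidal structure $*$ on $\sSeq(\mC)$ is, by definition, this same lax monoidal structure viewed on $\sSeq(\mC)^\op \simeq \sSeq(\mC^\op)$. Unwinding the definition of coalgebras in an oplax monoidal $\infty$-category (namely $\coAlg(\mathcal{W}) := \Alg(\mathcal{W}^\op)^\op$, consistent with the paper's convention for $\coAlg_\mQ$) one obtains the tautological identification $\coAlg(\sSeq(\mC),*) = \Alg(\sSeq(\mC^\op))^\op = \coOp(\mC)$. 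Moreover, since $\mC$ is preadditive and compatible with small colimits, its tensor product preserves the zero object in each variable, so Construction \ref{leftact} applies to $\mC$ and to $\mC^\op$ alike: the fully faithful embeddings $\sSeq(\mC)_{\geq 1} \subset \sSeq(\mC)$ and $\sSeq(\mC^\op)_{\geq 1} \subset \sSeq(\mC^\op)$ are monoidal for the composition product. Hence $\Alg(\sSeq(\mC^\op)_{\geq 1})$ is the full subcategory of $\Op(\mC^\op)$ on those $\infty$-operads $\mO$ with $\mO_0$ initial, i.e.\ $\Op(\mC^\op)^\nun$, and after taking opposites $\coAlg(\sSeq(\mC)_{\geq 1},*) = \coOp(\mC)^\nun$.

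Next I would invoke Lemma \ref{gghjjgh} in the explicit form given by its proof: the identity of $\sSeq(\mC)$ underlies a map of non-symmetric $\infty$-operads $\lambda \colon (\sSeq(\mC)^\op)^\ot \to \sSeq(\mC^\op)^\ot$ over $\Ass$ — where $(\sSeq(\mC)^\op)^\ot$ denotes the opposite monoidal $\infty$-category of the composition product — which is the identity on underlying $\infty$-categories and restricts to an equivalence of non-symmetric $\infty$-operads $(\sSeq(\mC)_{\geq 1}^\op)^\ot \xrightarrow{\sim} \sSeq(\mC^\op)_{\geq 1}^\ot$. A map of non-symmetric $\infty$-operads induces a functor on $\infty$-categories of algebras, and an equivalence of such induces an equivalence \cite{lurie.higheralgebra}; applying $\Alg(-)$ to $\lambda$ and passing to opposite $\infty$-categories — using $\Alg((\sSeq(\mC)^\op)^\ot) = \coAlg(\sSeq(\mC))^\op$ — produces the sought functor \[ \coAlg(\sSeq(\mC),\circ) \longrightarrow \coAlg(\sSeq(\mC),*) = \coOp(\mC), \] which by the last sentence of Lemma \ref{gghjjgh} restricts along $\sSeq(\mC)_{\geq 1} \subset \sSeq(\mC)$ to an equivalence $\coAlg(\sSeq(\mC)_{\geq 1},\circ) \xrightarrow{\sim} \coAlg(\sSeq(\mC)_{\geq 1},*) = \coOp(\mC)^\nun$. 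Compatibility with the forgetful functors to $\sSeq(\mC)$ is immediate: since $\lambda$ is the identity on underlying $\infty$-categories, the induced functor on algebras commutes with the forgetful functors to $\sSeq(\mC)^\op$, hence after taking opposites the functor on coalgebras commutes with those to $\sSeq(\mC)$.

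All of the analytic content — in particular the fact that, over symmetric sequences concentrated in positive degrees, the norm maps comparing the composition and cocomposition products are equivalences — is already absorbed into Lemma \ref{gghjjgh}, so the corollary is a purely formal consequence of it. I expect the only thing requiring real care to be the handedness bookkeeping: separating the opposite category $\mC^\op$, the opposite of the composition monoidal structure, and the opposite implicit in passing from algebras to coalgebras, together with the (standard) identification of non-unital $\infty$-operads with algebras in symmetric sequences concentrated in positive degrees. There is no genuine obstacle beyond that.
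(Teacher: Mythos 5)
Your proof is correct and follows essentially the same route as the paper: the paper's own proof is a one-line appeal to Lemma \ref{gghjjgh}, applying the (op)lax monoidal functor from the composition to the cocomposition product to pass to coalgebras and unwinding the definition $\coOp(\mC)=\Alg(\sSeq(\mC^\op))^\op$. Your additional bookkeeping of the various opposites and the identification of $\Alg(\sSeq(\mC^\op)_{\geq 1})^\op$ with $\coOp(\mC)^\nun$ is exactly the unwinding the paper leaves implicit.
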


\begin{proof}By the last lemma we obtain a forgetful functor $$ \co\Alg(\sSeq(\mC) ) \to \coOp(\mC)=\Alg(\sSeq(\mC^\op))^\op $$ over $ \sSeq(\mC)$ that restricts to an equivalence $$ \co\Alg(\sSeq(\mC)_{\geq 1}) \simeq  \mathrm{CoOp}(\mC)^\nun=\Alg(\sSeq(\mC^\op)_{\geq 1})^\op.$$
\end{proof}

\begin{notation}Let $\mC$ be a preadditive symmetric monoidal $\infty$-category compatible with small colimits that admits small limits.
For any non-counital $\infty$-cooperad $\mQ \in \mathrm{CoOp}(\mC)^\nun \subset \co\Alg(\sSeq(\mC))$ we set $$\co\Alg^{\mathrm{dp}, \conil}_\mQ(\mC):= \co\LMod_\mQ(\mC), $$ where we form comodules in $\mC$ with respect to the left action of $\sSeq(\mC)$ on $\mC$.
\end{notation}

\begin{corollary}\label{forh}

Let $\mC$ be a preadditive symmetric monoidal $\infty$-category compatible with small colimits that admits small limits.
For any non-counital $\infty$-cooperad $\mQ \in \mathrm{CoOp}(\mC)^\nun \subset \co\Alg(\sSeq(\mC))$ there is a forgetful functor over $\mC:$
$$ \co\Alg^{\mathrm{dp}, \conil}_{\mQ}(\mC) \to \co\Alg_\mQ( \mC).$$

\end{corollary}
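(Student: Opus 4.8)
The plan is to realize both $\infty$-categories as $\infty$-categories of $\mQ$-comodules in $\mC$ — the source with respect to the composition product, the target with respect to the cocomposition product — and then to produce the comparison functor from the oplax monoidal functor of Lemma \ref{gghjjgh}.

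First I would rewrite the target. By definition the cocomposition product $(\sSeq(\mC), *)$ is the opposite, in the morphism-reversing sense, of the lax monoidal $\infty$-category $(\sSeq(\mC^\op), \circ)$, and the weak left action of $\sSeq(\mC^\op)$ on $\mC^\op$ furnished by Proposition \ref{weak} dualizes to a weak left action of $(\sSeq(\mC), *)$ on $\mC$. Correspondingly $\coAlg((\sSeq(\mC), *)) = \Alg(\sSeq(\mC^\op))^\op = \coOp(\mC)$, and under this identification the $\infty$-cooperad $\mQ \in \coOp(\mC)^\nun$ is exactly the coalgebra in $(\sSeq(\mC), *)$ underlying $\mQ \in \coAlg(\sSeq(\mC))$ along the forgetful functor $\coAlg(\sSeq(\mC)) \to \coOp(\mC)$ constructed above. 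Hence
$$ \coAlg_\mQ(\mC) = \Alg_\mQ(\mC^\op)^\op = \LMod_\mQ(\mC^\op)^\op \simeq \coLMod_\mQ(\mC), $$
the comodules being formed for the cocomposition product, while $\coAlg^{\mathrm{dp}, \conil}_\mQ(\mC) = \coLMod_\mQ(\mC)$ is by definition the analogous comodule $\infty$-category for the composition product and the same coalgebra $\mQ$.

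Next I would promote the oplax monoidal functor $\id_{\sSeq(\mC)}: (\sSeq(\mC), \circ) \to (\sSeq(\mC), *)$ of Lemma \ref{gghjjgh} to one compatible with the module categories, i.e. I would equip $\id_\mC: \mC \to \mC$ with an oplax-linear structure over it. This can be done by the method of the proof of Lemma \ref{gghjjgh} — embedding $\mC$ and $\mC^\op$ into symmetric monoidal $\infty$-categories compatible with small colimits as in Corollary \ref{cory} and restricting to full suboperads — now also carrying along the weak biactions on $\mC$ provided by Proposition \ref{weak}; the oplax-linear coherence on an object $X \in \mC$ is the norm map
$$ \mO \circ \iota(X) \simeq \iota\Big(\coprod_{k \geq 0}(\mO_k \ot X^{\ot k})_{\Sigma_k}\Big) \longrightarrow \iota\Big(\prod_{k \geq 0}(\mO_k \ot X^{\ot k})^{\Sigma_k}\Big) \simeq \mO * \iota(X). $$
Unlike the comparison map of Lemma \ref{gghjjgh}, whose indexing groupoid carries a free symmetric-group action, this norm map is in general not an equivalence — which is precisely why one obtains only a forgetful functor.

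Finally, an oplax monoidal functor together with an oplax-linear functor of module categories induces a functor on comodule $\infty$-categories lying over the induced functor on coalgebras: a composition-product $\mQ$-comodule $X \to \mQ \circ X$ is carried to the composite $X \to \mQ \circ X \to \mQ * X$, a cocomposition-product $\mQ$-comodule. The induced functor on coalgebras is, under the identifications of the first step, the forgetful functor $\coAlg(\sSeq(\mC)) \to \coOp(\mC)$, which fixes $\mQ$; evaluating at $\mQ$ therefore gives
$$ \coAlg^{\mathrm{dp}, \conil}_\mQ(\mC) = \coLMod_\mQ(\mC) \longrightarrow \coLMod_\mQ(\mC) \simeq \coAlg_\mQ(\mC), $$
a functor over $\mC$ because the module-level functor is $\id_\mC$. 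The main obstacle is the compatibility of Lemma \ref{gghjjgh}'s oplax monoidal functor with the module actions — threading the weak biactions of Proposition \ref{weak} through that embedding argument — together with invoking, in the appropriate generality of locally coCartesian $\LM$-fibrations rather than honest left-tensored $\infty$-categories, the formal fact that such oplax data act functorially on comodule $\infty$-categories.
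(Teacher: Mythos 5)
Your proposal is correct and takes essentially the same route as the paper: the paper's proof is a one-line appeal to Lemma \ref{gghjjgh}, deducing a forgetful functor from comodules for the composition-product action to comodules for the (co)composition-product action on $\mC^\op$. Your write-up supplies the details the paper leaves implicit — in particular the oplax-linear structure on $\id_\mC$ given by the norm maps $\mQ \circ \iota(X) \to \mQ * \iota(X)$, and the correct observation that these fail to be equivalences (since $\iota(X)_0 = X$ is not initial, unlike in Lemma \ref{gghjjgh}), which is exactly why one gets only a forgetful functor rather than an equivalence.
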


\begin{proof}By the last lemma we obtain a forgetful functor over $\mC:$
$$ \co\Alg^{\mathrm{dp}, \conil}_{\mQ}(\mC)=\mathrm{coLMod}_{\mQ}( \mC  ) \to \LMod_{\mQ}(\mC^\op)^\op = \co\Alg_\mQ( \mC),$$
where the left hand side are left comodules with respect to the left action
of $\sSeq(\mC)$ on $\mC$ and the right hand side are left modules with respect to the lax left action of $\sSeq(\mC^\op)$ on $\mC^\op.$

\end{proof}

\begin{lemma}\label{comon}
	
\begin{enumerate}
\item Let $\mC$ be a symmetric monoidal $\infty$-category compatible with small colimits and $\mO$ an $\infty$-operad in $\mC.$
The forgetful functor $ \Alg_{\mO}(\mC) \to \mC$ is monadic.	

\item Let $\mC$ be a symmetric monoidal $\infty$-category that has small colimits and $\mO$ an $\infty$-operad in $\mC.$
The forgetful functor $ \Alg_{\mO}(\mC) \to \mC$ is monadic if it has a left adjoint.	
	
\item Let $\mC$ be a preadditive symmetric monoidal $\infty$-category compatible with small colimits that admits small limits and $\mQ$ a non-counital $\infty$-cooperad in $\mC.$
The forgetful functor $ \co\Alg^{\mathrm{dp}, \conil}_{\mQ}(\mC) \to \mC$ is comonadic.

%\item Let $\mC$ be a symmetric monoidal $\infty$-category that admits small limitsand $\mQ$ an $\infty$-cooperad in $\mC$.The forgetful functor $ \co\Alg_{\mQ}(\mC) \to \mC$ is comonadic if it admits a right adjoint.

\end{enumerate}	
	
\end{lemma}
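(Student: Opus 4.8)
I would deduce all three statements from the (co)monadicity of the forgetful functor out of (co)modules over a (co)algebra in a monoidal $\infty$-category, using the Barr--Beck--Lurie theorem \cite{lurie.higheralgebra} together with, for part (2), the embedding technique already used in the proof of Lemma \ref{lek}. For part (1): since $\mC$ is compatible with small colimits, the composition product makes $\sSeq(\mC)$ a genuine monoidal $\infty$-category, and by Remark \ref{monad} the monoidal functor $\sSeq(\mC)\to\Fun(\mC,\mC)$ sends $\mO$ to the monad $\T_\mO = \mO\circ(-)$ and identifies $\Alg_\mO(\mC)$ with $\LMod_{\T_\mO}(\mC)$ over $\mC$. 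The forgetful functor $\LMod_{\T_\mO}(\mC)\to\mC$ is tautologically monadic: its left adjoint is $X\mapsto\T_\mO(X)$, the associated monad is $\T_\mO$, and the comparison functor is the identity; hence $\Alg_\mO(\mC)\to\mC$ is monadic.

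For part (2), $\sSeq(\mC)$ carries only the lax monoidal structure of Proposition \ref{weak}, so I would verify the hypotheses of Barr--Beck--Lurie directly for $G\colon\Alg_\mO(\mC)\to\mC$. By Corollary \ref{cory} (or Proposition \ref{embe}) choose a symmetric monoidal embedding $\mC\subset\mC'$ with $\mC'$ compatible with small colimits and the embedding preserving small limits; exactly as in the proof of Lemma \ref{lek} this yields a fully faithful functor $\Alg_\mO(\mC)\hookrightarrow\Alg_\mO(\mC')$, compatible with the forgetful functors, whose essential image consists of the $\mO$-algebras whose underlying object lies in $\mC$. Then: (a) $G$ has a left adjoint by hypothesis; (b) $G$ is conservative, being the restriction to a full subcategory of the conservative (indeed, by part (1), monadic) functor $\Alg_\mO(\mC')\to\mC'$; (c) if $A_\bullet$ is a $G$-split simplicial object of $\Alg_\mO(\mC)$, then $G(A_\bullet)$ extends to a split simplicial object of $\mC$, whose colimit is absolute, hence exists in $\mC$ and is preserved by $\mC\subset\mC'$, while by Remark \ref{imp} applied to $\mC'$ the colimit of $A_\bullet$ exists in $\Alg_\mO(\mC')$ with underlying object $\colim_{\mC'}G(A_\bullet)\in\mC$, so it lies in the full subcategory $\Alg_\mO(\mC)$, where it is again a colimit of $A_\bullet$ preserved by $G$. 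The Barr--Beck--Lurie theorem then gives that $G$ is monadic.

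For part (3), I would dualize part (1). Since $\mC$ is compatible with small colimits, $\sSeq(\mC)$ is monoidal under the composition product and the monoidal functor $\sSeq(\mC)\to\Fun(\mC,\mC)$ of Remark \ref{monad}, being monoidal, also preserves coalgebras; it therefore carries the coalgebra $\mQ\in\coAlg(\sSeq(\mC))$ (which is how $\mQ$ enters the definition $\coAlg^{\mathrm{dp},\conil}_\mQ(\mC):=\coLMod_\mQ(\mC)$, via Lemma \ref{gghjjgh} and its corollary) to the comonad $\mQ\circ(-)\colon X\mapsto\coprod_{\bk}(\mQ_\bk\ot X^{\ot\bk})_{\Sigma_\bk}$ on $\mC$ --- a well-defined endofunctor because $\mC$ is compatible with small colimits --- and identifies $\coLMod_\mQ(\mC)$ with $\coLMod_{\mQ\circ(-)}(\mC)$ over $\mC$. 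The forgetful functor from coalgebras over a comonad is tautologically comonadic (its right adjoint is the cofree coalgebra and the comparison functor is the identity), which is the dual of the monadic statement in \cite{lurie.higheralgebra}; hence $\coAlg^{\mathrm{dp},\conil}_\mQ(\mC)\to\mC$ is comonadic.

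The main obstacle is step (c) of part (2): because $\mC$ need not be compatible with small colimits, one cannot directly produce the colimit of a $G$-split simplicial object inside $\Alg_\mO(\mC)$, and the argument must detour through the auxiliary category $\mC'$ and exploit the absoluteness of split colimits to see that the colimit formed there already lies in $\Alg_\mO(\mC)$. Parts (1) and (3) are essentially formal once the relevant (co)monad has been exhibited.
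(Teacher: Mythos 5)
Your proof is correct and follows essentially the same route as the paper: parts (1) and (3) invoke the (tautological, per \cite[Example 4.7.3.9.]{lurie.higheralgebra} and its dual) (co)monadicity of left (co)modules over a left action of a monoidal $\infty$-category, and part (2) reduces to part (1) via a colimit-compatible symmetric monoidal enlargement together with the absoluteness of split geometric realizations and Barr--Beck--Lurie. The only (immaterial) difference is that the paper embeds $\mC$ into $\mP(\mC)$ with Day convolution where you use the closure $\mC'$ from Proposition \ref{embe}, and that you make the (co)monad $\T_\mO$, $\mQ\circ(-)$ explicit via Remark \ref{monad} rather than citing the general statement directly.
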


\begin{proof}
1.: The $\infty$-category $ \Alg_\mO(\mC)$ is the
$\infty$-category of left modules with respect to a left action of a monoidal
$\infty$-category and so is monadic \cite[Example 4.7.3.9.]{lurie.higheralgebra}.

2.: We embed $\mC$ symmetric monoidally into $\mP(\mC)$ endowed with Day-convolution, which is compatible with small colimits. By 1. the forgetful functor $ \Alg_{\mO}(\mP(\mC)) \to \mP(\mC)$ is monadic.
The forgetful functor $ \Alg_{\mO}(\mC) \to \mC$ is the pullback of the
latter monadic forgetful functor. This implies that every simplicial object in $ \Alg_{\mO}(\mC)$ whose image in $\mC$ splits, admits a geometric realization that is preserved by the forgetful functor.
Hence the claim follows from the Barr-Beck theorem \cite[Theorem 4.7.3.5]{lurie.higheralgebra}.

3.: The $\infty$-category $ \co\Alg^{\mathrm{dp}, \conil}_{\mQ}(\mC)$ is the
$\infty$-category of left comodules with respect to a left action of a monoidal
$\infty$-category. 
Dualising \cite[Example 4.7.3.9.]{lurie.higheralgebra} the $\infty$-category of left comodules with respect to a left action of a monoidal $\infty$-category is comonadic.

%3.: We embed $\mC$ symmetric monoidally into $\mP(\mC^\op)^\op$, the opposite symmetric monoidal $\infty$-category of the Day-convolution on presheaves on $\mC^\op.$ Since the symmetric monoidal $\infty$-category$\mP(\mC^\op)^\op$ is compatible with small limits, by 1. the forgetful functor $ \co\Alg_{\mQ}(\mP(\mC^\op)^\op) \to \mP(\mC^\op)^\op$ is comonadic.The forgetful functor $ \co\Alg_{\mQ}(\mC) \to \mC$ is the pullback of thecomonadic forgetful functor $ \co\Alg_{\mQ}(\mP(\mC^\op)^\op) \to \mP(\mC^\op)^\op$.This implies that every cosimplicial object in $ \co\Alg_{\mQ}(\mC)$ whose image in $\mC$ splits, admits a totalization that is preserved by the forgetful functor.
%Moreover this implies that the forgetful functor $ \co\Alg_{\mQ}(\mC) \to \mC$preserves small colimits since the embedding $\mC \to \mP(\mC^\op)^\op$ preserves colimits and $ \co\Alg_{\mQ}(\mP(\mC^\op)^\op)$ admits small colimitsby Remark \ref{imp}.By ... the $\infty$-category is presentable so that the forgetful functor $ \co\Alg_{\mQ}(\mC) \to \mC$ admits a right adjoint. 
%Hence the claim follows from the opposite of the Barr-Beck theorem \cite[Theorem 4.7.3.5]{lurie.higheralgebra}.

\end{proof}

\begin{remark}\label{preo}
	
Let $\mC$ be a preadditive symmetric monoidal $\infty$-category compatible with small colimits and $\mQ$ a non-counital $\infty$-cooperad in $\mC.$
The $\infty$-category $ \co\Alg^{\mathrm{dp}, \conil}_{\mQ}(\mC)$ admits small colimits that are preserved by the forgetful functor to $\mC$ by \cite[Corollary 4.2.3.3.]{lurie.higheralgebra} because $ \co\Alg^{\mathrm{dp}, \conil}_{\mQ}(\mC)$ is the $\infty$-category of left comodules with respect to a left action of a monoidal $\infty$-category.

Let $\mC$ be a preadditive presentably symmetric monoidal $\infty$-category. The $\infty$-category $ \co\Alg^{\mathrm{dp}, \conil}_{\mQ}(\mC)$ is presentable
\cite[Proposition 5.51.]{Mon} since the functor $ \co\Alg^{\mathrm{dp}, \conil}_{\mQ}(\mC) \to \mC$ is comonadic by Lemma \ref{comon} (3) and the associated comonad $\mQ \circ (-): \mC \to \mC$ is accessible.

\end{remark}

\begin{lemma}\label{dfghjvbn}
Let $\mC$ be a symmetric monoidal $\infty$-category having small colimits and $\K \to \Op(\mC)$ a functor such that the composition $\K \to \Op(\mC)\to \sSeq(\mC)$ admits a limit.
Then the functor $\K \to \Op(\mC)$ has a limit, which is preserved by the forgetful functor
$\Op(\mC) \to \sSeq(\mC).$
And similarly, for $\Op(\mC)$ replaced by $ \Op(\mC)^\nun$.
%admit a final object lying over the constant symmetric sequence (concentrated in positive degrees) with value the final object of $\mC.$
\end{lemma}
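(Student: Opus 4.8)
The plan is to reduce to the case where the symmetric monoidal structure on $\mC$ is compatible with small colimits --- so that the composition product is a genuine monoidal structure on $\sSeq(\mC)$ and $\Op(\mC) = \Alg(\sSeq(\mC))$ is the $\infty$-category of associative algebras in a monoidal $\infty$-category --- and then to quote the standard fact that such an $\infty$-category of algebras admits every limit that exists in the underlying $\infty$-category, with the forgetful functor creating it (\cite[Corollary 3.2.2.5]{lurie.higheralgebra}).

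For the reduction I would argue as in the proofs of Lemma \ref{leuma} and Proposition \ref{fghjjhbv}. By Proposition \ref{embe} choose a right adjoint symmetric monoidal embedding $\mC \subset \mC'$ into a symmetric monoidal $\infty$-category $\mC'$ compatible with small colimits that preserves the initial object. Being a right adjoint it preserves small limits, and being symmetric monoidal it preserves the tensor unit; since symmetric sequences and their limits are formed degreewise, it therefore induces a fully faithful, small-limit-preserving embedding of non-symmetric $\infty$-operads $\sSeq(\mC)^\ot \subset \sSeq(\mC')^\ot$ preserving the tensor unit $\triv$. Hence $\Op(\mC)$, resp.\ $\Op(\mC)^\nun$, is the full subcategory of $\Op(\mC')$, resp.\ $\Op(\mC')^\nun$, on the $\infty$-operads whose underlying symmetric sequence lies in $\sSeq(\mC)$. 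Now if $\K \to \Op(\mC)$ has underlying diagram admitting a limit $\mP$ in $\sSeq(\mC)$, the same $\mP$ is a limit of the composite $\K \to \sSeq(\mC')$, so the limit of $\K \to \Op(\mC')$ provided by the compatible-with-colimits case is sent by the forgetful functor to $\mP \in \sSeq(\mC)$, hence lies in the full subcategory $\Op(\mC)$ (resp.\ $\Op(\mC)^\nun$) and is the limit sought. This licenses the assumption that $\mC$ is compatible with small colimits.

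Granting that, $\sSeq(\mC)^\ot \to \Ass$ is a monoidal $\infty$-category with unit $\triv$ and $\Op(\mC) = \Alg(\sSeq(\mC))$, so the statement for $\Op(\mC)$ is immediate from \cite[Corollary 3.2.2.5]{lurie.higheralgebra}. For $\Op(\mC)^\nun$ one extra point is needed: that this full subcategory of $\Op(\mC)$ is stable under the limits just constructed. If $\K \to \Op(\mC)^\nun$, $k \mapsto \mO^{(k)}$, has underlying limit $\mP$, then $\mP_0 \simeq \lim_{k \in \K} \mO^{(k)}_0$ is a limit of initial objects of $\mC$, hence again initial (here one uses that $\mC$ is compatible with the initial object; when $\mC$ is pointed, as in the main applications, this is automatic), so $\mP$ defines an object of $\Op(\mC)^\nun$. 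I expect no genuine obstacle beyond these organisational points: correctly realising $\Op(\mC)$ as a full subcategory of $\Op(\mC')$ via a tensor-unit-preserving full suboperad inclusion, exactly as in Lemma \ref{leuma}, and the elementary observation about limits of initial objects --- the only delicate instance of which is the empty diagram, where the assertion for $\Op(\mC)^\nun$ reduces to the statement that $\Op(\mC)^\nun$ has a terminal object preserved by the forgetful functor and so really does require $\mC$ to be pointed.
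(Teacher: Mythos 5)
Your proof of the statement for $\Op(\mC)$ is essentially the paper's: reduce via Proposition \ref{embe} to the case where the symmetric monoidal structure is compatible with small colimits, note that the induced embedding $\Op(\mC)\subset\Op(\mC')$ is full and the underlying limit already lives in $\sSeq(\mC)$, and then quote Lurie's result that limits of associative algebras in a monoidal $\infty$-category are computed in the underlying $\infty$-category (the paper cites \cite[Corollary 3.2.2.4.]{lurie.higheralgebra}). Where you diverge is the non-unital case. You treat $\Op(\mC)^\nun$ as a full subcategory of $\Op(\mC)$ and check that it is closed under the limits just constructed, which forces you to argue that a limit of initial objects is again initial; as you rightly observe, this fails for the empty diagram, and in fact it is not automatic even for non-empty diagrams unless the initial object is suitably strict, so your route genuinely needs the extra pointedness hypothesis you add. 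The paper avoids this entirely by using the identification $\Op(\mC)^\nun\simeq\Alg(\sSeq(\mC)_{\geq 1})$: the limit is then formed in $\sSeq(\mC)_{\geq 1}$ (no degree-zero component to control) and the same corollary of Lurie applies verbatim, with no pointedness needed. This is also the reading consistent with how the lemma is used later (e.g.\ the final object of $\Op^\nun_\Hopf(\mC)$ lies over the constant symmetric sequence concentrated in \emph{positive} degrees), so for the non-unital statement you should interpret the forgetful functor as landing in $\sSeq(\mC)_{\geq 1}$ rather than $\sSeq(\mC)$ and argue as for $\Op(\mC)$.
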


\begin{proof}We can generally assume that $\K$ is small by changing to a larger universe if necessary.
By Proposition \ref{embe} there is a symmetric monoidal embedding $\mC\subset \mC'$ into a symmetric monoidal $\infty$-category compatible with small colimits that admits small limits such that the embedding preserves small limits.
The embedding $\mC \subset \mC'$ induces embeddings $\Op(\mC) \subset \Op(\mC'), \Op(\mC)^\nun \subset \Op(\mC')^\nun$.
Consequently, we can assume that the symmetric monoidal structure on $\mC$ is compatible with small colimits.
In this case the result follows from \cite[Corollary 3.2.2.4.]{lurie.higheralgebra} since
$\Op(\mC) \simeq \Alg(\sSeq(\mC)), \Op(\mC)^\nun \simeq \Alg(\sSeq(\mC)_{\geq1})$
are $\infty$-categories of associative algebras in a monoidal $\infty$-category.	

\end{proof}

\subsection{Shifting $\infty$-operads}\label{dfghfghffff}

In this subsection we construct a shift functor on the $\infty$-category of $\infty$-operads and $\infty$-cooperads in any stable symmetric monoidal $\infty$-category $\mC$ having small colimits.
%Let $\mC$ be a stable symmetric monoidal $\infty$-category compatible with small colimits. 
In the following let $[-]$ be the (object-wise) shift functor of $\sSeq(\mC).$

\begin{lemma}
Let $\mC$ be a stable symmetric monoidal $\infty$-category compatible with small colimits and $\n \in \bZ.$
Then $\triv[\n]$ is tensor inverse to $\triv[-\n]$ in the composition product. % on$\sSeq(\mC).$ 
\end{lemma}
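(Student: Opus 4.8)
The plan is to compute the composition products $\triv[\n] \circ \triv[-\n]$ and $\triv[-\n] \circ \triv[\n]$ directly from the explicit formula of Remark \ref{dghfgh}, exploiting that $\triv[\n]$ is a symmetric sequence concentrated in arity $1$. First I would record that, $\mC$ being stable, its initial object is the zero object $0$; hence $\triv$ has value $\tu$ in arity $1$ and value $0$ in every other arity, and applying the object-wise shift $[\n]$ yields a symmetric sequence with value $\tu[\n]$ in arity $1$ and $0[\n] \simeq 0$ elsewhere.

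Next I would plug this into $\X \circ \Y \simeq \coprod_{\bk \geq 0} \iota(\X_\bk) \ot_{\Sigma_\bk} \Y^{\ot \bk}$ with $\X = \triv[\n]$ and $\Y \in \sSeq(\mC)$ arbitrary. Since the tensor product of $\mC$ preserves small colimits in each variable we have $0 \ot (-) \simeq 0$, so all summands with $\bk \neq 1$ vanish, and the $\bk = 1$ summand reads, in arity $\m \in \Sigma$, simply $\tu[\n] \ot \Y_\m$. Thus $(\triv[\n] \circ \Y)_\m \simeq \tu[\n] \ot \Y_\m$, naturally in $\Y$ and $\m$. The second ingredient is that $\tu[\n] \ot (-)$ is the $\n$-fold shift of $\mC$: as $\ot$ preserves small colimits in each variable and $\mC$ is stable, the functor $(-) \ot \Y_\m$ is exact, hence commutes with shift, so $\tu[\n] \ot \Y_\m \simeq (\tu \ot \Y_\m)[\n] \simeq \Y_\m[\n]$. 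Combining, $\triv[\n] \circ \Y \simeq \Y[\n]$; that is, the endofunctor $\triv[\n] \circ (-)$ of $\sSeq(\mC)$ is naturally equivalent to the object-wise shift $(-)[\n]$.

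Finally I would specialize: taking $\Y = \triv[-\n]$ gives $\triv[\n] \circ \triv[-\n] \simeq \triv[-\n][\n] \simeq \triv[0] = \triv$, and taking $\Y = \triv[\n]$ gives $\triv[-\n] \circ \triv[\n] \simeq \triv[\n][-\n] \simeq \triv$, using that object-wise shifts compose additively and that $\triv$ is the unit of the composition product; this exhibits $\triv[-\n]$ as a two-sided tensor inverse of $\triv[\n]$. (Alternatively, the natural equivalence $\triv[\n] \circ (-) \simeq (-)[\n]$ already shows $\triv[\n] \circ (-)$ is an equivalence of $\infty$-categories, and the two displayed identifications single out the inverting object.) I do not anticipate any real obstacle: the only points meriting a word of care are the vanishing of the arity-$\neq 1$ summands — which uses only that $\mC$ is pointed and $\ot$ preserves colimits — and the identification of $\tu[\n] \ot (-)$ with the shift, which is exactly where stability of $\mC$ is used.
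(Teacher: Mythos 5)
Your proof is correct and follows essentially the same route as the paper: the paper likewise computes $\X[\n]\circ \Y[\m]$ for $\X$ concentrated in arity $1$ via the explicit coproduct formula, observes that only the $\bk=1$ summand survives, and identifies the result with $(\X_1\ot\Y)[\n+\m]$, specializing to $\triv[\n]\circ\triv[\m]\simeq\triv[\n+\m]$. Your version merely spells out the two specializations $\m=-\n$ separately and makes explicit the (correct) points about vanishing of the other summands and the shift commuting with the tensor product.
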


\begin{proof}
For every $\n, \m \in \bZ$ and $\X, \Y \in \sSeq(\mC)$ such that $\X$ is concentrated in degree 1  there is a canonical equivalence $$ \X[\n] \circ \Y[\m] \simeq \coprod_{\bk \in \bN} (\X[\n])_\bk \ot_{\Sigma_\bk} ( \Y[\m]  )^{\ot \bk} \simeq \X_1[\n] \ot \Y[\m] \simeq (\X_1 \ot \Y)[\n+ \m] .$$
In particular, there is a canonical equivalence $\triv[\n] \circ \triv[\m] \simeq \triv[\n+\m].$
\end{proof}

\begin{construction}
Let $\mC$ be a stable symmetric monoidal $\infty$-category compatible with small colimits and $\n \in \bZ.$
Via the canonical equivalence
$\Fun^{\ot, \L}_{\mC/}(\sSeq(\mC), \sSeq(\mC)) \simeq \sSeq(\mC) $ evaluating at $\triv$ the tensor invertible object $\triv[\n] $ corresponds to a symmetric monoidal autoequivalence $\alpha_\n$ of $\sSeq(\mC) $ under $\mC$,
which gives rise to a monoidal autoequivalence of 
$\Fun^{\ot, \L}_{\mC/}(\sSeq(\mC), \sSeq(\mC)) \simeq \sSeq(\mC) $ conjugating with $\alpha_\n$ denoted by $(-)(\n)$.
\end{construction}

\begin{lemma}\label{fghjvbnml}
Let $\mC$ be a stable symmetric monoidal $\infty$-category compatible with small colimits.
For every $\n \in \bZ, \Y \in \sSeq(\mC) $ and $\br \in \Sigma$ there is a natural equivalence $$ \Y(\n)_\br \simeq \Y_\br [(1-\br) \n ].$$

\end{lemma}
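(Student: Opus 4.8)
The plan is to make the autoequivalence $(-)(\n)$ of $\sSeq(\mC)$ explicit as a two‑sided composition with shifts of $\triv$ inside the composition product, and then read off each homogeneous component from the explicit formula for $\circ$. First I would identify $\alpha_\n$ concretely: by the lemma identifying $\Fun^{\ot,\L}_{\mC/}(\sSeq(\mC),\mD)$ with $\mD$ via evaluation at $\triv$, together with Remark \ref{dghfgh} (which gives $\X\circ\Z\simeq\Psi^{-1}(\Z)(\X)$), the inverse equivalence $\Psi^{-1}$ carries an object $\Z$ to the colimit‑preserving symmetric monoidal functor $(-)\circ\Z$. Hence $\alpha_\n\simeq(-)\circ\triv[\n]$, and since $\triv[\n]$ is tensor invertible for the composition product with inverse $\triv[-\n]$, also $\alpha_\n^{-1}\simeq(-)\circ\triv[-\n]$.

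Then I would unwind the conjugation by $\alpha_\n$ that defines $(-)(\n)$: evaluating this conjugate of $\Psi^{-1}(\Y)$ at $\triv$ and repeatedly using $\X\circ\Z\simeq\Psi^{-1}(\Z)(\X)$, the unit relations $\triv\circ\Z\simeq\Z\simeq\Z\circ\triv$, and associativity of $\circ$, one arrives at a natural equivalence
\[ \Y(\n)\ \simeq\ \triv[\n]\circ\Y\circ\triv[-\n]\qquad\text{in }\sSeq(\mC). \]

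Finally I would compute the degree‑$\br$ component using $\X\circ\Z\simeq\coprod_{\bk\geq0}\iota(\X_\bk)\ot_{\Sigma_\bk}\Z^{\ot\bk}$. Because $\triv$ is concentrated in degree $1$, this collapses in two complementary ways: with $\X=\triv[a]$ only the $\bk=1$ summand survives, so $(\triv[a]\circ\W)_\br\simeq\W_\br[a]$ — composing with a shift of $\triv$ on the left is the object‑wise shift; and with $\Z=\triv[b]$ only the $\bk=\br$ summand survives, and the Day‑convolution power $(\triv[b])^{\ot\br}$ is concentrated in degree $\br$ and is the free $\Sigma_\br$‑object on $(\unit[b])^{\ot\br}\simeq\unit[\br b]$, so that $(\X\circ\triv[b])_\br\simeq\X_\br\ot_{\Sigma_\br}(\text{free }\Sigma_\br\text{-object on }\unit[\br b])\simeq\X_\br[\br b]$. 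Applying the second formula with $b=-\n$ and then the first with $a=\n$ gives
\[ \Y(\n)_\br\ \simeq\ (\Y\circ\triv[-\n])_\br[\n]\ \simeq\ \Y_\br[-\br\n]\,[\n]\ =\ \Y_\br[(1-\br)\n], \]
naturally in $\Y$.

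The step I expect to be the main obstacle is the last one, more precisely its $\Sigma_\br$‑equivariant bookkeeping: permuting the $\br$ tensor factors $\unit[b]$ inside $(\triv[b])^{\ot\br}$ contributes a sign‑representation twist which has to be carried through the homotopy coinvariants $\ot_{\Sigma_\br}$ and checked to recombine correctly; it becomes invisible once one forgets the $\Sigma_\br$‑action, which is all the stated equivalence records. The first two steps are purely formal manipulations of the defining adjoint equivalences.
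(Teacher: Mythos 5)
Your argument is correct and is essentially the paper's own proof: the paper likewise unwinds the conjugation to $\Y(\n)\simeq(\Y\circ\triv[-\n])[\n]$ (your $\triv[\n]\circ\Y\circ\triv[-\n]$, since left composition with $\triv[\n]$ is the object-wise shift) and then reads off the degree-$\br$ part of $\Y\circ\triv[-\n]$ exactly as you do. The equivariant worry in your last step is harmless because $\Sigma_\br$ acts freely and transitively on the indexing groupoid of decompositions of $\br$ into singletons, so the coinvariants collapse to the value at a single decomposition and no sign twist survives.
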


\begin{proof}

The object $\Y \in \sSeq(\mC)$ uniquely lifts to a symmetric monoidal small colimits preserving endofunctor $\bar{\Y} $ of $\sSeq(\mC) $ under $\mC$.
There is a canonical equivalence $$ \Y(\n) \simeq (\alpha_{- \n} \circ \bar{\Y} \circ \alpha_\n) (\triv) \simeq \alpha_{- \n} ( \bar{\Y} (\triv[\n])) \simeq \alpha_{- \n} ( \bar{\Y} (\triv))[\n] $$
$$\simeq \alpha_{- \n} (\Y)[\n] \simeq (\Y \circ (\triv[-\n]))[\n] \simeq $$ 
$$(\coprod_{\bk \in \Sigma} \Y_\bk \ot_{\Sigma_\bk} \triv[-\n]^{\ot \bk})[\n] \simeq \coprod_{\bk \in \Sigma} \Y_\bk \ot_{\Sigma_\bk} \triv^{\ot \bk}[(1-\bk) \n ] $$
and thus $ \Y(\n)_\br \simeq \Y_\br \ot_{\Sigma_\br} (\Sigma_\br \times \tu) [(1-\br) \n ] \simeq \Y_\br [(1-\br) \n ]. $

\end{proof}
\begin{remark}
Lemma \ref{fghjvbnml} shows that for any $\n \in \bZ$ the autoequivalence $(-)(\n)$ of $ \sSeq(\mC) $ restricts to the $\n$-fold shift of $\mC$ along the embedding $\mC \subset \sSeq(\mC)$ viewing an object concentrated in degree zero.
Moreover $(-)(\n)$ restricts to an autoequivalence of $\sSeq(\mC)_{\geq 1}. $ 
\end{remark}
The monoidal autoequivalence $(-)(\n)$ of $ \sSeq(\mC) $ gives rise to induced autoequivalences of $\co\Alg(\sSeq(\mC)), \co\LMod(\mC).$
If $\mC$ is preadditive, for every non-counital $\infty$-cooperad $\mQ$ in $\mC$ we obtain a commutative square
$$
\begin{xy}
\xymatrix{
\co\Alg^{\mathrm{dp}, \conil}_{\mQ(\n)}( \mC)   \ar[d] 
\ar[rr]^\simeq 
&&  \co\Alg^{\mathrm{dp}, \conil}_\mQ( \mC)  \ar[d]
\\
\mC \ar[rr]^{(-)[-\n]}_\simeq  &&  \mC.
}
\end{xy} 
$$ 

\begin{construction}

If $\mC$ is a stable symmetric monoidal $\infty$-category having small colimits, Corollary \ref{cory} states that $\mC$ embeds symmetric monoidally and exact into a stable symmetric monoidal $\infty$-category $ \mC'$ compatible with small colimits.
Thus the non-symmetric $\infty$-operad $ \sSeq(\mC)^\ot$ embeds into the
monoidal $\infty$-category $ \sSeq(\mC')^\ot $ and for any $\n \in \bZ$ the monoidal autoequivalence $(-)(\n)$ of $ \sSeq(\mC') $ restricts to a 
monoidal autoequivalence $(-)(\n)$ of $ \sSeq(\mC) $.
The autoequivalence $(-)(\n)$ gives rise to autoequivalences of
$\Op(\mC)= \Alg( \sSeq(\mC))$ and $ \LMod(\mC).$
For every $\infty$-operad $\mO $ in $\mC$ we obtain a commutative square
$$
\begin{xy}
\xymatrix{
\Alg_{\mO(\n)}(\mC)  \ar[d] 
\ar[rr]^\simeq 
&& \Alg_\mO(\mC) \ar[d]
\\
\mC \ar[rr]^{(-)[-\n]}_\simeq  &&  \mC.
}
\end{xy} 
$$ 
\end{construction}
%and dually, for every $\infty$-cooperad $\mQ $ in $\mC$ we obtain a commutative square$$\begin{xy}\xymatrix{\co\Alg_{\mQ(\n)}(\mC)  \ar[d] \ar[rr]^\simeq && \co\Alg_\mQ(\mC) \ar[d]\\\mC \ar[rr]^{(-)[-\n]}_\simeq  &&  \mC.\end{xy} $$

\subsection{Truncating $\infty$-operads}

In this subsection we define and study truncations of $\infty$-operads,
which we use in the next section to analyze Koszul duality. %, which we use to construct a cofiltration of the primitive elements (prop. \ref{filter}). 
%Using truncation of operads one is able to filter algebras over an operad by the following remark:

\begin{notation}
For every $\n \geq 1$ let $\Sigma_{\geq 1 }^{\leq \n} \subset \Sigma_{\geq 1 }$ be the full subcategory of sets with less or equal than $\n$ elements.
\end{notation}
The embedding $\iota: \Sigma_{\geq 1 }^{ \leq \n} \subset \Sigma_{\geq 1 }$ induces a localization
$$ \iota^\ast:  \sSeq(\mC)_{\geq 1}  \rightleftarrows \sSeq(\mC)_{\geq 1}^{\leq\n}  : \iota_\ast, $$
where the left adjoint is restriction and the fully faithful right adjoint extends beyond degree $\n$ by the zero object.

\begin{lemma}\label{fghfghkm}
	
Let $\mC$ be a symmetric monoidal $\infty$-category compatible with the zero object that admits small colimits.
For every $\n \geq 1 $ the localization
$ \iota^\ast: \sSeq(\mC)_{\geq 1} \rightleftarrows \sSeq(\mC)_{\geq 1}^{\leq\n} :\iota_\ast $
is compatible with the composition product on $ \sSeq(\mC)_{\geq 1}. $	
	
\end{lemma}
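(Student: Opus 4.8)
The plan is to verify the defining condition for the localization $\iota^{\ast}\dashv\iota_{\ast}$ to be compatible with the composition product (see \cite[\S 2.2.1]{lurie.higheralgebra}): that whenever $f\colon \X\to\Y$ is a morphism of $\sSeq(\mC)_{\geq 1}$ with $\iota^{\ast}(f)$ an equivalence and $\cZ\in\sSeq(\mC)_{\geq 1}$ is arbitrary, the morphisms $f\circ\id_{\cZ}$ and $\id_{\cZ}\circ f$ also become equivalences after applying $\iota^{\ast}$. Since $\iota_{\ast}$ is fully faithful, for a morphism $g$ of $\sSeq(\mC)_{\geq 1}$ the map $\iota^{\ast}(g)$ is an equivalence exactly when $g_{\bk}$ is an equivalence in $\mC$ for every finite set $\bk$ with $1\leq|\bk|\leq\n$. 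So it is enough to show that for $\m\in\Sigma$ with $|\m|\leq\n$ the object $(\X\circ\cZ)_{\m}$, and likewise $(\cZ\circ\X)_{\m}$, depends on $\X$ and $\cZ$ only through $\iota^{\ast}\X$ and $\iota^{\ast}\cZ$, and does so functorially.

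The computational input is the $\ell=2$ case of Proposition \ref{weak}(2): for $\X,\cZ\in\sSeq(\mC)_{\geq 1}$ and $\m\in\Sigma$ one has $(\X\circ\cZ)_{\m}\simeq\colim_{(\f\colon\m\to\m_1)\in\Fin^{2}_{\m}}\bigl(\X_{\m_1}\ot\bigotimes_{\bi\in\m_1}\cZ_{\f^{-1}(\bi)}\bigr)$, the colimit running over the groupoid of maps of finite sets with source $\m$. The key observation is that $\Fin^{2}_{\m}$, being a groupoid, is the disjoint union of its full subgroupoid on the surjective $\f$ and its full subgroupoid on the non-surjective $\f$ (surjectivity being invariant under the isomorphisms of $\Fin^{2}_{\m}$), so the colimit decomposes as the coproduct of the colimits over these two pieces. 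On the non-surjective piece some fibre $\f^{-1}(\bi)$ is empty, so the tensor factor $\cZ_{\emptyset}=\cZ_{0}$ is initial in $\mC$ because $\cZ$ is concentrated in degrees $\geq 1$; as the symmetric monoidal structure of $\mC$ is compatible with the initial object, the whole term is then initial, and a colimit of a diagram all of whose values are initial is initial — so this piece contributes nothing. On the surjective piece one has $|\m_1|\leq|\m|\leq\n$ and $1\leq|\f^{-1}(\bi)|\leq|\m|\leq\n$ for each $\bi\in\m_1$, so the diagram involves only the components $\X_{\bk}$ and $\cZ_{\bk}$ with $1\leq|\bk|\leq\n$, i.e.\ only $\iota^{\ast}\X$ and $\iota^{\ast}\cZ$, naturally in both. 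The same argument handles $(\cZ\circ\X)_{\m}$.

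Granting this, the conclusion is immediate: if $\iota^{\ast}(f)$ is an equivalence then $f_{\bk}$ is an equivalence for $|\bk|\leq\n$, so on the surviving terms the maps $(f\circ\id_{\cZ})_{\m}$ and $(\id_{\cZ}\circ f)_{\m}$ are tensor products of equivalences, hence equivalences, and they stay equivalences after taking the colimit over $\Fin^{2}_{\m}$ for every $\m$ with $|\m|\leq\n$; thus $\iota^{\ast}(f\circ\id_{\cZ})$ and $\iota^{\ast}(\id_{\cZ}\circ f)$ are equivalences, as needed. (Running the same reasoning with the full $\ell$-fold formula of Proposition \ref{weak}(2) shows in addition that $\iota^{\ast}$ carries the composition product of local objects to a composition product, so $\sSeq(\mC)_{\geq 1}^{\leq\n}$ inherits the structure; alternatively, one may first pass along the symmetric monoidal embedding $\mC\subset\mC'$ of Proposition \ref{embe} into a symmetric monoidal $\infty$-category compatible with small colimits that preserves the zero object, under which the two truncation localizations correspond and the degreewise left adjoint $\L_{\ast}$ of Proposition \ref{weak} manifestly preserves $\iota^{\ast}$-equivalences.) The only genuine obstacle I anticipate is bookkeeping: confirming that compatibility with the composition product is precisely the condition to check in the lax monoidal generality of $\sSeq(\mC)_{\geq 1}$, and that discarding the non-surjective terms is legitimate — which is exactly where the hypothesis that $\mC$ is compatible with the (zero, hence initial) object is used.
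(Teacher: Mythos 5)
Your proposal is correct and follows essentially the same route as the paper: reduce to checking that local equivalences (detected degreewise in degrees $\leq \n$) are preserved by $(-)\circ\cZ$ and $\cZ\circ(-)$, then read this off from the explicit degreewise formula for the composition product, using that $\cZ_{0}$ is initial to discard the terms where components of degree $>\n$ could appear. The paper packages the same computation slightly differently (a coproduct over $\bk\leq\s$ rather than a colimit over the groupoid of surjections, and it performs the reduction along the embedding $\mC\subset\mC'$ of Proposition \ref{embe} up front rather than as an alternative), but the mathematical content is identical.
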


\begin{proof}

By Proposition \ref{embe} we find that $\mC$ embeds symmetric monoidally into a symmetric monoidal $\infty$-category $\mC'$ compatible with small colimits such that the embedding $\mC \subset \mC'$ preserves the zero object and admits a left adjoint $\L.$ 
We obtain an embedding $\sSeq(\mC)^\ot_{\geq 1} \subset \sSeq(\mC')^\ot_{\geq 1}$ of non-symmetric $\infty$-operads and an embedding $\sSeq(\mC)_{\geq 1}^{\leq\n}  \subset \sSeq(\mC')_{\geq 1}^{\leq\n}.$
Thus $\X_1 \circ ... \circ \X_\bk $ is the image of the corresponding composition product in $\sSeq(\mC')_{\geq 1}$ under the functor $\L_\ast: \sSeq(\mC')_{\geq 1} \to \sSeq(\mC)_{\geq 1}$ induced by $\L.$ 	
A morphism of $ \sSeq(\mC)_{\geq 1} $ is a local equivalence if and only if it induces an equivalence in degrees $\leq \n $ and similar for $\mC'.$
So the embedding $\sSeq(\mC)_{\geq 1} \subset \sSeq(\mC')_{\geq 1}$
and its left adjoint $\L_\ast: \sSeq(\mC')_{\geq 1} \to \sSeq(\mC)_{\geq 1}$ preserve local equivalences.
Hence we can reduce to the case that the symmetric monoidal structure 
on $\mC$ is compatible with small colimits.

Let $ \f: \X \to \Y $ be a morphism in $ \sSeq(\mC)_{\geq 1}$
such that $\f_\bj : \X_\bj \to \Y_\bj$ is an equivalence for every $\bj \leq \n$ 
and $\Z \in \sSeq(\mC)_{\geq 1}. $
We want to see that $(\f \circ \Z)_\s$ and $(\Z \circ \f)_\s$ are both equivalences for every $\s \leq \n. $ 
We compute
$$(\f \circ \Z)_\s = \underset{\bk \geq 0}{\coprod} ( \underset{ {\n_1 \coprod ... \coprod \n_\bk = \s} }{\coprod} \f_\bk \ot( \bigotimes_{1 \leq \bj \leq \bk } \Z_{\n_\bj }))_{\Sigma_\bk} = \underset{\s \geq \bk \geq 0}{\coprod} ( \underset{ {\n_1 \coprod ... \coprod \n_\bk = \s} }{\coprod} \f_\bk \ot( \bigotimes_{1 \leq \bj \leq \bk } \Z_{\n_\bj }))_{\Sigma_\bk}, $$
$$(\Z \circ \f)_\s = \underset{\bk \geq 0}{\coprod} ( \underset{ {\n_1 \coprod ... \coprod \n_\bk = \s} }{\coprod} \Z_\bk \ot( \bigotimes_{1 \leq \bj \leq \bk } \f_{\n_\bj }))_{\Sigma_\bk}. $$
	
\end{proof}

\begin{notation}Let $\mC$ be a symmetric monoidal $\infty$-category that admits small colimits and a zero object preserved by the tensor product in each component.
For every $\n \geq 1$ let $ \Op(\mC)^\nun_{\leq \n} \subset \Op(\mC)^\nun $ be the full subcategory of $\infty$-operads in $\mC$ whose underlying symmetric sequence vanishes beyond degree $\n,$ i.e. belongs to the full subcategory $ \sSeq(\mC)_{\geq 1}^{\leq \n}  \subset  \sSeq(\mC)_{\geq 1} .$
\end{notation}
\begin{corollary}\label{cqop}
Let $\mC$ be a symmetric monoidal $\infty$-category that admits small colimits and a zero object preserved by the tensor product in each component.
%By Lemma \ref{fghfghkm} the localization $  \iota^\ast: \sSeq(\mC)_{\geq 1} \rightleftarrows \mC^{\Sigma_{\geq 1 }^{ \leq \n}} : \iota_\ast $is compatible with the composition product on $ \sSeq(\mC)_{\geq 1} $and so yields a localization$$ (-)_{\leq \n } :\Op(\mC)^{\mathrm{nu}} = \Alg( \sSeq(\mC)_{\geq 1} ) \rightleftarrows \Op(\mC)^{\mathrm{nu}}_{\leq \n } = \Alg(\mC^{\Sigma_{\geq 1 }^{ \leq \n}} ). $$	 
The embedding $ \Op(\mC)^\nun_{\leq \n} \subset \Op(\mC)^\nun$ admits a left adjoint 
$ (-)_{\leq \n } :\Op(\mC)^\nun \to \Op(\mC)^\nun_{\leq \n } $
that fits into a commutative square, where the bottom functor is restriction:
\begin{equation*}
\begin{xy}
\xymatrix{
\Op(\mC)^\nun \ar[d] 
\ar[r] 
& \Op(\mC)^\nun_{\leq \n} \ar[d] 
\\
\sSeq(\mC)_{\geq 1} \ar[r] &  \sSeq(\mC)_{\geq 1}^{ \leq \n}.
}
\end{xy} 
\end{equation*}  

\end{corollary}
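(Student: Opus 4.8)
The plan is to recognize the asserted left adjoint as the localization on associative algebras induced by a monoidal localization on symmetric sequences, by feeding the composition product into \cite[Proposition 2.2.1.9]{lurie.higheralgebra}. The only genuine input is Lemma \ref{fghfghkm}; everything else is formal.

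First I would reduce to the case that the symmetric monoidal structure on $\mC$ is compatible with small colimits, exactly as in the proof of Lemma \ref{fghfghkm}: by Proposition \ref{embe} there is a symmetric monoidal embedding $\mC \subset \mC'$ into a symmetric monoidal $\infty$-category compatible with small colimits that preserves the zero object. This induces fully faithful embeddings $\Op(\mC)^\nun \subset \Op(\mC')^\nun$ and $\Op(\mC)^\nun_{\leq \n} \subset \Op(\mC')^\nun_{\leq \n}$ onto the full subcategories of $\infty$-operads whose underlying symmetric sequence lies in $\sSeq(\mC)_{\geq 1}$, respectively in $\sSeq(\mC)_{\geq 1}^{\leq \n}$, using that $\sSeq(\mC)_{\geq 1}^\ot \subset \sSeq(\mC')_{\geq 1}^\ot$ is a full suboperad preserving the tensor unit. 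Since the left adjoint $(-)_{\leq \n}$ for $\mC'$ acts on underlying symmetric sequences by the restriction functor $\iota^\ast$, which carries $\sSeq(\mC)_{\geq 1}$ into itself, it restricts to a functor $\Op(\mC)^\nun \to \Op(\mC)^\nun_{\leq \n}$ that is still left adjoint to the embedding, and the commutative square for $\mC$ is the restriction of that for $\mC'$. So I may assume $\mC$ is compatible with small colimits.

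In that case the composition product makes $\sSeq(\mC)_{\geq 1}^\ot \to \Ass$ a monoidal $\infty$-category with unit $\triv$, and $\iota^\ast: \sSeq(\mC)_{\geq 1} \rightleftarrows \sSeq(\mC)_{\geq 1}^{\leq \n}: \iota_\ast$ exhibits $\sSeq(\mC)_{\geq 1}^{\leq \n}$ as the reflective subcategory of $\iota^\ast$-local objects of $\sSeq(\mC)_{\geq 1}$. By Lemma \ref{fghfghkm} this localization is compatible with the composition product, i.e.\ tensoring an $\iota^\ast$-local equivalence with any object on either side is again an $\iota^\ast$-local equivalence. Hence \cite[Proposition 2.2.1.9]{lurie.higheralgebra}, applied with $\mO^\ot = \Ass$, endows $\sSeq(\mC)_{\geq 1}^{\leq \n}$ with a monoidal structure for which $\iota^\ast$ is monoidal, and shows that the induced functor $\Alg(\sSeq(\mC)_{\geq 1}^{\leq \n}) \to \Alg(\sSeq(\mC)_{\geq 1})$ is fully faithful with essential image the associative algebras whose underlying symmetric sequence is $\iota^\ast$-local, and that it admits a left adjoint computed on underlying symmetric sequences by $\iota^\ast$. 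Under the identifications $\Alg(\sSeq(\mC)_{\geq 1}) \simeq \Op(\mC)^\nun$ and $\Alg(\sSeq(\mC)_{\geq 1}^{\leq \n}) \simeq \Op(\mC)^\nun_{\leq \n}$ this is the sought left adjoint $(-)_{\leq \n}$, and the assertion that its underlying symmetric sequence is $\iota^\ast$ of the underlying symmetric sequence is exactly the commutative square.

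I do not expect a real obstacle here: the one substantive ingredient, that $\iota^\ast \dashv \iota_\ast$ is a monoidal localization of the composition product, is precisely Lemma \ref{fghfghkm}. The only point where a little care is needed is the reduction to $\mC$ compatible with small colimits, since over a general $\mC$ the composition product on $\sSeq(\mC)_{\geq 1}$ is merely a non-symmetric $\infty$-operad and \cite[Proposition 2.2.1.9]{lurie.higheralgebra} does not apply directly; but this is handled by the embedding argument above, and the identification of an associative algebra in $\sSeq(\mC)_{\geq 1}$ with $\iota^\ast$-local underlying symmetric sequence with an object of $\Op(\mC)^\nun_{\leq \n}$ is immediate from the definition of $\Op(\mC)^\nun_{\leq \n}$.
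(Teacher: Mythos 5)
Your proposal is correct and is exactly the argument the paper intends: the corollary is stated without proof as a formal consequence of Lemma \ref{fghfghkm}, namely that a localization compatible with the composition product induces, via \cite[Proposition 2.2.1.9]{lurie.higheralgebra}, a left adjoint on associative algebras computed on underlying symmetric sequences by $\iota^\ast$. Your explicit reduction to the case where $\mC$ is compatible with small colimits, via the embedding of Proposition \ref{embe}, is the same device the paper uses throughout (e.g.\ in the proofs of Lemma \ref{fghfghkm} and Proposition \ref{fghjjhbv}) and is handled correctly.
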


\vspace{1mm}

\begin{remark}\label{remfs}\label{dghjjjkkhg}\label{leuj}
	
If $\mC$ is a presentably symmetric monoidal $\infty$-category, by the adjoint functor theorem the functor 
$ (-)_{\leq \n } :\Op(\mC)^\nun \to \Op(\mC)^\nun_{\leq \n } $ preserving small limits and sifted colimits admits a left adjoint denoted by $\f_\n$.
The composition $$\f_\n \circ (-)_{\leq \n }: \Op(\mC)^\nun \to \Op(\mC)^\nun_{\leq \n } \to \Op(\mC)^\nun $$ is left adjoint to the functor $\tau_\n: \Op(\mC)^\nun \xrightarrow{ (-)_{\leq \n } } \Op(\mC)^\nun_{\leq \n } \subset \Op(\mC)^\nun. $

For every non-unital $\infty$-operad $\mO$ in $\mC$ the induced diagram 
of $\infty$-operads in $\mC$:
$$\f_1(\mO_{\leq1}) \to ... \to \f_\n(\mO_{\leq \n}) \to \f_{\n+1}(\mO_{\leq \n+1}) \to ...\to \mO $$
is a colimit diagram (see \cite[Remark 4.23.]{2018arXiv180306325H} for details).

This implies that for any $\mO$-algebra $\A$ in $\mC$ the induced diagram
$$\mO \circ_{\f_1(\mO_{\leq 1})}\A \to ... \to \mO \circ_{\f_\n(\mO_{\leq \n})} \A \to \mO \circ_{\f_{\n+1}(\mO_{\leq \n+1})} \A \to ... \to \mO \circ_\mO \A \simeq \A $$ of $\mO$-algebras in $\mC$ is a colimit diagram:
for any $\mO$-algebra $\B$ in $\mC$ the induced map
$$ \Alg_\mO(\mC)(\A,\B) \to \Alg_\mO(\mC)(\colim_{\n \geq 1} \mO \circ_{\f_\n(\mO_{\leq \n})} \A,\B) \simeq \lim_{\n \geq 1}\Alg_\mO(\mC)(\mO \circ_{\f_\n(\mO_{\leq \n})} \A,\B)$$
$$ \simeq \lim_{\n \geq 1}\Alg_{\f_\n(\mO_{\leq \n})}(\mC)(\A,\B) \simeq (\lim_{\n \geq 1}\Alg_{\f_\n(\mO_{\leq \n})}(\mC))(\A,\B)$$
identifies with the induced map on mapping spaces of the equivalence
$$\Alg_\mO(\mC) \simeq \Alg_{\colim_{\n \geq 1} \f_\n(\mO_{\leq \n})}(\mC) \simeq \lim_{\n \geq 1}\Alg_{\f_\n(\mO_{\leq \n})}(\mC).$$
	
\end{remark}

\vspace{1mm}
\begin{lemma}\label{fghfghjjjl}
	
Let $\mC$ be a symmetric monoidal $\infty$-category compatible with the initial object that admits small colimits and let $\n \in \Sigma.$
Let $\X_1, ..., \X_\bk$ be objects of $ \sSeq(\mC)_{\geq 1} $ for $\bk \geq 2.$ 
If $\X_\bi \in  \sSeq(\mC)_{\geq \n} $  for some $ 1 \leq \bi \leq \bk, $
then $\X_1 \circ ... \circ \X_\bk $ belongs to $ \sSeq(\mC)_{\geq \n} $.

\end{lemma}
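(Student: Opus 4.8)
The statement claims that if one of the inputs $\X_\bi$ in a composition product $\X_1 \circ \cdots \circ \X_\bk$ (with $\bk \geq 2$, all $\X_j \in \sSeq(\mC)_{\geq 1}$) is concentrated in degrees $\geq \n$, then the composition product is also concentrated in degrees $\geq \n$. The plan is to reduce to the explicit colimit formula for the composition product and then run a counting argument on the indexing groupoid $\Fin^\bk_\m$. By Proposition \ref{weak}(2), for any $\m \in \Sigma$ there is an equivalence
\[
(\X_1 \circ \cdots \circ \X_\bk)_\m \simeq \colim_{\f \in \Fin^\bk_\m}\Bigl((\X_1)_{\m_1} \ot \bigotimes_{\bi \in \m_1}(\X_2)_{\f_1^{-1}(\bi)} \ot \cdots \ot \bigotimes_{\bi \in \m_{\bk-1}}(\X_\bk)_{\f_{\bk-1}^{-1}(\bi)}\Bigr),
\]
where a point of $\Fin^\bk_\m$ is a chain $\m \xrightarrow{\f_{\bk-1}} \m_{\bk-1} \to \cdots \xrightarrow{\f_1} \m_1$. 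Since each $\X_j$ lies in $\sSeq(\mC)_{\geq 1}$, a summand is nonzero only when every fiber appearing is nonempty, i.e. every $\f_t$ is surjective; and since $\X_\bi \in \sSeq(\mC)_{\geq \n}$, a summand is moreover nonzero only when each fiber $\f_{\bi-1}^{-1}(\bj)$ of the map into $\m_{\bi-1}$ has at least $\n$ elements.

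First I would reduce to the case where the symmetric monoidal structure on $\mC$ is compatible with small colimits, exactly as in the proof of Proposition \ref{weak}: embed $\mC \subset \mC'$ via Proposition \ref{embe} into a symmetric monoidal $\infty$-category compatible with small colimits preserving the initial object, note that $\sSeq(\mC)^\ot \subset \sSeq(\mC')^\ot$ preserves the tensor unit and the subcategories $\sSeq(-)_{\geq 1}$, so it suffices to prove the claim in $\mC'$ where the composition product is a genuine monoidal structure and the colimit formula above holds verbatim. Then I would argue: if $\m$ has cardinality $|\m| < \n$, I claim the colimit is over summands that are all zero. Indeed, in any chain $\m \to \m_{\bk-1} \to \cdots \to \m_1$ contributing a nonzero summand, all the maps must be surjective (otherwise some fiber is empty and the corresponding factor $(\X_j)_\emptyset = 0$ kills the summand, as $\X_j \in \sSeq(\mC)_{\geq 1}$), hence $|\m| \geq |\m_{\bk-1}| \geq \cdots \geq |\m_1|$. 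Now look at the input $\X_\bi$: it contributes the factor $\bigotimes_{\bj \in \m_{\bi-1}}(\X_\bi)_{\f_{\bi-1}^{-1}(\bj)}$ (with the convention $\m_0 = \m$ when $\bi = \bk$). Since $\X_\bi \in \sSeq(\mC)_{\geq \n}$, a summand is nonzero only if every fiber $\f_{\bi-1}^{-1}(\bj)$ has cardinality $\geq \n$, and since $\bk \geq 2$ there is at least one such fiber; but the fibers of $\f_{\bi-1}: \m_{\bi-1} \to \m_{\bi-1}$... — more precisely the target $\m_{\bi-1}$ is nonempty (it maps onto $\m_1$ which is nonempty as $\m$ is, by surjectivity), so there is a nonempty fiber, hence some fiber has $\geq \n$ elements, forcing $|\m_{\bi-1}| \geq \n$, and therefore $|\m| \geq |\m_{\bi-1}| \geq \n$, contradicting $|\m| < \n$. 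So every summand vanishes and the colimit is $0$.

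The only subtlety — and the step I expect to require the most care — is the bookkeeping at the two ends of the chain ($\bi = 1$ and $\bi = \bk$) and the degenerate possibility that $\m$ itself is empty: if $\m = \emptyset$ then surjectivity forces all $\m_t = \emptyset$, and since $\bk \geq 2$ the factor coming from $\X_\bi$ is an empty tensor product $\bigotimes_{\bj \in \emptyset}(-) = \tu$, which is \emph{not} zero, so one must instead observe that some \emph{other} factor is a tensor of things indexed by $\m_{\bi-2}$ etc. — but in fact when $\m = \emptyset$ all fibers are empty, so the first input contributes $(\X_1)_{\m_1} = (\X_1)_\emptyset = 0$ since $\X_1 \in \sSeq(\mC)_{\geq 1}$, and we are done provided $\n \geq 1$ (which we may assume, else the statement is vacuous as $\sSeq(\mC)_{\geq \n} = \sSeq(\mC)_{\geq 1}$ already). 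I would state the argument cleanly by first disposing of $\m = \emptyset$ and then, for $\m \neq \emptyset$, running the surjectivity-and-fiber-size inequality chain above. This also shows the statement is really a statement about the support of symmetric sequences under $\circ$, consistent with how $\circ$ is used in the truncation discussion of this subsection.
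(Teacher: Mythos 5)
Your proposal is correct and follows essentially the same route as the paper: reduce to the case where the tensor product is compatible with small colimits via Proposition \ref{embe} (using that the left adjoint preserves the initial object, so $\L_\ast$ preserves $\sSeq(-)_{\geq \n}$), then count degrees in the explicit coproduct formula, where empty fibers force vanishing because the inputs lie in $\sSeq(\mC)_{\geq 1}$. The only differences are cosmetic — the paper treats the two binary cases $\X \circ \Y$ with $\X$ resp.\ $\Y$ in $\sSeq(\mC)_{\geq \n}$ and lets associativity of the (now genuine) monoidal structure handle general $\bk$, whereas you run the count on the $\bk$-fold formula directly — together with one harmless index slip: the fibers of $\f_{\bi-1}$ partition the source $\m_{\bi}$, so the inequality you actually get is $|\m_{\bi}| \geq \n$ rather than $|\m_{\bi-1}| \geq \n$, which still yields $|\m| \geq |\m_{\bi}| \geq \n$ via the chain of surjections.
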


\begin{proof}
	
Proposition \ref{embe} states that $\mC$ embeds symmetric monoidally into a symmetric monoidal $\infty$-category $\mC'$ compatible with small colimits such that the embedding $\mC \subset \mC'$ preserves the initial object and admits a left adjoint $\L.$ 
We obtain an embedding $\sSeq(\mC)^\ot_{\geq 1} \subset \sSeq(\mC')^\ot_{\geq 1}$ of non-symmetric $\infty$-operads and an embedding $\sSeq(\mC)_{\geq 1}^{\leq\n}  \subset \sSeq(\mC')_{\geq 1}^{\leq\n}.$
Moreover $\X_1 \circ ... \circ \X_\bk $ is the image of the corresponding composition product in $ \sSeq(\mC')_{\geq 1} $ under the functor $\L_\ast:  \sSeq(\mC')_{\geq 1}  \to \sSeq(\mC)_{\geq 1} $ induced by $\L.$ 
As a left adjoint the functor $\L$ preserves the initial object so that the functor $\L_\ast$ restricts to a functor $  \sSeq(\mC')_{\geq \n}  \to  \sSeq(\mC)_{\geq \n} $. 
Consequently, we can reduce to the case that the symmetric monoidal structure 
on $\mC$ is compatible with small colimits.
Let $\X \in \sSeq(\mC)_{\geq \n} , \Y \in  \sSeq(\mC)_{\geq 1} .$
For every $\s \geq 0 $ we have
$$(\X \circ \Y)_\s = \underset{\bk \geq 0}{\coprod} ( \underset{ {\n_1 \coprod ... \coprod \n_\bk = \s} }{\coprod} \X_\bk \ot( \bigotimes_{1 \leq \bj \leq \bk } \Y_{\n_\bj }))_{\Sigma_\bk} =\underset{\s \geq \bk \geq 0}{\coprod} ( \underset{ {\n_1 \coprod ... \coprod \n_\bk = \s} }{\coprod} \X_\bk \ot( \bigotimes_{1 \leq \bj \leq \bk } \Y_{\n_\bj }))_{\Sigma_\bk} $$
so that $\X \circ \Y \in  \sSeq(\mC)_{\geq \n} .$
Let $\X \in  \sSeq(\mC)_{\geq 1} , \Y \in \sSeq(\mC)_{\geq \n} .$
For every $\s \geq 0 $ we have
$$(\X \circ \Y)_\s = \underset{\bk \geq 0}{\coprod} ( \underset{ {\n_1 \coprod ... \coprod \n_\bk = \s} }{\coprod} \X_\bk \ot( \bigotimes_{1 \leq \bj \leq \bk } \Y_{\n_\bj }))_{\Sigma_\bk}= \underset{\bk \geq 1}{\coprod} ( \underset{ {\n_1 \coprod ... \coprod \n_\bk = \s} }{\coprod} \X_\bk \ot( \bigotimes_{1 \leq \bj \leq \bk } \Y_{\n_\bj }))_{\Sigma_\bk}. $$
Thus $\X \circ \Y \in \sSeq(\mC)_{\geq \n} .$	
	
\end{proof}

\begin{notation}
For every $\n \geq 1$ let $\Sigma_{ \geq \n} \subset \Sigma_{\geq 1 }$ be the full subcategory of sets having at least $\n$ elements.
	
\end{notation}
\begin{construction}\label{dfhnghdf}
	
Let $\mC$ be a symmetric monoidal $\infty$-category compatible with the initial object that admits small colimits.
The embedding $\kappa: \Sigma_{ \geq \n} \subset \Sigma_{\geq 1 }$ induces a colocalization
$$ \kappa_!: \sSeq(\mC)_{\geq \n} \rightleftarrows \sSeq(\mC)_{\geq 1} : \kappa^\ast, $$
where the right adjoint is restriction and the fully faithful left adjoint extends under degree $\n$ by the initial object.
By Lemma \ref{fghfghjjjl} the embedding $ \kappa_!: \sSeq(\mC)_{\geq \n} \subset \sSeq(\mC)_{\geq 1} $ is $\sSeq(\mC)_{\geq 1}$-linear. 
Thus for every non-unital $\infty$-operad $\mO$ in $\mC$ we obtain a colocalization
$$ \RMod_{ \mO }( \sSeq(\mC)_{\geq \n}) \rightleftarrows \RMod_{\mO} ( \sSeq(\mC)_{\geq 1}) : \tau_{\geq \n}. $$

\end{construction}

For later reference we add the following lemma:

\begin{lemma}\label{dfghfghjfghj}

Let $\mC$ be a presentably symmetric monoidal $\infty$-category and $\mO$ a non-unital $\infty$-operad in $\mC$ such that $\mO_1= \tu$.
Let $\n \geq 1$ and $\X$ a right $\mO$-module in $\mC$ concentrated in degree $\n$.
The space of right $\mO$-module structures on $\X$ is contractible.
\end{lemma}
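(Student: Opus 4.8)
The plan is to reduce the statement to a calculation with the composition product restricted to symmetric sequences concentrated in a single degree. The key observation is that a right $\mO$-module structure on $\X$ is encoded by a coaction $\X \circ \mO \to \X$ (together with coherence data), and since $\X$ is concentrated in degree $\n$ and $\mO_1 = \tu$, this map is heavily constrained.

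First I would unwind the definitions. A right $\mO$-module in $\sSeq(\mC)_{\geq 1}$ with respect to the composition product is the data of an action map $\mu: \X \circ \mO \to \X$ satisfying associativity and unitality up to coherent homotopy. Using the formula from Remark \ref{dghfgh} (or Proposition \ref{weak}(2)) together with Lemma \ref{fghfghjjjl}, since $\X$ is concentrated in degree $\n$ we have $(\X \circ \mO)_\m = 0$ for $\m < \n$, and in degree $\n$ the only surviving term in the coproduct over $\bk$ is the $\bk = \n$ term with all blocks $\n_\bj$ singletons, contributing $\X_\n \ot (\mO_1)^{\ot \n} \simeq \X_\n \ot \tu^{\ot \n} \simeq \X_\n$ since $\mO_1 = \tu$. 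Thus the natural map $(\X \circ \triv)_\n \to (\X \circ \mO)_\n$ coming from the unit $\triv \to \mO$ is an equivalence in degree $\n$; in higher degrees $\X \circ \mO$ may be nonzero but $\X$ is not, so any module structure map $\mu$ is forced in every degree.

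The cleanest way to package this: the unit $\triv \to \mO$ of the $\infty$-operad induces a functor on right modules, and I claim that restricting a right $\mO$-module structure on $\X$ (concentrated in degree $\n$) to a right $\triv$-module structure is an equivalence of spaces of module structures. One direction is clear; for the other, one argues that the relevant forgetful functor $\RMod_\mO(\sSeq(\mC)_{\geq n}) \to \sSeq(\mC)_{\geq n}$, when restricted to objects concentrated in degree exactly $\n$, is fully faithful onto its image in the appropriate sense — equivalently, that the bar construction computing the free-forgetful comparison collapses because $\X \circ \mO^{\circ \bullet}$ agrees with $\X$ in degree $\n$ and vanishes below. Since a right $\triv$-module structure is no structure at all (Lemma \ref{leuma}, dualized to right modules), the space of right $\mO$-module structures on $\X$ is contractible.

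The main obstacle will be making the "module structure is forced in all degrees" argument precise at the level of the full simplicial coherence data, rather than just on the underlying action map. I expect the slickest route is to embed $\mC$ into a symmetric monoidal $\infty$-category compatible with small colimits (via Proposition \ref{embe} / Corollary \ref{cory}) so that the composition product is genuinely monoidal, then observe that the full subcategory $\sSeq(\mC)_{\geq \n}^{\leq \n}$ of symmetric sequences concentrated in degree exactly $\n$ is preserved under $(-) \circ \mO$ with the structure map an equivalence (by the degree count above, using $\mO_1 = \tu$ and Lemma \ref{fghfghjjjl}), so that $\mO$ acts on this subcategory through the augmentation $\mO \to \triv$ up to equivalence; then right $\mO$-modules concentrated in degree $\n$ coincide with right $\triv$-modules concentrated in degree $\n$, whose space of module structures over a fixed underlying object is contractible by Lemma \ref{leuma}. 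The passage back from $\mC'$ to $\mC$ is routine since all the relevant subcategories and the unit are preserved by the embedding.
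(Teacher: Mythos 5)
Your degree count on the underlying action map is correct, and you rightly single out the coherence data as the real difficulty; but the mechanism you propose to handle it fails. The full subcategory of symmetric sequences concentrated in degree exactly $\n$ is \emph{not} preserved by $(-)\circ\mO$: for $\m>\n$ the component $(\X\circ\mO)_\m$ receives contributions $\X_\n\ot\mO_{\n_1}\ot\ldots\ot\mO_{\n_\n}$ from decompositions of $\m$ into $\n$ nonempty blocks not all singletons, and these involve $\mO_{\geq2}$ and are in general nontrivial --- as you yourself observe two paragraphs earlier. So the claim that ``$\mO$ acts on this subcategory through the augmentation up to equivalence'' does not make sense as stated, and with it falls the proposed identification of right $\mO$-modules concentrated in degree $\n$ with right $\triv$-modules. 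The fallback via ``the bar construction collapses / the forgetful functor is fully faithful in the appropriate sense'' is also not enough as written: contractibility of the space of module structures is the statement that the fiber of $\RMod_\mO(\sSeq(\mC)_{\geq1})^\simeq\to\sSeq(\mC)_{\geq1}^\simeq$ over $\X$ is contractible, which requires both existence and uniqueness of the structure, not full faithfulness of a forgetful functor on some subcategory; likewise ``restriction to $\triv$-modules is an equivalence of spaces of structures'' is precisely the assertion to be proved, so ``one direction is clear'' carries no content.

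The paper resolves exactly the coherence problem you flag by passing to the endomorphism operad: since $\sSeq(\mC)_{\geq1}$ is presentable, the functor plugging $\X$ into a symmetric sequence admits a right adjoint $[\X,-]$, and the space of right $\mO$-module structures on $\X$ is the space of maps of $\infty$-operads $\mO\to[\X,\X]$ --- this packages all the simplicial coherences at once. The degree count then shows that $[\X,\X]_\bk$ is final for $\bk>1$ (because $\F_\bk(\Z)\circ\X$ is concentrated in degree $\bk\n>\n$) and that $[\X,\X]_1\simeq[\X_\n,\X_\n]^{\Sigma_\n}$, so operad maps into $[\X,\X]$ are associative algebra maps out of $\mO_1\simeq\tu$, the initial associative algebra. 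This is the precise form of your intuition that the action ``only sees $\mO_1$''; if you want to salvage your route, the statement you should actually prove is that the endomorphism operad $[\X,\X]$ is concentrated in degrees $\leq1$, at which point your argument becomes the paper's.
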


\begin{proof}	
	
%By ... we can reduce to the case that the symmetric monoidal structure on$\mC$ is compatible with small colimits.There is a symmetric monoidal right adjoint embedding 	of $\mC$ into a presentably symmetric monoidal $\infty$-category.So we can assume that $\mC$ is presentably symmetric monoidal.

As $\sSeq(\mC)_{\geq1} $ is presentable, the small colimits preserving functor $(-) \circ \X: \sSeq(\mC)_{\geq1}  \to \sSeq(\mC)_{\geq1} $ admits a right adjoint
$[\X,-].$
Thus the space of right $\mO$-module structures on $\X$ is equivalent to the space of maps of $\infty$-operads $\mO \to [\X,\X]$ in $\mC.$

For every $\bk \in \Sigma_{\geq1} $ the functor $\sSeq(\mC)_{\geq1} \to \mC$ evaluating at $\bk$ admits a left adjoint $\F_\bk$ sending $\Z$ to the symmetric sequence concentrated in degree $\bk$ with value $\Sigma_\bk \times \Z.$
Hence by adjointness we get the following:
If $\bk > 1$, we find that $$\mC(\Z, [\X,\X]_\bk) \simeq \sSeq(\mC)_{\geq1} (\F_\bk(\Z)\circ \X,\X)$$ 
is contractible so that $[\X,\X]_\bk$ is the final object.
Otherwise, we find that 
$$\mC(\Z, [\X,\X]_1) \simeq \sSeq(\mC)_{\geq1} (\F_1(\Z) \circ \X,\X) \simeq \mC^{\Sigma_\n}(\Z \ot \X_\n,\X_\n) \simeq \mC(\Z,[\X_\n,\X_\n]^{\Sigma_n}),$$
where $[\X_\n,\X_\n]$ is the internal hom in $\mC$,
and so $[\X,\X]_1 \simeq [\X_\n,\X_\n]^{\Sigma_n}$.
Using the localization $$ (-)_{\leq 1 } :\Op(\mC)^\nun  \to \Op(\mC)^\nun_{\leq 1 } =\Alg(\mC) $$
the space of maps of $\infty$-operads $\mO \to [\X,\X]$ in $\mC$
identifies with the space of maps $\mO_1 \to [\X_\n,\X_\n]^{\Sigma_n}$
of associative algebras in $\mC$.
But by assumption $\mO_1 \simeq \tu$ so that $\mO_1$ is the initial associative algebra in $\mC.$

\end{proof}

\subsection{Hopf $\infty$-operads}

Let $\mC$ be a symmetric monoidal $\infty$-category.
By \cite[Proposition 3.2.4.3.]{lurie.higheralgebra} the $\infty$-category $\Alg_{\bE_\infty}(\mC)$
carries a canonical symmetric monoidal structure such that the forgetful functor
$\Alg_{\bE_\infty}(\mC) \to \mC$ is symmetric monoidal.
%This follows from the fact that $\Cmon(\Cat_\infty)$ is preadditive (\cite[Proposition 3.2.4.7.]{lurie.higheralgebra}) so that every symmetric monoidal $\infty$-category is a commutative monoid in the $\infty$-category of symmetric monoidal $\infty$-categories that is preserved by taking $\bE_\infty$-algebras. 
So dually, the $\infty$-category $$\co\Alg_{\bE_\infty}(\mC):= \Alg_{\bE_\infty}(\mC^\op)^\op$$
carries a symmetric monoidal structure such that the forgetful functor
$\co\Alg_{\bE_\infty}(\mC) \to \mC$ is symmetric monoidal. 

If $\mC$ admits small colimits, $\co\Alg_{\bE_\infty}(\mC)$ admits small colimits
and the forgetful functor $\co\Alg_{\bE_\infty}(\mC) \to \mC$ preserves small colimits \cite[Corollary 3.2.2.5.]{lurie.higheralgebra}.
So we can make the following definition:

\begin{definition}Let $\mC$ be a symmetric monoidal $\infty$-category $\mC$ having small colimits.

A Hopf $\infty$-operad in $\mC$ is an $\infty$-operad in $\co\Alg_{\bE_\infty}(\mC)$.
\end{definition}

\begin{notation}
We set $$ \Op_\Hopf(\mC):= \Op(\co\Alg_{\bE_\infty}(\mC)), \ \Op_\Hopf^\nun(\mC):= \Op(\co\Alg_{\bE_\infty}(\mC))^\nun.$$
	
\end{notation}

The symmetric monoidal forgetful functor $ \co\Alg_{\bE_\infty}(\mC) \to \mC $ yields forgetful functors $$\Op_\Hopf(\mC) \to \Op(\mC), \ \Op^\nun_\Hopf(\mC) \to \Op(\mC)^\nun.$$

%If the symmetric monoidal structure on $\mC$ is compatible with small colimits, there is a unique left adjoint symmetric monoidal functor $\mS \to \mC$that lifts to a functor $ \mS \to \co\Alg_{\bE_\infty}(\mC)$ and so gives rise to a functor $\Op(\mS) \to \Op_{\Hopf}(\mC).$

%Simlarly, if the symmetric monoidal structure on $\mC$ is compatible with finite coproducts, there is a unique left adjoint symmetric monoidal functor from finite sets to $ \mC$that lifts to $\co\Alg_{\bE_\infty}(\mC)$ and so gives rise to a functor from operads in finite sets to Hopf operads in $\mC.$
\begin{remark}
Since $\co\Alg_{\bE_\infty}(\mC)$ admits a final object lying over the tensor unit of $\mC,$
by Lemma \ref{dfghjvbn} the $\infty$-categories $ \Op_\Hopf(\mC), \ \Op^\nun_\Hopf(\mC)$ admit a final object lying over the constant symmetric sequence (concentrated in positive degrees) with value the tensor unit.
\end{remark}

\vspace{1mm}

\begin{definition}
Let $\mC$ be a symmetric monoidal $\infty$-category $\mC$ having small colimits.

\vspace{1mm}	
The commutative $\infty$-operad $\Comm_\mC$ in $\mC$
is the final object of $ \Op_\Hopf(\mC)$.

\vspace{1mm}
The non-unital commutative $\infty$-operad $\Comm^\nun_\mC$ in $\mC$
is the final object of $ \Op^{\nun}_\Hopf(\mC)$.

\end{definition}

\begin{definition}

Let $\mC$ be a symmetric monoidal $\infty$-category $\mC$ having small limits.

\vspace{1mm}
The cocommutative $\infty$-cooperad $\coComm_\mC$ in $\mC$
is $\Comm_{\mC^\op}$.

\vspace{1mm}

The non-counital cocommutative $\infty$-cooperad $\coComm^\nun_\mC$ in $\mC$
is $\Comm^\nun_{\mC^\op}$.

\end{definition}

%\vspace{2mm}By Remark \ref{dfghjvbn} 2. the non-unital $\infty$-operad $\Comm^\mathrm{nu} $ and the non-counital $\infty$-cooperad  $\coComm^\mathrm{ncu} $ admit a canonical augmentation, coaugmentation, respectively, if the symmetric monoidal structure on $\mC$ is compatible with the zero object.

%Every Hopf $\infty$-operad $\mH$ in $\mC$ endows its $\infty$-category of algebras $\Alg_{\mH}(\mC)$ with a symmetric monoidal structure such that the forgetful functor $\Alg_\mH(\mC) \to \mC$ is symmetric monoidal (proposition \ref{Hoopf}).

\begin{lemma}

Let $\mC$ be a symmetric monoidal $\infty$-category $\mC$ that admits small colimits and $\mO$ a Hopf $\infty$-operad in $\mC$ such that the unique map
$\mO_0 \to \tu$ of cocommutative coalgebras in $\mC$ is an equivalence.
The fiber product $$\mO^\nun:=\Comm_\mC^\nun \times_{\Comm_\mC} \mO$$
in $\Hopf(\mC)$ exists and is preserved by the forgetful functor $\Hopf(\mC) \to \Op(\mC) $.

\end{lemma}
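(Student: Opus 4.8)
The plan is to reduce the fiber product to an arity-by-arity computation on underlying symmetric sequences, where the hypothesis $\mO_0\simeq\tu$ makes every pullback that occurs a pullback along an equivalence. For the setup: since $\mC$ admits small colimits, $\coAlg_{\bE_\infty}(\mC)$ is a symmetric monoidal $\infty$-category admitting small colimits, and its tensor unit $\tu$ is also its final object (being the unit, hence initial, $\bE_\infty$-algebra of $\mC^\op$). By the remark preceding the definition of $\Comm_\mC$, the underlying symmetric sequence of $\Comm_\mC$ in $\coAlg_{\bE_\infty}(\mC)$ is constant with value $\tu$, that of $\Comm^\nun_\mC$ is concentrated in positive degrees with value $\tu$, and the maps $\mO\to\Comm_\mC$, $\Comm^\nun_\mC\to\Comm_\mC$ are the essentially unique maps to the final object of $\Op_\Hopf(\mC)=\Op(\coAlg_{\bE_\infty}(\mC))$.

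First I would apply Lemma \ref{dfghjvbn} with $\coAlg_{\bE_\infty}(\mC)$ in place of $\mC$: it reduces the existence of the fiber product in $\Op_\Hopf(\mC)$ to the existence of the limit of the cospan $\Comm^\nun_\mC\to\Comm_\mC\leftarrow\mO$ of symmetric sequences in $\coAlg_{\bE_\infty}(\mC)$, and guarantees that the forgetful functor $\Op_\Hopf(\mC)\to\sSeq(\coAlg_{\bE_\infty}(\mC))$ preserves this limit. Limits of symmetric sequences are computed in each arity, so I compute the pullback arity by arity. In arity $\ell\geq1$ the left leg is the unique endomorphism $\id_\tu$ of the final object $\tu$, so the pullback is $\mO_\ell$; in arity $0$ the right leg $\mO_0\to\tu$ is an equivalence by hypothesis, so the pullback is the initial object of $\coAlg_{\bE_\infty}(\mC)$. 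Each such pullback exists (being taken along an equivalence), so the cospan has a limit, and hence $\mO^\nun:=\Comm^\nun_\mC\times_{\Comm_\mC}\mO$ exists in $\Op_\Hopf(\mC)$ with underlying symmetric sequence this arity-wise pullback; in particular $\mO^\nun$ is non-unital and its positive part coincides with that of $\mO$.

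For preservation under $\Op_\Hopf(\mC)\to\Op(\mC)$, observe that each arity-wise pullback above is taken along an equivalence and hence is preserved by the symmetric monoidal forgetful functor $U\colon\coAlg_{\bE_\infty}(\mC)\to\mC$; thus $\sSeq(U)$ takes the cospan of symmetric sequences above, and its limit, to a cospan with limit in $\sSeq(\mC)$, and by Lemma \ref{dfghjvbn} applied to $\mC$ the fiber product of the images of $\Comm^\nun_\mC\to\Comm_\mC\leftarrow\mO$ in $\Op(\mC)$ exists and is computed on underlying symmetric sequences. A short diagram chase in the square of forgetful functors to symmetric sequences — using that its vertical functors are conservative and take these limits to limits (Lemma \ref{dfghjvbn}) while its bottom functor $\sSeq(U)$ preserves the limit in play — then identifies the canonical comparison map from the image of $\mO^\nun$ in $\Op(\mC)$ with the fiber product of the image cospan, as desired.

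I do not anticipate a genuine obstacle: the hypothesis $\mO_0\simeq\tu$ is exactly what makes every pullback that appears absolute (a pullback along an equivalence), which trivializes both existence and preservation; the only thing requiring care is keeping the two applications of Lemma \ref{dfghjvbn} — to $\coAlg_{\bE_\infty}(\mC)$ and to $\mC$ — and the two "underlying symmetric sequence" forgetful functors straight in the diagram chase.
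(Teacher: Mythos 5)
Your proof is correct and follows essentially the same route as the paper: reduce via Lemma \ref{dfghjvbn} to an arity-wise pullback in $\coAlg_{\bE_\infty}(\mC)$, and observe that in each arity one leg of the cospan is an equivalence ($\Comm^\nun_\mC\to\Comm_\mC$ in positive arities, $\mO_0\to\tu$ in arity $0$), so the pullback is absolute. The only cosmetic difference is that the paper verifies existence and preservation of the arity-wise coalgebra pullbacks via Lurie's criterion (\cite[Proposition 3.1.3.3.]{lurie.higheralgebra}, checking that the tensor powers of the comparison map are equivalences), whereas you argue directly that a pullback along an equivalence is preserved by any functor; both rest on the same observation.
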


\begin{proof}
The fiber product $$\Comm_\mC^\nun \times_{\Comm_\mC} \mO$$
in $\Op(\mC)$ exists and is preserved by the forgetful functor $\Op(\mC) \to \sSeq(\mC)$
by Lemma \ref{dfghjvbn}.
So it is enough to see that the fiber product $\Comm_\mC^\nun \times_{\Comm_\mC} \mO$
in $\Hopf(\mC)$ exists and is preserved by the forgetful functor $\Hopf(\mC) \to \sSeq(\mC) $.
Since every fiber product in $\sSeq(\co\Alg_{\bE_\infty}(\mC))$ of maps in $\Hopf(\mC)$ lifts to a fiber product in $\Hopf(\mC)$ by Lemma \ref{dfghjvbn}, it remains to prove that the fiber product $\Comm_\mC^\nun \times_{\Comm_\mC} \mO$
in $\sSeq(\co\Alg_{\bE_\infty}(\mC))$ exists and is preserved by the forgetful functor $\sSeq(\co\Alg_{\bE_\infty}(\mC)) \to \sSeq(\mC)$.
For that it is enough to show that for every $\n \geq 0$ the fiber product $(\Comm_\mC^\nun)_\n \times_{(\Comm_\mC)_\n} \mO_\n$
in $\co\Alg_{\bE_\infty}(\mC)$ exists and is preserved by the forgetful functor $\co\Alg_{\bE_\infty}(\mC) \to \mC$.
This holds by \cite[Proposition 3.1.3.3.]{lurie.higheralgebra} if and only if for every $\ell \geq 0$ the morphism
$$ ((\Comm_\mC^\nun)_\n \times_{(\Comm_\mC)_\n} \mO_\n)^{\ot\ell} \to ((\Comm_\mC^\nun)_\n)^{\ot\ell} \times_{((\Comm_\mC)_\n)^{\ot\ell}} (\mO_\n)^{\ot\ell} $$
is an equivalence.
The latter map trivially is an equivalence since $\Comm_\mC^\nun \to \Comm_\mC$ induces an equivalence at level
$\n$ for all $\n >0$ and $ \mO \to \Comm_\mC$ induces an equivalence at level
$0$.
\end{proof}

There is the following relationship between $\mO $ and $ \mO^\nun$-algebras:

\begin{proposition}\label{fghfghjok}
Let $\mC$ be a preadditive symmetric monoidal $\infty$-category that has small colimits.
\begin{enumerate}
\item There is a canonical embedding
$\Alg_{\mO^\nun}(\mC) \hookrightarrow \Alg_{\mO}(\mC)_{/\tu}$
sending $\X $ to $\X \oplus\tu.$

\item If $\mC$ admits fibers, 
the embedding $\Alg_{\mO^\nun}(\mC) \hookrightarrow \Alg_{\mO}(\mC)_{/\tu} $
admits a right adjoint that takes the fiber of the augmentation. 

\item The embedding $ \Alg_{\mO^\nun}(\mC) \hookrightarrow \Alg_{\mO}(\mC)_{/\tu} $ is an equivalence if $\mC$ is stable or idempotent complete
and additive.

\end{enumerate}

\end{proposition}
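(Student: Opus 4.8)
The plan is to exhibit the functor $\X\mapsto\X\oplus\tu$ as the operadic induction along the canonical map $\iota\colon\mO^\nun\to\mO$ and to study it through the bar construction, after first reducing to a convenient $\mC$. Using Corollary~\ref{cory} (or Proposition~\ref{embe}) I would embed $\mC$ symmetric monoidally into a preadditive $\mC'$ that is compatible with small colimits and admits small limits, with the embedding preserving the zero object; when $\mC$ is stable, resp.\ additive and idempotent complete, one arranges $\mC'$ to retain this. Since $\Alg_{\mO^\nun}(-)$ and $\Alg_\mO(-)_{/\tu}$ send this embedding to fully faithful functors whose essential images are detected on underlying objects of $\mC$, and since $\X\mapsto\X\oplus\tu$ is compatible with these inclusions, it suffices to prove (1) and (3) for $\mC'$, while (2) already assumes $\mC$ has fibers. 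So I may assume $\mC$ is compatible with small colimits and has small limits, so that $\sSeq(\mC)$ carries its composition product monoidal structure, $\Alg_\mO(\mC)=\LMod_\mO(\mC)$, and $\mC$ has fibers.

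\emph{Construction of $F$ (part (1), first half).} Let $\iota\colon\mO^\nun\to\mO$ be the projection from $\mO^\nun=\Comm^\nun_\mC\times_{\Comm_\mC}\mO$; it is an equivalence in positive arities and is $0\to\tu$ in arity $0$. Restriction $\iota^{*}\colon\Alg_\mO(\mC)\to\Alg_{\mO^\nun}(\mC)$ has a left adjoint $\iota_{!}=\mO\circ_{\mO^\nun}(-)$, the operadic bar construction, with $\iota_{!}(\mO^\nun\circ\Y)\simeq\mO\circ\Y$. By Lemma~\ref{lek} the $\infty$-category $\Alg_{\mO^\nun}(\mC)$ has a zero object $\mathbf 0$ over $0\in\mC$, and $\iota_{!}\mathbf 0$ is the initial $\mO$-algebra $\tu$ (over $\mO_0=\tu$). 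Composing $\iota_{!}$ with the maps $\iota_{!}\X\to\iota_{!}\mathbf 0=\tu$ induced by the terminal maps $\X\to\mathbf 0$ yields $F\colon\Alg_{\mO^\nun}(\mC)\to\Alg_\mO(\mC)_{/\tu}$. Writing $\X$ as the realization of its bar resolution, using that $\Alg_\mO(\mC)\to\mC$ preserves geometric realizations (Remark~\ref{imp}) and that $\mO\circ\Y\simeq\tu\oplus(\mO^\nun\circ\Y)$ by preadditivity and $\mO_0=\tu$ (cf.\ Remark~\ref{dghfgh}), one finds that $\iota_{!}\X$ has underlying object $\tu\oplus\X$ and that the augmentation $\iota_{!}\X\to\tu$ is the projection; thus $F\simeq(\X\mapsto\X\oplus\tu)$.

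\emph{The right adjoint (parts (1) and (2)).} For $\X\in\Alg_{\mO^\nun}(\mC)$ and $(A\xrightarrow{\epsilon}\tu)\in\Alg_\mO(\mC)_{/\tu}$, the adjunction $\iota_{!}\dashv\iota^{*}$ identifies $\Map_{\Alg_\mO(\mC)_{/\tu}}(F\X,(A,\epsilon))$ with the fibre of $\Map_{\Alg_{\mO^\nun}}(\X,\iota^{*}A)\to\Map_{\Alg_{\mO^\nun}}(\X,\iota^{*}\tu)$ over the adjoint of the augmentation; by naturality of the unit of $\iota_{!}\dashv\iota^{*}$ this adjoint is $\X\to\mathbf 0\to\iota^{*}\tu$, hence the zero morphism. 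As $\Alg_{\mO^\nun}(\mC)$ has all the limits that $\mC$ has and they are computed on underlying objects (Remark~\ref{imp}), this fibre is $\Map_{\Alg_{\mO^\nun}}(\X,\fib(\iota^{*}A\to\iota^{*}\tu))$, so $F$ has a right adjoint $G(A,\epsilon)=\fib(A\xrightarrow{\epsilon}\tu)$ with its natural $\mO^\nun$-algebra structure — this gives (2). For (1) it remains to see the unit $\X\to GF\X$ is an equivalence: $GF\X=\fib(\iota^{*}\iota_{!}\X\to\iota^{*}\tu)$ has underlying object $\fib(\tu\oplus\X\xrightarrow{\pr}\tu)\simeq\X$, and the unit becomes this canonical identification after the conservative forgetful functor to $\mC$ (Lemma~\ref{comon}); hence $F$ is fully faithful.

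\emph{Part (3), and the main difficulty.} Suppose in addition $\mC$ is stable, or additive and idempotent complete. The unique map $s\colon\tu\to A$ from the initial $\mO$-algebra is a section of any augmentation $\epsilon\colon A\to\tu$, since $\epsilon s$ is an endomorphism of the initial object. In a stable $\mC$ the split fibre sequence $\bar A\to A\xrightarrow{\epsilon}\tu$ forces the counit $FG(A,\epsilon)\simeq\tu\oplus\bar A\to A$ to be an equivalence in $\mC$; if $\mC$ is additive and idempotent complete the idempotent $s\epsilon$ splits $A$ as $\tu\oplus\bar A$ compatibly. By conservativity the counit is an equivalence, so the fully faithful $F$ is essentially surjective, hence an equivalence — this is (3). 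I expect the fussiest points to be making the construction of $F$ genuinely functorial (pointwise it is just the classical unitalization $\X\mapsto\X\oplus\tu$, but the underlying object and the augmentation must be tracked through the bar construction), and checking in the reduction step that $\mC'$ can be chosen to inherit precisely the extra hypotheses needed in (3).
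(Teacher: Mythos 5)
Your proposal is correct and follows essentially the same route as the paper: the embedding is operadic induction $\mO\circ_{\mO^\nun}(-)$ along $\mO^\nun\to\mO$, its underlying object is identified as $\X\oplus\tu$ by checking on free algebras and extending over the bar resolution by sifted colimits, the right adjoint is the fiber of the augmentation, and fully faithfulness is read off from the unit after passing to the conservative forgetful functor. The one place you genuinely diverge is the additive, idempotent-complete case of (3): the paper embeds $\mC$ symmetric monoidally into a stable $\mD$ closed under finite products and retracts and deduces essential surjectivity from the stable case by retract-closure, whereas you split the idempotent $s\epsilon$ directly in $\mC$ to produce the fiber of the augmentation and check the counit on underlying objects. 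Your variant is slightly more self-contained (no auxiliary stable envelope for this step), at the cost of having to verify that the complementary summand of the split idempotent really computes the $\infty$-categorical fiber $A\times_\tu 0$ and that the counit $FG(A)\to A$ restricts to the canonical comparison map on underlying objects --- both of which hold and which you gesture at with ``compatibly,'' but which deserve a sentence each in a final write-up.
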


\begin{proof}

The forgetful functor
$\Alg_{\mO}(\mC) \to \Alg_{\mO^\nun}(\mC) $ admits a left adjoint $\F:=\mO \circ_{\mO^{\mathrm{nu}}} (-).$ 
For every $\mO^{\mathrm{nu}}$-algebra $\X$ in $\mC$ the unit
$ \X \to \F(\X) $ of the adjunction and the unit $\tu \to \F(\X) $ 
of the $\mO$-algebra give rise to a morphism $\theta_\X: \X \oplus \tu \to \F(\X)$ in $\mC,$ which determines a natural transformation $\theta: (-)\oplus \tu \to \F$ of functors $\Alg_{\mO^\nun}(\mC) \to \mC$.
For every $\Y \in \mC$ the morphism $$\theta_{\mO^\nun \circ \Y}:
(\mO^\nun \circ \Y) \oplus \tu \to \F(\mO^\nun \circ \Y) \simeq \mO \circ \Y$$
is the canonical equivalence. Because source and target of $\theta$ preserve sifted colimits and every $\mO^\nun$-algebra is a geometric realization of free $\mO^\nun$-algebras, $\theta$ is an equivalence.

Since $\mC$ is compatible with the initial object, $\mO^\nun \circ (-): \mC \to \mC$ preserves the zero object and so $ \Alg_{\mO^\nun}(\mC)$ admits a zero object that lies over the zero object of $ \mC.$
So $\F$ lifts to a functor $\bar{\F}:\Alg_{\mO^\nun}(\mC) \to \Alg_{\mO}(\mC)_{/ \tu}$ that factors through the full subcategory $\Alg_{\mO}(\mC)_{/ \tu}' \subset \Alg_{\mO}(\mC)_{/ \tu} $ of augmented algebras whose augmentation admits a fiber in $\mC.$

The functor $\bar{\F}$ is left adjoint to the functor
$$ \Gamma:\Alg_{\mO}(\mC)_{/ \tu}' \to \Alg_{\mO^\nun}(\mC)$$ that takes the fiber of the augmentation in $\Alg_{\mO^\nun}(\mC).$
For any $\X \in \Alg_{\mO^\nun}(\mC)$ the unit
$$ \X \to \Gamma(\bar{\F}(\X)) \simeq 0 \times_{\F(0)} \F(\X) \simeq 0 \times_{(0 \oplus \tu)} (\X \oplus \tu) $$ is the canonical equivalence so that $\F$ is fully faithful. 
If $\mC$ is stable, the right adjoint exists and is conservative since
equivalences can be tested on the fibers. So in this case $\bar{\F}$ is an equivalence.
By Corollary \ref{cory} there is a symmetric monoidal embedding
$\mC \subset \mD $ into a  stable symmetric monoidal $\infty$-category $\mD$ such that $\mC$ is closed in $\mD$ under finite products and retracts. We obtain a commutative square
\begin{align*} 
\begin{xy}
\xymatrix{
\Alg_{\mO^\nun}(\mC)  \ar[d] 
\ar[r]^{\bar{\F}}  
& \Alg_{\mO}(\mC)_{/ \tu}  \ar[d]
\\
\Alg_{\mO^\nun}(\mD) \ar[r]^{\bar{\F}} & \Alg_{\mO}(\mD)_{/ \tu},
}
\end{xy} \end{align*}
where the vertical functors are fully faithful and the bottom functor is an equivalence.
Since $\mC \subset \mD$ is closed under retracts, $\A \in \Alg_{\mO^\nun}(\mD) $ belongs to
$ \Alg_{\mO^\nun}(\mC) $ if its image $ \bar{\F}(\A) \simeq \A \oplus \tu $ belongs to $ \Alg_{\Comm}(\mC)_{/ \tu} $. So the top functor is an equivalence.

\end{proof} 

\begin{corollary}\label{fghjok}
Let $\mC$ be a preadditive symmetric monoidal $\infty$-category that admits small limits.

\begin{enumerate}
\item There is a canonical embedding
$\co\Alg_{\coComm^\mathrm{nu}}(\mC) \hookrightarrow \co\Alg_{\coComm}(\mC)_{\tu/}$
sending $\X $ to $\X \oplus\tu.$

\item If $\mC$ admits cofibers, 
the embedding $\co\Alg_{\coComm^\mathrm{nu}}(\mC) \hookrightarrow \co\Alg_{\coComm}(\mC)_{\tu/}$
admits a left adjoint that takes the cofiber of the coaugmentation. 

\item The embedding $\co\Alg_{\coComm^\mathrm{nu}}(\mC) \hookrightarrow \co\Alg_{\coComm}(\mC)_{\tu/}$ is an equivalence if $\mC$ is stable or idempotent complete and additive.

\end{enumerate}

\end{corollary}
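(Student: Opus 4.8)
The plan is to deduce the statement from Proposition \ref{fghfghjok} by passing to the opposite $\infty$-category. Preadditivity, symmetric monoidality, idempotent completeness and additivity are all self-dual, and $\mC$ admits small limits, so $\mC^\op$ is a preadditive symmetric monoidal $\infty$-category that admits small colimits; hence Proposition \ref{fghfghjok} applies to $\mC^\op$. I take for the Hopf $\infty$-operad in $\mC^\op$ the commutative $\infty$-operad $\mO := \Comm_{\mC^\op}$, so that by definition $\coComm_\mC = \Comm_{\mC^\op}$ and $\coComm^\nun_\mC = \Comm^\nun_{\mC^\op}$. The running hypothesis of the lemma preceding Proposition \ref{fghfghjok} holds here: the underlying symmetric sequence of $\Comm_{\mC^\op}$ is the constant one with value the tensor unit, so $(\Comm_{\mC^\op})_0 = \tu$ and the map $\mO_0 \to \tu$ is an equivalence; moreover the right leg of the defining pullback $\mO^\nun = \Comm^\nun_{\mC^\op} \times_{\Comm_{\mC^\op}} \mO$ is the identity, whence $\mO^\nun \simeq \Comm^\nun_{\mC^\op}$.

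First I would apply Proposition \ref{fghfghjok}(1) to $\mC^\op$, obtaining a canonical embedding $\Alg_{\Comm^\nun_{\mC^\op}}(\mC^\op) \hookrightarrow \Alg_{\Comm_{\mC^\op}}(\mC^\op)_{/\tu}$ sending $\X$ to $\X \oplus \tu$. Passing to opposite $\infty$-categories and using the identifications $\coAlg_\mQ(\mC) = \Alg_\mQ(\mC^\op)^\op$ and $(\mD_{/\A})^\op \simeq (\mD^\op)_{\A/}$, together with the self-duality of the biproduct $\oplus$ in a preadditive $\infty$-category, this becomes precisely the embedding $\coAlg_{\coComm^\nun}(\mC) \hookrightarrow \coAlg_{\coComm}(\mC)_{\tu/}$, $\X \mapsto \X \oplus \tu$, of part (1).

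For part (2) I would observe that $\mC$ admits cofibers exactly when $\mC^\op$ admits fibers, and then invoke Proposition \ref{fghfghjok}(2) for $\mC^\op$: it yields a right adjoint to the embedding over $\mC^\op$ computing the fiber of the augmentation, and since passing to opposite functors turns a right adjoint into a left adjoint, this dualizes to a left adjoint of the embedding over $\mC$ computing the cofiber of the coaugmentation. For part (3), since both ``stable'' and ``idempotent complete and additive'' are self-dual conditions, $\mC^\op$ satisfies them iff $\mC$ does, so Proposition \ref{fghfghjok}(3) shows the embedding over $\mC^\op$ --- hence its opposite, the embedding over $\mC$ --- is an equivalence. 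The only genuine work is the bookkeeping of the opposite-category dictionary (over versus under slices, which adjoint is which, limits versus colimits, fibers versus cofibers) and checking that $\Comm_{\mC^\op}$ meets the hypothesis $\mO_0 \simeq \tu$; there is no real obstacle beyond keeping these dualities straight.
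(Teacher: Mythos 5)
Your proposal is correct and is exactly the intended argument: the paper states this as a corollary of Proposition \ref{fghfghjok} with no separate proof, precisely because it follows by applying that proposition to $\mC^\op$ with $\mO = \Comm_{\mC^\op}$ (so $\coComm_\mC = \Comm_{\mC^\op}$ and $\coComm^\nun_\mC = \Comm^\nun_{\mC^\op}$ by definition) and passing to opposite $\infty$-categories. Your bookkeeping of the duality dictionary, including the check that $\mO_0 \simeq \tu$ and the self-duality of the hypotheses in (3), is exactly what is needed.
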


The next proposition is \cite[Theorem 8.9.]{Mon}:

\begin{proposition}\label{Hoopf}
	
Let $\mC$ be a symmetric monoidal $\infty$-category compatible with small colimits
and $\mO$ a Hopf $\infty$-operad in $\mC$.
Then $\Alg_\mO(\mC) $ carries a canonical symmetric monoidal structure such that the forgetful functor
$\Alg_\mO(\mC) \to \mC$ is symmetric monoidal.
	
\end{proposition}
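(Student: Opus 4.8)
The statement refines the classical facts that, for a symmetric monoidal $\infty$-category $\mC$, the $\infty$-categories $\Alg_{\bE_\infty}(\mC)$, $\Alg_{\bE_\n}(\mC)$ and $\Alg(\mC)$ are symmetric monoidal over $\mC$ (each of $\bE_\infty$, $\bE_\n$, $\Ass$ being a Hopf $\infty$-operad), the new input being the cocommutative coalgebra structure on the components of $\mO$. The plan is to realize $\Alg_\mO(\mC)$ as the $\infty$-category of algebras over an \emph{oplax symmetric monoidal} monad on $\mC$ and then to apply the $\infty$-categorical form of the classical principle (which one would also need to establish or quote) that, for such a monad $\T$, the $\infty$-category $\LMod_\T(\mC)$ is symmetric monoidal with symmetric monoidal forgetful functor: the tensor product of $\T$-algebras $\A$, $\B$ is $\A\ot\B$ with action $\T(\A\ot\B)\to\T\A\ot\T\B\to\A\ot\B$, the first map being the oplax structure map, and symmetry of that structure making this a symmetric monoidal structure. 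Writing $\mO$ again for the underlying $\infty$-operad of the Hopf $\infty$-operad $\mO$, Remark~\ref{monad} identifies $\Alg_\mO(\mC)$ with $\LMod_{\mO\circ(-)}(\mC)$, where $(\mO\circ(-))(\X)\simeq\coprod_{\bk\ge0}(\mO_\bk\ot\X^{\ot\bk})_{\Sigma_\bk}$; everything thus reduces to refining the monad $\mO\circ(-)$ to an oplax symmetric monoidal monad, naturally in $\mO$.

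For the refinement I would exploit the two monoidal structures on $\sSeq(\mC)$: besides the composition product $\circ$, the arity-wise tensor product $\bar\ot$ given by $(\X\bar\ot\Y)_\bk\simeq\X_\bk\ot\Y_\bk$, which is symmetric and satisfies $\coAlg_{\bE_\infty}(\sSeq(\mC),\bar\ot)\simeq\sSeq(\coAlg_{\bE_\infty}(\mC))$. A direct reshuffling argument shows that $\circ$ is \emph{oplax monoidal} for $\bar\ot$: there is a natural map $(\X\bar\ot\X')\circ(\Y\bar\ot\Y')\to(\X\circ\Y)\bar\ot(\X'\circ\Y')$ and a map $\triv\to\mathbb{1}_{\bar\ot}$, built only from the symmetry of $\ot$ and the coinvariant quotients $(-)_{\Sigma_\bk}\to(-)_{\Sigma_\bk\times\Sigma_\bk}$ — crucially, no diagonal on objects of $\mC$ is needed. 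Hence $\circ$ descends to $\bar\ot$-comonoids, i.e. the composition product passes to $\sSeq(\coAlg_{\bE_\infty}(\mC))$, and unwinding $\Op_\Hopf(\mC)=\Op(\coAlg_{\bE_\infty}(\mC))$ shows that a Hopf $\infty$-operad is precisely a $\circ$-monoid in $\sSeq(\coAlg_{\bE_\infty}(\mC))$, that is, a cocommutative $\bar\ot$-bialgebra in $(\sSeq(\mC);\circ,\bar\ot)$. By the same reshuffling, the left action of $\sSeq(\mC)$ on $\mC$ from Construction~\ref{leftact} is oplax monoidal for the pair $(\bar\ot,\ot_\mC)\rightsquigarrow\ot_\mC$. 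Feeding the cocommutative $\bar\ot$-bialgebra $\mO$ into this action, the comultiplications $\mO_\bk\to\mO_\bk\ot\mO_\bk$ and counits $\mO_\bk\to\unit$ assemble into the desired oplax symmetric monoidal structure on the monad $\mO\circ(-)$; cocommutativity of the coalgebras $\mO_\bk$ makes it symmetric, and compatibility with the monad unit and multiplication is exactly the assertion that the operad unit $\triv\to\mO$ and the operad composition of $\mO$ are maps of $\bar\ot$-comonoids — which is the content of $\mO$ being an $\infty$-operad in $\coAlg_{\bE_\infty}(\mC)$ rather than merely a symmetric sequence of coalgebras.

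The main obstacle is organizing all of this coherently at the $\infty$-categorical level, since ``oplax symmetric monoidal monad'' must be turned into an algebra over a genuine monoidal $\infty$-category. One way: form $\Fun^{\otimes\text{-opl},\L}(\mC,\mC)$, the colimit-preserving oplax symmetric monoidal endofunctors, identified with $\bE_\infty$-coalgebras for a Day-type symmetric monoidal structure on $\Fun^\L(\mC,\mC)$ over which composition is again oplax monoidal; then construct a monoidal functor $\sSeq(\coAlg_{\bE_\infty}(\mC))\to\Fun^{\otimes\text{-opl},\L}(\mC,\mC)$ (composition products on both sides) refining $\X\mapsto\X\circ(-)$, and apply $\Alg(-)$ to send $\mO$ to $\mO\circ(-)$ with its oplax symmetric monoidal monad structure; the general lemma quoted above then finishes the proof. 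A uniform device is that forming the composition product and its left action is functorial in colimit-preserving symmetric monoidal functors (Proposition~\ref{weak}(3), Remark~\ref{remf}); applied to the colimit-preserving symmetric monoidal forgetful functor $\U\colon\coAlg_{\bE_\infty}(\mC)\to\mC$ — which exhibits $\mC$ as a commutative algebra object of $\Mod_{\coAlg_{\bE_\infty}(\mC)}(\Cat_\infty^{\rc\rc})$ via the symmetric monoidal bifunctor $(c,m)\mapsto\U(c)\ot m$ — this produces the left action of $\sSeq(\coAlg_{\bE_\infty}(\mC))$ on $\mC$ together with the oplax symmetric monoidal enhancements, from which the symmetric monoidal structure on $\Alg_\mO(\mC)$ and the symmetric monoidality of the forgetful functor $\Alg_\mO(\mC)\to\mC$ are read off.
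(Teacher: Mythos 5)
The paper does not prove this proposition at all: it is quoted from \cite[Theorem 8.9.]{Mon}, so there is no in-text argument to compare yours against. Your strategy --- identify $\Alg_\mO(\mC)$ with algebras over the monad $\mO\circ(-)$ via Remark \ref{monad}, refine that monad to an oplax symmetric monoidal monad using the arity-wise tensor product $\bar{\ot}$ on $\sSeq(\mC)$ and the cocommutative coalgebra structures on the $\mO_\bk$, and invoke the principle that algebras over an oplax symmetric monoidal monad form a symmetric monoidal category with symmetric monoidal forgetful functor --- is the standard one, and at the level of ordinary categories it is a complete and correct proof. You have also correctly located where the difficulty sits.

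The genuine gap is that in the $\infty$-categorical setting the coherence problem you flag as ``the main obstacle'' is not bookkeeping to be organized afterwards but the entire mathematical content of the statement, and your sketch does not close it. Concretely: (i) ``a direct reshuffling argument shows that $\circ$ is oplax monoidal for $\bar{\ot}$'' produces the binary structure maps, but an oplax monoidal structure on $\circ$ (equivalently a lift of the composition product to $\bar{\ot}$-comonoids) is an infinite hierarchy of compatibilities that the reshuffling does not by itself supply; and the identification of $\Op(\coAlg_{\bE_\infty}(\mC))$ --- where the paper's composition product is built from Day convolution on $\sSeq(\coAlg_{\bE_\infty}(\mC))$ --- with cocommutative $\bar{\ot}$-bialgebras in $(\sSeq(\mC);\circ,\bar{\ot})$ is itself something to prove. (ii) The lemma that $\LMod_\T(\mC)$ is symmetric monoidal for an oplax symmetric monoidal monad $\T$ first needs a precise $\infty$-categorical formulation ($\T$ as an associative algebra in a monoidal $\infty$-category of oplax symmetric monoidal endofunctors, plus the construction of the tensor product on its modules); this is a theorem on the order of the one being proved and cannot simply be quoted. (iii) ``cocommutativity of the coalgebras $\mO_\bk$ makes it symmetric'' again reads a coherent $\bE_\infty$-structure off from $1$-categorical data. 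A cleaner route to the coherences, which your final paragraph is circling, is to exploit that the symmetric monoidal structure on $\coAlg_{\bE_\infty}(\mC)$ is cartesian (\cite[Proposition 3.2.4.7.]{lurie.higheralgebra}, used elsewhere in this paper), so that oplax structures on product-preserving functors come for free, and to descend the free $\mO$-algebra monad on $\coAlg_{\bE_\infty}(\mC)$ along the symmetric monoidal, colimit-preserving forgetful functor to $\mC$. As written, your proposal is a correct blueprint rather than a proof.
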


We obtain the following relationship between $\Comm$-algebras and $\bE_\infty$-algebras:

\begin{proposition}\label{compara}

Let $\mC$ be a symmetric monoidal $\infty$-category that admits small colimits.
There is a canonical equivalence over $\mC:$
$$\Alg_{\Comm}(\mC) \simeq \Alg_{\bE_\infty}(\mC).$$ 

\end{proposition}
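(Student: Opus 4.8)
The plan is to realize both sides as monadic over $\mC$ and then to identify the two monads. Since a monadic functor to $\mC$ is determined, up to equivalence over $\mC$, by its monad, it will be enough to show that the forgetful functors $\Alg_{\Comm_\mC}(\mC)\to\mC$ and $\Alg_{\bE_\infty}(\mC)\to\mC$ are monadic and that the monad $\T_{\Comm_\mC}=\Comm_\mC\circ(-)$ of the Hopf $\infty$-operad $\Comm_\mC$ (Remark \ref{monad}) agrees, as an associative algebra in $(\Fun(\mC,\mC),\circ)$, with the free $\bE_\infty$-algebra monad $\Sym$ on $\mC$; any such identification then yields the asserted equivalence over $\mC$.

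First I would reduce to the case in which the symmetric monoidal structure on $\mC$ is compatible with small colimits. By Corollary \ref{cory} there is a symmetric monoidal embedding $\mC\subset\mC'$ into a symmetric monoidal $\infty$-category compatible with small colimits; it preserves the tensor unit, so the induced embedding $\sSeq(\mC)^\ot\subset\sSeq(\mC')^\ot$ of non-symmetric $\infty$-operads preserves $\triv$ and carries $\Comm_\mC$, whose underlying symmetric sequence is constant at $\tu$, to $\Comm_{\mC'}$. As in the proof of Lemma \ref{lek}, the functor $\Alg_{\Comm_\mC}(\mC)\to\Alg_{\Comm_{\mC'}}(\mC')$ is then fully faithful with essential image the $\Comm_{\mC'}$-algebra structures on objects of $\mC$, and likewise $\Alg_{\bE_\infty}(\mC)\to\Alg_{\bE_\infty}(\mC')$ is fully faithful with essential image the $\bE_\infty$-algebra structures on objects of $\mC$ (the forgetful functor is conservative and $\mC\subset\mC'$ is closed under the tensor powers that appear). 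Hence an equivalence over $\mC'$ restricts to one over $\mC$, and I may assume $\mC$ compatible with small colimits.

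In this case the composition product on $\sSeq(\mC)$ is an honest monoidal structure, so by Lemma \ref{lek} the functor $\Alg_{\Comm_\mC}(\mC)=\LMod_{\Comm_\mC}(\mC)\to\mC$ has left adjoint $\Comm_\mC\circ(-)$ and is monadic by Lemma \ref{comon}, while $\Alg_{\bE_\infty}(\mC)\to\mC$ is monadic with free functor $\Sym$ by \cite[\S 3.1.3]{lurie.higheralgebra} together with the Barr--Beck theorem \cite[Theorem 4.7.3.5]{lurie.higheralgebra}. Using that $\Comm_\mC$ has underlying symmetric sequence constant at $\tu$ with trivial $\Sigma_\n$-action, Remark \ref{dghfgh} gives $$\T_{\Comm_\mC}(\X)\simeq\Comm_\mC\circ\X\simeq\coprod_{\n\geq0}\big((\Comm_\mC)_\n\ot\X^{\ot\n}\big)_{\Sigma_\n}\simeq\coprod_{\n\geq0}\big(\X^{\ot\n}\big)_{\Sigma_\n},$$ which is exactly the underlying endofunctor of $\Sym$. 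To compare the monad structures I would use that $\Comm_\mC$ is the terminal object of $\Op_\Hopf(\mC)=\Op(\coAlg_{\bE_\infty}(\mC))$: its operadic composition maps $(\Comm_\mC)_\bk\ot\bigotimes_\bi(\Comm_\mC)_{\n_\bi}\to(\Comm_\mC)_{\sum_\bi\n_\bi}$ are the canonical equivalences of constant-at-$\tu$ symmetric sequences, and transporting them along the monoidal functor $\sSeq(\mC)\to\Fun(\mC,\mC)$ of Remark \ref{monad} produces precisely the multiplication of the free $\bE_\infty$-algebra monad, which Lurie assembles from the same $\Sigma_\n$-equivariant coherences of the tensor powers. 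Thus $\T_{\Comm_\mC}\simeq\Sym$ and $\Alg_{\Comm_\mC}(\mC)\simeq\LMod_\Sym(\mC)\simeq\Alg_{\bE_\infty}(\mC)$ over $\mC$.

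The hard part will be this last identification of monad structures: the two underlying endofunctors coincide for formal reasons, but checking that the two multiplications agree coherently really requires unwinding the construction of the free $\bE_\infty$-algebra monad in \cite{lurie.higheralgebra}, or else proving that the constant-at-$\tu$ symmetric sequence carries an essentially unique $\infty$-operad (equivalently Hopf $\infty$-operad) structure and that $\Sym$ is the monad of an $\infty$-operad with that underlying symmetric sequence. Once that is in place, the proposition follows purely formally.
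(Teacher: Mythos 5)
Your reduction to the case where the tensor product is compatible with small colimits is fine and matches what the paper does (the paper embeds into $\mP(\mC)$ with Day convolution), and the identification of the underlying endofunctor $\Comm_\mC\circ(-)\simeq\coprod_{\n\geq0}(\X^{\ot\n})_{\Sigma_\n}$ is correct. But the step you yourself flag as ``the hard part'' is a genuine gap, and it is the entire content of the proposition. In the $\infty$-categorical setting there is no a priori map between the monad $\T_{\Comm_\mC}$ and the free $\bE_\infty$-algebra monad, so there is nothing to ``check agrees'': exhibiting an equivalence of associative algebra objects in $(\Fun(\mC,\mC),\circ)$ requires producing a coherent datum, not verifying that two multiplications coincide on objects. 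Saying that transporting the composition maps of the constant symmetric sequence ``produces precisely the multiplication of the free $\bE_\infty$-algebra monad'' presupposes the comparison you are trying to construct. Your fallback --- that the constant-at-$\tu$ symmetric sequence carries an essentially unique $\infty$-operad structure and that $\Sym$ is the monad of such an $\infty$-operad --- is itself a nontrivial claim that would need proof, and is not obviously easier.

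The paper sidesteps this by first manufacturing a canonical functor between the two $\infty$-categories of algebras, after which the monad comparison becomes an objectwise check. Concretely: since $\Comm$ is a Hopf $\infty$-operad, $\Alg_{\Comm}(\mC)$ carries a symmetric monoidal structure with symmetric monoidal forgetful functor to $\mC$ (Proposition \ref{Hoopf}); one shows this structure is \emph{cocartesian} (the tensor unit is initial, and on free algebras $\Sym(\X)\ot\Sym(\Y)$ is the coproduct, via Lemma \ref{nnlklj}); then \cite[Proposition 2.4.3.16.]{lurie.higheralgebra} gives $\Alg_{\bE_\infty}(\Alg_{\Comm}(\mC))\simeq\Alg_{\Comm}(\mC)$, so the symmetric monoidal forgetful functor lifts to $\rho\colon\Alg_{\Comm}(\mC)\to\Alg_{\bE_\infty}(\mC)$ over $\mC$. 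Only now, with $\rho$ in hand, does \cite[Corollary 4.7.3.16.]{lurie.higheralgebra} reduce everything to checking that the induced comparison $\rS(\X)\to\Sym(\X)$ of free objects is an equivalence, which is the objectwise computation you carried out. You need some device of this kind --- the cocartesian structure on $\Alg_\Comm(\mC)$, or an equivalent universal property --- to get a functor before you can compare monads.
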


\begin{proof}

We may assume that the symmetric monoidal structure on $\mC$ is compatible with small colimits. Otherwise we embed $\mC$ symmetric monoidally into $\mP(\mC)$ endowed with Day-convolution and the canonical equivalence
$$\Alg_{\Comm}(\mP(\mC)) \simeq \Alg_{\bE_\infty}(\mP(\mC)) $$ over $\mP(\mC)$
restricts to an equivalence $\Alg_{\Comm}(\mC) \simeq \Alg_{\bE_\infty}(\mC)$ over $\mC.$

If the symmetric monoidal structure on $\mC$ is compatible with small colimits,
the forgetful functor $ \Alg_{\Comm}(\mC) \to \mC$ is monadic and admits a left adjoint $$\Sym:= \Comm \circ (-): \mC \to \Alg_{\Comm} (\mC), \ \X \mapsto \Comm \circ \X \simeq \coprod_{\n \geq 0 } (\X^{\ot \n})_{\Sigma \n}.$$ 
By Proposition \ref{Hoopf} the forgetful functor $ \Alg_{\Comm}(\mC) \to \mC$ refines to a symmetric monoidal functor since $\Comm$ is a Hopf $\infty$-operad in $\mC$.
We start with proving that the symmetric monoidal structure on $ \Alg_{\Comm} (\mC) $ is cocartesian.
The tensor unit $\tu$ of $ \Alg_{\Comm} (\mC) $ is initial since the unique morphism $\alpha: \Sym(\emptyset) \to \tu$ is an equivalence:
indeed, the morphism $ \alpha$ factors as $\Sym(\emptyset) \to \Sym(\tu) \xrightarrow{\mu} \tu,$ where the counit $\mu$ is induced by the multiplication of $\tu.$ So $\alpha$ is the 1-fold multiplication $\tu^{\ot 1} \to \tu $ of the $\Comm$-algebra $\tu$ and so the identity by unitality of the algebra.

Let $\beta: \X \coprod \Y \to \Sym(\X) \ot \Sym(\Y)$ be the morphism in $\mC$ that is the morphism $$\X \simeq \X \ot \tu \simeq \X \ot \Sym (\emptyset) \to \Sym( \X) \ot \Sym( \Y)$$ on the first summand and
the morphism $\Y \simeq \tu \ot \Y \simeq \Sym(\emptyset) \ot \Y \to \Sym( \X) \ot \Sym( \Y)$ on the second summand.
By Lemma \ref{nnlklj} and Remark \ref{hhjbbccx} the symmetric monoidal structure on $\Alg_{\Comm}(\mC)$ is cocartesian if for every $\X, \Y \in \mC$ the following map adjoint to $\beta$ is an equivalence:
$$ \alpha: \Sym (\X \coprod \Y) \xrightarrow{\Sym(\beta)} \Sym (\Sym(\X) \ot \Sym(\Y)) \xrightarrow{\mu} \Sym( \X) \ot \Sym( \Y), $$ where $\mu$ is the multiplication of $\Sym(\X) \ot \Sym(\Y) $.
The latter map identifies with the canonical equivalence.
%The multiplication $\mu$ factors as$$ \Sym (\Sym(\X) \ot \Sym(\Y)) \to  \Sym (\Sym(\X)) \ot \Sym(\Sym(\Y)) \to \Sym(\X) \ot \Sym(\Y).$$The morphism $\beta$ factors as $ \X \coprod \Y \to \Sym(\X \coprod \Y ) \simeq \Sym(\X) \ot \Sym(\Y) $ so that $\Sym(\beta) $ factors as$ \Sym (\X \coprod \Y) \xrightarrow{ } \Sym(  \Sym (\X \coprod \Y)) \simeq \Sym (\Sym(\X) \ot \Sym(\Y)) $ and so as $$ \Sym (\X \coprod \Y) \simeq \Sym(\X) \ot \Sym(\Y) \to  \Sym (\Sym(\X) \ot \Sym(\Y)) .$$So $\alpha$ factors as $$\Sym (\X \coprod \Y) \simeq \Sym(\X) \ot \Sym(\Y) \to  \Sym (\Sym(\X) \ot \Sym(\Y)) \to $$$$ \Sym (\Sym(\X)) \ot \Sym(\Sym(\Y)) \to \Sym(\X) \ot \Sym(\Y).$$By the triangular identities $\alpha$ is the canonical equivalence$\Sym (\X \coprod \Y) \simeq \Sym(\X) \ot \Sym(\Y).$
So the symmetric monoidal structure on $\Alg_{\Comm}(\mC)$ is cocartesian. 

By \cite[Proposition 2.4.3.16.]{lurie.higheralgebra} this implies that the forgetful functor $ \Alg_{\bE_\infty}(\Alg_{\Comm}(\mC)) \to \Alg_{\Comm}(\mC)$ is an equivalence. So the symmetric monoidal forgetful functor $\Alg_{\Comm}(\mC) \to \mC $ lifts to a functor $$\rho: \Alg_{\Comm}(\mC) \simeq \Alg_{\bE_\infty}(\Alg_{\Comm} (\mC)) \to \Alg_{\bE_\infty}(\mC)$$
over $\mC$ between monadic functors.
Let $\rS$ be the left adjoint of the forgetful functor $\Alg_{\bE_\infty}(\mC) \to \mC $.
By \cite[Corollary 4.7.3.16.]{lurie.higheralgebra} the functor $\rho$ is an equivalence if for every $\X \in \mC$ the morphism $\psi: \rS(\X) \to \rS(\Sym(\X)) \to \Sym(\X)$ adjoint to the unit $\X \to \Sym(\X)$ is an equivalence.
The counit $\rS(\Sym(\X)) \simeq \coprod_{\n \geq 0}(\Sym(\X)^{\ot \n})_{\Sigma \n} \to \Sym(\X)$ gives in each summand the multiplication of $\Sym(\X).$
For every $\n \geq 0$ the map $$ (\X^{\ot \n})_{\Sigma \n} \to (\Sym(\X)^{\ot \n})_{\Sigma \n} \to \Sym(\X) \simeq \coprod_{\n \geq 0 } (\X^{\ot \n})_{\Sigma \n},$$
where the second map is the multiplication of $\Sym(\X)$, is the canonical map to the coproduct. Thus $\psi$ is the canonical equivalence.

\end{proof}

\begin{corollary}\label{comparat}

Let $\mC$ be a symmetric monoidal $\infty$-category that admits small limits.
There is a canonical equivalence over $\mC:$
$$\co\Alg_{\coComm}(\mC) \simeq \co\Alg_{\bE_\infty}(\mC).$$ 

\end{corollary}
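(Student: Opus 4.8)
The plan is to obtain the statement from Proposition \ref{compara} by passing to opposite $\infty$-categories. Since $\mC$ admits small limits, the opposite symmetric monoidal $\infty$-category $\mC^\op$ admits small colimits, so Proposition \ref{compara} applies to $\mC^\op$ and furnishes a canonical equivalence
\begin{equation*}
\Alg_{\Comm_{\mC^\op}}(\mC^\op) \simeq \Alg_{\bE_\infty}(\mC^\op)
\end{equation*}
over $\mC^\op$, i.e.\ compatible with the forgetful functors to $\mC^\op$.

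Next I would unwind the definitions to see that applying $(-)^\op$ to both sides produces exactly the asserted equivalence. By definition the cocommutative $\infty$-cooperad in $\mC$ is $\coComm_\mC = \Comm_{\mC^\op}$, and for any $\infty$-cooperad $\mQ$ in $\mC$ one has $\coAlg_\mQ(\mC) = \Alg_\mQ(\mC^\op)^\op$; in particular $\coAlg_{\coComm_\mC}(\mC) = \Alg_{\Comm_{\mC^\op}}(\mC^\op)^\op$. Likewise $\coAlg_{\bE_\infty}(\mC) = \Alg_{\bE_\infty}(\mC^\op)^\op$ by definition. Applying $(-)^\op$ to the displayed equivalence, and using that $(-)^\op$ carries a functor over $\mC^\op$ to a functor over $\mC$, then yields the canonical equivalence $\coAlg_{\coComm}(\mC) \simeq \coAlg_{\bE_\infty}(\mC)$ over $\mC$.

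I do not expect a genuine obstacle here: the mathematical content is entirely contained in Proposition \ref{compara}, and the only point requiring care is purely bookkeeping, namely that the symmetric monoidal structure on $\coAlg_{\bE_\infty}$, the notion of Hopf $\infty$-operad, and hence $\coComm_\mC$ were all set up precisely via the opposite category, so that dualizing Proposition \ref{compara} is literally legitimate and the ``over $\mC$'' compatibility is preserved under $(-)^\op$. Thus the corollary follows formally.
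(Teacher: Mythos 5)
Your proposal is correct and is exactly the argument the paper intends: the corollary is stated without proof immediately after Proposition \ref{compara} precisely because, with the paper's definitions $\coAlg_{\bE_\infty}(\mC)=\Alg_{\bE_\infty}(\mC^\op)^\op$, $\coComm_\mC=\Comm_{\mC^\op}$ and $\coAlg_\mQ(\mC)=\Alg_\mQ(\mC^\op)^\op$, it is the formal dual of that proposition applied to $\mC^\op$. Your bookkeeping of the ``over $\mC$'' compatibility under $(-)^\op$ is also right.
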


To prove Proposition \ref{compara} we used the following lemma: 

\begin{lemma}\label{nnlklj}

Let $\mC$ be a symmetric monoidal category that admits finite coproducts.
Let $\G: \mD \to \mC$ be a symmetric monoidal and monadic functor with left adjoint $\F$ such that the tensor unit of $\mD$ is an initial object.
%For every $\A, \B \in \mD$ there are canonical morphisms $\A \simeq \A \ot \tu \to \A \ot \B, \ \B \simeq \tu \ot \B \to \A \ot \B $ in $\mD. $ 
The symmetric monoidal structure on $\mD$ is cocartesian if for every 
$\A, \B \in \mD$ that belong to the essential image of $\F$ 	 
the canonical morphisms $\A \simeq \A \ot \tu \to \A \ot \B, \B \simeq \tu \ot \B \to \A \ot \B $ in $\mD $ exhibit $\A \ot \B$ as a coproduct of $\A$ and $\B$
in $\mD.$

\end{lemma}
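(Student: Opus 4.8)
The plan is to verify the universal property directly. Recall that a symmetric monoidal structure on $\mD$ is cocartesian precisely when its tensor unit is an initial object — which holds by hypothesis — and, for all $X,Y,Z\in\mD$, the map
$$\eta_{X,Y,Z}\colon \mD(X\ot Y,Z)\longrightarrow \mD(X,Z)\times\mD(Y,Z)$$
given by precomposition with the canonical morphisms $X\simeq X\ot\tu\to X\ot Y$ and $Y\simeq\tu\ot Y\to X\ot Y$ (induced by the unique maps out of the initial unit) is an equivalence, naturally in $X$ and $Y$. The hypothesis says exactly that $\eta_{\A,\B,Z}$ is an equivalence whenever $\A$ and $\B$ lie in the essential image of $\F$, so the strategy is to bootstrap from free objects to all objects via the monadic bar resolution.

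First I would establish the key point: for every fixed $Y\in\mD$ the functor $(-)\ot Y\colon\mD\to\mD$ carries the colimit of any $\G$-split simplicial object to a colimit, and likewise $X\ot(-)$. Given a $\G$-split augmented simplicial object $U_\bullet\to U$ in $\mD$, the symmetric monoidality of $\G$ identifies $\G(U_\bullet\ot Y)\to\G(U\ot Y)$ with the augmented simplicial object obtained from the split object $\G(U_\bullet)\to\G(U)$ by applying $(-)\ot\G(Y)$; a split augmented simplicial object remains split under any functor, since its extra degeneracies are transported, so $U_\bullet\ot Y\to U\ot Y$ is again $\G$-split. Since $\G$ is monadic, $\mD$ admits geometric realizations of $\G$-split simplicial objects and $\G$ both preserves and detects them, so $U_\bullet\ot Y\to U\ot Y$ is a colimit diagram in $\mD$. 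No cocompleteness of $\mC$ or $\mD$, and no exactness of $\ot$ for general colimits, is needed.

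Next, since $\G$ is monadic, every $X\in\mD$ is the geometric realization $X\simeq|X_\bullet|$ of its monadic bar resolution, a $\G$-split simplicial object all of whose terms $X_p$ lie in the essential image of $\F$; similarly $Y\simeq|Y_\bullet|$. Applying the previous step in each variable gives $X\ot Y\simeq\colim_{\Delta^\op\times\Delta^\op}(X_p\ot Y_q)$, hence $\mD(X\ot Y,Z)\simeq\lim_{[p],[q]}\mD(X_p\ot Y_q,Z)$, while $\mD(X,Z)\times\mD(Y,Z)\simeq\big(\lim_{[p]}\mD(X_p,Z)\big)\times\big(\lim_{[q]}\mD(Y_q,Z)\big)\simeq\lim_{[p],[q]}\big(\mD(X_p,Z)\times\mD(Y_q,Z)\big)$, the last equivalence because $\Delta$ is weakly contractible, so a limit over $\Delta$ of a constant diagram is that constant. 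Under these identifications $\eta_{X,Y,Z}$ is the limit of the maps $\eta_{X_p,Y_q,Z}$, each of which is an equivalence by hypothesis since $X_p$ and $Y_q$ lie in the essential image of $\F$; naturality and compatibility with the structure maps are seen by running the same resolution argument with morphisms. Therefore $\eta_{X,Y,Z}$ is an equivalence and $\mD$ is cocartesian.

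I expect the only real obstacle to be the first step: because the tensor product is a priori not known to commute with any colimits, one cannot directly assert that $(-)\ot Y$ preserves geometric realizations, and the resolution is precisely the observation that $\G$-split realizations are absolute — preserved by every functor, in particular by $(-)\ot\G(Y)$ on $\mC$ — and are then reflected back into $\mD$ by the monadic functor $\G$. Everything after that is formal bookkeeping with the bar resolution, using only the weak contractibility of $\Delta$.
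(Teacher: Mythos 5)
Your proof is correct and follows essentially the same route as the paper's: resolve $\A$ and $\B$ by $\G$-split bar resolutions valued in free objects, use that split geometric realizations are absolute (hence preserved by $\ot$ after applying the symmetric monoidal functor $\G$) and created by the monadic functor $\G$, and then pass to the limit of mapping spaces over $\Delta$. The only cosmetic difference is that the paper tensors the two resolutions levelwise (taking the diagonal in $\Fun(\Delta^\op,\mD)$) where you keep the bisimplicial indexing over $\Delta^\op\times\Delta^\op$; both yield the same reduction to the hypothesis on free objects.
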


\begin{remark}\label{hhjbbccx}

For every $\X, \Y \in \mC$ the canonical morphisms $$ \F(\X) \to \F(\X) \ot \F(\Y), \ \F(\Y) \to \F(\X) \ot \F(\Y) $$ in $\mD$ define a morphism
$\alpha:  \F(\X \coprod \Y) \simeq \F(\X) \coprod \F(\Y) \to \F(\X) \ot \F(\Y)$ in $\mD$.  

Moreover there is a canonical morphism 
$$\beta: \X \coprod \Y \to \G(\F(\X)) \ot \G(\F(\Y)) \simeq \G( \F(\X) \ot \F(\Y))$$ in $\mC$ that is the morphism $$\X \simeq \X \ot \tu \simeq \X \ot \G(\F (\emptyset)) \to  \G(\F(\X)) \ot \G(\F(\Y)) $$ on the first summand and
the morphism $$\Y \simeq \tu \ot \Y \simeq \G(\F(\emptyset)) \ot \Y \to  \G(\F(\X)) \ot \G(\F(\Y)) $$ on the second summand.

The map $\alpha:  \F(\X \coprod \Y) \to \F(\X) \ot \F(\Y)$
is adjoint to $\beta:  \X \coprod \Y \to \G( \F(\X) \ot \F(\Y)).$

\end{remark}

\begin{proof}[Proof of Lemma \ref{nnlklj}]

We write $\A \simeq | \bar{\A} |, \B \simeq | \bar{\B} | $ for some $\G$-split simplicial objects $\bar{\A}, \bar{\B}: \Delta^\op \to \mD$ taking values in the essential image of $\F.$
Let $\Fun(\Delta^\op, \mD)$ be endowed with the levelwise symmetric monoidal structure. The tensor unit of $\Fun(\Delta^\op, \mD)$ is an initial object
as the tensor unit of $\mD$ is.
The canonical morphisms $\bar{\A} \simeq\bar{\A}\ot \tu \to \bar{\A}\ot \bar{\B}, \ \bar{\B} \simeq \tu \ot \bar{\B}\to \bar{\A} \ot \bar{\B}$ in $\Fun(\Delta^\op, \mD) $ exhibit $\bar{\A}\ot \bar{\B}$ as a coproduct of $\bar{\A}$ and $\bar{\B}$
in $\Fun(\Delta^\op, \mD) $ as they do object-wise.
The functor $\ot: \mD \times \mD \to \mD$ sends the $\G \times \G$-split simplicial object $(\bar{\A}, \bar{\B})$ in $\mD \times \mD$ to the 
$\G $-split simplicial object $\bar{\A}\ot \bar{\B}: \Delta^\op \to \mD. $ 

Let $\delta: \mD \to \Fun(\Delta^\op, \mD)$ be the diagonal functor.
The morphism $\bar{\A}\ot \bar{\B} \to \delta(\A \ot \B)$ in $\Fun(\Delta^\op, \mD)$ exhibits 
$\A \ot \B$ as the geometric realization of the simplicial object $\bar{\A}\ot \bar{\B}. $ 
So for every $ \Z \in \mD$
the canonical map $\mD(\A \ot \B, \Z) \to \mD(\A, \Z) \times  \mD(\B, \Z)$
factors as $$\mD(\A \ot \B, \Z) \simeq \Fun(\Delta^\op, \mD)(\bar{\A}\ot \bar{\B}, \delta(\Z)) \simeq $$$$ \Fun(\Delta^\op, \mD)(\bar{\A}, \delta(\Z))  \times  \Fun(\Delta^\op, \mD)(\bar{\B}, \delta(\Z))  \simeq \mD(\A, \Z) \times  \mD(\B, \Z).$$

\end{proof}

\subsection{Trivial algebras}

\begin{construction}\label{Prim}

Let $\mC$ be a symmetric monoidal $\infty$-category such that $\mC$ admits small colimits.
Let $\mO \to \triv$ be an augmented $\infty$-operad in $\mC$.
The augmentation of $\mO$ gives rise to a forgetful functor assigning the trivial $\mO$-algebra:
$$\triv_\mO: \mC \simeq \Alg_{\triv}(\mC) \to \Alg_\mO(\mC).$$

Dually, let $\mC$ be a symmetric monoidal $\infty$-category such that $\mC$ admits small limits.
Let $\triv \to \mQ$ be a coaugmented $\infty$-cooperad in $\mC$.
The coaugmentation of $\mQ$ gives rise to a forgetful functor assigning the trivial $\mQ$-coalgebra:
$$\triv_\mQ: \mC \simeq \co\Alg_{\triv}(\mC) \to \co\Alg_\mQ(\mC).$$ 
\end{construction}

We will prove the following proposition:

\begin{proposition}\label{lemgf}
	
Let $\mC$ be a symmetric monoidal $\infty$-category such that $\mC$ has small colimits and the symmetric monoidal structure is compatible with the initial object. Let $\mO \to \triv$ be an augmented $\infty$-operad in $\mC$.
Then 
$\triv_\mO: \mC \to \Alg_\mO(\mC) $ has a left adjoint
$$ \triv \circ_\mO (-) : \Alg_\mO(\mC)  \to \mC $$ that factors as 
$$\Alg_\mO(\mC) \xrightarrow{\Barc_\mO(-) } \Fun(\Delta^\op, \mC)  \xrightarrow{\colim} \mC $$ such that for any $\X \in \Alg_\mO(\mC) $ and $[\n] \in \Delta$ the object $\Barc_\mO(\X)_\n $ is $\mO \circ ... \circ \mO \circ \X$.

\end{proposition}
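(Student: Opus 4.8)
The plan is to identify the left adjoint $\triv\circ_\mO(-)$ with the functor computing the geometric realization of the two-sided bar construction, and to deduce the adjunction from the monadicity of the free--forgetful adjunction $\mC\rightleftarrows\Alg_\mO(\mC)$. First I would use Proposition \ref{embe} to embed $\mC\subset\mC'$ symmetric monoidally, preserving the initial object, into a symmetric monoidal $\infty$-category compatible with small colimits; since the composition product, the functor $\triv_\mO$, and the composition products $\mO\circ\cdots\circ\mO\circ(-)$ all restrict from $\mC'$ to $\mC$, and the left adjoint $\L$ of the embedding preserves colimits and restricts to the identity on $\mC$, the assertion for $\mC$ follows from the one for $\mC'$. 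So I may assume the symmetric monoidal structure on $\mC$ is compatible with small colimits.

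In that case the composition product on $\sSeq(\mC)$ is monoidal, so the forgetful functor $\G\colon\Alg_\mO(\mC)\to\mC$ has a left adjoint $\F\simeq\mO\circ(-)$, is monadic by Lemma \ref{comon}, has associated monad $\T_\mO=\mO\circ(-)$ by Remark \ref{monad}, and satisfies $\G\circ\triv_\mO\simeq\id_\mC$ by construction of $\triv_\mO$. The canonical bar resolution $\Barc_\bullet(\X)\colon\Delta^\op\to\Alg_\mO(\mC)$ of $\X$ \cite{lurie.higheralgebra} has $\n$-th term the free $\mO$-algebra $(\F\G)^{\n+1}\X\simeq\F(\mO^{\circ\n}\circ\G\X)$; it is $\G$-split, hence admits a geometric realization in $\Alg_\mO(\mC)$ that is preserved by $\G$ and equal to $\X$. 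I would let $\Barc_\mO(\X)_\bullet\colon\Delta^\op\to\mC$ be the simplicial object with $\Barc_\mO(\X)_\n:=\mO^{\circ\n}\circ\G\X=\mO\circ\cdots\circ\mO\circ\X$ and with structure maps induced from those of $\Barc_\bullet(\X)$ through the free--forgetful adjunction; equivalently, $\Barc_\mO(\X)_\bullet$ is the two-sided bar construction $\Barc_\bullet(\triv,\mO,\X)$ with $\triv$ a right $\mO$-module via the augmentation $\mO\to\triv$. Then for every $Y\in\mC$:
\begin{align*}
\Map_{\Alg_\mO(\mC)}(\X,\triv_\mO Y)
&\simeq\lim_{[\n]\in\Delta}\Map_{\Alg_\mO(\mC)}\big(\F(\mO^{\circ\n}\circ\G\X),\,\triv_\mO Y\big)\\
&\simeq\lim_{[\n]\in\Delta}\Map_{\mC}\big(\mO^{\circ\n}\circ\G\X,\,\G\triv_\mO Y\big)\\
&\simeq\lim_{[\n]\in\Delta}\Map_{\mC}\big(\Barc_\mO(\X)_\n,\,Y\big)
\simeq\Map_{\mC}\big(\colim_{[\n]\in\Delta^\op}\Barc_\mO(\X)_\n,\,Y\big),
\end{align*}
where the last colimit exists because $\mC$ has geometric realizations. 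By the Yoneda lemma this exhibits $\X\mapsto\colim\Barc_\mO(\X)_\bullet$ as a left adjoint $\triv\circ_\mO(-)$ of $\triv_\mO$, with the factorization $\Alg_\mO(\mC)\xrightarrow{\Barc_\mO(-)}\Fun(\Delta^\op,\mC)\xrightarrow{\colim}\mC$ and $\Barc_\mO(\X)_\n=\mO\circ\cdots\circ\mO\circ\X$ as asserted.

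The main obstacle I anticipate is promoting the second displayed equivalence from a levelwise equivalence of spaces to an equivalence of cosimplicial spaces: one must check that applying $\G$ and the free--forgetful adjunction to the faces and degeneracies of $\Barc_\bullet(\X)$ yields exactly the structure maps of $\Barc_\bullet(\triv,\mO,\X)$, so that $[\n]\mapsto\Map_{\mC}(\mO^{\circ\n}\circ\G\X,Y)$ is genuinely corepresented by $\Barc_\mO(\X)_\bullet$. The other ingredients --- existence and $\G$-splitness of $\Barc_\bullet(\X)$, monadicity of $\G$, and existence of the final colimit --- are supplied by the results recalled above.
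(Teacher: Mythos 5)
Your overall strategy --- resolve $\X$ by its monadic bar resolution $\Barc_\bullet(\X)$ with $\Barc_\n(\X)\simeq\F(\mO^{\circ\n}\circ\G\X)$, map into $\triv_\mO Y$, and apply the free--forgetful adjunction levelwise --- is a genuinely different route from the paper, which instead identifies $\mC\simeq\Alg_\triv(\mC)$ (Lemma \ref{leuma}) and invokes the general relative-tensor-product adjunction $\tu\ot_\A(-)\ot_\B\tu\dashv\triv$ for (lax) bitensored $\infty$-categories (Lemma \ref{hggvujhnh}, resting on \cite[Theorem 4.4.2.8, Remark 4.5.3.3]{lurie.higheralgebra} via the enveloping bitensored category). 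However, the step you yourself flag as "the main obstacle" is a genuine gap, not a routine check. To pass from the levelwise equivalences $\Map_{\Alg_\mO(\mC)}(\F(\mO^{\circ\n}\circ\G\X),\triv_\mO Y)\simeq\Map_\mC(\mO^{\circ\n}\circ\G\X,Y)$ to the equivalence of limits over $\Delta$, you need an equivalence of \emph{cosimplicial spaces}, and in the $\infty$-categorical setting that cannot be established by checking faces and degeneracies one at a time. The source of the difficulty is that $\Barc_\bullet(\X)$ is not $\F$ applied to a simplicial object of $\mC$: the outer face map is the counit $\epsilon_{(\F\G)^{\n}\X}$, which is not in the image of $\F$, so the adjunction equivalence does not transport the simplicial structure for free. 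Producing the simplicial object $\Barc_\bullet(\triv,\mO,\X)$ in $\mC$ together with the coherent identification of $[\n]\mapsto\Map_{\Alg_\mO(\mC)}(\Barc_\n(\X),\triv_\mO Y)$ with the cosimplicial space it corepresents is essentially the content of the two-sided bar construction machinery that the paper's proof imports wholesale; without it your argument reduces the proposition to a statement of comparable difficulty.

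A secondary imprecision in your reduction step: the composition products do \emph{not} restrict from $\mC'$ to $\mC$, since the embedding $\mC\subset\mC'$ is a right adjoint and the relevant colimits $\coprod_\bk(\mO_\bk\ot\X^{\ot\bk})_{\Sigma_\bk}$ are computed in $\mC'$; by Proposition \ref{weak} the composition product in $\sSeq(\mC)$ is $\L_*$ applied to the one in $\sSeq(\mC')$. Your appeal to $\L$ preserving colimits salvages the reduction (apply $\L$ to the $\mC'$-bar construction and use that $\L$ is the identity on $\mC$-valued mapping spaces), but you should say this explicitly rather than asserting that the constructions restrict.
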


We will often apply the dual version of Proposition \ref{lemgf}: 

\begin{corollary}\label{lemfg2}
Let $\mC$ be a symmetric monoidal $\infty$-category such that $\mC$ has small limits and the symmetric monoidal structure is compatible with the final object. Let $\triv \to \mQ$ be a coaugmented $\infty$-cooperad in $\mC$.
Then $\triv_\mQ: \mC \to \co\Alg_\mQ(\mC) $ 
has a right adjoint $$\triv *^\mQ (-) : \co\Alg_\mQ(\mC) \to \mC $$ that factors as 
$$\co\Alg_\mQ(\mC)  \xrightarrow{\Cobar_\mQ(-) } \Fun(\Delta, \mC) \xrightarrow{\lim} \mC$$
such that for any $\X \in \co\Alg_\mQ(\mC) $ and $[\n] \in \Delta$ the object $\Cobar_\mO(\X)_\n $ is $\mO * ... * \mO * \X$.
\end{corollary}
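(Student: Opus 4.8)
The plan is to obtain Corollary~\ref{lemfg2} from Proposition~\ref{lemgf} by a formal dualization. Suppose $\mC$ is a symmetric monoidal $\infty$-category with small limits whose symmetric monoidal structure is compatible with the final object. Then $\mC^\op$ is a symmetric monoidal $\infty$-category with small colimits whose symmetric monoidal structure is compatible with the initial object, and the tensor unit of $\mC^\op$ is the tensor unit of $\mC$, so that the symmetric sequence $\triv$ for $\mC^\op$ coincides with $\triv$ for $\mC$. By definition a coaugmented $\infty$-cooperad $\triv \to \mQ$ in $\mC$ is an augmented $\infty$-operad $\mQ \to \triv$ in $\mC^\op$, and $\coAlg_\mQ(\mC) = \Alg_\mQ(\mC^\op)^\op$. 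Since the two halves of Construction~\ref{Prim} are opposite to one another, the functor $\triv_\mQ \colon \mC \to \coAlg_\mQ(\mC)$ is the opposite of the functor $\triv_\mQ \colon \mC^\op \to \Alg_\mQ(\mC^\op)$ associated to $\mQ \to \triv$ in $\mC^\op$.

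First I would apply Proposition~\ref{lemgf} to $\mC^\op$ and the augmented $\infty$-operad $\mQ \to \triv$, which yields a left adjoint $\triv \circ_\mQ (-) \colon \Alg_\mQ(\mC^\op) \to \mC^\op$ of $\triv_\mQ$ that factors as $\Alg_\mQ(\mC^\op) \xrightarrow{\Barc_\mQ(-)} \Fun(\Delta^\op, \mC^\op) \xrightarrow{\colim} \mC^\op$, with $\Barc_\mQ(\X)_\n$ the $\n$-fold composition product $\mQ \circ \cdots \circ \mQ \circ \X$ taken in $\mC^\op$. Passing back to opposites turns this left adjoint into a right adjoint $\triv *^\mQ (-)$ of $\triv_\mQ \colon \mC \to \coAlg_\mQ(\mC)$. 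Using the canonical identifications $\Fun(\Delta^\op, \mC^\op)^\op \simeq \Fun(\Delta, \mC)$ and the fact that $\colim$ taken in $\mC^\op$ is $\lim$ taken in $\mC$, the composite factorization becomes $\coAlg_\mQ(\mC) \xrightarrow{\Cobar_\mQ(-)} \Fun(\Delta, \mC) \xrightarrow{\lim} \mC$, where $\Cobar_\mQ(-) := \Barc_\mQ(-)^\op$. Since by definition the composition product in $\mC^\op$ is the cocomposition product $*$ in $\mC$ (and likewise for the induced action on $\mC$ versus coaction on $\mC$), the $[\n]$-term of $\Cobar_\mQ(\X)$ is $\mQ * \cdots * \mQ * \X$, as required.

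As the whole argument is a dualization of Proposition~\ref{lemgf}, I expect no real obstacle; the only point needing care is the bookkeeping of opposites --- verifying that the coaugmentation of $\mQ$ becomes an augmentation in $\mC^\op$, that $\triv$ is unchanged under $\mC \leadsto \mC^\op$, and that the simplicial/cosimplicial and colimit/limit dualities are matched so that $\Cobar_\mQ(\X)$ genuinely has $[\n]$-term $\mQ * \cdots * \mQ * \X$.
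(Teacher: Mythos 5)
Your proposal is correct and is exactly the argument the paper intends: the paper presents Corollary \ref{lemfg2} as "the dual version of Proposition \ref{lemgf}" with no further proof, so the content is precisely the dualization you carry out, applying Proposition \ref{lemgf} to $\mC^\op$ and translating composition products, simplicial objects and colimits into cocomposition products, cosimplicial objects and limits. Your bookkeeping of the opposites (coaugmentation vs.\ augmentation, invariance of $\triv$, $\Fun(\Delta^\op,\mC^\op)^\op \simeq \Fun(\Delta,\mC)$) is accurate.
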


To prove Proposition \ref{lemgf} we fix the following notation:

\begin{notation}
Let $\mD$ be an $\infty$-category bitensored over monoidal $\infty$-categories $\mC,\mE$, $\A$ an algebra in $\mC$, $\B$ an algebra in $\mE$, $\M$ an $\A,\B$-bimodule, $\X$ a right $\A$-module and $\Y$ a left $\B$-module.
By \cite[Theorem 4.4.2.8.]{lurie.higheralgebra} there is a simplicial object $\Barc^\mD_{\A,\B}(\X, \M, \Y)$ in $ \mD$ such that 
$$ \Barc^\mD_{\A,\B}(\X,\M,\Y)_\n \simeq \X \ot \A^{\ot \n} \ot \M \ot \B^{\ot \n} \ot \Y .$$
\end{notation}
\begin{remark}
	
By \cite[Example 4.7.2.7.]{lurie.higheralgebra} the simplicial object $\Barc^\mD_{\A,\B}(\A, \M, \B)$ in $ \mD$
canonically splits, and lifts and extends to an augmented simplicial object in ${_\A}\BMod_\B(\mD)$ that exhibits $\M$ as
$\A \ot_\A \M \ot_\B \A$ and sends $[\n] \in \Delta $ to the free $\A, \B$-bimodule on $ \A^{\ot \n} \ot \M \ot \B^{\ot \n}$.
\end{remark}

Let $\mD$ be an $\infty$-category lax bitensored over non-symmetric $\infty$-operads $\mC,\mE$.
By %\cite[Notation 3.108.]{https://doi.org/10.48550/arxiv.2009.02428}, 
\cite[Definition 2.96.]{heine2024bi}, \cite[Proposition 2.95.]{heine2024higher} there is an enveloping $\infty$-category $\B\Env(\mD)$ bitensored over monoidal $\infty$-categories $\Env(\mC), \Env(\mE)$, which comes equipped with embeddings $\mC \subset \Env(\mC), \mE \subset \Env(\mE)$ of non-symmetric $\infty$-operads and an embedding $\mD \subset \B\Env(\mD)$. 
Every object of $\B\Env(\mD)$ is of the form $\V_1 \ot ... \ot \V_\n \ot \X \ot \W_1 \ot... \ot \W_\bk$ for $\V_1, ..., \V_\n \in \mC, \X \in \mD, \W_1, ..., \W_\bk \in \mE$ and $\bk,\n \geq 0.$
By \cite[Lemma 3.103.]{https://doi.org/10.48550/arxiv.2009.02428} the embedding $\mD \subset \B\Env(\mD)$ admits a left adjoint $\L$ (not compatible with the biactions) that sends an object
$\V_1 \ot ... \ot \V_\n \ot \X \ot \W_1 \ot... \ot \W_\bk$
for $\V_1, ..., \V_\n \in \mC, \X \in \mD, \W_1, ..., \W_\bk \in \mE$
to $\V_1 \ot ... \ot \V_\n \ot \X \ot \W_1 \ot... \ot \W_\bk$, where for the latter object the biaction of $\mC,\mE$ on $\mD$ is used.

\begin{lemma}\label{inamb}
	
Let $\mD$ be an $\infty$-category bitensored over monoidal $\infty$-categories $\mC,\mE$, $\A$ an algebra in $\mC$, $\B$ an algebra in $\mE$, $\M$ an $\A,\B$-bimodule, $\X$ a right $\A$-module and $\Y$ a left $\B$-module.
Then $\Barc^{\mD}_{\A,\B}(\X,-,\Y)$ factors as
$${_\A}\BMod_\B(\mD) \subset {_\A}\BMod_\B(\B\Env(\mD)) \xrightarrow{\Barc^{\B\Env(\mD)}_{\A,\B}(\X,-,\Y)} \Fun(\Delta^\op, \B\Env(\mD))$$$$ \xrightarrow{\Fun(\Delta^\op,\L)}\Fun(\Delta^\op, \mD).$$
	
\end{lemma}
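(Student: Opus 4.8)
The plan is to reduce the claim to a levelwise comparison, using that the envelope $\B\Env(\mD)$ is genuinely bitensored (over the monoidal $\infty$-categories $\Env(\mC), \Env(\mE)$) so that the two-sided bar construction of \cite[Theorem 4.4.2.8.]{lurie.higheralgebra} applies to it verbatim. First I would record that the operadic embeddings $\mC \subset \Env(\mC), \mE \subset \Env(\mE)$ carry $\A, \B$ to algebras in $\Env(\mC), \Env(\mE)$ (their unit and multiplication being the images of those of $\A$ and $\B$), and that $\M, \X, \Y$ are carried to an $\A,\B$-bimodule, a right $\A$-module and a left $\B$-module in $\B\Env(\mD)$ over $\Env(\mC), \Env(\mE)$; thus the inclusion ${_\A}\BMod_\B(\mD) \subset {_\A}\BMod_\B(\B\Env(\mD))$ makes sense and $\Barc^{\B\Env(\mD)}_{\A,\B}(\X,-,\Y)$ restricts along it. It then remains to show that post-composing with $\Fun(\Delta^\op, \L)$ recovers $\Barc^\mD_{\A,\B}(\X,-,\Y)$.

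Next I would carry out the levelwise identification. For $\M \in {_\A}\BMod_\B(\mD)$ and $[\n] \in \Delta$, \cite[Theorem 4.4.2.8.]{lurie.higheralgebra} gives $\Barc^{\B\Env(\mD)}_{\A,\B}(\X,\M,\Y)_\n \simeq \X \ot \A^{\ot \n} \ot \M \ot \B^{\ot \n} \ot \Y$ with the tensor products formed in $\B\Env(\mD)$ over $\Env(\mC), \Env(\mE)$. Because $\A$ lies in $\mC \subset \Env(\mC)$, its $\n$-fold tensor power in $\Env(\mC)$ is the length-$\n$ word $(\A,\dots,\A)$, and likewise for $\B$; hence this object is exactly a word of $\B\Env(\mD)$ of the form recalled before the statement, with $\M$ as the middle $\mD$-entry and $\X, \A, \dots, \A$ and $\B, \dots, \B, \Y$ as the outer entries. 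By the explicit formula for $\L$ from \cite[Lemma 3.103.]{https://doi.org/10.48550/arxiv.2009.02428}, such a word is sent to $\X \ot \A^{\ot \n} \ot \M \ot \B^{\ot \n} \ot \Y$ computed with the genuine biaction of $\mC, \mE$ on $\mD$, which is $\Barc^\mD_{\A,\B}(\X,\M,\Y)_\n$. So the composite $\Fun(\Delta^\op,\L)\circ\Barc^{\B\Env(\mD)}_{\A,\B}(\X,-,\Y)$ and $\Barc^\mD_{\A,\B}(\X,-,\Y)$ agree after evaluation at every $[\n] \in \Delta$, naturally in $\M$.

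Finally I would upgrade this to an equivalence of simplicial objects. The cleanest route is to observe that the embedding $\iota: \mD \subset \B\Env(\mD)$, together with $\mC \subset \Env(\mC)$ and $\mE \subset \Env(\mE)$, is a lax morphism of bitensored $\infty$-categories whose lax structure maps $\V \ot \iota(Z) \to \iota(\V \ot Z)$ (for $\V \in \mC$, $Z \in \mD$, and similarly on the right) are the components of the unit of the adjunction $\L \dashv \iota$; since $\iota$ is fully faithful, the counit $\L\iota \to \id_\mD$ is an equivalence, and then the triangle identities force $\L$ applied to each such structure map to be an equivalence. Naturality of \cite[Theorem 4.4.2.8.]{lurie.higheralgebra} in the ambient bitensored $\infty$-category then yields a comparison $\Barc^{\B\Env(\mD)}_{\A,\B}(\X,\M,\Y) \to \iota_*\Barc^\mD_{\A,\B}(\X,\M,\Y)$ of simplicial objects of $\B\Env(\mD)$, levelwise a composite of the lax structure maps; applying $\Fun(\Delta^\op, \L)$ and using $\L\iota \simeq \id_\mD$ produces a natural map $\L_*\Barc^{\B\Env(\mD)}_{\A,\B}(\X,\M,\Y) \to \Barc^\mD_{\A,\B}(\X,\M,\Y)$ which is a levelwise equivalence by the previous step, hence an equivalence. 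The main obstacle is exactly this last step: \cite[Theorem 4.4.2.8.]{lurie.higheralgebra} is not literally phrased as a functor of the ambient bitensored $\infty$-category, and $\L$ is not a morphism of bitensored $\infty$-categories, so one must extract the needed lax naturality from the construction of the two-sided bar object or, equivalently, check directly that $\L$ sends the face and degeneracy maps of $\Barc^{\B\Env(\mD)}_{\A,\B}(\X,\M,\Y)$ — all built from the multiplications, units and actions of $\A, \B, \X, \M, \Y$, which are pulled back from $\mD$ and $\mC, \mE$ — to those defining $\Barc^\mD_{\A,\B}(\X,\M,\Y)$, using the universal property of $\B\Env$ and the description of $\L$.
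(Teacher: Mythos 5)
Your levelwise computation is correct, but the step you yourself flag as ``the main obstacle'' is a genuine gap, and your diagnosis of it points in the wrong direction. You assert that $\L$ is not a morphism of bitensored $\infty$-categories. That is true in the general setting of the Notation that follows the lemma, where $\mD$ is only \emph{lax} bitensored over non-symmetric $\infty$-operads (the paper says so explicitly there). But the hypothesis of Lemma \ref{inamb} is that $\mD$ is genuinely bitensored over monoidal $\infty$-categories $\mC,\mE$, and in that case \cite[Lemma 3.103.]{https://doi.org/10.48550/arxiv.2009.02428} says precisely that the localization $\L\colon \B\Env(\mD)\to\mD$ \emph{does} preserve the biactions. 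This is the single fact the paper's proof runs on: because $\L$ is a map of bitensored $\infty$-categories it induces a localization ${_\A}\BMod_\B(\L)\colon {_\A}\BMod_\B(\B\Env(\mD))\to {_\A}\BMod_\B(\mD)$ and intertwines the two bar constructions, so that $\Fun(\Delta^\op,\L)\circ\Barc^{\B\Env(\mD)}_{\A,\B}(\X,-,\Y)$ identifies with $\Barc^{\mD}_{\A,\B}(\X,-,\Y)\circ{_\A}\BMod_\B(\L)$; precomposing with the inclusion ${_\A}\BMod_\B(\mD)\subset{_\A}\BMod_\B(\B\Env(\mD))$ and using that $\L$ restricted to $\mD$ is the identity gives the factorization at once.

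Without that input, what you have actually proved is only that the two simplicial objects agree after evaluation at each $[\n]$, naturally in $\M$; in an $\infty$-category this does not assemble into an equivalence of objects of $\Fun(\Delta^\op,\mD)$. Your fallback --- ``check directly that $\L$ sends the face and degeneracy maps of $\Barc^{\B\Env(\mD)}_{\A,\B}(\X,\M,\Y)$ to those defining $\Barc^\mD_{\A,\B}(\X,\M,\Y)$'' --- is exactly the coherence problem that a levelwise computation cannot solve, and you do not carry it out. The repair is to drop the claim that $\L$ fails to be bitensored, invoke \cite[Lemma 3.103.]{https://doi.org/10.48550/arxiv.2009.02428} to see that under the lemma's hypotheses it is one, and then appeal to the functoriality of the two-sided bar construction of \cite[Theorem 4.4.2.8.]{lurie.higheralgebra} in maps of bitensored $\infty$-categories.
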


\begin{proof}
	
Since $\mD$ is bitensored over monoidal $\infty$-categories, by \cite[Lemma 3.103.]{https://doi.org/10.48550/arxiv.2009.02428} the left adjoint $\L$ of the embedding $\mD \subset \B\Env(\mD)$ preserves the biactions and so induces a localization ${_\A}\BMod_\B(\L):{_\A}\BMod_\B(\B\Env(\mD)) \rightleftarrows {_\A}\BMod_\B(\mD)$.
Thus $\Barc^{\mD}_{\A,\B}(\X,-,\Y) $ factors as
$${_\A}\BMod_\B(\mD) \subset {_\A}\BMod_\B(\B\Env(\mD)) \xrightarrow{{_\A}\BMod_\B(\L)} {_\A}\BMod_\B(\mD) \xrightarrow{\Barc^{\mD}_{\A,\B}(\X,-,\Y)} \Fun(\Delta^\op, \mD)$$
and ${_\A}\BMod_\B(\B\Env(\mD)) \xrightarrow{{_\A}\BMod_\B(\L)} {_\A}\BMod_\B(\mD) \xrightarrow{\Barc^{\mD}_{\A,\B}(\X,-,\Y)} \Fun(\Delta^\op, \mD)$ identifies with
$${_\A}\BMod_\B(\B\Env(\mD)) \xrightarrow{\Barc^{\B\Env(\mD)}_{\A,\B}(\X,-,\Y)} \Fun(\Delta^\op, \B\Env(\mD)) \xrightarrow{\Fun(\Delta^\op,\L)}\Fun(\Delta^\op, \mD).$$
	
\end{proof}

The latter lemma motivates the following notation:

\begin{notation}
Let $\mD$ be an $\infty$-category lax bitensored over non-symmetric $\infty$-operads $\mC,\mE$ and $\A$ an algebra in $\mC$, $\B$ an algebra in $\mE$. For any right $\A$-module $\X$ and left $\B$-module $\Y$ let $\Barc^\mD_{\A,\B}(\X,-,\Y)$ be the composition:
$${_\A}\BMod_\B(\mD) \subset {_\A}\BMod_\B(\B\Env(\mD)) \xrightarrow{\Barc^{\B\Env(\mD)}_{\A,\B}(\X,-,\Y)} \Fun(\Delta^\op, \B\Env(\mD)) $$$$\xrightarrow{\Fun(\Delta^\op,\L)}\Fun(\Delta^\op, \mD).$$

\end{notation}

\begin{remark}

For any $\M \in {_\A}\BMod_\B(\mD) $ and $[\n]\in \Delta$ we have $$ \Barc^\mD_{\A,\B}(\X,\M,\Y)_\n \simeq \X \ot \underbrace{\A \ot ... \ot \A}_{\n - \text{times}}\ot \M \ot \underbrace{\B \ot ... \ot \B}_{\n - \text{times}} \ot \Y .$$
\end{remark}
\begin{notation}
For every $\A,\B$-bimodule $\M$ let $\X \ot_\A \M \ot_\B \Y$ be the geometric realization of $\Barc^\mD_{\A,\B}(\X,\M,\Y)$ if the latter exists.
\end{notation}
Proposition \ref{lemgf} immediately follows from Lemma \ref{leuma} and the following lemma:

\begin{lemma}\label{hggvujhnh}

Let $\mD$ be an $\infty$-category lax bitensored over non-symmetric $\infty$-operads $\mC,\mE$, $\A$ an augmented algebra in $\mC$ and $\B$ an augmented algebra in $\mE$.  
If $\mD$ has geometric realizations, the functor $$\tu \ot_\A (-) \ot_\B \tu: {_\A}\BMod_\B(\mD) \to {_\tu}\BMod_\tu(\mD) \simeq \mD$$ is left adjoint to the forgetful functor $\triv: \mD \simeq {_\tu}\BMod_\tu(\mD) \to {_\A}\BMod_\B(\mD).$

\end{lemma}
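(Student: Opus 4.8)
The statement to prove is Lemma \ref{hggvujhnh}: for $\mD$ lax bitensored over non-symmetric $\infty$-operads $\mC,\mE$ with $\A,\B$ augmented algebras and $\mD$ having geometric realizations, the functor $\tu \ot_\A (-) \ot_\B \tu$ is left adjoint to the forgetful functor ${_\tu}\BMod_\tu(\mD) \simeq \mD \to {_\A}\BMod_\B(\mD)$. The plan is to reduce to the honestly-bitensored case via the enveloping $\infty$-category $\B\Env(\mD)$ and there invoke the standard two-sided bar resolution.

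First I would pass to $\B\Env(\mD)$, which is bitensored over the genuine monoidal $\infty$-categories $\Env(\mC),\Env(\mE)$, with $\mD \subset \B\Env(\mD)$ admitting a left adjoint $\L$ compatible with the biactions (by \cite[Lemma 3.103.]{https://doi.org/10.48550/arxiv.2009.02428}). By Lemma \ref{inamb} (and the notation set up after it) the functor $\Barc^\mD_{\A,\B}(\tu,-,\tu)$ is defined as the composite of $\Barc^{\B\Env(\mD)}_{\A,\B}(\tu,-,\tu)$ with $\Fun(\Delta^\op,\L)$, so its geometric realization $\tu \ot_\A (-) \ot_\B \tu$ is the composite of the bar realization computed inside $\B\Env(\mD)$ with $\L$. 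Since $\L$ is a left adjoint and geometric realizations commute with $\L$, it suffices to establish the adjunction after replacing $\mD$ by $\B\Env(\mD)$, i.e. to treat the case where $\mD$ is genuinely bitensored over monoidal $\infty$-categories $\mC,\mE$; note that the forgetful functor and the relevant module categories are unaffected, as $\tu \in \Env(\mC)$ is still the image of $\tu \in \mC$.

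Second, in the genuinely bitensored setting I would use the two-sided bar construction as a resolution. By \cite[Theorem 4.4.2.8.]{lurie.higheralgebra} we have $\Barc^\mD_{\A,\B}(\tu,\M,\tu)_\n \simeq \A^{\ot \n}\ot \M \ot \B^{\ot \n}$ (after absorbing the trivial modules $\tu$). The counit of the desired adjunction on a trivial bimodule $\N \in {_\tu}\BMod_\tu(\mD)$ is the augmentation $\tu \ot_\A \triv(\N) \ot_\B \tu \to \N$; I would show this is an equivalence because the bar simplicial object $\Barc^\mD_{\A,\B}(\tu,\triv(\N),\tu)$ is split — via the extra degeneracies coming from the unit maps $\tu\to\A$, $\tu\to\B$, exactly as in \cite[Example 4.7.2.7.]{lurie.higheralgebra} — so its geometric realization recovers $\N$. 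Dually, for the unit on a general bimodule $\M$, the augmented simplicial object extends over ${_\A}\BMod_\B(\mD)$, sending $[\n]$ to the free $\A,\B$-bimodule on $\A^{\ot\n}\ot\M\ot\B^{\ot\n}$ (the remark after \cite[Example 4.7.2.7.]{lurie.higheralgebra}), so that $\M$ is the geometric realization of a simplicial free bimodule; applying $\tu\ot_\A(-)\ot_\B\tu$ and using that it sends the free bimodule on $\Z$ to $\Z$ gives back the bar object, and the unit $\M \to \triv(\tu\ot_\A\M\ot_\B\tu)$ is identified with the canonical map. The triangle identities then follow from compatibility of the two splittings, which is the content of \cite[Example 4.7.2.7.]{lurie.higheralgebra}.

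The main obstacle I anticipate is bookkeeping rather than conceptual: making sure the reduction through $\B\Env(\mD)$ genuinely transports the adjunction, i.e. that $\L$ intertwines the two bar constructions at the level of augmented simplicial objects (not merely their realizations) so that both the unit and counit descend correctly, and that "$\tu$" on each side means the same object before and after enveloping. Once that compatibility is in place — which is essentially Lemma \ref{inamb} applied with $\X = \Y = \tu$ together with the fact that $\L$ preserves geometric realizations and sends free bimodules to free bimodules — the rest is the classical bar-resolution argument. Combined with Lemma \ref{leuma} (identifying ${_\tu}\BMod_\tu(\mD)$ with $\mD$) and the preceding discussion, this yields Proposition \ref{lemgf}.
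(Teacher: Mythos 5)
Your proposal has a genuine gap at its core: the claim that the counit $\tu \ot_\A \triv(\N) \ot_\B \tu \to \N$ is an equivalence because $\Barc^\mD_{\A,\B}(\tu,\triv(\N),\tu)$ is split. The split simplicial object of \cite[Example 4.7.2.7.]{lurie.higheralgebra} is $\Barc_{\A,\B}(\A,\M,\B)$, with the algebras themselves as the outer modules; its extra degeneracies insert units on the outside and are sections of the outer action maps. For $\Barc_{\A,\B}(\tu,\triv(\N),\tu)$ the outer modules are $\tu$ acting through the augmentations, the candidate extra degeneracy violates the simplicial identities, and the realization is (in the cocomplete case) $(\tu\ot_\A\tu)\ot\N\ot(\tu\ot_\B\tu)$ rather than $\N$ --- for instance $\N\oplus(V[1]\ot\N)$ when $\A$ is the tensor algebra on $V$ and $\B=\tu$. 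An invertible counit would force $\triv$ to be fully faithful, which it is not: the failure of $\triv$ to be fully faithful is exactly what Koszul duality measures. Since your argument for the adjunction rests on this, and since exhibiting candidate unit and counit maps without the (higher-coherent) triangle identities does not by itself produce an adjunction of $\infty$-categories, the proof does not go through as written. A secondary issue is the reduction itself: $\B\Env(\mD)$ need not admit geometric realizations (only $\mD$ does by hypothesis), so ``the bar realization computed inside $\B\Env(\mD)$'' may not exist and one cannot simply replace $\mD$ by $\B\Env(\mD)$.

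The paper's proof avoids both problems. It passes to the presheaf category $\mP\B\Env(\mD)$, where the adjunction $\tu\ot_\A(-)\ot_\B\tu \dashv \triv$ is already available by \cite[Remark 4.5.3.3.]{lurie.higheralgebra}, and then deduces the statement for $\mD$ by a direct computation of mapping spaces: for $\M\in{_\A}\BMod_\B(\mD)$ and $\Y\in\mD$ one has
$${_\A}\BMod_\B(\mD)(\M,\triv(\Y))\simeq \lim_{\Delta}\ \mD\bigl(\L\circ\Barc^{\B\Env(\mD)}_{\A,\B}(\tu,\M,\tu),\Y\bigr)\simeq \mD\bigl(\colim(\L\circ\Barc^{\B\Env(\mD)}_{\A,\B}(\tu,\M,\tu)),\Y\bigr),$$
using full faithfulness of the various embeddings and the fact that $\L$ is left adjoint to $\mD\subset\B\Env(\mD)$, so that mapping out of each bar term into an object of $\mD$ may be computed after applying $\L$. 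This identifies the right-hand side with $\mD(\tu\ot_\A\M\ot_\B\tu,\Y)$ as defined in the paper, with no need to construct unit and counit by hand. I would recommend reworking your argument along these lines.
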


\begin{proof}
	
By \cite[Remark 4.5.3.3.]{lurie.higheralgebra} there is an adjunction $$\tu \ot_\A (-) \ot_\B \tu:  {_\A}\BMod_\B(\mP\B\Env(\mD)) \to {_\tu}\BMod_\tu(\mP\B\Env(\mD)):\triv.$$

Since the biaction of $\mC, \mE$ on $\mD$ is lax, the forgetful functor
${_\tu}\BMod_\tu(\mD) \to \mD$ is an equivalence.
Let $\L$ be the left adjoint of the embedding $\mD \subset \B\Env(\mD)$.
When viewing $\B\Env(\mD)$ as bitensored over the full monoidal subcategories of $\Env(\mC), \Env(\mE)$ spanned by the tensor unit of $\mC, \mE$, respectively,
and $\mD$ as bitensored over the full subcategories of $\mC, \mE$ spanned by the tensor unit of $\mC, \mE$, respectively, the functor $\L: \B\Env(\mD) \to \mD$ preserves the biaction and so gives rise to a localization
$$ {_\tu}\BMod_\tu(\B\Env(\mD)) \to {_\tu}\BMod_\tu(\mD) \simeq \mD.$$

Thus for every $ \Y \in \mD$ and $\M \in {_\A}\BMod_\B(\mD)$ there is a canonical equivalence
$$ {_\A}\BMod_\B(\mD)(\M, \triv(\Y)) \simeq {_\A}\BMod_\B(\mP\B\Env(\mD))(\M, \triv(\Y)) \simeq $$$${_\tu}\BMod_\tu(\mP\B\Env(\mD))(\colim(\Barc_{\A,\B}^{\mP\B\Env(\mD)}(\tu,\M,\tu)), \Y) \simeq $$$$ \lim{_\tu}\BMod_\tu(\B\Env(\mD))(\Barc^{\mB\Env(\mD)}_{\A,\B}(\tu,\M,\tu), \Y) \simeq$$$$ \lim \mD(\L \circ \Barc^{\mB\Env(\mD)}_{\A,\B}(\tu,\M,\tu), \Y) \simeq \mD(\colim(\L \circ \Barc^{\B\Env(\mD)}_{\A,\B}(\tu,\M,\tu)), \Y). $$	 

\end{proof}	

%\section{Restricted $L_\infty$-algebras}

%In this section we define the monad of tensor algebra / primitive elements whose algebras we call restricted $L_\infty$-algebras.

Next we investigate the relationship between the free associative algebra and the primitives:

\begin{lemma}
Let $\mC$ be an additive presentably symmetric monoidal $\infty$-category. There is an adjunction 
$$\T: \mC \rightleftarrows \Bialg(\mC):\Prim $$
whose left adjoint lifts the free associative algebra.
	
\end{lemma}

\begin{proof}

The symmetric monoidal $\infty$-category $\co\Alg_{\bE_\infty}(\mC)$ is compatible with small colimits
and the forgetful functor $\co\Alg_{\bE_\infty}(\mC) \to \mC$ preserves small colimits.
This implies that the forgetful functor $$ \Bialg(\mC)= \Grp(\co\Alg_{\bE_\infty}(\mC))=\Mon(\co\Alg_{\bE_\infty}(\mC)) \to \co\Alg_{\bE_\infty}(\mC)_{\tu/}$$ admits a left adjoint $\widetilde{\mathrm{Free}}$ that covers the left adjoint of the forgetful functor $\Alg(\mC) \to \mC_{\tu/},$ the free associative algebra functor.
By Corollaries \ref{fghjok} and \ref{comparat} there is an equivalence $$\Coalg_{\coComm^{\mathrm{nu}}}(\mC)\simeq \co\Alg_{\coComm}(\mC)_{\tu/} \simeq \co\Alg_{\bE_\infty}(\mC)_{\tu/}$$ adding the tensor unit.
The tensor algebra $\T$ is the composition
$$ \mC \xrightarrow{\triv_{\coComm^\mathrm{nu}}} \co\Alg_{\coComm^{\mathrm{nu}}}(\mC)\simeq \co\Alg_{\bE_\infty}(\mC)_{\tu/} \xrightarrow{\widetilde{\mathrm{Free}} } \Bialg(\mC).$$

Then $\T$ lifts the free associative algebra $\mC \to \Alg(\mC)$ along the forgetful functor
$\Bialg(\mC) \to \Alg(\mC).$

The functor of primitives $\Prim$ is the composition $$ \Bialg(\mC) \xrightarrow{\mathrm{forget}} \co\Alg_{\bE_\infty}(\mC)_{\tu/} \simeq \co\Alg_{\coComm^{\mathrm{nu}}}(\mC) \xrightarrow{\triv *^{\coComm^\mathrm{nu}}(-)} \mC.$$

Hence there is an adjunction 
$\T: \mC \rightleftarrows \Bialg(\mC):\Prim.$

\end{proof}

\section{Koszul duality}\label{Koszull}

%\subsection{Koszul duality of associative algebras and modules}

In this section we study Koszul duality between $\infty$-operads and $\infty$-cooperads
and their respective $\infty$-categories of algebras and coalgebras.
We construct this Koszul duality from Koszul-duality between augmented 
associative algebras and coaugmented coassociative coalgebras and their respective $\infty$-categories of modules and comodules following \cite[5.2.]{lurie.higheralgebra}, \cite{https://doi.org/10.48550/arxiv.2104.03870}.
The next two theorems are \cite[Theorem 3.26.]{https://doi.org/10.48550/arxiv.2104.03870}:

%Let $\mD$ be an $\infty$-category left tensored over a monoidal $\infty$-category $\mC$ such that $ \mC, \mD $ admit geometric realizations. For any augmented algebra $\A$ in $\mC$ by Lemma \ref{hggvujhnh} there is an adjunction$$  \tu \ot_\A (-): {\LMod_\A(\m)} \rightleftarrows \mD \simeq { \LMod_\tu(\mD)} :\triv_{\A}.$$ 

\begin{theorem}\label{thh}\label{ccojjkk}
Let $\mC$ be a monoidal $\infty$-category and $\mD$ an $\infty$-category
left tensored over $\mC$ such that $ \mC, \mD $ admit geometric realizations. 
There is a %functor  $$ \Barc_\mC: \Alg(\mC)_{/\tu} \to \co\Alg(\mC)_{\tu /}, \ \A \mapsto \tu \ot_\A \tu $$
commutative square
\begin{equation}\label{fghjkk}
\begin{xy}
\xymatrix{
\LMod(\mD) \ar[d] 
\ar[rrr]^{\Barc_\mD} 
&&& \mathrm{coLMod}(\mD):=\LMod(\mD^\op)^\op \ar[d] 
\\
\Alg(\mC)_{/\tu}  \ar[rrr]^{\Barc_\mC}  &&& \co\Alg( \mC)_{\tu/}:= (\Alg(\mC^\op)_{/\tu})^\op.
}
\end{xy} 
\end{equation} 
The bottom functor lifts the functor 
$$ \Alg(\mC)_{/\tu} \to \mC, \ \A \mapsto \tu \ot_\A \tu.$$
The top functor preserves cocartesian lifts of the vertical functors.

Square (\ref{fghjkk}) induces on the fiber over every $\A \in \Alg(\mC)_{/\tu} $ 
a functor $$ \LMod_\A( \mD) \to \mathrm{coLMod}_{\mathrm{Bar}_\mC(\A)}(\mD) $$ lifting the functor 
$ \tu \ot_\A (-): \LMod_\A(\mD) \to \mD.$
\end{theorem}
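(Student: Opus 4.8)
The plan is to promote Lurie's bar construction on augmented associative algebras \cite[\S 5.2]{lurie.higheralgebra} to a construction on their $\infty$-categories of modules, organised over the base $\Alg(\mC)_{/\tu}$, following \cite[\S 3]{https://doi.org/10.48550/arxiv.2104.03870}.

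First I would recall the bottom functor. For an augmented algebra $\A \to \tu$ in $\mC$, its underlying object in $\coAlg(\mC)_{\tu/}$ is the relative tensor product $\tu \ot_\A \tu$, which exists because $\mC$ has geometric realizations: it is the realization of the two-sided bar construction whose $\n$-simplices are $\A^{\ot \n}$. The coalgebra structure comes from the ordinal-sum structure of the indexing category: the inclusions $[\n] \hookrightarrow [\n]\oplus[1]\oplus[\m] \hookleftarrow [\m]$ in $\Delta$, together with the augmentation $\A \to \tu$ applied to the middle tensor factor, assemble after realization into a comultiplication $\tu\ot_\A\tu \to (\tu\ot_\A\tu)\ot(\tu\ot_\A\tu)$, with coassociativity and counitality coming from the simplicial identities. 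Organising this coherently is exactly \cite[Construction 5.2.2.6 ff.]{lurie.higheralgebra}, which produces $\Barc_\mC\colon\Alg(\mC)_{/\tu}\to\coAlg(\mC)_{\tu/}$ lifting $\A\mapsto\tu\ot_\A\tu$ (with right adjoint the cobar construction $\Cobar$); I would cite this rather than reprove it.

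Next, the module-level functor. The forgetful functor $\LMod(\mD)\to\Alg(\mC)$ is a cocartesian fibration (base change being relative tensor product, available since $\mD$ has geometric realizations); restricting to augmented algebras, the pair $(\A\to\tu,\, M)$ should go to $\tu\ot_\A M$, the realization of the bar construction with $\n$-simplices $\A^{\ot \n}\ot M$, equipped with the coaction of $\tu\ot_\A\tu$ built from the same ordinal-sum maps, now with the rightmost block acting on $M$. To make this coherent and functorial I would imitate the algebra case, realising the total datum as a single diagram indexed by a $\BMod$-flavoured enlargement of the simplex category, using the two-sided bar machinery of \cite[\S 4.4.2]{lurie.higheralgebra} together with the comonoidal bookkeeping of \cite[\S 5.2.2]{lurie.higheralgebra}; equivalently, this is carried out in \cite[\S 3]{https://doi.org/10.48550/arxiv.2104.03870}. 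Base change along a map of augmented algebras $\A\to\A'$ takes $M\mapsto \A'\ot_\A M$ compatibly with bar constructions, which is precisely the statement that the top functor $\Barc_\mD$ preserves cocartesian lifts of the two vertical fibrations. Commutativity of the square is then immediate, since along both composites the underlying pair in $\mC\times\mD$ is $(\tu\ot_\A\tu,\ \tu\ot_\A M)$.

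Finally, the fiberwise statement drops out formally: because $\Barc_\mD$ lies over $\Barc_\mC$ and carries cocartesian edges to cocartesian edges, it restricts on the fiber over a fixed $\A\in\Alg(\mC)_{/\tu}$ to a functor $\LMod_\A(\mD)\to\coLMod_{\Barc_\mC(\A)}(\mD)$, and that this lifts $\tu\ot_\A(-)\colon\LMod_\A(\mD)\to\mD$ is once more the identification of underlying objects. The main obstacle is the coherence problem in the second step: upgrading the evident pointwise comonoid/comodule structure on these geometric realizations to genuine $\infty$-categorical data, functorially in $(\A,M)$ and compatibly with the projection to $\Alg(\mC)_{/\tu}$. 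The honest way to handle it is not to add structure by hand but to exhibit $\Barc$ as induced by a morphism of $\infty$-operads / a monoidal natural transformation --- essentially the self-duality of $\Delta^{\op}$-indexed diagrams underlying bar--cobar duality --- so that all higher coherences are inherited; this is exactly where one leans on \cite[\S 5.2]{lurie.higheralgebra} and \cite[\S 3]{https://doi.org/10.48550/arxiv.2104.03870}.
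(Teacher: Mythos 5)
Your proposal is correct and matches the paper's treatment: the paper gives no independent proof of this statement but simply cites \cite[Theorem 3.26.]{https://doi.org/10.48550/arxiv.2104.03870}, and your sketch of the ordinal-sum comultiplication and the module-level coaction is an accurate outline of what lies behind that result, with the genuine coherence work deferred to the same sources (\cite[5.2]{lurie.higheralgebra} and \cite[\S 3]{https://doi.org/10.48550/arxiv.2104.03870}) that the paper relies on.
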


\begin{theorem}\label{Baar}
Let $\mC$ be a monoidal $\infty$-category and $\mD$ an $\infty$-category left tensored over $\mC$ such that $ \mC, \mD $ admit geometric realizations and totalizations.
There are adjunctions $$ \Barc_\mC: \Alg(\mC)_{/\tu} \rightleftarrows \co\Alg( \mC)_{\tu / } : \Cobar_\mC:=\Barc^\op_{\mC^\op}, $$
$$\Barc_\mD: \BMod(\mD) \rightleftarrows \mathrm{coBMod}(\mD): \Cobar_\mD:=\Barc^\op_{\mD^\op} $$ and square (\ref{fghjkk}) is a map of adjunctions
and so induces for any augmented algebra $\A$ in $\mC$ and map $ \Barc_\mC(\A) \to \A' $ of coaugmented coalgebras in $\mC$ an adjunction
$$ \tu \ot_\A (-): {\LMod_\A(\mD)} \rightleftarrows {\mathrm{LMod}_{\A'}(\mD)}:  \tu \ot^{\A'} (-). $$

\end{theorem}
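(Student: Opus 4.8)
The plan is to obtain both adjunctions, together with the map-of-adjunctions structure, from Lurie's bar--cobar adjunction (\cite[\S 5.2.2]{lurie.higheralgebra}) and Theorem~\ref{thh}, applied once to $\mC,\mD$ and once to $\mC^{\op},\mD^{\op}$. \emph{Cobar functors.} Since $\mC$ admits totalizations, $\mC^{\op}$ admits geometric realizations, so Theorem~\ref{thh} applies to $\mC^{\op}$ and to the left-tensored $\infty$-category $\mD^{\op}$ (which admits geometric realizations because $\mD$ admits totalizations). Putting $\Cobar_\mC:=\Barc_{\mC^{\op}}^{\op}$ and $\Cobar_\mD:=\Barc_{\mD^{\op}}^{\op}$ then produces functors $\Cobar_\mC\colon\coAlg(\mC)_{\tu/}\to\Alg(\mC)_{/\tu}$ and $\Cobar_\mD\colon\mathrm{coLMod}(\mD)\to\LMod(\mD)$ (and, on bimodules, $\mathrm{coBMod}(\mD)\to\BMod(\mD)$), together with the ``opposite'' commutative square supplied by Theorem~\ref{thh} for $\mC^{\op},\mD^{\op}$. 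That $\Barc_\mC\dashv\Cobar_\mC$ is the bar--cobar adjunction between augmented algebras and coaugmented coalgebras, with $\Barc_\mC(\A)\simeq\tu\ot_\A\tu$ and $\Cobar_\mC(\C)$ the totalization of the cobar complex of $\C$; its proof uses only that $\mC$ has geometric realizations and totalizations. The adjunction $\Barc_\mD\dashv\Cobar_\mD$ is its relative version over the former, obtained from the two-sided (co)bar constructions on (bi)modules in the same way.

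\emph{The square is a map of adjunctions.} This is the point genuinely beyond Theorem~\ref{thh}, and the \emph{main obstacle}. Let $V\colon\LMod(\mD)\to\Alg(\mC)_{/\tu}$ and $W\colon\mathrm{coLMod}(\mD)\to\coAlg(\mC)_{\tu/}$ be the forgetful functors; by the (co)limit hypotheses on $\mD$ both are bicartesian fibrations, and by Theorem~\ref{thh} the functor $\Barc_\mD$ preserves cocartesian lifts. Theorem~\ref{thh} for $\mC,\mD$ gives the commuting square of left adjoints $W\circ\Barc_\mD\simeq\Barc_\mC\circ V$, and Theorem~\ref{thh} for $(\mC^{\op},\mD^{\op})$, read back in the original categories via opposites, gives the commuting square of right adjoints $V\circ\Cobar_\mD\simeq\Cobar_\mC\circ W$. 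One must then check that the canonical Beck--Chevalley mate of the first equivalence is invertible and coincides with the second; this is exactly where the ``preserves cocartesian lifts'' clause of Theorem~\ref{thh} and the bicartesian fibration structures are used. Granting this, a commuting square of left adjoints whose mate is invertible is by definition a map of adjunctions.

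\emph{The relative adjunction.} Fix an augmented algebra $\A$ and a map $\Barc_\mC(\A)\to\A'$ of coaugmented coalgebras, and restrict the map of adjunctions along the fibres of $V$ and $W$. Over $\A$ the functor $\Barc_\mD$ lands in the fibre $\mathrm{coLMod}_{\Barc_\mC(\A)}(\mD)$ of $W$, so composing with the cocartesian pushforward along $\Barc_\mC(\A)\to\A'$ gives $\tu\ot_\A(-)\colon\LMod_\A(\mD)\to\mathrm{coLMod}_{\A'}(\mD)$, lifting $\tu\ot_\A(-)\colon\LMod_\A(\mD)\to\mD$ as in Theorem~\ref{thh}. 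Dually, composing $\Cobar_\mD$ with the cartesian pullback along $\Barc_\mC(\A)\to\A'$ and with restriction along the unit $\A\to\Cobar_\mC(\Barc_\mC(\A))$ gives $\tu\ot^{\A'}(-)$; that this pair is adjoint follows formally from the map-of-adjunctions structure and the bicartesian fibration structures. (Here $\mathrm{coLMod}_{\A'}(\mD)$ is the $\infty$-category of left $\A'$-comodules in $\mD$, written $\mathrm{LMod}_{\A'}(\mD)$ in the statement.)
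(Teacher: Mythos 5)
The paper does not actually prove this statement: Theorems \ref{thh} and \ref{Baar} are both quoted verbatim from \cite[Theorem 3.26]{https://doi.org/10.48550/arxiv.2104.03870}, so there is no internal argument to compare yours against. Judged on its own terms, your proposal has the right architecture — define $\Cobar_\mC=\Barc^\op_{\mC^\op}$ and $\Cobar_\mD=\Barc^\op_{\mD^\op}$, observe that Theorem \ref{thh} applied to $(\mC^\op,\mD^\op)$ gives a commuting square of cobar functors, and then deduce the fibrewise adjunction $\tu\ot_\A(-)\dashv\tu\ot^{\A'}(-)$ formally from the map-of-adjunctions structure together with (co)cartesian pushforward/pullback. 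That last reduction is genuinely formal and correct. The problem is that the entire content of the theorem sits in the step you label ``the main obstacle'' and then skip: you write ``one must then check that the canonical Beck--Chevalley mate \dots is invertible and coincides with the second'' and proceed with ``granting this.'' Nothing in Theorem \ref{thh} as stated gives you that identification — having a commuting square of Bars and, separately, a commuting square of Cobars does not make the square a map of adjunctions unless the second equivalence is the mate of the first, and that is exactly what has to be proved. So what you have is a correct reduction of the theorem to itself, not a proof.

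There is also a quieter gap upstream. You treat the adjunctions $\Barc_\mC\dashv\Cobar_\mC$ and $\Barc_\mD\dashv\Cobar_\mD$ as already available and only in need of being glued; but in \cite[5.2.2]{lurie.higheralgebra}, and likewise in \cite{https://doi.org/10.48550/arxiv.2104.03870}, these adjunctions are not produced by writing down a unit and counit for explicitly constructed functors — they are extracted from a left- and right-representable pairing of $\infty$-categories (a twisted-arrow-type correspondence between $\Alg(\mC)_{/\tu}$ and $\coAlg(\mC)_{\tu/}$, upgraded to one between $\LMod(\mD)$ and $\coLMod(\mD)$ living over it). It is precisely this pairing formalism that delivers the adjunction, its module-level relative version, and the map-of-adjunctions compatibility in one stroke; a direct mate computation of the kind you sketch is not how the result is established in the literature, and it is not clear it can be carried out from the ``preserves cocartesian lifts'' clause alone. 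Relatedly, your blanket claim that $\LMod(\mD)\to\Alg(\mC)_{/\tu}$ is a bicartesian fibration is too strong under the stated hypotheses: cocartesian pushforward there is the relative tensor product as a module, which requires the action of $\mC$ on $\mD$ to be compatible with geometric realizations, not merely that $\mD$ admits them. (The specific lifts your final paragraph uses — corestriction of comodules and restriction of modules — do exist unconditionally, so that part survives, but the justification as written does not.)
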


\begin{remark}\label{contin}
Let $\mC$ be a monoidal $\infty$-category and $\mD$ an $\infty$-category
left tensored over $\mC$ compatible with geometric realizations.
For every $\Y \in \mD$ the canonical morphism 
$$  \tu \ot_\A \mathrm{triv}_{\A} (\Y) \simeq \colim_{\n \in \Delta^\op}(\A^{\ot \n} \ot \Y) \to \Barc_\mC(\A) \ot \Y \simeq \colim_{\n \in \Delta^\op}(\A^{\ot \n}) \ot \Y $$ is an equivalence.
This implies that the functor $$ \LMod_\A(\mD) \to {\mathrm{coLMod}_{ \mathrm{Bar}_\mC(\A)}( \mD)} $$ exhibits
the $\infty$-category ${\mathrm{coLMod}_{ \mathrm{Bar}_\mC(\A)}( \mD)}  $ as the $\infty$-category of coalgebras 
over the comonad associated to the adjunction $$ \tu \ot_\A (-):  { \LMod_\A( \mD )} \rightleftarrows \mD : \mathrm{triv}_{\A}.$$ 

\end{remark}

\vspace{1mm}

To define operadic Koszul duality we apply Theorem \ref{Baar} 
%Koszul-duality between augmented associative algebras and coaugmented coassociative coalgebras 
to the $\infty$-category of symmetric sequences endowed with the composition product.
For any symmetric monoidal $\infty$-category $\mC$ compatible with small colimits
by Construction \ref{leftact} the monoidal $\infty$-category $\sSeq(\mC)$ endowed with the composition product acts on $\mC$ from the left.
Restricting this left action to the full monoidal subcategory $\sSeq(\mC)_{\geq 1} \subset \sSeq(\mC)$ we obtain a left action of $\sSeq(\mC)_{\geq 1}$ on $\mC$, to which we can apply
Theorem \ref{Baar} if $\mC$ admits totalizations, to obtain the following:

\vspace{1mm}

\begin{proposition}\label{dfghcfghkk}

Let $\mC$ be a preadditive symmetric monoidal $\infty$-category compatible with  small colimits that admits totalizations.

\begin{enumerate}
\item There is a Koszul duality adjunction
$$(-)^\vee: \Op(\mC)^\nun_{/\triv} \simeq \Alg(\sSeq(\mC)_{\geq 1})_{/\triv} \rightleftarrows \co\Alg(\sSeq(\mC)_{\geq 1})_{\triv / } \simeq \mathrm{coOp}(\mC)^\nun_{\triv / }: (-)^\vee. $$

\item There is an adjunction 
$ \LMod(\mC) \rightleftarrows \co\LMod(\mC) $ and a map of adjunctions 
\begin{equation*}
\begin{xy}
\xymatrix{
\LMod( \mC)  \ar[d] 
\ar[r]^{ } 
&  \co\LMod( \mC ) \ar[d] 
\\
\Op(\mC)^\nun_{/\triv}  \ar[r]^{ }  & \mathrm{coOp}(\mC)^\nun_{\triv / }.
}
\end{xy} 
\end{equation*} 

For any augmented non-unital $\infty$-operad $\mO $ in $\mC$ and morphism $ \mO^\vee \to \mQ $ of coaugmented non-counital $\infty$-cooperads in $\mC$ this square induces an adjunction
$$ \Alg_\mO(\mC)= \LMod_\mO( \mC) \rightleftarrows \co\Alg^{\mathrm{dp},\conil}_{\mQ}(\mC) = \co\LMod_{ \mQ }(\mC), $$ where the left adjoint lifts the functor 
$ \triv \circ_\mO (-): \Alg_\mO( \mC) \to \mC$ left adjoint to the trivial $\mO$-algebra functor and the right adjoint lifts the functor 
$ \triv \circ^{\mQ} (-): \co\Alg^{\mathrm{dp},\conil}_{\mQ}(\mC) \to \mC$ right adjoint to the trivial $\mQ$-coalgebra functor. 

\end{enumerate}

\end{proposition}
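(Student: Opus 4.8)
The plan is to deduce both statements from the associative-algebra Koszul duality of Theorem~\ref{Baar}, applied to the monoidal $\infty$-category $\sSeq(\mC)_{\geq 1}$ with the composition product acting on $\mC$ from the left. The first step is to verify the hypotheses of that theorem. Since the symmetric monoidal structure on $\mC$ is compatible with small colimits, Proposition~\ref{weak}(1) shows that the composition product makes $\sSeq(\mC)_{\geq 1}$ a genuine monoidal $\infty$-category with unit $\triv$, and Construction~\ref{leftact} shows that the composition-product left action of $\sSeq(\mC)$ on $\mC$ restricts to a left action of $\sSeq(\mC)_{\geq 1}$ on $\mC$. As $\mC$ has small colimits and, by hypothesis, totalizations, the functor $\infty$-category $\sSeq(\mC)_{\geq 1} = \Fun(\Sigma_{\geq 1}, \mC)$ has geometric realizations and totalizations computed objectwise, so all four $\infty$-categories appearing in Theorem~\ref{Baar} have both.

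For (1) I would take the adjunction $\Barc_{\sSeq(\mC)_{\geq 1}} : \Alg(\sSeq(\mC)_{\geq 1})_{/\triv} \rightleftarrows \coAlg(\sSeq(\mC)_{\geq 1})_{\triv/} : \Cobar_{\sSeq(\mC)_{\geq 1}}$ produced by Theorem~\ref{Baar} and rewrite both sides. By definition $\Op(\mC)^\nun \simeq \Alg(\sSeq(\mC)_{\geq 1})$, while the corollary following Lemma~\ref{gghjjgh}---which uses preadditivity of $\mC$ to compare the composition and cocomposition products on $\sSeq(\mC)_{\geq 1}$---gives $\coAlg(\sSeq(\mC)_{\geq 1}) \simeq \coOp(\mC)^\nun$ over $\sSeq(\mC)_{\geq 1}$. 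Since $\triv$ is the monoidal unit, these equivalences carry the slice over $\triv$ to $\Op(\mC)^\nun_{/\triv}$ and the slice under $\triv$ to $\coOp(\mC)^\nun_{\triv/}$, so the resulting adjunction is the asserted $(-)^\dual : \Op(\mC)^\nun_{/\triv} \rightleftarrows \coOp(\mC)^\nun_{\triv/} : (-)^\dual$.

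For (2) I would apply the module part of Theorem~\ref{Baar} to $\mD = \mC$, left-tensored over $\sSeq(\mC)_{\geq 1}$. This produces the adjunction $\LMod(\mC) \rightleftarrows \coLMod(\mC)$, the asserted map of adjunctions lying over the Koszul duality adjunction of (1), and, for an augmented non-unital $\infty$-operad $\mO$ together with a map $\mO^\dual = \Barc_{\sSeq(\mC)_{\geq 1}}(\mO) \to \mQ$ of coaugmented non-counital cooperads, the fiberwise adjunction $\triv \ot_\mO (-) : \LMod_\mO(\mC) \rightleftarrows \coLMod_\mQ(\mC) : \triv \ot^\mQ (-)$. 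Since $\sSeq(\mC)_{\geq 1} \subset \sSeq(\mC)$ is a full monoidal subcategory containing every non-unital operad, $\LMod_\mO(\mC) = \Alg_\mO(\mC)$, and by definition $\coLMod_\mQ(\mC) = \coAlg^{\mathrm{dp}, \conil}_\mQ(\mC)$. Finally, Proposition~\ref{lemgf} identifies the left adjoint $\triv \ot_\mO (-)$ with $\triv \circ_\mO (-)$, the left adjoint of the trivial $\mO$-algebra functor, and dually Corollary~\ref{lemfg2} identifies the right adjoint with $\triv \circ^\mQ (-)$, the right adjoint of the trivial $\mQ$-coalgebra functor.

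The main obstacle is not any single step but keeping straight the dictionary between Lurie's module-level Koszul duality and the operadic language: one must use that the composition product genuinely restricts to a monoidal structure on $\sSeq(\mC)_{\geq 1}$ whose coalgebras are exactly the non-counital $\infty$-cooperads (the point where preadditivity enters, through Lemma~\ref{gghjjgh}), and that the module-theoretic bar adjoint $\triv \ot_\mO (-)$ coincides with the operadic $\triv \circ_\mO (-)$ of Proposition~\ref{lemgf} (which itself rests on Lemma~\ref{hggvujhnh}). Once these identifications are recorded, the proposition is a direct transcription of Theorem~\ref{Baar}.
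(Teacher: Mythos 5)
Your proposal is correct and follows essentially the same route as the paper, which states Proposition \ref{dfghcfghkk} as a direct application of Theorem \ref{Baar} to the left action of $\sSeq(\mC)_{\geq 1}$ (with the composition product) on $\mC$, without giving a separate proof. Your additional bookkeeping --- verifying the hypotheses of Theorem \ref{Baar}, invoking the corollary of Lemma \ref{gghjjgh} to identify $\coAlg(\sSeq(\mC)_{\geq 1})$ with $\mathrm{coOp}(\mC)^\nun$, and using Proposition \ref{lemgf} and Corollary \ref{lemfg2} to identify the fiberwise adjoints --- is exactly the dictionary the paper leaves implicit.
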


Next we prove an extension of Proposition \ref{dfghcfghkk}, which provides Koszul duality for a preadditive symmetric monoidal $\infty$-category $\mC$ that admits small colimits but whose
tensor product does not necessarily preserve small colimits component-wise.
We need this extension to construct Koszul duality for the opposite monoidal $\infty$-category $\mC^\op$, whose tensor product rarely commutes with small colimits component-wise.
We need Koszul duality for the opposite monoidal $\infty$-category $\mC^\op$ to state and prove Theorem \ref{map}.

Let $\mC$ be a symmetric monoidal $\infty$-category that admits small colimits (but whose
tensor product does not necessarily preserve small colimits component-wise).
Then the composition product on  $\sSeq(\mC)_{\geq 1}$ does not define a monoidal $\infty$-category but a lax monoidal $\infty$-category.
In this case we cannot form the Bar-Cobar adjunctions for $\sSeq(\mC)_{\geq 1}$ directly. By embedding $\mC$ symmetric monoidally into a preadditive symmetric monoidal $\infty$-category compatible with small colimits that admits totalizations we can construct the Bar-Cobar adjunctions for $\sSeq(\mC)_{\geq 1}$ in this more general case via the following Proposition:

\begin{proposition}\label{dfghcfghkkk}

Let $\mC$ be a preadditive symmetric monoidal $\infty$-category that admits small colimits and small limits.

\begin{enumerate}
\item There is an adjunction
$$ (-)^\vee: \Op(\mC)^\nun_{/\triv} \rightleftarrows \mathrm{coOp}(\mC)^\nun_{\triv / }: (-)^\vee. $$

\item There is an adjunction
$$ \RMod(\sSeq(\mC)_{\geq 1}) \rightleftarrows \mathrm{coRMod}(\sSeq(\mC)_{\geq 1}) $$ and a map of adjunctions 
\begin{equation*}
\begin{xy}
\xymatrix{
\RMod(\sSeq(\mC)_{\geq 1})  \ar[d] 
\ar[r]^{ } 
&  \mathrm{coRMod}(\sSeq(\mC)_{\geq 1})  \ar[d] 
\\
\Op(\mC)^\nun_{/\triv}  \ar[r]^{ }  &  \mathrm{coOp}(\mC)^\nun_{\triv / }.
}
\end{xy} 
\end{equation*} 
For any augmented non-unital $\infty$-operad $\mO $ in $\mC$ and morphism $ \mO^\vee \to \mQ $ of coaugmented non-counital $\infty$-cooperads in $\mC$ this square induces an adjunction
$$\RMod_\mO(\sSeq(\mC)_{\geq 1}) \rightleftarrows \co\RMod_{ \mQ }(\sSeq(\mC)_{\geq 1}), $$ where the left adjoint lifts the functor 
$ (- )\circ_\mO \triv: \RMod_\mO(\sSeq(\mC)_{\geq 1}) \to \mC$ left adjoint to the trivial right $\mO$-module functor and the right adjoint lifts the functor 
$ (-) \circ^{\mQ} \triv:  \co\RMod_{ \mQ }(\sSeq(\mC)_{\geq 1}) \to \mC$ right adjoint to the trivial right $\mQ$-comodule.

\vspace{1mm}
\item For any augmented non-unital $\infty$-operad $\mO $ in $\mC$ and morphism $ \mO^\vee \to \mQ $ of coaugmented non-counital $\infty$-cooperads in $\mC$ there is an adjunction
$$\triv \circ_\mO (-): \Alg_\mO(\mC) \rightleftarrows \co\Alg_{\mQ}(\mC): \triv *^{\mQ} (-), $$ where the left adjoint lifts the functor 
$ \triv \circ_\mO (-): \Alg_\mO(\mC) \to \mC$ left adjoint to the trivial $\mO$-algebra functor and the right adjoint lifts the functor 
$ \triv *^{\mQ} (-): \co\Alg_{\mQ}(\mC) \to \mC$ right adjoint to the trivial $\mQ$-coalgebra functor. 

\end{enumerate}

\end{proposition}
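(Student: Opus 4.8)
The strategy is to reduce Proposition \ref{dfghcfghkkk} to the already-established Proposition \ref{dfghcfghkk} by embedding $\mC$ into a better-behaved symmetric monoidal $\infty$-category. First I would invoke Corollary \ref{cory} (or Proposition \ref{embe}) to find a symmetric monoidal embedding $\mC \subset \mC'$ into a preadditive symmetric monoidal $\infty$-category $\mC'$ that is compatible with small colimits and admits small limits, with the embedding preserving small limits and the initial and final objects. Crucially, by Proposition \ref{weak} (3) and Lemma \ref{gghjjgh} this embedding induces an embedding of non-symmetric $\infty$-operads $\sSeq(\mC)^\ot \subset \sSeq(\mC')^\ot$ preserving the tensor unit $\triv$, and likewise on the cocomposition side. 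Since $\mC'$ is compatible with small colimits, $\sSeq(\mC')_{\geq 1}$ is an honest monoidal $\infty$-category acting on $\mC'$, so Proposition \ref{dfghcfghkk} applies to $\mC'$, giving the Koszul duality adjunction $(-)^\dual$ on $\Op(\mC')^\nun_{/\triv}$, the Bar-Cobar adjunction on left modules, and the induced adjunction $\triv \circ_\mO (-) \dashv \triv *^\mQ (-)$.

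Next I would show each of these structures restricts along the embedding. For part (1): an object of $\Op(\mC')^\nun$ lies in the image of $\Op(\mC)^\nun$ iff its underlying symmetric sequence is levelwise in $\mC$, and similarly for $\mathrm{coOp}$; one checks that $\Barc_{\sSeq(\mC')_{\geq 1}}$ of an $\infty$-operad whose terms lie in $\mC$ again has terms in $\mC$, because each $\Barc$-term is built from composition products $\mO \circ \cdots \circ \mO$, which by Proposition \ref{weak} (2) (applied as a colimit formula) and the fact that the embedding $\mC \subset \mC'$ preserves all the relevant colimits — here using that the embedding is \emph{reflective} (admits a left adjoint $\L$) so $\L_*$ computes these composition products inside $\sSeq(\mC)_{\geq 1}$ — stays in $\mC$. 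Dually, $\Cobar$-terms stay in $\mC$ because $\mC \subset \mC'$ preserves limits. So the adjunction restricts. For parts (2) and (3): the Bar-Cobar adjunction on $\RMod$, resp. the adjunction $\triv \circ_\mO(-) \dashv \triv *^\mQ(-)$ on algebras/coalgebras, restricts to $\mC$ because the relevant left adjoint $\triv \circ_\mO(-)$ is a geometric realization of composition products $\triv \circ \mO \circ \cdots \circ \mO \circ \X$ (Proposition \ref{lemgf}), which stays in $\mC$ by the same colimit-preservation argument, while the right adjoint $\triv *^\mQ(-)$ is a totalization of cocomposition products (Corollary \ref{lemfg2}), which stays in $\mC$ because $\mC \subset \mC'$ preserves limits. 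Finally one identifies the restricted adjunctions with the ones asserted in the statement by observing that the underlying functors on $\mC$ are the ones described (left adjoint to trivial algebra, right adjoint to trivial coalgebra), using Construction \ref{Prim}.

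The main obstacle I anticipate is the compatibility bookkeeping in part (2)–(3): one must be careful that the lax monoidal (rather than monoidal) nature of $\sSeq(\mC)_{\geq 1}^\ot$ does not obstruct forming the relevant module/comodule categories and their Bar-Cobar functors. The point is that all the constructions only ever use composition products of the form $\mO \circ \cdots \circ \mO \circ \X$ where at most one factor fails to be the unit, and for these the lax structure map $\theta$ is an equivalence by the remark following the definition of lax monoidal $\infty$-category; combined with Lemma \ref{inamb} and Lemma \ref{hggvujhnh}, which were precisely set up to handle the lax bitensored setting, the Bar-Cobar adjunctions make sense directly over $\sSeq(\mC)_{\geq 1}^\ot$. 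So rather than a genuine difficulty this is a verification that the lax framework developed in §\ref{Ooper} suffices; the substantive mathematical content is entirely contained in Proposition \ref{dfghcfghkk} and the colimit/limit-preservation of the embedding $\mC \subset \mC'$.
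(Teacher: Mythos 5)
Your overall strategy --- reduce to the colimit-compatible case via a symmetric monoidal embedding, restrict the right adjoints using limit-preservation and the left adjoints using a reflection --- is the same as the paper's, but your reduction rests on a single embedding $\mC \subset \mC'$ that is simultaneously reflective and has a target to which Proposition \ref{dfghcfghkk} applies, and no such embedding is supplied by the paper's toolkit. Corollary \ref{cory} produces a chain $\mC \subset \mC' \subset \mC''$ in which these two properties are split between the two stages: $\mC \subset \mC'$ admits a left adjoint, but $\mC'$ (the closure of $\mC$ under small colimits inside a presheaf-type category) need not admit totalizations, so Proposition \ref{dfghcfghkk} cannot be applied to it; while $\mC''$ is presentable and $\mC \subset \mC''$ preserves small limits, but is not reflective, so the Bar-type left adjoints (which are colimits) do not restrict to $\mC$ along it. The paper therefore restricts the right adjoints from $\mC''$ to $\mC$ directly (closure under limits), restricts the left adjoints from $\mC''$ only to $\mC'$ (closure under colimits), and then composes with reflections $\L', \L'', \L'''$ down to $\mC$. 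Making those reflections exist at the level of cooperads, right comodules and coalgebras is itself nontrivial: one needs the embedding $\sSeq(\mC)_{\geq 1} \subset \sSeq(\mC')_{\geq 1}$ to be monoidal for the cocomposition product, which the paper deduces from Lemma \ref{gghjjgh} (identifying composition and cocomposition on $\sSeq(\mC'')_{\geq 1}$) together with the limit-preservation of $\mC \subset \mC''$; your ``likewise on the cocomposition side'' passes over exactly this point.

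The second gap is in part (3). Proposition \ref{dfghcfghkk} produces an adjunction valued in $\coAlg^{\mathrm{dp},\conil}_{\mQ}$, not in $\coAlg_{\mQ}$, and your plan treats the adjunction $\Alg_\mO \rightleftarrows \coAlg_{\mQ}$ as if it were already available in the nice case. It is not: even for presentable $\mC$ one must compose with the forgetful functor $\coAlg^{\mathrm{dp},\conil}_{\mQ}(\mC) \to \coAlg_{\mQ}(\mC)$ and its right adjoint $\R$ (whose existence uses presentability, via Remark \ref{preo}), and then verify that the resulting composite right adjoint still lifts $\triv *^{\mQ}(-)$; this identification is carried out explicitly in the paper's proof and is not a formality. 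Once you add this step in the presentable case and replace your single embedding by the two-stage one, the rest of your restriction argument goes through as you describe.
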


\begin{proof}
	
Let us first assume that $\mC$ is a presentably symmetric monoidal $\infty$-category. Then the composition product on $\sSeq(\mC)$ defines a monoidal structure. In this case the statements 1. and 2. follow from Proposition \ref{Baar}.

Moreover for any augmented non-unital $\infty$-operad $\mO $ in $\mC$ and morphism $ \mO^\vee \to \mQ $ of coaugmented non-counital $\infty$-cooperads in $\mC$ there is an adjunction
$$\triv \circ_\mO (-): \Alg_\mO(\mC) \rightleftarrows \co\Alg^{\mathrm{dp}, \conil}_{\mQ}(\mC): \triv \circ^{\mQ} (-), $$ where the left adjoint lifts the functor 
$ \triv \circ_\mO (-): \Alg_\mO(\mC) \to \mC$ left adjoint to the trivial $\mO$-algebra functor and the right adjoint lifts the functor 
$ \triv \circ^{\mQ} (-): \co\Alg^{\mathrm{dp}, \conil}_{\mQ}(\mC) \to \mC$ right adjoint to the trivial $\mQ$-coalgebra functor.
The forgetful functor $\co\Alg^{\mathrm{dp}, \conil}_{\mQ}(\mC) \to \co\Alg_{\mQ}(\mC) $
admits a right adjoint $\R.$
The functor
$$\Alg_\mO(\mC) \xrightarrow{\triv \circ_\mO (-)}\co\Alg^{\mathrm{dp}, \conil}_{\mQ}(\mC) \to \co\Alg_{\mQ}(\mC)$$
is left adjoint to the functor
$$\triv *^{\mQ} (-): \co\Alg_{\mQ}(\mC) \xrightarrow{\R}\co\Alg^{\mathrm{dp}, \conil}_{\mQ}(\mC) \xrightarrow{\triv \circ^\mQ (-)} \Alg_\mO(\mC).$$
The latter functor lifts the functor $\co\Alg_{\mQ}(\mC) \to \mC$ right adjoint to the composition $$\mC \xrightarrow{\triv^{\mathrm{dp},\conil}_\Q} \co\Alg^{\mathrm{dp}, \conil}_{\mQ}(\mC) \to \co\Alg_{\mQ}(\mC),$$ which identifies with the trivial
$\mQ$-coalgebra functor $\triv_\mQ.$
Hence the functor $$\triv *^{\mQ} (-): \co\Alg_{\mQ}(\mC)\to\Alg_\mO(\mC)$$
lifts the functor $\triv *^{\mQ} (-): \co\Alg_{\mQ}(\mC) \to \mC$ right adjoint to $\triv_\mQ.$

Now we treat the general case. By Corollary \ref{cory} there are symmetric monoidal embeddings $\mC \subset \mC' \subset \mC''$
with preadditive symmetric monoidal $\infty$-categories $\mC', \mC''$ compatible with small colimits such that the embedding $\mC \subset \mC'$ admits a left adjoint $\L$, the embedding $\mC' \subset \mC''$ preserves small colimits, the embedding $\mC \subset \mC''$ preserves small limits and $\mC''$ is presentable.
The embeddings $\mC \subset \mC', \mC' \subset \mC''$
induce embeddings 
$$\Op(\mC) \subset \Op^{\mathrm{}}(\mC') \subset \Op^{\mathrm{}}(\mC''), $$$$ \mathrm{coOp}(\mC) \subset \mathrm{coOp}(\mC') \subset \mathrm{coOp}(\mC''), $$
$$ \RMod_\mO(\sSeq(\mC)_{\geq 1}) \subset \RMod_\mO(\sSeq(\mC')_{\geq 1}) \subset \RMod_\mO(\sSeq(\mC'')_{\geq 1}), $$
$$\mathrm{co}\RMod_{\mQ}(\sSeq(\mC)_{\geq 1}) \subset \mathrm{co}\RMod_{\mQ}(\sSeq(\mC')_{\geq 1}) \subset \mathrm{co}\RMod_{\mQ}( \sSeq(\mC'')_{\geq 1}),$$
$$\Alg_\mO(\mC) \subset \Alg_\mO(\mC') \subset \Alg_\mO(\mC''),$$
$$\co\Alg_\mQ(\mC) \subset \co\Alg_\mQ(\mC') \subset \co\Alg_\mQ(\mC'').$$

\vspace{1mm}

The induced embedding $\sSeq(\mC)_{\geq 1} \subset\sSeq(\mC'')_{\geq 1}$ is monoidal with respect to cocomposition product since the embedding $\mC \subset \mC''$ preserves small limits.
The induced embedding $\sSeq(\mC')_{\geq 1} \subset\sSeq(\mC'')_{\geq 1}$ is monoidal with respect to the composition product since the embedding $\mC \subset \mC''$ preserves small colimits.
By Lemma \ref{gghjjgh} the cocomposition product on $\sSeq(\mC'')_{\geq 1}$ agrees with the composition product so that the cocomposition product on $\sSeq(\mC'')_{\geq 1}$ restricts to $\sSeq(\mC')_{\geq 1}$ and identifies with the composition product on $\sSeq(\mC')_{\geq 1}$.
Hence the right adjoint embedding $\sSeq(\mC)_{\geq 1} \subset\sSeq(\mC')_{\geq 1}$ is monoidal with respect to cocomposition product.
Thus the embeddings 
$$ \mathrm{coOp}(\mC)_{\triv / } \subset  \mathrm{coOp}(\mC')_{\triv / }, $$$$ \mathrm{coRMod}_{\mQ}(\sSeq(\mC)_{\geq 1}) \subset \mathrm{coRMod}_{\mQ}(\sSeq(\mC')_{\geq 1}),$$
$$\co\Alg_\mQ(\mC) \subset \co\Alg_\mQ(\mC') $$ admit left adjoints $\L',\L'', \L''' $ respectively, that forget to the functors $\sSeq(\L)_{\geq 1}, \L,$ respectively.

Since the symmetric monoidal structure on $\mC''$ is compatible with small colimits,
we obtain the adjunctions of 1., 2. and 3. for $\mC''.$ 
The right adjoints $$ (-)^\vee: \mathrm{coOp}(\mC'')_{\triv / } \to \Op(\mC'')^{\mathrm{}}_{/\triv}, \ \X \mapsto \triv \ast^\X \triv $$$$  (-) \ast^{\mQ} \triv : \mathrm{coRMod}_{\mQ}(\sSeq(\mC'')_{\geq 1}) \to \mathrm{RMod}_{\mO}(\sSeq(\mC'')_{\geq 1}), $$
$$\triv *^{\mQ} (-): \co\Alg_\mQ(\mC'') \to \Alg_\mO(\mC'') $$
restrict to functors
$ (-)^\vee: \mathrm{coOp}(\mC)_{\triv / }\to \Op(\mC)^{\mathrm{}}_{/\triv}, $
$$ (-) \ast^{\mQ} \triv  :\mathrm{coRMod}_{\mQ}(\sSeq(\mC)_{\geq 1}) \to \mathrm{RMod}_{\mO}(\sSeq(\mC)_{\geq 1}), $$
$$\triv *^{\mQ} (-): \co\Alg_\mQ(\mC) \to \Alg_\mO(\mC) $$
since $\mC$ is closed in $\mC''$ under small limits. The left adjoints 
$$ (-)^\vee: \Op(\mC'')^{\mathrm{}}_{/\triv} \to \mathrm{coOp}(\mC'')_{\triv / }, $$$$(-) \circ_{\mO} \triv : \mathrm{RMod}_{\mO}(\sSeq(\mC'')_{\geq 1}) \to \mathrm{coRMod}_{\mQ}(\sSeq(\mC'')_{\geq 1}), $$
$$\triv \circ_\mO (-): \Alg_\mO(\mC'') \rightleftarrows \co\Alg_{\mQ}(\mC'')$$  
restrict to functors
$$ (-)^\vee: \Op(\mC')^{\mathrm{}}_{/\triv} \to \mathrm{coOp}(\mC')_{\triv / },$$
$$ (-) \circ_{\mO} \triv : \mathrm{RMod}_{\mO}(\sSeq(\mC')_{\geq 1}) \to \mathrm{coRMod}_{\mQ}(\sSeq(\mC')_{\geq 1}), $$ 
$$\triv \circ_\mO (-): \Alg_\mO(\mC') \rightleftarrows \co\Alg_{\mQ}(\mC')$$  
since $\mC'$ is closed in $\mC''$ under small colimits.
Thus the composition $$ \Op(\mC)^{\mathrm{}}_{/\triv} \subset \Op(\mC')^{\mathrm{}}_{/\triv} \xrightarrow{(-)^\vee} \mathrm{CoOp}(\mC')_{\triv / } \xrightarrow{\L'}  \mathrm{coOp}(\mC)_{\triv / }$$ is left adjoint to the functor
$(-)^\vee: \mathrm{coOp}(\mC)_{\triv / } \to \Op(\mC)^{\mathrm{}}_{/\triv}, $ the composition $$ \RMod_{\mO}(\sSeq(\mC)_{\geq 1})  \subset \RMod_{\mO}(\sSeq(\mC')_{\geq 1}) \xrightarrow{(-) \circ_{\mO} \triv } \mathrm{coRMod}_{\mQ}(\sSeq(\mC')_{\geq 1})$$$$ \xrightarrow{\L''} \mathrm{coRMod}_{\mQ}(\sSeq(\mC)_{\geq 1}) $$ is left adjoint to the functor
$(-) \ast^{\mQ} \triv :  \mathrm{coRMod}_{\mQ}(\sSeq(\mC)_{\geq 1}) \to \RMod_{\mO}(\sSeq(\mC)_{\geq 1}) $
and the composition $$ \Alg_{\mO}(\mC)  \subset \Alg_{\mO}(\mC')  \xrightarrow{\triv \circ_{\mO} (-)} \co\Alg_{\mQ}(\mC') \xrightarrow{\L'''} \co\Alg_{\mQ}(\mC) $$ is left adjoint to the functor
$\triv \ast^{\mQ} (-) : \co\Alg_{\mQ}(\mC) \to \Alg_{\mO}(\mC).$

\end{proof}

The next Proposition is well-known to the experts and we give a proof for the reader's convenience since we could not find any reference.

\begin{proposition}\label{eqqqqq}

Let $\mC$ be a stable symmetric monoidal $\infty$-category compatible with small colimits that admits totalizations. 	

\begin{enumerate}
\item Koszul-duality 
$$ (-)^\vee: \Op(\mC)^\mathrm{} \rightleftarrows \mathrm{CoOp}(\mC): (-)^\vee $$
restricts to an equivalence
$$ \widetilde{\Op}(\mC) \simeq  \widetilde{\mathrm{CoOp}}(\mC).$$

\item For every non-unital $\infty$-operad $\mO$ in $\mC $ such that $\mO_1 = \tu$ Koszul-duality
$$ (-) \circ_\mO \triv : \RMod_\mO(\sSeq(\mC)_{\geq 1}) \rightleftarrows  \mathrm{co}\RMod_{\mO^\vee}(\sSeq(\mC)_{\geq 1}): (-) \circ^{\mO^\vee} \triv $$
is an equivalence.

\end{enumerate} 

\end{proposition}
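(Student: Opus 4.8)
The plan is to realise both equivalences as restrictions of the general bar--cobar adjunctions of Propositions \ref{dfghcfghkk} and \ref{dfghcfghkkk}, exploiting the arity grading of $\sSeq(\mC)_{\geq 1}$ to force the relevant bar constructions to be finite colimits, and the cobar constructions finite limits, in each arity. Following the proof of Proposition \ref{dfghcfghkkk}, I would first use Corollary \ref{cory} to embed $\mC$ symmetric monoidally and exactly into a presentably symmetric monoidal $\infty$-category $\mC'$ compatible with small colimits in a way that preserves totalizations, and check that all the Koszul duality functors restrict back along $\mC \subset \mC'$; so one may assume the composition product makes $\sSeq(\mC)_{\geq 1}$ a presentably monoidal $\infty$-category. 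Note that every $\mO \in \widetilde{\Op}(\mC)$ is canonically augmented via its operadic truncation $\mO \to \tau_1(\mO) \simeq \triv$ (the unit of the localization of Corollary \ref{cqop}), and since $\tu$ is the initial associative algebra in $\mC$ one has $\Op(\mC)^\nun(\mO,\triv) \simeq \Alg(\mC)(\tu,\tu) \simeq \ast$, so this identifies $\widetilde{\Op}(\mC)$ with a full subcategory of $\Op(\mC)^\nun_{/\triv}$, and dually $\widetilde{\coOp}(\mC)$ with a full subcategory of $\coOp(\mC)^\nun_{\triv/}$.

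With these identifications, statement (1) is that the adjunction $\Barc_{\sSeq(\mC)_{\geq 1}} \dashv \Cobar_{\sSeq(\mC)_{\geq 1}}$ restricts to inverse equivalences $\widetilde{\Op}(\mC) \simeq \widetilde{\coOp}(\mC)$. For this it suffices to verify: (a) $\Barc$ sends $\widetilde{\Op}(\mC)$ into $\widetilde{\coOp}(\mC)$ and $\Cobar$ sends $\widetilde{\coOp}(\mC)$ into $\widetilde{\Op}(\mC)$; (b) the unit $\mO \to \Cobar\Barc(\mO)$ is an equivalence for $\mO \in \widetilde{\Op}(\mC)$; (c) the counit $\Barc\Cobar(\mQ) \to \mQ$ is an equivalence for $\mQ \in \widetilde{\coOp}(\mC)$. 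Granting these, (b) and (c) make $\Barc$ and $\Cobar$ fully faithful on the respective subcategories, and combined with (a) they become mutually inverse equivalences. Claim (a) is a short computation: by Proposition \ref{lemgf}, $\Barc(\mO) \simeq \triv \circ_\mO \triv$ is the geometric realization of $[\n] \mapsto \mO^{\circ \n}$; since $(\X \circ \Y)_0$ is initial and $(\X \circ \Y)_1 \simeq \X_1 \ot \Y_1$ on $\sSeq(\mC)_{\geq 1}$, the underlying symmetric sequence of $\Barc(\mO)$ vanishes in arity $0$ and, when $\mO_1 \simeq \tu$, equals $\colim_{\Delta^\op}(\text{const }\tu) \simeq \tu$ in arity $1$; dually for $\Cobar$.

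The heart of the proof is (b) (and dually (c)), and the key input is the explicit arity formula of Remark \ref{dghfgh} together with Lemma \ref{fghfghjjjl}. Writing $I := \fib(\mO \to \triv)$, which is concentrated in arities $\geq 2$ because $\mC$ is stable and $\mO_1 \simeq \tu$, iterating the arity formula gives $I^{\circ \n} \in \sSeq(\mC)_{\geq 2^\n}$. Hence the $\n$-th stage of the skeletal filtration of the bar construction, whose associated graded is built from $I^{\circ \n}$ up to shift, vanishes in arities $< 2^\n$; so in each fixed arity the geometric realization computing $\Barc(\mO)$ is a finite colimit, and dually the totalization computing $\Cobar$ of an object of $\widetilde{\coOp}(\mC)$ is a finite limit. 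Consequently $\mO$ and $\Cobar\Barc(\mO)$ both carry exhaustive and complete filtrations by the arity truncations of Lemma \ref{fghfghkm}, the unit respects them, and in each arity both sides are computed by the same finite diagram once one is far enough along, so it suffices to check that the unit is an equivalence on the associated graded --- where the multiplication on $\mO$ degenerates to a split square-zero one, and the statement reduces to the classical computation that the bar of a split square-zero object is a shifted suspension, the cobar of a shifted cosuspension a desuspension, and the two compose to the identity on the generating object, which is seen by induction on the arity. The same argument, applied to the relative two-sided bar complex $\Barc_\bullet(M,\mO,\triv)$ with $\n$-simplices $M \circ \mO^{\circ \n}$ and to its cocomposition-product dual, proves (2): since $M \circ I^{\circ \n} \in \sSeq(\mC)_{\geq 2^\n}$ for every right $\mO$-module $M$, the bar and cobar constructions entering the adjunction of Proposition \ref{dfghcfghkkk}(2) are finite (co)limits in each arity, and the unit and counit are equivalences by the same inductive degeneration, giving $(-) \circ_\mO \triv : \RMod_\mO(\sSeq(\mC)_{\geq 1}) \simeq \coRMod_{\mO^\dual}(\sSeq(\mC)_{\geq 1})$.

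I expect the associated-graded degeneration at the end to be the main obstacle: passing to the associated graded of the arity filtration is compatible with bar and cobar precisely because all occurring (co)limits have first been reduced to finite ones in each arity, and spelling this out rigorously --- identifying the graded pieces of $\Cobar\Barc(\mO)$ and matching them, via the classical connected bar--cobar computation, with those of $\mO$ --- is the one step that requires genuine work; the reductions and the arity bookkeeping around it are routine given the results already established in this section.
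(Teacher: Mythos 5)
Your treatment of part (2) is essentially the paper's argument in different clothing: the paper also filters a right $\mO$-module by arity, uses that the cofiber of $\tau_{\geq \n+1}\X \to \X$ is a trivial module concentrated in degree $\n$ (Lemmas \ref{fghfghjjjl} and \ref{dfghfghjfghj}), exploits exactness of $(-)\circ_\mO\triv$ and $(-)\circ^{\mO^\dual}\triv$ in the module variable to run a (descending) induction on arity, and finishes on trivial modules. One small improvement you could import from the paper for the base case: rather than invoking a ``classical square-zero bar--cobar computation,'' it is cleaner to observe that $(-)\circ_\mO\triv$ sends trivial right $\mO$-modules to cofree right $\mO^\dual$-comodules (because the composition product preserves sifted colimits in each variable, so $\triv_\mO(\Y)\circ_\mO\triv\simeq \Y\circ\mO^\dual$), while $(-)\circ^{\mO^\dual}\triv$ sends cofree comodules to trivial modules purely by adjointness; the unit on trivial modules is then visibly an equivalence with no computation.

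For part (1), however, your route diverges from the paper's and contains the one real gap, which you yourself flag: you want to deduce that the unit $\mO\to\Cobar\Barc(\mO)$ is an equivalence by passing to the associated graded of the arity filtration \emph{of the operad} and degenerating the multiplication to a split square-zero one. Unlike the module case, $\Barc(\mO)=\triv\circ_\mO\triv$ is not exact in $\mO$ (the operad appears in every simplicial degree), so ``finiteness of the colimits in each arity'' does not by itself give $\mathrm{gr}\,\Barc(\mO)\simeq\Barc(\mathrm{gr}\,\mO)$; you would need a genuine multiplicative filtered refinement of the bar construction on $\Op(\mC)^\nun$, which is a nontrivial piece of structure you have not constructed. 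The paper avoids this entirely: the unit of the operadic Koszul duality at $\mO$ is identified with the unit of the \emph{module} Koszul duality of part (2) applied to $\mO$ regarded as a right module over itself (since $\mO\circ_\mO\triv\simeq\triv$ and hence $(\mO\circ_\mO\triv)\circ^{\mO^\dual}\triv\simeq(\mO^\dual)^\dual$), together with the observation that $(\mO^\dual)_1\simeq\tu$; so (1) is a formal consequence of (2) and its dual, and the multiplicative associated-graded problem never arises. I would recommend replacing your direct attack on (1) by this reduction; with that change, and with the cofree/trivial base case as above, your proof closes.
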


\begin{proof}

The unit of the adjunction of 1. applied to an $\infty$-operad $\mO$ is equivalent to
the unit of the adjunction of 2. applied to $\mO$ considered as module over itself.
Moreover note that $(\mO^\vee)_1 = \tu $ if $ \mO_1 = \tu$, which follows from the proof of Lemma \ref{fghjjhbv}. 

Hence 1. follows from 2. and the following statement:
for every  $\infty$-cooperad $\mQ$ in $\mC $ such that $\mQ_1 = \tu $ the Koszul-duality adjunction 
\begin{equation}\label{fghghjk}
(-) \circ_{\mQ^\vee} \triv : \RMod_{\mQ^\vee}(\sSeq(\mC)_{\geq 1}) \rightleftarrows  \mathrm{co}\RMod_{\mQ}(\sSeq(\mC)_{\geq 1}): (-) \circ^{\mQ} \triv 
\end{equation}
induced by forgetting along the counit $(\mQ^\vee)^\vee \to \mQ$
is an equivalence.

So we need to see that the unit and counit of the adjunction of 2. and of the adjunction (\ref{fghghjk}) are equivalences.
We will show that the unit $\eta $ of the adjunction of 2. is an equivalence.
The case of the counit of the adjunction of 2. and the other cases are similar.

We will prove that for every $\bk \geq 1 $ the following statement ($\ast$) holds:
for every $\n  \geq 1 $ and every right $\mO$-module $\X$ that vanishes below degree $\n $, i.e.
that belongs to $\RMod_\mO(\sSeq(\mC)_{\geq \n}), $ the following morphism is an equivalence: $$(\eta_\X)_\bk : \X_\bk \to ((\X \circ_\mO \triv ) \circ^{\mO^\vee} \triv )_\bk.$$

By Lemma \ref{fghfghkm} the objects $ \X \circ_\mO \triv$ and so
$ (\X \circ_\mO \triv ) \circ^{\mO^\vee} \triv $ belong to $\sSeq(\mC)_{\geq \n}$. Hence $\eta_\bk$ is an equivalence if $\n > \bk$.
Thus it remains to show ($\ast$) for $ \n \leq \bk, $ 
which we prove by descending induction on $\n.$ 
By Lemma \ref{fghfghjjjl} there is an $\X' \in \RMod_\mO(\sSeq(\mC)_{\geq \n+1})$ and a morphism $\X' \to \X $ in $ \RMod_\mO(\sSeq(\mC)_{\geq \n})$ that induces an equivalence in degrees larger than $\n. $ By Lemma \ref{dfghfghjfghj} the cofiber $\X''$ of the morphism $\X' \to \X $ in the stable $\infty$-category $ \RMod_\mO(\sSeq(\mC)_{\geq 1})$ is the trivial right $\mO$-module concentrated in degree $\n$ with value $\X_\n $ since $\mO_1=\tu$.
There is a commutative square
\begin{equation*}
\begin{xy}
\xymatrix{
\X'  \ar[d]^{\eta_{\X'}}  
\ar[r] 
& 	\X \ar[d]^{\eta_\X} 	\ar[r]   	& 	\X'' \ar[d]^{\eta_{\X''}} 
\\
(\X' \circ_\mO \triv ) \ast^{\mO^\vee} \triv \ar[r]  & (\X \circ_\mO \triv ) \ast^{\mO^\vee} \triv \ar[r]  	& 	(\X'' \circ_\mO \triv ) \ast^{\mO^\vee} \triv }
\end{xy} 
\end{equation*}  
in $ \RMod_\mO(\sSeq(\mC)_{\geq 1})$, 
where bottom and top morphisms are  cofiber sequences.

So by the induction hypothesis we are reduced to show that
$\eta_\X $ is an equivalence if $\X$ carries the trivial right $\mO$-module structure.
By Proposition \ref{dfghcfghkk} the left adjoint of adjunction 2. lifts the functor 
$ (- )\circ_\mO \triv: \RMod_\mO(\sSeq(\mC)_{\geq 1}) \to \mC$ left adjoint to the trivial right $\mO$-module functor.
So by adjointness the right adjoint $$(-) \circ^{\mO^\vee} \triv : \mathrm{co}\RMod_{\mO^\vee}(\sSeq(\mC)_{\geq 1})  \to \RMod_\mO(\sSeq(\mC)_{\geq 1})$$ of the adjunction of 2. sends cofree right $\mO^\vee$-comodules to trivial right $\mO$-modules.

On the other hand the left adjoint of adjunction 2.
sends trivial right $\mO$-modules to cofree right $\mO^\vee$-comodules since the composition product on $\sSeq(\mC)_{\geq 1}$ preserves small sifted colimits in each component.

%The dual statement about the counit of adjunction 2. follows from the fact that the composition product on $\sSeq(\mC)_{\geq 1}$ also preserves small sifted limits in each component as it is equivalent via the norm map to the cocomposition product on $\sSeq(\mC)_{\geq 1}$ by Lemma \ref{gghjjgh}.

\end{proof}

\begin{corollary}\label{lies}
	
Let $\mC, \mD$ be stable symmetric monoidal $\infty$-categories compatible with small colimits that admit totalizations, $\phi: \mC \to \mD$ a 
symmetric monoidal functor that preserves small colimits and $\mQ$ a non-counital
$\infty$-cooperad in $\mC.$
There is a canonical equivalence of non-unital $\infty$-operads in $\mD:$
$$ \phi_*(\mQ)^\vee \simeq \phi_*(\mQ^\vee).$$

\end{corollary}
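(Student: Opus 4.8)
The plan is to deduce the statement from the compatibility of $\phi_*$ with the \emph{bar} side of operadic Koszul duality, which is a colimit construction and hence visibly preserved by the colimit‑preserving $\phi$; the genuine subtlety — that the cooperad Koszul dual $(-)^\dual$ appearing in the corollary is the \emph{cobar} construction, built from totalizations, which $\phi$ (being only a left adjoint) need not preserve — will be sidestepped by transposing through Koszul duality on the subcategories where the arity‑one term is the unit, where it is an equivalence by Proposition \ref{eqqqqq}. First I would unwind the two functors. Since $\phi$ preserves small colimits, Proposition \ref{weak} yields a monoidal functor $\xi\colon\sSeq(\mC)_{\geq1}^{\ot}\to\sSeq(\mD)_{\geq1}^{\ot}$ for the composition products, which on underlying symmetric sequences is post‑composition with $\phi$; under the identifications $\Op(\mC)^{\nun}\simeq\Alg(\sSeq(\mC)_{\geq1})$ and $\coOp(\mC)^{\nun}\simeq\coAlg(\sSeq(\mC)_{\geq1})$ (Lemma \ref{gghjjgh}), the functors $\phi_*$ on $\infty$‑operads and on $\infty$‑cooperads are $\Alg(\xi)$ and $\coAlg(\xi)$, compatible with $\xi$ on underlying symmetric sequences. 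By Propositions \ref{dfghcfghkk} and \ref{dfghcfghkkk}, Koszul duality $(-)^\dual$ for this left action is bar–cobar: the functor $\Op(\mC)^{\nun}_{/\triv}\to\coOp(\mC)^{\nun}_{\triv/}$ lifts $\triv\circ_{\mO}(-)$, and the functor $\coOp(\mC)^{\nun}_{\triv/}\to\Op(\mC)^{\nun}_{/\triv}$ lifts $\triv*^{\mQ}(-)$.

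Next I would prove that $\phi_*$ commutes with the bar direction $(-)^\dual\colon\widetilde{\Op}\to\widetilde{\coOp}$. By Theorem \ref{Baar} (see also Proposition \ref{lemgf}), for an augmented non‑unital $\infty$‑operad $\mO$ in $\mC$ the underlying symmetric sequence of $\mO^\dual=\triv\circ_{\mO}\triv$ is the geometric realization in $\sSeq(\mC)_{\geq1}$ of the simplicial object $[\n]\mapsto\mO^{\circ\n}$, the $\n$‑fold composition power. Because $\xi$ is monoidal it carries this simplicial object to $[\n]\mapsto(\phi_*\mO)^{\circ\n}$, and because $\xi$ is, on underlying symmetric sequences, post‑composition with the colimit‑preserving $\phi$ while geometric realizations of symmetric sequences are computed objectwise, $\xi$ preserves this geometric realization. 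Invoking the functoriality of the bar–cobar square of Theorem \ref{thh} in the monoidal $\infty$‑category and the left‑tensored $\infty$‑category, along strong monoidal functors preserving geometric realizations — applied to $\xi$ acting on itself, compatibly with $\phi$ on $\mC$ — this upgrades to an equivalence of $\infty$‑cooperads $\phi_*(\mO^\dual)\simeq(\phi_*\mO)^\dual$, natural in $\mO$.

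To conclude, I would observe that $\phi_*$ preserves the subcategories $\widetilde{\Op}$ and $\widetilde{\coOp}$: non‑unitality is preserved since $\phi$ takes the zero object to the zero object, and the arity‑one term is preserved since $(\phi_*\mathcal X)_1=\phi(\mathcal X_1)=\phi(\tu)=\tu$ because $\phi$ is symmetric monoidal (and $(\mQ^\dual)_1=\tu$ whenever $\mQ_1=\tu$, as in the proof of Proposition \ref{eqqqqq}). By Proposition \ref{eqqqqq}(1) the horizontal arrows $(-)^\dual$ in the commuting square of the previous paragraph are equivalences, with inverses the cooperad‑to‑operad Koszul duals; hence the square of inverse equivalences commutes as well, i.e. $\phi_*\circ(-)^\dual\simeq(-)^\dual\circ\phi_*$ as functors $\widetilde{\coOp}(\mC)\to\widetilde{\Op}(\mD)$. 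Evaluating at $\mQ$ gives $\phi_*(\mQ^\dual)\simeq\phi_*(\mQ)^\dual$ as non‑unital $\infty$‑operads in $\mD$.

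The one place where real care is needed is exactly the cobar construction: $(-)^\dual$ on cooperads is a totalization and $\phi$ has no reason to preserve totalizations, so a direct computation does not work. One could instead argue arity by arity, using that for $\mQ_1=\tu$ the reduced cobar complex of $\mQ$ vanishes beyond a finite stage in each fixed arity (a consequence of the degree estimate of Lemma \ref{fghfghjjjl}), so that the exact functor $\phi$ preserves the resulting finite totalization; but the transposition through the bar side via Proposition \ref{eqqqqq} avoids this computation and is the route I would take.
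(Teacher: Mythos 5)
Your proof is correct and is essentially the argument the paper intends: the corollary carries no separate proof and sits immediately after Proposition \ref{eqqqqq}, and the only viable route is exactly yours --- observe that the cooperad-side $(-)^\dual$ is a cobar totalization that $\phi$ need not preserve, check instead that $\phi_*$ commutes with the operad-side $(-)^\dual$ (a geometric realization computed arity-wise, hence preserved by the colimit-preserving monoidal functor of Proposition \ref{weak}(3) and Remark \ref{remf}), and then transpose through the equivalence of Proposition \ref{eqqqqq}. The one caveat worth recording is that your argument, like Proposition \ref{eqqqqq} itself, requires $\mQ_1=\tu$ so that $\mQ$ and $\phi_*(\mQ)$ lie in $\widetilde{\coOp}$; the corollary omits this hypothesis, but it holds in the paper's only application ($\mQ=\coComm^{\nun}_{\mC}$), and without it the cobar complex does not truncate arity-wise and the statement does not follow from the results at hand.
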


\begin{lemma}\label{djfghjdfh}

Let $\mC$ be a stable symmetric monoidal $\infty$-category compatible with small colimits that admits totalizations.
Let $\mO$ be a non-unital $\infty$-operad in $\mC$ such that $\mO_1 = \tu$.
For every $\n \in \bN$ there is a canonical equivalence of right $\mO$-modules:
$$\triv \circ_{\f_\n(\mO)} \mO \simeq  \tau_\n(\mO^\vee)\circ^{\mO^\vee} \triv.$$

\end{lemma}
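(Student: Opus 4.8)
The plan is to identify both sides of the claimed equivalence with the Koszul-dual description of a truncated right module, using Proposition~\ref{eqqqqq}(2) and the machinery of truncations from Subsection on truncating $\infty$-operads. First I would recall that $\f_\n(\mO)$ is defined (Remark~\ref{remfs}) via the left adjoint $\f_\n$ to $(-)_{\leq \n}$, so that there is a canonical map of $\infty$-operads $\f_\n(\mO) \to \mO$ inducing the colimit diagram $\f_1(\mO_{\leq 1}) \to \cdots \to \f_\n(\mO_{\leq\n}) \to \cdots \to \mO$. The object $\triv \circ_{\f_\n(\mO)} \mO$ is Koszul duality (Proposition~\ref{dfghcfghkk}, applied to $\mO$ as a right module over $\f_\n(\mO)$ via this map) of the right $\f_\n(\mO)$-module $\mO$; note $(\f_\n(\mO))_1 = \tu$ since $\mO_1 = \tu$ and truncation does not touch degree $1$. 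The left-hand side therefore carries a right $\f_\n(\mO)^\dual$-comodule structure, but we want only the underlying symmetric sequence together with the right $\mO$-action, which is recovered by restricting along $\mO^\dual \to \f_\n(\mO)^\dual$ (the dual of $\f_\n(\mO) \to \mO$).

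Next I would analyze $\f_\n(\mO)^\dual$. Since $\f_\n(\mO) = \f_n(\mO_{\leq \n})$ agrees with $\mO$ in degrees $\leq \n$ and is the "free" operad generated above degree $\n$ only through this truncation, its Koszul dual should be concentrated in degrees $\leq \n$; more precisely, by Proposition~\ref{eqqqqq}(1) Koszul duality is an equivalence on $\widetilde{\Op}(\mC) \simeq \widetilde{\coOp}(\mC)$, and under this equivalence the colimit cone $\f_\n(\mO_{\leq\n}) \to \mO$ dualizes to a limit cone $\mO^\dual \to \cdots \to \f_\n(\mO)^\dual \to \cdots$; I expect to show $\f_\n(\mO)^\dual \simeq \tau_\n(\mO^\dual)$, the $n$-th cooperadic truncation. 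This is the key structural input: the operadic truncation $\f_n \circ (-)_{\leq n}$ is Koszul-dual to the cooperadic truncation $\tau_n$. To prove it I would use the adjunction $\f_\n \circ (-)_{\leq \n} \dashv \tau_\n$ (Remark~\ref{remfs}), Corollary~\ref{cqop}, and the fact that Koszul duality intertwines the full subcategory $\Op(\mC)^\nun_{\leq \n}$ with the corresponding subcategory of $\coOp(\mC)^\nun$ concentrated in degrees $\leq \n$, together with Lemma~\ref{fghfghjjjl}/Lemma~\ref{fghfghkm} controlling how the composition product interacts with degree bounds.

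Granting $\f_\n(\mO)^\dual \simeq \tau_\n(\mO^\dual)$, the right-hand side $\tau_\n(\mO^\dual) \circ^{\mO^\dual} \triv$ is, by Proposition~\ref{dfghcfghkk}(2) or Proposition~\ref{eqqqqq}(2), precisely the Koszul dual (in the $\mO$-module/$\mO^\dual$-comodule sense) of the right $\mO$-module one obtains by applying the inverse equivalence to the right $\f_\n(\mO)^\dual$-comodule $\tau_\n(\mO^\dual)$ viewed as comodule over itself — and by Proposition~\ref{eqqqqq}(2) applied to the operad $\f_\n(\mO)$ (valid since $(\f_\n(\mO))_1=\tu$), that right $\mO$-module is exactly $\mO$ regarded as a right $\f_\n(\mO)$-module. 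Chasing the map of adjunctions in Proposition~\ref{dfghcfghkk}(2) through the restriction along $\mO^\dual \to \f_\n(\mO)^\dual$, both $\triv \circ_{\f_\n(\mO)} \mO$ and $\tau_\n(\mO^\dual) \circ^{\mO^\dual} \triv$ are identified with the image of $\mO$ under the composite Koszul-duality equivalence for $\f_\n(\mO)$ followed by restriction of comodule structure, yielding the desired canonical equivalence of right $\mO$-modules.

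**Main obstacle.** The hard part will be the identification $\f_\n(\mO)^\dual \simeq \tau_\n(\mO^\dual)$ as coaugmented non-counital $\infty$-cooperads, and in particular checking that Koszul duality is \emph{compatible} with the truncation adjunctions on both sides — i.e.\ that the equivalence $\widetilde{\Op}(\mC)\simeq\widetilde{\coOp}(\mC)$ of Proposition~\ref{eqqqqq}(1) carries the localization $(-)_{\leq\n}$ (together with its left adjoint $\f_\n$) to the corresponding co-localization $\tau_\n$ on the cooperad side. One must verify that the Bar construction of an operad concentrated in degrees $\le n$ is again concentrated in degrees $\le n$ (so that the relevant full subcategories correspond), which should follow from the bar resolution formula $\Barc_\mO(\X)_\bullet \simeq \mO\circ\cdots\circ\mO\circ\X$ of Proposition~\ref{lemgf} together with Lemma~\ref{fghfghjjjl}; keeping track of the coaugmentations and of the right-module (as opposed to bimodule) variant is where the bookkeeping is heaviest.
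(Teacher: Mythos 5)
Your overall architecture is the same as the paper's: the paper also reduces everything to the compatibility of operadic Koszul duality with the canonical map $\psi\colon \f_\n(\mO)\to\mO$, writing down the commutative square supplied by Theorem \ref{ccojjkk} that intertwines restriction and induction along $\psi$ with the corresponding operations along its Bar-dual, and reading off both sides of the lemma as the two composite paths applied to $\mO^\dual$. Your ``key structural input'' $\f_\n(\mO)^\dual\simeq\tau_\n(\mO^\dual)$ is precisely what the labelling of the middle row of that square encodes; note that under the equivalence $\RMod_\mO(\sSeq(\mC)_{\geq1})\simeq\coRMod_{\mO^\dual}(\sSeq(\mC)_{\geq1})$ of Proposition \ref{eqqqqq}(2) this input is essentially \emph{equivalent} to the lemma itself, so your two-step plan stands or falls with your proof of it.

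And there the proposal has a genuine gap: the step you flag as needing verification --- ``that the Bar construction of an operad concentrated in degrees $\leq\n$ is again concentrated in degrees $\leq\n$, so that the relevant full subcategories correspond'' --- is false, already for $\n=2$. If $\mO_1=\tu$ and $\mO_2\neq0$, the bar complex $\triv\circ\mO^{\circ\bullet}\circ\triv$ contains in arity $3$ the summand obtained by grafting two binary operations, and in general the Koszul dual of an operad generated in arity $2$ is supported in \emph{all} arities: for a square-zero operad $\triv\oplus\mO_2$ the dual is the cofree conilpotent cooperad on $\mO_2[1]$, indexed by binary trees. So Koszul duality does not restrict to an equivalence between the degree-$\leq\n$ subcategories, and Lemmas \ref{fghfghkm} and \ref{fghfghjjjl} only control the composition product through the localization at degrees $\leq\n$, respectively bound it from below --- neither gives an upper arity bound on $\Barc$. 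What you actually need is that $\Barc(\f_\n(\mO))$ --- the bar construction of the \emph{free extension} $\f_\n(\mO_{\leq\n})$, which is itself not concentrated in arities $\leq\n$ --- collapses to arities $\leq\n$ and agrees there with $\mO^\dual$; this is a freeness/acyclicity phenomenon, not a degree count on the inputs. The paper avoids computing this altogether by deriving both objects from the single naturality square of Theorem \ref{ccojjkk} evaluated at $\mO^\dual$; if you want to keep your two-step structure you must replace the degree bookkeeping by an argument of that kind, or by the inductive cofiber-sequence technique (peeling off one arity at a time via Lemma \ref{dfghfghjfghj}) used in the proof of Proposition \ref{eqqqqq}.
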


\begin{proof}

Let $\psi: \f_\n(\mO) \to \mO $ be the canonical map of $\infty$-operads. 	

By Theorem \ref{ccojjkk} there is a commutative square of $\infty$-categories
\begin{equation*}
\begin{xy}
\xymatrix{
\co\RMod_{ \mO^\vee }(\sSeq(\mC)_{\geq 1}) \ar[d]^{(\psi^\vee)^\ast} 
\ar[r]
& \RMod_{ \mO }(\sSeq(\mC)_{\geq 1}) \ar[d]^{\psi^\ast}   
\\
\co\RMod_{\tau_\n(\mO^\vee) }(\sSeq(\mC)_{\geq 1}) \ar[r] \ar[d]^{(\psi^\vee)_\ast} &\RMod_{\f_\n(\mO)}(\sSeq(\mC)_{\geq 1}) \ar[d]^{\psi_\ast} 
\\
\co\RMod_{\mO^\vee }(\sSeq(\mC)_{\geq 1})  \ar[r]  & \RMod_{ \mO }(\sSeq(\mC)_{\geq 1}).
}
\end{xy} 
\end{equation*} 

The diagonal of this square sends $\mO^\vee$ to both objects we want to identify.

\end{proof}

\begin{lemma}\label{dghcvbhhh}

Let $\mC$ be a stable symmetric monoidal $\infty$-category that admits small colimits and totalizations.
Let $\mQ$ be a non-counital $\infty$-cooperad in $\mC$ such that $\mQ_1 = \tu$. 
For every $\A \in \Alg_{\mQ^\vee}(\mC)$ there is a canonical equivalence in $\mC:$
$$ \triv \circ_{\mQ^\vee} \A \simeq \colim_{\n \geq 1}((\tau_\n(\mQ) \circ^{\mQ} \triv) \circ_{\mQ^\vee} \A).$$

\end{lemma}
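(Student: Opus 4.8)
The plan is to reduce everything to the module-level statement Lemma \ref{djfghjdfh} via the colimit diagram of Remark \ref{remfs}. Set $\mO := \mQ^\dual$. Since $\mQ_1 = \tu$, Proposition \ref{eqqqqq} applies: $\mO$ is a non-unital $\infty$-operad with $\mO_1 = \tu$, there is a canonical equivalence $\mO^\dual \simeq \mQ$ (so $\tau_\n(\mO^\dual) \simeq \tau_\n(\mQ)$), and Koszul self-duality identifies $\RMod_\mO(\sSeq(\mC)_{\geq 1}) \simeq \coRMod_{\mO^\dual}(\sSeq(\mC)_{\geq 1})$. Embedding $\mC$ symmetric-monoidally as in Corollary \ref{cory} into a presentably symmetric monoidal $\infty$-category whose tensor product preserves small colimits componentwise, compatibly with the functors $\triv\circ_\mO(-)$, the truncations $\tau_\n$ and $\f_\n$, and the relative composition products, exactly as in the reductions used throughout Section \ref{Koszull}, we may assume $\mC$ is of this form.

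First I would invoke Remark \ref{remfs}: writing $\f_\n(\mO)\to\mO$ for the canonical map of $\infty$-operads, the tower
\[ \mO\circ_{\f_1(\mO)}\A \to \dots \to \mO\circ_{\f_\n(\mO)}\A \to \dots \to \mO\circ_\mO\A \simeq \A \]
is a colimit diagram in $\Alg_\mO(\mC)$, where on the left $\A$ is restricted along $\f_\n(\mO)\to\mO$. By Proposition \ref{lemgf} the functor $\triv\circ_\mO(-)\colon\Alg_\mO(\mC)\to\mC$ is a left adjoint and hence commutes with this colimit, so $\triv\circ_\mO\A \simeq \colim_{\n\geq 1}\bigl(\triv\circ_\mO(\mO\circ_{\f_\n(\mO)}\A)\bigr)$. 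Moreover, since the trivial-algebra functor $\triv_{\f_\n(\mO)}\colon\mC\to\Alg_{\f_\n(\mO)}(\mC)$ is the composite of restriction along $\mO\to\triv$ and along $\f_\n(\mO)\to\mO$, its left adjoint $\triv\circ_{\f_\n(\mO)}(-)$ is $\triv\circ_\mO(\mO\circ_{\f_\n(\mO)}(-))$; thus $\triv\circ_\mO(\mO\circ_{\f_\n(\mO)}\A) \simeq \triv\circ_{\f_\n(\mO)}\A$.

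Next I would identify $\triv\circ_{\f_\n(\mO)}\A$ with $(\tau_\n(\mQ)\circ^{\mQ}\triv)\circ_\mO\A$. Viewing $\mO$ as an $\f_\n(\mO)$-$\mO$-bimodule and $\A$ as a left $\mO$-module in $\mC$, the equivalence $\mO\circ_\mO\A\simeq\A$ of $\f_\n(\mO)$-algebras together with the associativity of relative composition products (the composition-product analogue of the relative tensor product manipulations of \cite[Section~4.4]{lurie.higheralgebra}, in the lax-bitensored framework of Section \ref{Ooper}) gives $\triv\circ_{\f_\n(\mO)}\A \simeq \triv\circ_{\f_\n(\mO)}(\mO\circ_\mO\A) \simeq (\triv\circ_{\f_\n(\mO)}\mO)\circ_\mO\A$. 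By Lemma \ref{djfghjdfh} applied to $\mO$ there is a canonical equivalence of right $\mO$-modules $\triv\circ_{\f_\n(\mO)}\mO \simeq \tau_\n(\mO^\dual)\circ^{\mO^\dual}\triv \simeq \tau_\n(\mQ)\circ^{\mQ}\triv$, the latter carrying the right $\mO$-module structure via $\coRMod_\mQ(\sSeq(\mC)_{\geq 1})\simeq\RMod_{\mQ^\dual}(\sSeq(\mC)_{\geq 1})$. Substituting and reassembling the colimit yields $\triv\circ_{\mQ^\dual}\A \simeq \colim_{\n\geq 1}\bigl((\tau_\n(\mQ)\circ^{\mQ}\triv)\circ_{\mQ^\dual}\A\bigr)$.

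The step I expect to be the main obstacle is the associativity identity $\triv\circ_{\f_\n(\mO)}(\mO\circ_\mO\A)\simeq(\triv\circ_{\f_\n(\mO)}\mO)\circ_\mO\A$: although this is the standard associativity of relative composition products, carrying it out rigorously means tracking the $\f_\n(\mO)$-$\mO$-bimodule structure on $\mO$ and the left-$\mO$-module structure on $\A$ through the bar constructions, and checking that the resulting right-$\mO$-module structure on $\triv\circ_{\f_\n(\mO)}\mO$ is the one used in Lemma \ref{djfghjdfh}. A secondary technical point is the reduction to the presentable case, which one needs in order to have the functors $\f_\n$ and the colimit diagram of Remark \ref{remfs} available and to know that $\triv\circ_\mO(-)$ and $(-)\circ_\mO\A$ preserve the relevant sequential colimits; this follows the pattern of the reductions used repeatedly earlier in the paper.
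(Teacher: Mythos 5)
Your proposal is correct and follows essentially the same route as the paper's own proof: both use the colimit tower of Remark \ref{remfs}, commute $\triv\circ_{\mQ^\dual}(-)$ past it via Proposition \ref{lemgf}, identify $\triv\circ_{\f_\n(\mQ^\dual)}\mQ^\dual$ with $\tau_\n(\mQ)\circ^\mQ\triv$ via Lemma \ref{djfghjdfh} and Proposition \ref{eqqqqq}, and conclude by associativity of the relative composition product. The only real difference is that the paper spends considerable care on the reduction to the presentable case through the chain of embeddings $\mC\subset\mC'\subset\mC''$ of Corollary \ref{cory}, tracking which functors restrict along which embedding, whereas you defer this to "the usual pattern" — acceptable as a sketch, but that reduction is where most of the paper's actual work lies.
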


\begin{proof}We first assume that $\mC$ is a stable presentably symmetric monoidal $\infty$-category.
In this case by Remark \ref{remfs} for every $\n \geq 1$ the functor
$\tau_\n: \Op(\mC)^\nun \to \Op(\mC)^\nun_{\leq \n} \subset \Op(\mC)^\nun$ admits a left adjoint $\f_\n$.
By \cite[Remark 4.23.]{2018arXiv180306325H} the canonical map of $\infty$-operads $\colim_{\n \geq 1} \f_\n(\mQ^\vee) \to \mQ^\vee $ is an equivalence. 
By Lemma \ref{leuj} this implies that the canonical map 
$$ \colim_{\n \geq 1} \mQ^\vee \circ_{\f_\n(\mQ^\vee)} \A \to \A $$ of $\mQ^\vee$-algebras in $\mC$ is an equivalence. 
Hence the canonical map $$ \colim_{\n \geq 1}\triv \circ_{\f_\n(\mQ^\vee)} \A \simeq \triv \circ_{\mQ^\vee} (\colim_{\n \geq 1} \mQ^\vee \circ_{\f_\n(\mQ^\vee)} \A) \to \triv \circ_{\mQ^\vee} \A $$ is an equivalence.
By Lemma \ref{djfghjdfh} and Proposition \ref{eqqqqq} there is a canonical equivalence $\triv \circ_{\f_\n(\mQ^\vee)} \mQ^\vee \simeq \tau_\n(\mQ)\circ^{\mQ} \triv$ of right $\mQ^\vee$-modules.
So we obtain a canonical equivalence
$$\theta_\mC: (\tau_\n(\mQ)\circ^{\mQ} \triv) \circ_{\mQ^\vee} \A\simeq (\triv \circ_{\f_\n(\mQ^\vee)} \mQ^\vee) \circ_{\mQ^\vee} \A \to  \triv \circ_{\f_\n(\mQ^\vee)} \A$$ in $\mC$.
This proves the claim if $\mC$ is compatible with small colimits.

Now let $\mC$ be an arbitrary stable symmetric monoidal $\infty$-category
that admits small colimits and totalizations.
By Corollary \ref{cory} there are symmetric monoidal exact embeddings
$\iota: \mC \subset \mC', \bj: \mC' \subset \mC''$ into stable symmetric monoidal $\infty$-categories, where $\mC''$ is a presentably symmetric monoidal
$\infty$-category, $\mC'$ is compatible with small colimits, the first embedding $\iota: \mC \subset \mC'$ admits a left adjoint $\L$, the second embedding $\bj:\mC' \subset \mC''$ preserves small colimits and the whole embedding $\bj \circ \iota:\mC \subset \mC''$ preserves limits.
We obtain induced embeddings 
$$\iota: \coOp(\mC) \subset \coOp(\mC'), \bj: \coOp(\mC') \subset \coOp(\mC''),$$
$$\iota: \Alg_{\mQ^\vee}(\mC) \subset \Alg_{\mQ^\vee}(\mC'), \bj: \Alg_{\mQ^\vee}(\mC') \subset \subset \Alg_{\mQ^\vee}(\mC'')$$
$$\iota: \RMod_{\mQ^\vee}(\sSeq(\mC)_{\geq 1}) \subset \RMod_{\mQ^\vee}(\sSeq(\mC')_{\geq 1}),$$$$\bj: \RMod_{\mQ^\vee}(\sSeq(\mC')_{\geq 1}) \subset \RMod_{\mQ^\vee}(\sSeq(\mC'')_{\geq 1}),$$
$$\iota: \co\RMod_{\mQ}(\sSeq(\mC)_{\geq 1}) \subset \co\RMod_{\mQ}(\sSeq(\mC')_{\geq 1}), $$$$\bj: \co\RMod_{\mQ}(\sSeq(\mC')_{\geq 1}) \subset \co\RMod_{\mQ}(\sSeq(\mC'')_{\geq 1}).$$

Since the embedding $\bj\circ \iota: \mC \subset \mC''$ preserves limits,
the functor $$ (-) \circ^{\bj\iota(\mQ)}\bj\iota(\triv) \simeq (-) \ast^{\bj\iota(\mQ)}\bj\iota(\triv): \co\RMod_{\mQ}(\sSeq(\mC'')_{\geq 1}) \to \RMod_{\mQ^\vee}(\sSeq(\mC'')_{\geq 1}) $$
restricts to the functor $$ (-) \circ^{\mQ}\triv \simeq  (-) *^{\mQ}\triv: \co\RMod_{\mQ}(\sSeq(\mC)_{\geq 1}) \to \RMod_{\mQ^\vee}(\sSeq(\mC)_{\geq 1}).$$

Let $\T:=\tau_\n(\mQ) \circ^{\mQ} \triv$.
Consequently, there is a canonical equivalence: $$\bj\iota(\T) \simeq \bj\iota(\tau_\n(\mQ)) \circ^{\bj\iota(\mQ)} \bj\iota(\triv) \simeq \tau_\n(\bj\iota(\mQ)) \circ^{\bj\iota(\mQ)} \bj\iota(\triv).$$

By the first part of the proof there is a canonical equivalence in $\mC'':$
$$ \bj\iota(\triv) \circ_{\bj\iota(\mQ)^\vee} \bj\iota(\A) \simeq \colim_{\n \geq 1}((\tau_\n(\bj\iota(\mQ)) \circ^{\bj\iota(\mQ)} \bj\iota(\triv)) \circ_{\bj\iota(\mQ)^\vee} \bj\iota(\A))$$
$$ \simeq \colim_{\n \geq 1}(\bj\iota(\T) \circ_{\bj\iota(\mQ)^\vee} \bj\iota(\A)).$$

Since the embedding $\bj: \mC' \subset \mC''$ preserves small colimits, 
there are canonical equivalences
$$ \bj\iota(\triv) \circ_{\bj\iota(\mQ)^\vee} \bj\iota(\A) \simeq \bj\iota(\triv) \circ_{\bj\iota(\mQ^\vee)} \bj\iota(\A) \simeq \bj(\iota(\triv) \circ_{\iota(\mQ^\vee)} \iota(\A)), $$
$$\bj\iota(\T) \circ_{\bj\iota(\mQ)^\vee} \bj\iota(\A) \simeq \bj\iota(\T) \circ_{\bj\iota(\mQ^\vee)} \bj\iota(\A) \simeq \bj (\iota(\T) \circ_{\iota(\mQ^\vee)} \iota(\A)).$$
We obtain a canonical equivalence 
$$ \bj(\iota(\triv) \circ_{\iota(\mQ^\vee)} \iota(\A)) \simeq \colim_{\n \geq 1}\bj (\iota(\T) \circ_{\iota(\mQ^\vee)} \iota(\A))$$$$\simeq \bj( \colim_{\n \geq 1}(\iota(\T) \circ_{\iota(\mQ^\vee)} \iota(\A))) $$ in $\mC''$
and so a canonical equivalence 
$$ \iota(\triv) \circ_{\iota(\mQ^\vee)} \iota(\A) \simeq \colim_{\n \geq 1}(\iota(\T) \circ_{\iota(\mQ^\vee)} \iota(\A)) $$ in $\mC'.$
Applying the left adjoint $\L: \mC' \to \mC$ we get the desired equivalence in $\mC,$ where the first and last equivalence is by Proposition \ref{lemgf}:
$$\triv\circ_{\mQ^\vee} \A \simeq \L(\iota(\triv) \circ_{\iota(\mQ^\vee)} \iota(\A)) \simeq \L( \colim_{\n \geq 1}(\iota(\T) \circ_{\iota(\mQ^\vee)} \iota(\A)))\simeq $$$$\colim_{\n \geq 1}\L(\iota(\T) \circ_{\iota(\mQ^\vee)} \iota(\A)) \simeq \colim_{\n \geq 1}(\T \circ_{\mQ^\vee} \A). $$

%(see Remark \ref{hggvujhnh} for the definition of relative tensor products in representable planar $\infty$-operads).	

\end{proof}

Dualising the last proposition we obtain the following corollary:

\begin{corollary}\label{dghcvbhhhh}
Let $\mC$ be a stable symmetric monoidal $\infty$-category that admits small limits, $\mO$ a non-unital $\infty$-operad in $\mC$ such that $\mO_1 = \tu$. 
For every $\C \in \co\Alg^\mathrm{dp}_{\mO^\vee}(\mC)$ in $\mC$ there is a canonical equivalence in $\mC$:
$$ \triv *^{\mO^\vee} \C \simeq \lim_{\n \geq 1}((\tau_\n(\mO) \circ_{\mO} \triv) *^{\mO^\vee}  \C).$$	

\end{corollary}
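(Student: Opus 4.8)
The plan is to deduce the statement from Lemma~\ref{dghcvbhhh} by passing to the opposite symmetric monoidal $\infty$-category $\mC^{\op}$. It is stable and symmetric monoidal, it admits small colimits because $\mC$ admits small limits, and it admits totalizations --- equivalently, $\mC$ admits the geometric realizations that are in any case needed to make sense of the functors $\circ_{\mO}$ in the statement --- so the hypotheses of Lemma~\ref{dghcvbhhh} are met. A non-unital $\infty$-operad $\mO$ in $\mC$ with $\mO_1=\tu$ is, by the very definition of an $\infty$-cooperad, the same datum as a non-counital $\infty$-cooperad $\mQ:=\mO$ in $\mC^{\op}$ with $\mQ_1=\tu$; and since the Koszul duality adjunction of Proposition~\ref{dfghcfghkkk} was constructed through opposite categories and at the requisite level of generality, the Koszul dual operad $\mQ^{\dual}$ in $\mC^{\op}$ is precisely $\mO^{\dual}$, now regarded as an $\infty$-operad in $\mC^{\op}$ rather than as an $\infty$-cooperad in $\mC$.

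Next I would record the translation of notation under $(-)\mapsto(-)^{\op}$. Under $\sSeq(\mC^{\op})\simeq\sSeq(\mC)^{\op}$ the composition product on $\sSeq(\mC^{\op})$ is by definition the cocomposition product $*$ on $\sSeq(\mC)$, the operadic truncation $\tau_{\n}$ of the cooperad $\mO$ in $\mC^{\op}$ agrees levelwise with $\tau_{\n}(\mO)$ in $\mC$, and the $\infty$-category $\coAlg^{\mathrm{dp},\conil}_{\mO^{\dual}}(\mC)=\coLMod_{\mO^{\dual}}(\mC)$ is by definition $\Alg_{\mO^{\dual}}(\mC^{\op})^{\op}$, so that $\C$ corresponds to $\C^{\op}\in\Alg_{\mO^{\dual}}(\mC^{\op})$. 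Moreover the left adjoint $\triv\circ_{\mO^{\dual}}(-)$ of the trivial-algebra functor for $\mC^{\op}$ corresponds to the right adjoint $\triv*^{\mO^{\dual}}(-)$ of the trivial-coalgebra functor for $\mC$ (Corollary~\ref{lemfg2} being the opposite of Proposition~\ref{lemgf}), and the comodule-side cobar construction $\tau_{\n}(\mQ)\circ^{\mQ}\triv$ appearing in Lemma~\ref{dghcvbhhh} corresponds to the module-side bar construction $\tau_{\n}(\mO)\circ_{\mO}\triv$. Feeding $\mC^{\op}$, $\mQ=\mO$, and $\A=\C^{\op}$ into Lemma~\ref{dghcvbhhh} produces the equivalence
$$\triv\circ_{\mQ^{\dual}}\C^{\op}\ \simeq\ \colim_{\n\geq1}\bigl((\tau_{\n}(\mQ)\circ^{\mQ}\triv)\circ_{\mQ^{\dual}}\C^{\op}\bigr)$$
in $\mC^{\op}$, which is exactly the opposite of the asserted equivalence $\triv*^{\mO^{\dual}}\C\simeq\lim_{\n\geq1}\bigl((\tau_{\n}(\mO)\circ_{\mO}\triv)*^{\mO^{\dual}}\C\bigr)$ in $\mC$.

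Beyond this bookkeeping, the only thing to verify --- and the step I expect to be the main (if mild) obstacle --- is that every ingredient used in Lemma~\ref{dghcvbhhh} and its proof is insensitive to replacing $\mC$ by $\mC^{\op}$: the Koszul duality adjunctions, the tower $\f_{\n}(\mO_{\leq\n})\to\mO$ together with its colimit, the identification $\triv\circ_{\f_{\n}(\mQ^{\dual})}\mQ^{\dual}\simeq\tau_{\n}(\mQ)\circ^{\mQ}\triv$ of Lemma~\ref{djfghjdfh}, the equivalence of Proposition~\ref{eqqqqq}, and the reduction through the symmetric monoidal embeddings of Corollary~\ref{cory} into presentable $\infty$-categories. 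Each of these is stated for an arbitrary stable symmetric monoidal $\infty$-category with the requisite small (co)limits and totalizations, so it applies verbatim to $\mC^{\op}$; this is forced by the fact that $\infty$-cooperads are defined as $\infty$-operads in the opposite category and that Proposition~\ref{dfghcfghkkk} already builds the Koszul duality adjunctions in this generality rather than only in the presentable case. Hence no genuinely new argument is needed, and the proof consists of assembling the dictionary above.
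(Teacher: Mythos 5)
Your proposal is correct and follows exactly the paper's own (very brief) argument: the paper likewise deduces the corollary by applying Lemma~\ref{dghcvbhhh} to $\mC^{\op}$, noting the identification $\tau_\n(\mO) *_{\mO} \triv \simeq \tau_\n(\mO) \circ_{\mO} \triv$ that your dictionary also records. Your writeup just spells out the dualization bookkeeping in more detail than the paper does.
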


\begin{proof}
We apply Lemma \ref{dghcvbhhh} to the opposite $\infty$-category
while we use that $$ \tau_\n(\mO) *_{\mO} \triv \simeq \tau_\n(\mO) \circ_{\mO} \triv .$$	

\end{proof}

\begin{construction}\label{con}
Let $\mC$ be a stable symmetric monoidal $\infty$-category compatible with small colimits, $\mO$ an non-unital $\infty$-operad in $\mC$ such that $\mO_1 = \tu $ and $\X$ an $\mO$-algebra and $\Y$ a right $\mO$-module.
There is a canonical morphism $\rho:$
$$ \Y \circ_{\mO} \X \to (\Y \circ_{\mO} \triv) \circ^{\mO^\vee} \triv) \circ_{\mO} \X \to (\Y \circ_{\mO} \triv) \circ^{\mO^\vee} (\triv \circ_{\mO} \X) \to (\Y \circ_{\mO} \triv) *^{\mO^\vee} (\triv \circ_{\mO} \X)$$ in $\mC$,
where the first morphism is induced by the unit
$\Y \to (\Y \circ_{\mO} \triv) \circ^{\mO^\vee} \triv$
and the second morphism is the canonical morphism
$$ (\Z \circ^{\mO^\vee} \triv) \circ_{\mO} \X \to \Z \circ^{\mO^\vee} (\triv \circ_{\mO} \X) $$ in $\mC$ for any right $\mO^\vee$-comodule $\Z$ in $\sSeq(\mC).$
\end{construction}

Recall that by \cite[Example 6.1.6.22.]{lurie.higheralgebra}
%for every presentable preadditive $\infty$-category $\mC$ containing 
for every object $\X$ of $\mC$ equipped with an action of a finite group $\G$
there is a norm map $\X_\G \to \X^\G$ from the coinvariants to the invariants,
which we will use in the following.

\begin{lemma}\label{gggg}
Let $\mC$ be a stable symmetric monoidal $\infty$-category
compatible with small colimits that admits small limits, $\mO$ a non-unital $\infty$-operad in $\mC$ such that $\mO_1 = \tu $ and $\X$ an $\mO$-algebra. 
If all norm maps associated to symmetric groups are equivalences, for every $\n \in \bN$ the following morphism is an equivalence:
$$\rho: \tau_\n(\mO) \circ_{\mO} \X \to (\tau_\n(\mO) \circ_{\mO} \triv) *^{\mO^\vee} (\triv \circ_{\mO} \X).$$

\end{lemma}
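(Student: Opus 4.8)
The plan is to reduce Lemma \ref{gggg} to the case where $\X$ is a free $\mO$-algebra and then to the case where $\mO$ is a truncated operad, using a dévissage along the truncation tower $\tau_\n(\mO)$ and the fact that everything in sight commutes with sifted colimits and finite limits. First I would observe that both source and target of $\rho$, viewed as functors in the $\mO$-algebra variable $\X$, preserve sifted colimits: the functor $\X \mapsto \tau_\n(\mO) \circ_{\mO} \X$ preserves sifted colimits because $\tau_\n(\mO)$ is a right $\mO$-module and the relative composition product is computed by a bar construction whose terms are composition products preserving sifted colimits in each variable; the target $\X \mapsto (\tau_\n(\mO) \circ_{\mO} \triv) *^{\mO^\dual} (\triv \circ_{\mO} \X)$ preserves sifted colimits because $\triv \circ_{\mO}(-)$ does (it is a left adjoint) and the cocomposition product $*^{\mO^\dual}$ preserves sifted colimits in each variable. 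Since every $\mO$-algebra is a geometric realization of free $\mO$-algebras $\mO \circ \Z$ and $\rho$ is natural, it suffices to prove the statement for $\X = \mO \circ \Z$ with $\Z \in \mC$.

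For a free algebra $\X = \mO \circ \Z$ we have $\tau_\n(\mO) \circ_{\mO}(\mO \circ \Z) \simeq \tau_\n(\mO) \circ \Z$ and $\triv \circ_{\mO}(\mO \circ \Z) \simeq \triv \circ \Z \simeq \Z$, so the right-hand side becomes $(\tau_\n(\mO) \circ_{\mO} \triv) *^{\mO^\dual} \Z$. Now I would use Lemma \ref{djfghjdfh} together with Proposition \ref{eqqqqq} to identify $\tau_\n(\mO) \circ_{\mO} \triv \simeq \triv \circ_{\f_\n(\mO)} \mO$ (more precisely, its dual interpretation), and more usefully I would pass to the truncated operad: by Lemma \ref{fghfghkm} the $\leq\n$-truncation is compatible with the composition product, so the comparison morphism $\rho$ for $(\mO, \X)$ in degrees $\leq \n$ only depends on $\mO_{\leq n}$, and in fact the map $\rho$ for $\mO$ agrees in degrees $\le n$ with $\rho$ for the finite operad $\mO_{\le n}$ (after suitable bookkeeping using that $\tau_\n(\mO)$ is concentrated in degrees $\le n$). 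This reduces us to the case of an operad $\mO$ concentrated in degrees $\le n$, equivalently to proving that the morphism
\begin{equation*}
\rho: \mO \circ \Z \to \mO \circ^{\mO^\dual} \triv \,\,*^{\mO^\dual}\,\, \Z
\end{equation*}
is an equivalence when $\mO$ has only finitely many nonzero levels and all symmetric-group norm maps in $\mC$ are equivalences.

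In this finite, norm-invertible situation, the key computational input is that the Koszul duality functor $(-) \circ^{\mO^\dual}\triv$ on a finitely supported symmetric sequence is computed by a totalization that, level by level, is a finite limit; dually the composition product $\mO \circ \Z$ is a finite colimit of terms $(\mO_\bk \ot \Z^{\ot\bk})_{\Sigma_\bk}$. Because the symmetric-group norm maps are equivalences, all these homotopy orbits agree with homotopy fixed points, the finite products agree with finite coproducts by additivity, and the bar/cobar constructions computing $\circ_{\mO}$ and $*^{\mO^\dual}$ become dual to one another term by term; unwinding, $\rho$ becomes the canonical comparison between a finite colimit and the corresponding finite limit of diagrams of objects that are levelwise built from $\mO$ and $\Z$ by tensoring and taking (co)invariants, and this comparison is an equivalence precisely because each relevant norm map is. I expect the main obstacle to be the second step: carefully checking that the comparison morphism $\rho$ is compatible with truncation of $\mO$ in the appropriate range of degrees, and that passing to a truncated operad does not lose information — one must track the interaction of $\tau_\n$, the relative composition product, and the Koszul dual $\mO^\dual$ (whose levels below $n+1$ are controlled by $\mO_{\le n}$), and verify that the norm-map hypothesis is exactly what is needed to collapse the finite (co)limits. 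The sifted-colimit reduction and the final additivity/norm-map argument are routine once this dévissage is set up correctly.
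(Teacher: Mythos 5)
Your strategy has genuine gaps, and it is quite different from the paper's argument. The most serious problem is the first reduction: the target functor $\X \mapsto (\tau_\n(\mO) \circ_{\mO} \triv) *^{\mO^\dual} (\triv \circ_\mO \X)$ does \emph{not} preserve sifted colimits. The relative cocomposition $*^{\mO^\dual}$ is computed by a cobar construction, i.e.\ a totalization over $\Delta$, and each cosimplicial level is an infinite product over arities with fixed points $(-)^{\Sigma_\bk}$; neither totalizations nor infinite products commute with geometric realizations, and the norm-map hypothesis only identifies $(-)_{\Sigma_\bk}$ with $(-)^{\Sigma_\bk}$ for a \emph{finite} group --- it does not turn infinite products into coproducts or totalizations into realizations. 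So you cannot reduce to free $\mO$-algebras this way. The final step has the same defect: since $\tau_\n(\mO) \circ_\mO \triv$ is supported in \emph{all} arities $\geq 1$ (the inner $\mO$'s in the bar construction $\tau_\n(\mO) \circ \mO^{\circ \bk} \circ \triv$ raise the degree arbitrarily), its cocomposition with the degree-zero object $\triv \circ_\mO \X$ is an infinite product, not a finite limit, so the claimed ``finite colimit versus finite limit'' comparison is not what $\rho$ unwinds to. The truncation step is also not well posed: both sides of $\rho$ live in $\mC$ (arity zero), so there is no range of ``degrees $\le \n$'' in which to compare, and $\tau_\n(\mO) \circ_\mO \X$ genuinely depends on the full operad $\mO$ through the bar resolution, not only on $\mO_{\le \n}$.

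The paper instead argues by induction on $\n$ in the \emph{module} variable, which avoids all of these issues. The base case $\n = 1$ is the tautological equivalence $\triv \circ_\mO \X \simeq \mO^\dual *^{\mO^\dual} (\triv \circ_\mO \X)$. For the inductive step one uses that the fiber of $\tau_{\n+1}(\mO) \to \tau_\n(\mO)$ in right $\mO$-modules is the \emph{trivial} right $\mO$-module $\T$ concentrated in arity $\n+1$ with value $\mO_{\n+1}$ (Lemma \ref{dfghfghjfghj}); since all three functors $(-) \circ_\mO \X$, $(-) \circ_\mO \triv$ and $(-) *^{\mO^\dual} \Y$ are exact, $\rho$ induces a map of fiber sequences whose map on fibers is exactly the single norm map $(\mO_{\n+1} \ot \Y^{\ot \n+1})_{\Sigma_{\n+1}} \to (\mO_{\n+1} \ot \Y^{\ot \n+1})^{\Sigma_{\n+1}}$ with $\Y = \triv \circ_\mO \X$, an equivalence by hypothesis. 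If you want to salvage your approach, you would need to replace the sifted-colimit and finiteness arguments by this kind of exactness-plus-d\'evissage in the $\tau_\n$ variable, which is precisely the paper's route.
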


\begin{proof}

We proceed by induction on $\n \geq 1.$
For $\n=1$ the map $\rho$ identifies with the canonical equivalence
$$\rho: \tau_1(\mO) \circ_{\mO} \X =  \triv \circ_{\mO} \X \simeq \mO^\vee   {*}^{\mO^\vee} (\triv \circ_{\mO} \X) = (\tau_1(\mO) \circ_{\mO} \triv)  {*}^{\mO^\vee} (\triv \circ_{\mO} \X).$$

We continue with the induction step, where we set $\Y:= \triv \circ_\mO \X.$
Because $\mC$ is stable and for any $\Z \in \sSeq(\mC)$
the functor $(-)\circ \Z: \sSeq(\mC) \to \sSeq(\mC)$ is exact, the $\infty$-categories $\RMod_{\mO}(\sSeq(\mC)),  \mathrm{co} \RMod_{\mO^\vee}(\sSeq(\mC))$ are stable and the forgetful functors $$\RMod_{\mO}(\sSeq(\mC)) \to \sSeq(\mC), \mathrm{co} \RMod_{ \mO^\vee }(\sSeq(\mC)) \to \sSeq(\mC)$$ are exact.
Consequently, the following functors are exact: $$ (-) \circ_\mO \X: \RMod_{\mO}(\sSeq(\mC)) \to \sSeq(\mC),$$$$ (-) \circ_{\mO} \triv: \RMod_{\mO}(\sSeq(\mC)) \to \mathrm{co} \RMod_{\mO^\vee }(\sSeq(\mC)), $$$$ (-) {*}^{\mO^\vee} \Y : \mathrm{co} \RMod_{\mO^\vee }(\sSeq(\mC)) \to \sSeq(\mC)$$ 

\vspace{1mm}
The fiber $\T$ of the map $\tau_{\n+1}(\mO) \to \tau_\n(\mO)$
of right $\mO$-modules is the symmetric sequence concentrated in degree $\n+1$ with value $\mO_{\n+1}$ equipped with the trivial right $\mO$-module structure according to Lemma \ref{dfghfghjfghj}.
Applying the exact functor $(-) \circ_\mO \X: \RMod_\mO(\sSeq(\mC)) \to \sSeq(\mC)$ we obtain the fiber sequence 
$$\T \circ \Y= \T \circ (\triv \circ_\mO \X) \to \tau_{\n+1}(\mO) \circ_\mO \X \to \tau_\n(\mO) \circ_\mO \X. $$ 
Applying the exact functor $((-) \circ_\mO \triv) {*}^{\mO^\vee}\Y: \RMod_\mO(\sSeq(\mC)) \to \sSeq(\mC)$ we see that the fiber of the map 
$$(\tau_{\n+1}(\mO) \circ_\mO \triv) {*}^{\mO^\vee}\Y \to (\tau_\n(\mO) \circ_\mO \triv) {*}^{\mO^\vee}\Y $$ identifies with $$(\T \circ_\mO \triv) {*}^{\mO^\vee}\Y \simeq (\T \circ \mO^\vee) {*}^{\mO^\vee}\Y \simeq (\T  {*} \mO^\vee) {*}^{\mO^\vee}\Y \simeq \T  {*} \Y.$$

There is a commutative diagram
\begin{equation*}
\begin{xy}
\xymatrix{
\T \circ \Y \ar[d] \ar[r] 
& \tau_{\n+1}(\mO) \circ_{\mO} \X \ar[d] \ar[r] & \tau_\n(\mO) \circ_{\mO} \X \ar[d] 
\\
\T  {\ast} \Y \ar[r]  & (\tau_{\n+1}(\mO) \circ_{\mO} \triv)  {\ast}^{\mO^\vee}\Y \ar[r] & (\tau_\n(\mO) \circ_{\mO} \triv)  {\ast}^{\mO^\vee} \Y,
}
\end{xy} 
\end{equation*}
where both horizontal sequences are fiber sequences.
The left vertical map $\T \circ \Y \to \T  {\ast} \Y$ identifies with the
norm map $ (\mO_{\n+1} \ot \Y^{\ot \n+1})_{\Sigma_{\n+1}} \to  (\mO_{\n+1} \ot \Y^{\ot \n+1})^{\Sigma_{\n+1}}.$

\end{proof}

%Let $\mC$ be a stable presentably symmetric monoidal $\infty$-categoryand $\mO$ a non-unital $\infty$-operad in $\mC$ such that $\mO_1 = \tu $.By Proposition \ref{dfghcfghkkk} (3) there is an adjunction$$\triv \circ_\mO (-): \Alg_\mO(\mC) \rightleftarrows \co\Alg_{\mO^\vee}(\mC): \triv *^{\mO^\vee} (-). $$

We obtain the following proposition:

\begin{theorem}\label{map}
	
Let $\mC$ be a stable presentably symmetric monoidal $\infty$-category, $\mO$ a non-unital $\infty$-operad in $\mC$ such that $\mO_1 = \tu $ and $\X$ an $\mO$-algebra in $\mC$.
There is a commutative square:
\begin{equation*}
\begin{xy}
\xymatrix{\X \ar[d] 
\ar[rr]
&& \triv *^{\mO^\vee}(\triv \circ_\mO \X) \ar[d] 
\\
\lim_{\n \geq 1}(\tau_\n(\mO) \circ_{\mO} \X) \ar[rr] && \lim_{\n \geq 1}((\tau_\n(\mO) \circ_{\mO} \triv) *^{\mO^\vee} (\triv \circ_\mO \X)).
}
\end{xy} 
\end{equation*} 
If all norm maps of objects in $\mC$ with an action of a symmetric group are equivalences,
the right vertical morphism and bottom horizontal morphism are equivalences.
	
%The unit map $$\X \to \triv *^{\mO^\vee}(\triv \circ_\mO \X)$$ identifies with the map 
%$$\alpha: \X \to \lim_{\n \geq 1} \tau_\n(\mO) \circ_\mO \X.$$
\end{theorem}

\begin{proof}

There is a canonical commutative square:

\begin{equation*}
\begin{xy}
\xymatrix{\X \ar[d] 
\ar[rr]
&& \triv *^{\mO^\vee}(\triv \circ_\mO \X) \ar[d] 
\\
\lim_{\n \geq 1}(\tau_\n(\mO) \circ_{\mO} \X) \ar[rr] && \lim_{\n \geq 1}((\tau_\n(\mO) \circ_{\mO} \triv) *^{\mO^\vee} (\triv \circ_\mO \X)).
}
\end{xy} 
\end{equation*} 
The right vertical morphism is an equivalence by Corollary \ref{dghcvbhhhh} for $\C= \triv \circ_\mO \X $.
The bottom horizontal morphism is an equivalence by Lemma \ref{gggg}.	

\end{proof}

\begin{remark}
Recently, Heuts \cite{Heuts} generalizes Theorem \ref{map} to the situation that norm maps are not necessarily invertible.
	
\end{remark}

\section{A derived Milnor-Moore theorem}

%\subsection{The enveloping Hopf algebra}

\vspace{1mm}

In this section we apply Theorem \ref{map} to deduce a derived version of the
Milnor-Moore theorem (Theorem \ref{thg}).

\begin{definition}

The spectral Lie operad $$\Lie:= \coComm_\Sp^\vee(-1) \in \Op(\Sp) $$ 
is the negative shift of the Koszul-dual of the non-counital cocommutative spectral $\infty$-cooperad $\coComm_\Sp \in \coOp(\Sp).$

\end{definition}

\begin{remark}
Computing the Koszul dual one finds that for every $\n \in \Sigma$ there is a canonical equivalence of spectra
$$ \Lie_\n \simeq \bigoplus_{\n-1!} \tu_\Sp, $$
while the $\Sigma_\n$-action on $\Lie_\n$ is highly non-trivial.
In particular, $\Lie$ is an $\infty$-operad in connective spectra $\Sp_{\geq0}.$
	
\end{remark}

%\begin{definition}Let $\mC$ be an additive presentably symmetric monoidal $\infty$-category. As a consequence of \cite[Theorem 4.6.]{2013arXiv1305.4550G} there is a unique left adjoint symmetric monoidal functor $\alpha^\mC: \Sp_{\geq0} \to \mC$.The Lie operad in $\mC$ is $$\Lie_\mC := \alpha^\mC_!(\Lie).$$

%The Lie operad is $$\Lie_\bZ := \alpha^{\Mod_{\rH(\bZ)}}_*(\Lie).$$\end{definition}	
	
Corollary \ref{lies} implies the following remark:

\begin{remark}
	
Let $\mC$ be stable presentably symmetric monoidal $\infty$-category. %compatible with small colimits that admit totalizations.
There is a canonical equivalence of $\infty$-operads in $\mC:$
$$\Lie_\mC= \alpha^\mC_!(\Lie) \simeq (\coComm_\mC^\nun)^\vee(-1).$$

In particular, $\Lie_{\Mod_{\rH(\bZ)}(\Sp)}$ is the classical Lie operad.
\end{remark}

\vspace{1mm}

\begin{proposition}\label{caaaaart}
Let $\mC$ be a stable presentably symmetric monoidal $\infty$-category.

\begin{enumerate}
\item The forgetful functor $$\theta: \co\Alg^{\mathrm{dp}, \conil}_{\coComm_\mC^\nun}(\mC) \to \co\Alg_{\coComm_\mC^\nun}(\mC) \simeq \co\Alg_{\bE_\infty}(\mC)_{\tu/} $$ preserves finite products.

\item The functor $$\triv \circ_{\Lie(1)} (-): \Alg_{\Lie(1)}(\mC) \to \co\Alg^{\mathrm{dp}, \conil}_{\coComm_\mC^\nun}(\mC) $$ preserves finite products.

\end{enumerate}

\end{proposition}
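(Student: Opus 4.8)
The plan is to equip all the coalgebra $\infty$-categories in play with \emph{cartesian} symmetric monoidal structures coming from the Hopf $\infty$-cooperad $\coComm_\mC^\nun$ and to recognise both functors as symmetric monoidal: a symmetric monoidal functor between cartesian symmetric monoidal $\infty$-categories automatically preserves finite products, so this settles (1) and (2) at once. The two substantial ingredients are (i) that the tensor product of divided-power conilpotent cocommutative coalgebras computes their categorical product, and (ii) the symmetric monoidality of $\theta$ and of $\triv\circ_{\Lie(1)}(-)$.

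First I would dispose of the terminal objects. Since $\mC$ is stable we have $\coComm_\mC^\nun\circ 0\simeq 0$, so by comonadicity (Lemma \ref{comon}) the $\infty$-category $\coAlg^{\mathrm{dp},\conil}_{\coComm_\mC^\nun}(\mC)$ has a zero object lying over $0\in\mC$; likewise, using $\coAlg_{\coComm_\mC^\nun}(\mC)\simeq\coAlg_{\bE_\infty}(\mC)_{\tu/}$ (Corollaries \ref{fghjok}, \ref{comparat}) together with the fact that $\tu$ is simultaneously initial (Proposition \ref{compara}) and terminal in $\coAlg_{\bE_\infty}(\mC)$, the target of $\theta$ has a zero object as well. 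As $\theta$ and $\triv\circ_{\Lie(1)}(-)$ are left adjoints (Propositions \ref{dfghcfghkkk} and \ref{dfghcfghkk}) they preserve initial objects, hence these zero objects, and only binary products remain.

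For binary products the decisive computation is on cofree comodules. The cofree $\coComm_\mC^\nun$-comodule on $X\in\mC$ has underlying object $\bigoplus_{n\geq 1}(X^{\ot n})_{\Sigma_n}$, and the exponential identity $\bigoplus_n((X\oplus Y)^{\ot n})_{\Sigma_n}\simeq\big(\bigoplus_i(X^{\ot i})_{\Sigma_i}\big)\ot\big(\bigoplus_j(Y^{\ot j})_{\Sigma_j}\big)$, combined with the fact that the cofree functor is a right adjoint and so sends $X\oplus Y\simeq X\times Y$ to the product, shows that the categorical product of two cofree comodules has underlying object $\underline{C}\oplus\underline{D}\oplus(\underline{C}\ot\underline{D})$ — exactly the ``non-counital tensor product'' of coalgebras supplied by the Hopf $\infty$-cooperad $\coComm_\mC$ via Proposition \ref{Hoopf} (in $\mC^\op$). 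Propagating this along the $U$-split cobar resolution of an arbitrary comodule identifies the Hopf-cooperad tensor product on $\coAlg^{\mathrm{dp},\conil}_{\coComm_\mC^\nun}(\mC)$ with the cartesian one; the analogous fact for $\coAlg_{\coComm_\mC^\nun}(\mC)$ — the opposite of the $\infty$-category of non-unital $\bE_\infty$-algebras in $\mC^\op$ — is dual to the cocartesian symmetric monoidal structure on commutative algebras (finite coproducts being tensor products). Since $\theta$ is the comparison on comodules induced by the identity-on-objects lax-to-oplax functor of Lemma \ref{gghjjgh}, it is compatible with the Hopf structures and with the forgetful functors to $\mC$, hence symmetric monoidal, hence preserves finite products, which is part (1).

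For part (2) I would give $\Alg_{\Lie(1)}(\mC)$ its cartesian symmetric monoidal structure — products are computed in $\mC$ as $A\oplus B$ since the forgetful functor creates limits — and show $\triv\circ_{\Lie(1)}(-)$ is symmetric monoidal into the structure above. By Proposition \ref{lemgf} its underlying object is $\colim_{[n]\in\Delta^\op}\Lie(1)^{\circ n}\circ A$, and the exponential identities for $\Lie(1)^{\circ n}\circ(A\oplus B)$ and for $\coComm_\mC\circ(-)$ yield a natural equivalence $\triv\circ_{\Lie(1)}(A)\ot\triv\circ_{\Lie(1)}(B)\xrightarrow{\sim}\triv\circ_{\Lie(1)}(A\times B)$ compatible with the coherences — a Künneth theorem for the operadic bar construction of spectral Lie algebras — whence $\triv\circ_{\Lie(1)}(-)$ preserves finite products. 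I expect the main obstacle to be precisely the identification of the Hopf-$\infty$-cooperad tensor product with the categorical product (and, relatedly, the symmetric monoidality of $\triv\circ_{\Lie(1)}(-)$): the comparison with products is transparent only on cofree comodules, and pushing it to arbitrary comodules requires controlling the interaction of a tensor product with the totalizations appearing in cobar resolutions, along which it is not \emph{a priori} well behaved.
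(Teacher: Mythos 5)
Your part (1) follows essentially the paper's route: reduce by comonadicity to split cobar resolutions of cofree comodules, where the product is computed by the exponential equivalence $\Sym(\X\oplus\Y)\simeq\Sym(\X)\ot\Sym(\Y)$ together with the cartesianness of the object-wise symmetric monoidal structure on $\coAlg_{\bE_\infty}(\mC)_{\tu/}$; the worry you raise about tensor products against totalizations is resolved exactly because the cobar resolutions split after forgetting to $\mC$, so the relevant limits are absolute.

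Part (2) has a genuine gap. The ``exponential identity for $\Lie(1)^{\circ \n}\circ(\A\oplus\B)$'' on which your K\"unneth equivalence rests is false term by term. Expanding the composition product gives
$$\Lie(1)\circ(\A\oplus\B)\simeq\coprod_{\bi,\bj\geq 0}\bigl(\Lie(1)_{\bi+\bj}\ot\A^{\ot\bi}\ot\B^{\ot\bj}\bigr)_{\Sigma_\bi\times\Sigma_\bj},$$
whereas
$$(\Lie(1)\circ\A)\ot(\Lie(1)\circ\B)\simeq\coprod_{\bi,\bj\geq 0}\bigl(\Lie(1)_{\bi}\ot\Lie(1)_{\bj}\ot\A^{\ot\bi}\ot\B^{\ot\bj}\bigr)_{\Sigma_\bi\times\Sigma_\bj},$$
so a term-wise comparison would require $\Lie(1)_{\bi+\bj}$ to be equivalent to $\Lie(1)_\bi\ot\Lie(1)_\bj$ as $\Sigma_\bi\times\Sigma_\bj$-objects. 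This holds for $\Comm$ but fails for $\Lie$ (already $\Lie_\bk\simeq\bigoplus_{(\bk-1)!}\tu$, and the free Lie algebra on $\A\oplus\B$ is not $\Lie(\A)\oplus\Lie(\B)$); the same failure propagates to every bar term $\Lie(1)^{\circ\n}\circ(\A\oplus\B)$. You correctly flag this comparison as ``the main obstacle,'' but the mechanism you propose to overcome it is precisely the step that does not exist. The paper's proof supplies the missing idea: it filters $\A$ by the tower $\mO\circ_{\f_\n(\mO_{\leq\n})}\A$ of Remark \ref{dghjjjkkhg}, uses that the associated-graded functor $\gamma'$ is symmetric monoidal for Day convolution and that $\oplus\circ\gamma'$ is conservative, and observes that on the associated graded the composite $\triv\circ_{\Lie(1)}(-)$ degenerates to the forgetful functor followed by $\Sym$ --- at which point the cofree computation from part (1) applies. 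Without this filtration (or some other genuinely global argument) the K\"unneth statement for the operadic bar construction of spectral Lie algebras does not follow from what you have written.
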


\begin{proof}
	
1. We like to see that for any $\X, \Y \in  \co\Alg^{\mathrm{dp}, \conil}_{\coComm_\mC^\nun}(\mC) $ the canonical map
$\theta(\X \times \Y) \to \theta(\X) \times \theta(\Y)$ is an equivalence.
By Lemma \ref{comon} the forgetful functors $\co\Alg_{\coComm_\mC^\nun}(\mC) \to \mC$
and $\co\Alg^{\mathrm{dp}, \conil}_{\coComm_\mC^\nun}(\mC) \to \mC$ are comonadic.
Let $\Sym_{>0}$ be the right adjoint of the forgetful functor $\co\Alg^{\mathrm{dp}, \conil}_{\coComm_\mC^\nun}(\mC) \to \mC$.
By comonadicity any $\X \in \co\Alg^{\mathrm{dp}, \conil}_{\coComm_\mC^\nun}(\mC)$ is the totalization of a cosimplicial object $\X'$ in $\co\Alg^{\mathrm{dp}, \conil}_{\coComm_\mC^\nun}(\mC)$ taking values in the essential image of $\Sym_{>0}$, whose image in $\mC$ splits. Hence $\theta$ preserves the limit of $\X'$ and the canonical map $\theta(\X \times \Y) \to \theta(\X) \times \theta(\Y)$ 
identifies with the map $$\lim(\theta \circ (\X' \times \Y')) \to \lim((\theta \circ \X') \times (\theta \circ \Y')) \simeq \lim(\theta \circ \X') \times \lim(\theta \circ \Y').$$
Consequently, it is enough to check that the functor
$$\Sym: \mC \xrightarrow{\Sym_{>0}} \co\Alg^{\mathrm{dp}, \conil}_{\coComm_\mC^\nun}(\mC) \xrightarrow{\theta}\co\Alg_{\coComm_\mC^\nun}(\mC) \simeq \co\Alg_{\bE_\infty}(\mC)_{\tu/}$$ preserves finite products.
By \cite[Proposition 3.2.4.7.]{lurie.higheralgebra} the object-wise symmetric monoidal structure on $\co\Alg_{\bE_\infty}(\mC)_{\tu/}$ is cartesian.
For any $\X, \Y \in \mC$ the canonical map $\Sym(\X \oplus \Y) \to \Sym(\X) \otimes \Sym(\Y) $ identifies with the canonical equivalence in $\mC.$

2.: In view of (1) it is enough to see that the composition
$$\Alg_{\Lie(1)}(\mC) \xrightarrow{\triv \circ_{\Lie(1)} (-)} \co\Alg^{\mathrm{dp}, \conil}_{\coComm_\mC^\nun}(\mC)  \to \co\Alg_{\coComm_\mC^\nun}(\mC) \simeq \co\Alg_{\bE_\infty}(\mC)_{\tu/} $$ preserves finite products.

In the following we consider $\bN, \bZ $ as categories by viewing them as posets.	
Let $$\gamma: \Fun(\bZ, \co\Alg_{\bE_\infty}(\mC)_{\tu/}) \to \prod_{\bZ} \co\Alg_{\bE_\infty}(\mC)_{\tu/} $$ be the functor that sends a filtered object $\A$ to its associated graded object $(\A_{\bi}/{\A_{\bi-1}})_{\bi \in \bZ}  $.
The functor $\gamma$ restricts to a functor 
$$\gamma': \Fun(\bN, \co\Alg_{\bE_\infty}(\mC)_{\tu/}) \to \prod_{\bN} \co\Alg_{\bE_\infty}(\mC)_{\tu/}.$$

Let $\theta$ be the functor
$$ \Alg_{\Lie(1)}(\mC) \to \Fun(\bN, \Alg_{\Lie(1)}(\mC)), \ \A \mapsto (... \to \mO \circ_{\f_{\n}(\mO)} \A \to \mO \circ_{\f_{\n+1}(\mO)} \A \to ... ).$$

By Remark \ref{dghjjjkkhg} the functor $\theta$ is a section of the functor $$\colim: \Fun(\bN, \Alg_{\Lie(1)}(\mC)) \to \Alg_{\Lie(1)}(\mC).$$
Moreover by \cite[Remark 4.23.]{2018arXiv180306325H} the composition $$ \Alg_{\Lie(1)}(\mC) \xrightarrow{\theta} \Fun(\bN, \Alg_{\Lie(1)}(\mC)) \xrightarrow{\gamma'} \prod_{\bN} \Alg_{\Lie(1)}(\mC)  \xrightarrow{\oplus} \Alg_{\Lie(1)}(\mC) $$ 
factors as the forgetful functor $\Alg_{\Lie(1)}(\mC) \to \mC$ followed by the trivial Lie algebra functor $\mC \to \Alg_{\Lie(1)}(\mC).$
Thus the functor $ \triv \circ_{\Lie(1)} (-) : \Alg_{\Lie(1)}(\mC) \to \co\Alg_{\bE_\infty}(\mC)_{\tu/} $ factors as
$$ \Alg_{\Lie(1)}(\mC) \xrightarrow{\theta} \Fun(\bN, \Alg_{\Lie(1)}(\mC)) \xrightarrow{(\triv \circ_{\Lie(1)} (-))_*} \Fun(\bN, \co\Alg_{\bE_\infty}(\mC)_{\tu/}) $$$$ \xrightarrow{\colim}\co\Alg_{\bE_\infty}(\mC)_{\tu/}$$
and by Remark \ref{contin} the composition $$ \rho: \Alg_{\Lie(1)}(\mC) \xrightarrow{\theta} \Fun(\bN, \Alg_{\Lie(1)}(\mC)) \xrightarrow{(\triv \circ_{\Lie(1)} (-))_*} \Fun(\bN, \co\Alg_{\bE_\infty}(\mC)_{\tu/}) $$$$ \xrightarrow{\gamma' }\prod_{\bN}\co\Alg_{\bE_\infty}(\mC)_{\tu/}) \xrightarrow{\oplus} \co\Alg_{\bE_\infty}(\mC)_{\tu/} $$
factors as the forgetful functor
$ \Alg_{\Lie(1)}(\mC) \to \mC $ followed by the composition $$\Sym: \mC \xrightarrow{\coComm_\mC^\nun \circ (-)} \co\Alg^{\mathrm{dp}, \conil}_{\coComm_\mC^\nun}(\mC)\to \co\Alg_{\coComm_\mC^\nun}(\mC) \simeq \co\Alg_{\bE_\infty}(\mC)_{\tu/}$$
of the cofree functor and the forgetful functor.

By functoriality of Day-convolution with respect to left kan extensions
the functors $$\colim : \Fun(\bN,\co\Alg_{\bE_\infty}(\mC)_{\tu/}) \to\co\Alg_{\bE_\infty}(\mC)_{\tu/},$$
$$\oplus: \prod_{\bN}\co\Alg_{\bE_\infty}(\mC)_{\tu/} \to\co\Alg_{\bE_\infty}(\mC)_{\tu/} $$ 
are symmetric monoidal with respect to Day-convolution,
where $\co\Alg_{\bE_\infty}(\mC)_{\tu/}$ carries the cartesian structure
and $ \Fun(\bN,\co\Alg_{\bE_\infty}(\mC)_{\tu/}),  \prod_{\bN}\co\Alg_{\bE_\infty}(\mC)_{\tu/}$ carry the induced Day-convolution symmetric monoidal structures.
Hence it is enough to see that the functor
$$ \Alg_{\Lie(1)}(\mC) \xrightarrow{\theta} \Fun(\bN, \Alg_{\Lie(1)}(\mC)) \xrightarrow{(\triv \circ_{\Lie(1)} (-))_*} \Fun(\bN, \co\Alg_{\bE_\infty}(\mC)_{\tu/}) $$ is symmetric monoidal when $ \Alg_{\Lie(1)}(\mC)$ carries the cartesian structure and $$ \Fun(\bN,\co\Alg_{\bE_\infty}(\mC)_{\tu/}) $$ the Day-convolution.
By \cite[Theorem 1.13.]{GWILLIAM20183621} the functor $\gamma$ forming the associated graded, and so its restriction $\gamma'$ are symmetric monoidal with respect to Day-convolution.
Because $\oplus \circ \gamma'$ is conservative, it is enough to see that the
functor $\rho$ preserves finite products.
Since the forgetful functor $ \Alg_{\Lie(1)}(\mC) \to \mC$ preserves finite products, it suffices to see that $ \Sym: \mC \to\co\Alg_{\bE_\infty}(\mC)_{\tu/}$ preserves finite products.
By \cite[Proposition 3.2.4.7.]{lurie.higheralgebra} the object-wise symmetric monoidal structure on $\co\Alg_{\bE_\infty}(\mC)_{\tu/}$ is cartesian.

So we need to see that for any $\X, \Y \in \mC$ the canonical morphism
$$ \Sym(\X \oplus \Y) \to \Sym(\X \oplus \Y) \ot \Sym(\X \oplus \Y) \to \Sym(\X) \otimes \Sym(\Y)$$ is an equivalence, where the first morphism is the diagonal, and the second map is induced by the projections.
The latter map identifies with the canonical equivalence $ \Sym(\X \oplus \Y) \simeq \Sym(\X) \otimes \Sym(\Y)$ in $\mC.$

\end{proof}

%For the next lemma we need a few facts about additive $\infty$-categories.

%We fix the following notation:#
Next we construct the enveloping Hopf algebra (Definition \ref{Eenv}).
For that we use the notions of monoid object and $\bE_\infty$-monoid object \cite[Definition 2.4.2.1., Definition 4.1.2.5.]{lurie.higheralgebra} as well as the notion of
group object in any $\infty$-category $\mD$ that admits finite products. A group object in $\mD$ is a monoid object having the property that the shearing morphism
$(\mu, \pr_1): \X \times \X \to \X \times \X$ is an equivalence, where
$\mu$ is the multiplication of $\X$ and $\pr_1$ is projection to the first factor.

\begin{notation}
Let $\mD$ be an $\infty$-category that admits finite products.
\begin{itemize}
\item Let $ \Mon(\mD) \subset \Fun(\Delta^\op,\mD)$
be the full subcategory spanned by the monoid objects in $\mD$.

\item Let $\Mon_{\bE_\infty}(\mD) \subset \Fun(\Fin_*,\mD)$
be the full subcategory spanned by the $\bE_\infty$-monoid objects in $\mD$.

\item Let $ \Grp(\mD) \subset \Mon(\mD)$
be the full subcategory spanned by the groups objects in $\mD.$

\item Let $ \Grp_{\bE_\infty}(\mD) \subset \Mon_{\bE_\infty}(\mD) $
be the full subcategory spanned by the $\bE_\infty$-monoid objects in $\mD$
whose underlying monoid object in $\mD$ is a group object.	
\end{itemize}

\end{notation}

\begin{remark}\label{remqp}
	
Let $\mD$ be an additive $\infty$-category. By definition of additivity $ \Grp(\mD)= \Mon(\mD)$ and so $\Grp_{\bE_\infty}(\mD)=\Mon_{\bE_\infty}(\mD).$
By \cite[Proposition 2.4.2.5., Proposition 2.4.3.9.]{lurie.higheralgebra} the forgetful functors $\Grp_{\bE_\infty}(\mD) =\Mon_{\bE_\infty}(\mD) \to \mD, \Grp(\mD)=\Mon(\mD)\to \mD$ are equivalences.

Let $\phi: \mC \to \mD$ be a finite products preserving functor, where $\mD$ is additive and $\mC$ admits finite products.
Then $ \Grp(\mC)= \Mon(\mC)$ because $\phi$ is conservative
and a monoid $\X$ of $\mC$ is a group if the shearing morphism
$(\mu, \pr_1): \X \times \X \to \X \times \X$ is an equivalence.

\end{remark}

\begin{lemma}\label{hfbbcll}
	
Let $\mC $ be a stable $\infty$-category and $\mD$ an $\infty$-category that admits geometric realizations, finite products and totalizations.
Let $ \phi: \mD \to \mC$ be a conservative functor that preserves geometric realizations, finite products and totalizations. 

\vspace{1mm}

The adjunction $ \Barc: \Grp(\mD) \rightleftarrows \mD_\ast: \Cobar $ is an equivalence.

\end{lemma}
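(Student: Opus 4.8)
\emph{Plan.} The strategy is to reduce to the analogous statement over the stable $\infty$-category $\mC$, where the Bar--Cobar data is an equivalence for soft reasons, and then to transport this back along the conservative functor $\phi$.

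First I would note that $\Grp(\phi)\colon\Grp(\mD)\to\Grp(\mC)$ and $\phi_\ast\colon\mD_\ast\to\mC_\ast$ are conservative: since $\phi$ preserves finite products it carries the terminal object to the terminal object, hence induces functors on monoid objects and on slices under the terminal object, and conservativity of these follows from that of $\phi$ together with the fact that a morphism of monoid objects (resp. of pointed objects) is an equivalence as soon as it is so on underlying objects. Next, $\Barc_\mD$ is built degreewise from finite products followed by a geometric realization, and $\Cobar_\mD$ degreewise from finite products followed by a totalization, so the hypotheses on $\phi$ guarantee that $\phi$ commutes with both constructions; by the functoriality of the Bar--Cobar adjunction of Theorem \ref{Baar} in the ambient $\infty$-category, the pair $(\Grp(\phi),\phi_\ast)$ refines to a morphism of adjunctions from $\Barc_\mD\dashv\Cobar_\mD$ to the Bar--Cobar adjunction $\Barc_\mC\dashv\Cobar_\mC$ on $\mC$ (which exists because the relevant geometric realizations and totalizations are essentially finite, cf. below).

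The heart of the argument is that on the stable $\infty$-category $\mC$ the Bar--Cobar adjunction is an equivalence. Being stable, $\mC$ is additive, so by Remark \ref{remqp} the forgetful functors $\Grp(\mC)\to\mC$ and $\mC_\ast=\mC_{0/}\to\mC$ are equivalences. For $G\in\Grp(\mC)$ with underlying object $g$, the underlying object of $\Barc_\mC(G)$ is the geometric realization $\lvert[n]\mapsto g^{\oplus n}\rvert$; the normalized chains of this bar complex vanish above degree one, so its skeletal filtration stabilizes after stage one and the realization is the finite colimit $0\sqcup_g 0\simeq\Sigma g$, which exists in any stable $\infty$-category. Dually $\Cobar_\mC$ is identified with $\Omega$, the needed totalizations existing for the same coskeletal reason. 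Since $\mC$ is stable, $\Sigma\dashv\Omega$ has invertible unit and counit, so $\Barc_\mC\dashv\Cobar_\mC$ is an equivalence.

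To conclude, for $G\in\Grp(\mD)$ I would apply the conservative functor $\Grp(\phi)$ to the unit $\eta_G\colon G\to\Cobar_\mD\Barc_\mD G$; under the morphism of adjunctions this is identified with the unit of the $\mC$-adjunction at $\Grp(\phi)(G)$, which is invertible, hence so is $\eta_G$. Symmetrically, applying the conservative functor $\phi_\ast$ to the counit $\epsilon_Y\colon\Barc_\mD\Cobar_\mD Y\to Y$ for $Y\in\mD_\ast$ identifies it with the invertible counit of the $\mC$-adjunction at $\phi_\ast(Y)$, so $\epsilon_Y$ is invertible. Thus $\Barc_\mD\dashv\Cobar_\mD$ is an equivalence. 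The step I expect to demand the most care is making precise that $\phi$ induces a genuine morphism of Bar--Cobar adjunctions — that the identifications $\phi_\ast\Barc_\mD\simeq\Barc_\mC\Grp(\phi)$ and $\Grp(\phi)\Cobar_\mD\simeq\Cobar_\mC\phi_\ast$ are mutually mate and compatible with units and counits — which, together with the bookkeeping that the relevant geometric realizations and totalizations actually exist in $\mC$, is the only nontrivial input beyond the formal two-out-of-three argument.
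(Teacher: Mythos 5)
Your proposal is correct and follows essentially the same route as the paper: identify the Bar--Cobar adjunction on the stable $\infty$-category $\mC$ with $\Sigma\dashv\Omega$, use the preservation hypotheses on $\phi$ to match units and counits, and conclude by conservativity. The only cosmetic difference is how you see that $\Barc_\mC$ computes $\Sigma$ — you argue via the stabilizing skeletal filtration, whereas the paper identifies $\Barc_\mC(\A)$ with the pushout $0\ot_\A 0$ via \cite[Corollary 3.4.1.7.]{lurie.higheralgebra} and handles existence of the relevant (co)limits by embedding $\mC$ into $\Ind(\mC)$.
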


\begin{proof}
	
As $\mC$ is additive, by Remark \ref{remqp} the forgetful functors
$\Mon(\mC)\to \mC, \Mon_{\bE_\infty}(\mC)\to \mC$ are equivalences.
We first assume that $\mC$ admits geometric realizations and totalizations.
In this case by \cite[Corollary 3.4.1.7.]{lurie.higheralgebra} there is a canonical equivalence
$\Alg_{\bE_\infty}(\Mod_\A(\mC)) \simeq \Mon_{\bE_\infty}(\mC)_{\A /} \simeq \mC_{\A /}.$	
So for any $\A \in \mC \simeq \Mon_{\bE_\infty}(\mC)$ the relative tensor product
$ \Barc(\A) = 0 \ot_\A 0 $ is the coproduct of $\A \to 0$ with itself
in the $\infty$-category $\Alg_{\bE_\infty}(\Mod_\A(\mC)) \simeq \Mon_{\bE_\infty}(\mC)_{\A /} \simeq \mC_{\A /}$ and so identifies with $\Sigma(\A).$
So the adjunction $ \Barc: \Mon(\mC) \simeq \mC \rightleftarrows \mC_\ast \simeq \mC: \Cobar$ identifies with the equivalence $\Sigma: \mC \simeq \mC: \Omega.$

For general $\mC$ we embed $\mC$ exact into the stable presentable
$\infty$-category $\Ind(\mC)$ and find that
for every $\A \in \Mon(\mC) \simeq \mC$ the simplicial object
$\Barc(0,\A,0) : \Delta^\op \to \mC, [\n]\mapsto \A^{\oplus \n}$
admits a colimit that is given by $\Sigma(\A)$
and for every $\B \in \mC_* \simeq \mC$ the cosimplicial object
$\Cobar(0,\B,0) : \Delta \to \mC, [\n]\mapsto \B^{\oplus \n}$
admits a limit that is given by $\Omega(\B)$.
Because $\mD$ admits geometric realizations and totalizations,
there is an adjunction $ \Barc: \Mon(\mD) \rightleftarrows \mD_\ast: \Cobar.$
Since $ \phi: \mD \to \mC$ preserves geometric realizations, finite products and totalizations, $\phi$ sends the unit and counit of the adjunction 
$ \Barc: \Mon(\mD) \rightleftarrows \mD_\ast: \Cobar $ to the unit and counit, respectively, of the adjunction $ \Sigma: \mC \rightleftarrows \mC: \Omega. $
As $\phi$ is conservative, both adjoints of the adjunction $ \Barc: \Mon(\mD) \rightleftarrows \mD_\ast: \Cobar$ are equivalences.

\end{proof}

\begin{corollary}\label{qqqp}
	
Let $\mC $ be a stable $\infty$-category that admits geometric realizations and totalizations and $\mO$ an $\infty$-operad in $\mC$ such that $\mO_0=0.$
The adjunction $$ \Barc: \Grp(\Alg_\mO(\mC)) \rightleftarrows \Alg_\mO(\mC): \Cobar $$ is an equivalence.
	
\end{corollary}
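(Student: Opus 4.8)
The plan is to deduce this from Lemma \ref{hfbbcll} by exhibiting $\Alg_\mO(\mC)$ as an $\infty$-category $\mD$ to which that lemma applies, together with a conservative functor $\phi \colon \mD \to \mC$ with the required exactness properties. The natural candidate is the forgetful functor $\phi \colon \Alg_\mO(\mC) \to \mC$. First I would verify that $\Alg_\mO(\mC)$ admits geometric realizations, finite products and totalizations. Geometric realizations exist and are preserved by $\phi$ by Remark \ref{imp}(1) (they are among the sifted colimits), and totalizations and finite products exist and are preserved by $\phi$ by Remark \ref{imp}(2), since $\mC$ being stable has finite products and all small limits are built from totalizations and finite products. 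Moreover $\phi$ is conservative: the forgetful functor $\Alg_\mO(\mC) = \LMod_\mO(\mC) \to \mC$ is monadic by Lemma \ref{comon}(1) (it admits a left adjoint $\mO \circ (-)$ since $\mC$ is compatible with small colimits — but here I only assume $\mC$ has geometric realizations and totalizations, so I would instead invoke Lemma \ref{comon}(2), or simply note that a monadic functor is conservative), hence conservative. This gives conservativity of $\phi$ regardless.

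The one genuine subtlety is that $\phi$ lands in $\mC$, and to apply Lemma \ref{hfbbcll} with target $\mC$ I need $\mC$ stable (assumed) and I need $\phi$ to preserve geometric realizations, finite products and totalizations — all of which were checked above. So the main body of the argument is: apply Lemma \ref{hfbbcll} verbatim with $\mD := \Alg_\mO(\mC)$ and $\phi$ the forgetful functor, concluding that the adjunction $\Barc \colon \Grp(\Alg_\mO(\mC)) \rightleftarrows \Alg_\mO(\mC)_\ast : \Cobar$ is an equivalence. It remains only to identify $\Alg_\mO(\mC)_\ast$ with $\Alg_\mO(\mC)$: since $\mO_0 = 0$, by Lemma \ref{lek}(3) the $\infty$-category $\Alg_\mO(\mC)$ has a zero object lying over the zero object of $\mC$ (using that $\mC$ stable is in particular pointed and preadditive), so the pointed $\infty$-category $\Alg_\mO(\mC)_\ast$ coincides with $\Alg_\mO(\mC)$ itself.

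I expect the only point requiring care — the ``main obstacle'', such as it is — to be bookkeeping the hypotheses of Lemma \ref{hfbbcll}: that lemma wants $\mD$ to admit geometric realizations, finite products and totalizations and $\phi \colon \mD \to \mC$ to preserve all three and be conservative, with $\mC$ merely stable. All four conditions on $(\mD,\phi)$ follow from Remark \ref{imp} and Lemma \ref{comon}, so in fact there is no obstacle of substance; the proof is short. I would therefore write:

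\begin{proof}
Since $\mC$ is stable it is pointed and preadditive, and as $\mO_0 = 0$ Lemma \ref{lek}(3) shows $\Alg_\mO(\mC)$ has a zero object preserved by the forgetful functor $\phi \colon \Alg_\mO(\mC) \to \mC$; in particular $\Alg_\mO(\mC)_\ast \simeq \Alg_\mO(\mC)$. By Remark \ref{imp} the $\infty$-category $\Alg_\mO(\mC)$ admits geometric realizations, finite products and totalizations, and $\phi$ preserves all of these. By Lemma \ref{comon}(2) the functor $\phi$ is monadic, hence conservative. Thus Lemma \ref{hfbbcll}, applied with $\mD = \Alg_\mO(\mC)$, shows that the adjunction $\Barc \colon \Grp(\Alg_\mO(\mC)) \rightleftarrows \Alg_\mO(\mC)_\ast \simeq \Alg_\mO(\mC) : \Cobar$ is an equivalence.
\end{proof}
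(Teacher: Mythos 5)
Your proof is correct and follows exactly the route the paper intends: the corollary is stated immediately after Lemma \ref{hfbbcll} with no separate argument, and the intended proof is precisely to apply that lemma to the forgetful functor $\Alg_\mO(\mC)\to\mC$, using Remark \ref{imp} for the existence and preservation of the relevant (co)limits, monadicity (Lemma \ref{comon}) for conservativity, and Lemma \ref{lek}(3) together with $\mO_0=0$ to identify $\Alg_\mO(\mC)_\ast$ with $\Alg_\mO(\mC)$. Your bookkeeping of the hypotheses is careful and complete; there is nothing to add.
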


\begin{corollary}\label{qqqpp}
	
Let $\mC $ be a stable $\infty$-category that admits geometric realizations and totalizations and $\mO$ an $\infty$-operad in $\mC$ such that $\mO_0=0.$
The adjunction $$\Omega \circ \Barc: \Grp(\Alg_{\mO(1)}(\mC)) \rightleftarrows \Alg_{\mO(1)}(\mC) \simeq \Alg_\mO(\mC): \Cobar \circ \Sigma $$ is an equivalence over $\mC$ via the forgetful functors.
	
\end{corollary}

\begin{notation}Let $\mC$ be a symmetric monoidal $\infty$-category.
Let $$ \Bialg(\mC) = \Mon(\co\Alg_{\bE_\infty}(\mC)),$$$$
\Hopf(\mC) = \Grp(\co\Alg_{\bE_\infty}(\mC)).$$
	
\end{notation}

Remark \ref{remqp} gives the following:

\begin{corollary}
	
Let $\mC$ be an additive presentably symmetric monoidal $\infty$-category.
Then $$\Hopf(\mC) = \Bialg(\mC).$$
	
\end{corollary}

\begin{construction}	

Let $\mC$ be a stable presentably symmetric monoidal $\infty$-category.

By Proposition \ref{dfghcfghkk} there is an adjunction $$ \triv \circ_{\Lie(1)} (-) : \Alg_{\Lie(1)}(\mC) \rightleftarrows \co\Alg_{\coComm_\mC^\nun}(\mC): \triv *^{\coComm_\mC^\nun} (-).$$

%By Corollary \ref{forh} there is a left adjoint forgetful functor $$ \co\Alg^{\mathrm{dp}, \conil}_{\coComm_\mC^\nun}(\mC) \to \co\Alg_{\coComm_\mC^\nun}(\mC).$$Composing these adjunctions we obtain an adjunction$$\triv \circ_{\Lie(1)} (-) : \Alg_{\Lie(1)}(\mC) \rightleftarrows \co\Alg_{\coComm_\mC^\nun}(\mC): \triv *^{\coComm_\mC^\nun} (-).$$

By Corollary \ref{fghjok} and Corollary \ref{comparat} there is a canonical equivalence
$$\co\Alg_{\coComm_\mC^\nun}(\mC) \simeq \co\Alg_{\bE_\infty}(\mC)_{\tu/},\ \X \mapsto\X \oplus \tu.$$

\vspace{2mm}

By Proposition \ref{caaaaart} the latter adjunction followed by the equivalence
$$\co\Alg_{\coComm_\mC^\nun}(\mC) \simeq \co\Alg_{\bE_\infty}(\mC)_{\tu/},\ \X \mapsto\X \oplus \tu$$
preserves finite products and so gives rise to an adjunction on monoid objects
\begin{equation}\label{oi1}
\Grp(\Alg_{\Lie(1)}(\mC))= \Mon(\Alg_{\Lie(1)}(\mC)) \rightleftarrows \Bialg(\mC) \simeq \Mon(\co\Alg_{\bE_\infty}(\mC)_{\tu/}),\end{equation} 
where the left adjoint lands in $\Hopf(\mC) \simeq \Grp(\co\Alg_{\bE_\infty}(\mC)_{\tu/}).$

By Lemma \ref{hfbbcll} there is a canonical equivalence
\begin{equation}\label{oi2}
\Cobar: \Alg_{\Lie(1)}(\mC) \simeq \Grp(\Alg_{\Lie(1)}(\mC))= \Mon(\Alg_{\Lie(1)}(\mC)): \Barc\end{equation} 
covering $\Sigma: \mC \to \mC$.
Composing adjunctions (\ref{oi2}) and (\ref{oi1}) gives an adjunction 
$$\mU': \Alg_{\Lie(1)}(\mC) \simeq \Grp(\Alg_{\Lie(1)}(\mC)) \rightleftarrows  \Bialg(\mC):\Prim'. $$

%where the left adjoint lands in $\Hopf(\mC).$
\end{construction}

\begin{remark}\label{rema}
The left adjoint of the adjunction 
$$\mU': \Alg_{\Lie(1)}(\mC) \simeq \Grp(\Alg_{\Lie(1)}(\mC)) \rightleftarrows \Bialg(\mC):\Prim'$$ 
lifts the functor
$$ \Alg_{\Lie(1)}(\mC) \xrightarrow{\Omega} \Alg_{\Lie(1)}(\mC) \xrightarrow{\triv \circ_{\Lie(1)} (-)} \mC \xrightarrow{(-)\oplus \tu}\mC $$
and the right adjoint lifts the functor
$$\Bialg(\mC) \to \co\Alg_{\bE_\infty}(\mC)_{\tu/} \simeq \co\Alg_{\coComm_\mC^\nun}(\mC)  \xrightarrow{\triv *^{\coComm_\mC^\nun} (-)} \mC \xrightarrow{\Sigma} \mC.$$
	
\end{remark}

\begin{definition}\label{Eenv}Let $\mC$ be a stable presentably symmetric monoidal $\infty$-category.

The adjunction of enveloping bialgebra - primitive elements
$$ \mU: \Alg_{\Lie}(\mC) \rightleftarrows \Hopf(\mC)=\Bialg(\mC): \Prim$$ is the composite of the equivalence $$\Alg_{\Lie}(\mC) \simeq \Alg_{\Lie(1)}(\mC)$$
covering $\Sigma: \mC \to \mC$ and the adjunction $$\mU': \Alg_{\Lie(1)}(\mC) \rightleftarrows \Hopf(\mC):\Prim'.$$ 
\end{definition}

%Note that the left adjoint $\mU$ lands in $\Hopf(\mC) \subset \Bialg(\mC).$

\begin{remark}
	
Since limits in group objects are formed underlying, Proposition \ref{caaaaart}
implies that the functor $ \mU: \Alg_{\Lie}(\mC) \to \Hopf(\mC)$ preserves finite products and so turns the direct sum to the tensor product.

\end{remark}

\begin{remark}\label{tens}
	
Remark \ref{rema} implies that the left adjoint of the adjunction 
$$\mU: \Alg_{\Lie}(\mC) \rightleftarrows \Hopf(\mC):\Prim$$ 
lifts the functor
$$ \Alg_\Lie(\mC) \xrightarrow{\Omega} \Alg_{\Lie}(\mC) \xrightarrow{\triv \circ_{\Lie} (-)} \mC \xrightarrow{\Sigma}\mC \xrightarrow{(-)\oplus \tu}\mC $$
and the right adjoint $\Prim$ lifts the functor
$$\Hopf(\mC) \xrightarrow{\text{forget}} \co\Alg_{\bE_\infty}(\mC)_{\tu/} \simeq \co\Alg_{\coComm_\mC^\nun}(\mC) \xrightarrow{\triv *^{\coComm_\mC^\nun} (-)} \mC.$$

By adjointness the composition $\mC \xrightarrow{\Lie} \Alg_{\Lie}(\mC) \xrightarrow{\mU} \Bialg(\mC)$ factors as
$$ \mC \xrightarrow{\triv_{\coComm_\mC^\nun}} \co\Alg_{\coComm_\mC^\nun}(\mC) \simeq \co\Alg_{\bE_\infty}(\mC)_{\tu/} \xrightarrow{\mathrm{free}} \Hopf(\mC)=\Bialg(\mC)=  \Mon(\co\Alg_{\bE_\infty}(\mC)).$$
The latter functor lifts the functor
$$ \mC \xrightarrow{(-)\oplus \tu} \mC_{\tu/} \xrightarrow{\mathrm{free}} \Alg(\mC),$$ which is the free associative algebra functor.
 
\end{remark} 

%Note for the next corollary that 

%By ... there is a canonical equivalence $\rho: \Alg_{\Lie}(\mC) \simeq \Alg_{\Lie(1)}(\mC)$ that is the shift on underlying objects.
%By ... there is an equivalence $\Cobar  \Alg_{\Lie(1)}(\mC) \simeq \Grp(\Alg_{\Lie(1)}(\mC))$ 
%whose inverse we denote by $B$, and that is the loops on underlying objects.
%Hence the composition $\Omega \circ \rho: \Alg_{\Lie}(\mC) \simeq \Grp(\Alg_{\Lie(1)}(\mC))$ 
%is the identity on underlying objects.

Theorem \ref{map} gives the following:

\begin{theorem}\label{map2}
	
Let $\mC $ be a stable presentably symmetric monoidal $\infty$-category
such that all norm maps of objects in $\mC$ with an action of a symmetric group are equivalences.
For every Lie algebra $X \in \mC$ the unit $\X \to \Prim \mU(X)$ identifies with the completion morphism
$$ X \to \lim_{\n \geq 1}\tau_\n(\Lie(1)) \circ_{\Lie(1) } X,$$
where we identify $X$ via Corollary \ref{qqqpp} with a group object in $\Lie(1)$-algebras.

\end{theorem}

\begin{proof}
	
%Let $\rho: \Alg_{\Lie}(\mC) \to \Alg_{\Lie(1)}(\mC)$ be the canonical equivalence that is the shift on underlying objects.
Let $X' \in \Grp(\Alg_{\Lie(1)}(\mC))$ be the group object in $\Lie(1)$-algebras in $\mC$ corresponding to $X$ via the equivalence of Corollary \ref{qqqpp}.
%to $X$ such that$X' \simeq X[1].$
%Let $\Omega(\rho(X)) \in \Grp(\Alg_{\Lie(1)}(\mC))$ be the loops in $ \Alg_{\Lie(1)}(\mC)$ endowed with its canonical group structure.

The unit of Koszul duality \begin{equation}\label{ppluz}
X' \to \triv *^{\Lie(1)^\vee}(\triv \circ_{\Lie(1)} X') \end{equation}
is a morphism in $ \Grp(\Alg_{\Lie(1)}(\mC))$ whose corresponding morphism of Lie-algebras in $\mC$ is the unit $ X \to \Prim \mU(X)$ by construction of the adjunction $\mU: \Alg_{\Lie}(\mC) \rightleftarrows \Hopf(\mC):\Prim. $

%Since the equivalence $B: \Grp(\Alg_{\Lie(1)}(\mC)) \to  \Alg_{\Lie(1)}(\mC) $ is the shift on underlying objects, the unit $ X \to \Prim \mU(X)$ lies in $\mC$ over the unit of Koszul duality $\Omega(\rho(X)) \to \triv *^{\Lie(1)^\vee}(\triv \circ_{\Lie(1)} \Omega(\rho(X))).$
By Theorem \ref{map} the morphism (\ref{ppluz})
is the completion morphism $$ X' \to \lim_{\n \geq 1}\tau_\n(\Lie(1)) \circ_{\Lie(1) }X'.$$

\end{proof}

The initial $\Comm$-algebra $\rH(\bQ)$ in $\Mod_{\rH(\bQ)}$
(Lemma \ref{lek}) has an underling $\Comm^\nun$-algebra used for the following lemma:

\begin{definition}
	
A stable $\infty$-category $\mC$ is $\bQ$-linear
if for every $\X, \Y \in \mC$ the abelian group $\pi_0\mC(\X,\Y)$ is a $\bQ$-vector space.	
	
\end{definition}

\begin{remark}
	
A stable $\infty$-category $\mC$ is $\bQ$-linear if for any $\X, \Y \in \mC$, $\n > 0$ the map
$$ \n \bullet (-): \pi_0\mC(\X,\Y) \to \pi_0\mC(\X,\Y), f \mapsto X \xrightarrow{ } \X^{\oplus\n} \xrightarrow{f^{\oplus\n}} \Y^{\oplus \n} \to \Y$$ is a bijection.	
	
\end{remark}

The next lemma is well-known to the experts and a proof is sketched in \cite[Proposition 1.7.2.]{MR3701353}.
We offer an alternative proof for the reader's convenience, which we find more conceptual and transparent.

\begin{lemma}\label{trivo}
	
Let $\mC$ be a $\bQ$-linear stable symmetric monoidal $\infty$-category
compatible with small colimits and $\mO$ a non-unital $\infty$-operad in $\mC$
such that $\mO_1= \tu.$
The loops functor $\Omega: \Alg_{\mO}(\mC) \to \Alg_{\mO}(\mC)$
factors as the forgetful functor $\Alg_{\mO}(\mC) \to \mC$ followed by the shifted trivial $\mO$-algebra functor $ \mC \xrightarrow{\Omega}\mC \xrightarrow{\triv_\mO } \Alg_{\mO}(\mC).$	
	
\end{lemma}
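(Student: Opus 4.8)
The plan is to exploit the Koszul-duality description of the loops functor on Lie-type algebras provided by Proposition \ref{map}. Recall that by Remark \ref{monad} the forgetful functor $\Alg_\mO(\mC) \to \mC$ has a left adjoint $\mO \circ (-)$, and the key structural fact we will use is that $\Omega$ on $\Alg_\mO(\mC)$ is \emph{computed} by the right-hand side of Proposition \ref{map}: namely, for any $\mO$-algebra $\X$ the object $\Omega\X$ in $\Alg_\mO(\mC)$ receives a canonical map from $\X$ fitting into the commutative square of Proposition \ref{map}, and when $\mC$ is $\bQ$-linear all norm maps are equivalences (this is precisely the hypothesis that free and cofree $\Sigma_\n$-actions agree, which holds rationally by averaging), so the bottom horizontal map and the right vertical map are equivalences. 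The strategy is therefore: first identify $\triv *^{\mO^\dual}(\triv \circ_\mO \X)$ with the underlying object of a trivial $\mO$-algebra on $\Omega(\text{underlying }\X)$, and second promote this identification to the level of functors $\Alg_\mO(\mC) \to \Alg_\mO(\mC)$.

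First I would set up the comparison carefully. By Proposition \ref{dfghcfghkkk}(3) there is an adjunction $\triv \circ_\mO (-): \Alg_\mO(\mC) \rightleftarrows \coAlg_{\mO^\dual}(\mC): \triv *^{\mO^\dual}(-)$, and the right adjoint lifts the functor on underlying objects $\coAlg_{\mO^\dual}(\mC) \to \mC$ right adjoint to the trivial $\mO^\dual$-coalgebra functor. The composite endofunctor $\triv *^{\mO^\dual}(\triv \circ_\mO (-))$ of $\mC$ (ignoring algebra structures) is, by Remark \ref{contin} applied to the composition-product setting, the comonad-bar construction: it sends $\X$ to $\lim_{[\n]\in\Delta}(\mO^\dual)^{*\,\n} * \triv$ evaluated at $\triv\circ_\mO\X$, and one reads off from the $\infty$-operadic structure that on underlying objects this is $\Omega$ of the underlying object of $\X$ — indeed this is exactly the content of Lemma \ref{hfbbcll}/Corollary \ref{qqqp} translated through Koszul duality, where $\Barc$/$\Cobar$ realize $\Sigma$/$\Omega$ on the additive underlying category. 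So the underlying object of $\triv *^{\mO^\dual}(\triv\circ_\mO\X)$ is $\Omega$(underlying $\X$). Combined with Proposition \ref{map}, whose right vertical and bottom maps are equivalences in the $\bQ$-linear case, we get that the natural map $\X \to \triv*^{\mO^\dual}(\triv\circ_\mO\X) \simeq \lim_{\n\geq1}(\tau_\n(\mO)\circ_\mO\X)$ has, as its target, an $\mO$-algebra whose underlying object is $\Omega$(underlying $\X$).

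Next I would argue that this $\mO$-algebra structure is the \emph{trivial} one. The object $\triv *^{\mO^\dual}(\triv\circ_\mO\X)$ a priori lives only in $\coAlg_{\mO^\dual}(\mC)$, not in $\Alg_\mO(\mC)$; but $\Omega$ of an $\mO$-algebra is intrinsically an $\mO$-algebra, and I claim the canonical comparison identifies $\Omega\X \in \Alg_\mO(\mC)$ with this coalgebraic gadget. The cleanest way is to use that $\Omega = \mathrm{Cobar}$ on $\Alg_\mO(\mC) \simeq \Grp(\Alg_\mO(\mC))$ (Corollary \ref{qqqp}), i.e. $\Omega\X \simeq \lim_{[\n]\in\Delta}\X^{\times\n}$ as an $\mO$-algebra, and this limit is preserved by the forgetful functor to $\mC$ (Remark \ref{imp}(2)). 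Thus $\Omega\X$ as an $\mO$-algebra is the totalization in $\Alg_\mO(\mC)$ of the cosimplicial object $[\n]\mapsto\X^{\times\n}$. On the other hand, the trivial-algebra functor $\triv_\mO: \mC \to \Alg_\mO(\mC)$ is a right adjoint (dual to Proposition \ref{lemgf}), hence preserves limits, so $\triv_\mO(\Omega(\text{underlying }\X)) \simeq \triv_\mO(\lim_{[\n]\in\Delta}(\text{underlying }\X)^{\times\n}) \simeq \lim_{[\n]\in\Delta}\triv_\mO((\text{underlying }\X)^{\times\n})$. It therefore suffices to produce a natural equivalence of cosimplicial $\mO$-algebras $\X^{\times\n}\simeq\triv_\mO((\text{underlying }\X)^{\times\n})$ — but this does \emph{not} hold levelwise; rather one must exhibit the totalizations as equivalent using Proposition \ref{map} as the bridge, matching the $\tau_\n$-tower to the cosimplicial direction.

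The main obstacle, as indicated above, is exactly this last matching: showing that the natural transformation $\mathrm{id} \Rightarrow \triv *^{\mO^\dual}(\triv\circ_\mO(-))$ of endofunctors of $\Alg_\mO(\mC)$, whose target I have identified on underlying objects with $\Omega$, is compatible with the identification of $\Omega\X$ as $\mathrm{Cobar}(\X)$ \emph{as an $\mO$-algebra}, so that the $\mO$-algebra structure transported across is the trivial one. Concretely this means commuting the limit defining $\Prim$-type primitives with the limit defining $\mathrm{Cobar}$, and checking the two filtrations (the $\tau_\n(\mO)$-tower and the cosimplicial $\Delta$) induce the same object. I would handle this by working in a presentable enlargement $\mC \subset \mC''$ via Corollary \ref{cory} (reducing to the presentable case as is done throughout the excerpt), where all functors preserve the relevant (co)limits, then invoking that $\triv_\mO$ is comonadic-right-adjoint and that both towers compute the same Koszul-dual cobar object by Lemma \ref{dghcvbhhh}/Corollary \ref{dghcvbhhhh}. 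The functoriality (naturality in $\X$) is then formal, since every construction in sight is a composite of adjoints and the norm-equivalence hypothesis is used only to invoke Proposition \ref{map}, which is already natural.
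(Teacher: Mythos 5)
There is a genuine gap, and it occurs at the very first step of your argument: the claim that the composite $\triv *^{\mO^\dual}(\triv\circ_\mO(-))$ is $\Omega$ on underlying objects is false. This composite is the derived primitives of TQ-homology (the Koszul-duality completion), not the loops functor. You arrive at it by conflating two different bar constructions: the operadic bar construction $\triv\circ_\mO(-)$ appearing in Proposition \ref{map}, which computes indecomposables against the augmentation $\mO\to\triv$, and the bar construction of a group object with respect to the cocartesian monoidal structure on $\Alg_\mO(\mC)$, which is what Lemma \ref{hfbbcll} and Corollary \ref{qqqp} are about and what actually realizes $\Sigma/\Omega$. These do not coincide. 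A sanity check: for a \emph{trivial} $\mO$-algebra $\X$, Proposition \ref{map} together with Lemma \ref{calc} shows that the unit $\X\to\triv*^{\mO^\dual}(\triv\circ_\mO\X)$ is an equivalence, so $\Prim\circ\mathrm{TQ}$ is the identity on such $\X$ — not $\Omega$ of the underlying object. (Indeed, that $\Prim\circ\mathrm{TQ}\simeq\id$ on the relevant algebras is the content of the derived Milnor--Moore theorem, for which Lemma \ref{trivo} is an input.) Since the rest of your argument — matching the $\tau_\n$-tower against the cosimplicial $\mathrm{Cobar}$ direction — is built on this identification, and you yourself flag the "main obstacle" there without resolving it, the proposal does not go through.

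The paper's proof is entirely different and much more direct: by Corollary \ref{cop} the external tensor product $\boxtimes$ of operads gives a pairing $\Alg_{\Comm^\nun}(\Mod_{\rH(\bQ)})\times\Alg_\mO(\mC)\to\Alg_\mO(\mC)$ covering $\ot$, under which tensoring with $\rH(\bQ)[-1]=\Omega(\rH(\bQ))$ computes the loops functor on $\Alg_\mO(\mC)$. The characteristic-zero input is then Lemma \ref{lemon}: an explicit chain-level argument (acyclicity of $\Sym^\nun$ of an acyclic complex, and the vanishing of the square of a degree $-1$ class over $\bQ$) showing that $\rH(\bQ)[-1]$ is the \emph{trivial} non-unital $\bE_\infty$-algebra. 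Since $\boxtimes$ is compatible with the augmentations to $\triv$, tensoring with a trivial algebra lands in trivial $\mO$-algebras, which is exactly the statement. Note that your proposal never engages with this mechanism for producing the triviality of the algebra structure on $\Omega\X$; invoking the invertibility of norm maps to apply Proposition \ref{map} does not substitute for it.
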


To prove Lemma \ref{trivo} we use the following lemma:

\begin{lemma}\label{lemon}

The $\Comm^\nun$-algebra $\rH(\bQ)[-1] $ in $\Mod_{\rH(\bQ)}$ is trivial.		
\end{lemma}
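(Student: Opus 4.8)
The plan is to show, more generally, that \emph{any} non-unital commutative algebra structure on $\rH(\bQ)[-1]$ in $\mC := \Mod_{\rH(\bQ)}$ is equivalent to the trivial one; this is forced by the fact that the free non-unital commutative algebra monad on $\mC$ restricts to (essentially) the identity on objects concentrated in a single odd degree. First I would recall, via Lemma \ref{comon} and the computation of the free functor in the proofs of Propositions \ref{compara} and \ref{caaaaart}, that the forgetful functor $\Alg_{\Comm^\nun_\mC}(\mC)\to\mC$ is monadic with monad $T(\X)\simeq\bigoplus_{k\geq 1}(\X^{\ot k})_{\Sigma_k}$ (coproduct of the homotopy orbits of the $k$-fold tensor powers), the summand $k=1$ being the unit $\eta_\X\colon\X\to T(\X)$; and that the trivial algebra $\triv_{\Comm^\nun_\mC}(\X)$ is the $T$-algebra whose structure map $T(\X)\to\X$ is the identity on the $k=1$ summand and vanishes on all higher summands.

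Next I would carry out the key computation: $T(\rH(\bQ)[-1])\simeq\rH(\bQ)[-1]$. For $\X=\rH(\bQ)[-1]$ the tensor power $\X^{\ot k}$ is $\rH(\bQ)[-k]$, and since $\rH(\bQ)[-1]$ sits in odd degree the Koszul sign rule forces $\Sigma_k$ to act through the sign character. As $\mC$ is $\bQ$-linear, for an object concentrated in a single degree with an action of a finite group $G$ the homotopy orbits are computed by ordinary group homology with coefficients in a $\bQ$-vector space, which by semisimplicity of $\bQ[G]$ (Maschke) is concentrated in degree $0$ and equals the strict coinvariants. Hence $(\X^{\ot k})_{\Sigma_k}$ is the strict $\Sigma_k$-coinvariants of the one-dimensional sign representation of $\bQ$, placed in degree $-k$; this vanishes for $k\geq 2$ because a transposition $\sigma$ satisfies $\sigma v-v=-2v$. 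Therefore $(\X^{\ot k})_{\Sigma_k}\simeq 0$ for $k\geq 2$, so $\eta_{\rH(\bQ)[-1]}\colon\rH(\bQ)[-1]\to T(\rH(\bQ)[-1])$ is an equivalence.

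It remains to conclude. Let $\A$ be any non-unital commutative algebra in $\mC$ with underlying object $\rH(\bQ)[-1]$. Its structure map $T(\rH(\bQ)[-1])\to\rH(\bQ)[-1]$ is a retraction of $\eta_{\rH(\bQ)[-1]}$, hence an equivalence by the previous step; since the forgetful functor is monadic, in particular conservative, this exhibits the counit $T(\rH(\bQ)[-1])\to\A$ as an equivalence of $\Comm^\nun_\mC$-algebras, so $\A$ is the free algebra on $\rH(\bQ)[-1]$. The same reasoning applies to $\triv_{\Comm^\nun_\mC}(\rH(\bQ)[-1])$, whence $\A\simeq\triv_{\Comm^\nun_\mC}(\rH(\bQ)[-1])$, as claimed.

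The only step that is not purely formal is the computation of $T(\rH(\bQ)[-1])$, and there the only thing to be careful about is the interplay of the Koszul sign (giving the sign, not the trivial, $\Sigma_k$-action) with the rationality hypothesis (killing the higher group homology and the sign-coinvariants); all the rest is monad formalism.
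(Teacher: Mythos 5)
Your proof is correct, but it takes a genuinely different route from the paper's. The paper identifies the specific $\Comm^\nun$-algebra in question (the loops on $\rH(\bQ)$ in non-unital $\bE_\infty$-algebras) by an explicit point-set computation in rational chain complexes: it forms a homotopy pullback of a fibration $\Sym^\nun(\D)\to\bQ$ for an acyclic two-term complex $\D$, checks by hand that the induced map from $\bQ[-1]$ with the trivial structure is a quasi-isomorphism of non-unital $\bE_\infty$-algebras (using that a degree $-1$ element squares to zero), and then verifies that the rectification functor $\Alg_{\Comm^\nun}(\Ch_\bQ)\to\Alg_{\Comm^\nun}(\Mod_{\rH(\bQ)})$ preserves the relevant pullback. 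You instead prove the stronger, structure-independent statement that \emph{every} $\Comm^\nun$-algebra structure on $\rH(\bQ)[-1]$ is trivial, by computing that the free non-unital commutative algebra monad $T(\X)\simeq\bigoplus_{k\geq1}(\X^{\ot k})_{\Sigma_k}$ collapses to its $k=1$ summand on an odd shift of the unit in characteristic zero (the Koszul sign makes $\Sigma_k$ act by the sign character, and the rational sign-coinvariants vanish for $k\geq2$), so that every algebra structure map is inverse to the unit and every such algebra is free, hence trivial; the induced equivalence moreover covers the identity of the underlying object, as your retraction argument shows. Your approach buys generality and avoids both the explicit dg model and the rectification step, at the cost of invoking the identification of the operadic monad (which the paper does supply via Remark \ref{monad} and the proof of Lemma \ref{lek}); the paper's approach is more hands-on and pins down the particular loop-space algebra directly. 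Both are complete proofs of the lemma as used in Lemma \ref{trivo}.
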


\begin{proof}

Let $\D$ be the rational chain complex concentrated in degree $0$ and $-1$ whose
non-trivial differential is the identity of $\bQ.$
There is map $\kappa: \D \to \bQ$ of rational chain complexes that is in degree zero the identity.
Consider the pullback square of non-unital $\bE_\infty$-algebras in rational chain complexes 
\begin{equation}\label{uia}
\begin{xy}
\xymatrix{
\P  \ar[d] 
\ar[rr]
&& \Sym^\nun(\D) \ar[d]^\phi
\\
0 \ar[rr]  && \bQ,
}
\end{xy} 
\end{equation}
where $\bQ$ carries the non-trivial commutative $\bQ$-algebra structure
underlying the unique commutative $\bQ$-algebra structure,
and the right vertical map is induced by $\kappa$ via the free non-unital $\bE_\infty$-algebra $\Sym^\nun.$
The map $\phi$ is degree-wise surjective since $\kappa$ is degree-wise surjective. Therefore $\phi$ is a fibration of non-unital $\bE_\infty$-algebras in rational chain complexes so that square (\ref{uia}) is a homotopy pullback square.
Since $\D$ is acyclic and $\Sym^\nun$ preserves quasi-isomorphisms, $\Sym^\nun(\D)$ is acyclic.
Hence $\P $ is the loops of $\bQ$ in non-unital $\bE_\infty$-algebras.

The unit $ \D \to \Sym^\nun(\D)$ is an quasi-isomorphism since source and target are acyclic.
Therefore the induced map $\lambda: \bQ[-1] = \fib(\kappa) \to \P=\fib(\phi)$ on kernels is a quasi-isomorphism.
Since $\P$ is a $\bE_\infty$-algebra in rational chain complexes,
the square of an element of $\P$ of degree -1 is zero.
Thus the image of the generator under $\lambda$ squares to zero,
so that $\lambda$ is a map of non-unital $\bE_\infty$-algebras.

Thereore $\bQ[-1]$ endowed with the trivial algebra structure is loops of $\bQ$ in non-unital $\bE_\infty$-algebras.
The canonical symmetric monoidal functor
$$\gamma: \Alg_{\Comm^\nun}(\Ch_{\bQ}) \to \Alg_{\Comm^\nun}(\Mod_{\rH(\bQ)})$$
sends square (\ref{uia}) to a pullback square since the forgetful functor $\Alg_{\Comm^\nun}(\Mod_{\rH(\bQ)}) \to \Mod_{\rH(\bQ)}$ detects loops
and the functor $\Ch_{\bQ} \to \Mod_{\rH(\bQ)}$ sends homotopy pullback squares to pullback squares.
Hence $\gamma$ preserves trivial algebras, $\rH(\bQ)[-1]$ carries the trivial $\Comm^\nun$-algebra structure.	

\end{proof}

\begin{notation}
	
Let $\mC$ be a symmetric monoidal $\infty$-category compatible with small colimits and $\mO, \mO'$ symmetric sequences in $\mC.$
We write $\mO \boxtimes \mO'$ for the symmetric sequence 
$$\Sigma \xrightarrow{(\mO, \mO')} \mC \times \mC \xrightarrow{\ot}\mC.$$
\end{notation}

The next lemma is \cite[Proposition 3.9.]{https://doi.org/10.48550/arxiv.2104.03870}:

\begin{lemma}

Let $\mC$ be a symmetric monoidal $\infty$-category
compatible with small colimits.
The functor $$\boxtimes: \sSeq(\mC) \times \sSeq(\mC) \to \sSeq(\mC)$$ is lax monoidal with respect to composition product.	
	
\end{lemma}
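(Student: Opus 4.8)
The plan is to build the lax monoidal structure on $\boxtimes$ from just two inputs: the symmetry of the tensor product of $\mC$, and the canonical comparison map $\colim_{\mJ}(F\ot G)\to(\colim_{\mJ}F)\ot(\colim_{\mJ}G)$, which exists for every small $\infty$-category $\mJ$ and every pair of functors $F,G\colon\mJ\to\mC$ precisely because $\ot$ preserves small colimits in each variable. Since $\mC$ is compatible with small colimits, the composition product makes $\sSeq(\mC)$ into a genuine monoidal $\infty$-category (Proposition \ref{weak}), not merely a lax one, and $\boxtimes$ — being computed degreewise by $\ot$ — preserves small colimits in each variable; so no complications arise from the lax monoidal refinement of $\sSeq(\mC)$. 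For the unit constraint one notes that $\triv_\n$ is initial for $\n\neq1$, so $\triv\boxtimes\triv\simeq\triv$ because $\ot$ preserves the initial object, and the required map $\triv\to\triv\boxtimes\triv$ is the resulting equivalence.

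Next I would construct the interchange maps. Fix $\mO_1,\mO_1',\mO_2,\mO_2'\in\sSeq(\mC)$. By Proposition \ref{weak}(2) (see also Remark \ref{dghfgh}), for each $\n\in\Sigma$ the object $(\mO_1\circ\mO_2)_\n$ is the colimit over the groupoid $\Fin^2_\n$ of the functor sending $(\f\colon\n\to\bk)$ to $(\mO_1)_{\bk}\ot\bigotimes_{\bj\in\bk}(\mO_2)_{\f^{-1}(\bj)}$, and similarly for $(\mO_1'\circ\mO_2')_\n$. Because $\ot$ preserves colimits in each variable, $(\mO_1\circ\mO_2)_\n\ot(\mO_1'\circ\mO_2')_\n$ is the colimit of the evident functor on $\Fin^2_\n\times\Fin^2_\n$; restricting along the diagonal $\Fin^2_\n\to\Fin^2_\n\times\Fin^2_\n$ and using the symmetry of $\ot$ to interleave the tensor factors (functorially in $\f$), the restricted colimit is identified with $\bigl((\mO_1\boxtimes\mO_1')\circ(\mO_2\boxtimes\mO_2')\bigr)_\n$, using that $(\mO_\bi\boxtimes\mO_\bi')_{\m}=(\mO_\bi)_{\m}\ot(\mO_\bi')_{\m}$. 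The canonical map from a colimit over the diagonal to the colimit over the product then yields a natural transformation
\[
(\mO_1\boxtimes\mO_1')\circ(\mO_2\boxtimes\mO_2')\longrightarrow(\mO_1\circ\mO_2)\boxtimes(\mO_1'\circ\mO_2'),
\]
the sought interchange map; the same recipe with $\Fin^\ell_\n$ in place of $\Fin^2_\n$ produces the $\ell$-fold versions.

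The main obstacle is coherence: one must verify that these maps are compatible with the associativity constraints of the composition product, i.e. that they assemble into an actual morphism of $\infty$-operads over $\Ass$ rather than mere data in the homotopy category. I would handle this structurally rather than by a pentagon chase. The monoidal structure $\circ$ is, by construction, assembled from the colimit formulas over the groupoids $\Fin^\ell_\n$ (Proposition \ref{weak}(2)) together with the maps between them induced by composing finite-set maps, which organize into the coherence data of $\circ$; the diagonals $\Fin^\ell_\n\to\Fin^\ell_\n\times\Fin^\ell_\n$ are compatible with all of these, so the interchange maps above are the components of a single coherent family, i.e. of the lax monoidal structure. Equivalently, transporting along the equivalence $\Psi\colon\Fun^{\ot,\L}_{\mC/}(\sSeq(\mC),\sSeq(\mC))\simeq\sSeq(\mC)$ of the excerpt identifies $\circ$ with the reverse of composition of endofunctors, and the interchange map is then the value at $\triv$ of a natural comparison of symmetric monoidal endofunctors; this packaging also shows at once that $\boxtimes$ is in fact lax \emph{symmetric} monoidal for the composition product.
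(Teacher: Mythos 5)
The paper does not actually prove this lemma: it quotes it verbatim as \cite[Proposition 3.9.]{https://doi.org/10.48550/arxiv.2104.03870}, so there is no in-paper argument to compare yours against. Judged on its own terms, your construction of the structure maps is correct and is essentially the one the cited reference uses. The unit constraint is right: $\triv_\n$ is initial for $\n\neq 1$ and $\ot$ preserves the initial object in each variable, so $\triv\boxtimes\triv\simeq\triv$. The binary interchange map is also right, in direction and in substance: for fixed $\f\colon\n\to\bk$ the symmetry of $\ot$ identifies $(\mO_1\boxtimes\mO_1')_\bk\ot\bigotimes_{\bj}(\mO_2\boxtimes\mO_2')_{\f^{-1}(\bj)}$ with the $(\f,\f)$-term of the colimit computing $(\mO_1\circ\mO_2)_\n\ot(\mO_1'\circ\mO_2')_\n$, and the diagonal-to-product colimit comparison then produces $(\mO_1\boxtimes\mO_1')\circ(\mO_2\boxtimes\mO_2')\to(\mO_1\circ\mO_2)\boxtimes(\mO_1'\circ\mO_2')$, which is the correct direction for lax monoidality.

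Where your write-up falls short of a proof is, as you yourself flag, coherence. The assertion that the diagonals $\Fin^\ell_\n\to\Fin^\ell_\n\times\Fin^\ell_\n$ are ``compatible with all of these'' is morally correct but is exactly the point where an $\infty$-categorical argument needs an actual construction: one must exhibit the colimit formulas of Proposition \ref{weak} (2), functorially in $\ell$ and in the structure maps relating different $\ell$, as the unstraightening of the monoidal structure, and then check that the diagonals assemble into a map of cocartesian fibrations over $\Ass$; a component-wise family of maps in the homotopy category is not enough. Your second strategy, transporting through $\Psi$ and realizing the interchange as a natural transformation between composites of colimit-preserving symmetric monoidal endofunctors, is the more robust route and closer to how this is done in the literature. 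Finally, the closing claim that $\boxtimes$ is lax \emph{symmetric} monoidal ``for the composition product'' does not parse: the composition product is a monoidal, not symmetric monoidal, structure on $\sSeq(\mC)$, so there is no symmetry on the target for $\boxtimes$ to be lax symmetric monoidal with respect to. What is true, and presumably what you mean, is that the interchange maps are compatible with swapping the two $\boxtimes$-arguments.
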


\begin{corollary}\label{cop}
	
Let $\mC$ be a symmetric monoidal $\infty$-category
compatible with small colimits and let $\mO, \mO'$ be $\infty$-operads in $\mC.$
There is a canonical functor $$\Alg_{\mO}(\mC) \times \Alg_{\mO'}(\mC) \to \Alg_{\mO \boxtimes \mO'}(\mC)$$ covering the functor $\ot: \mC \times \mC \to \mC$ and natural in $\mO, \mO'.$

\end{corollary}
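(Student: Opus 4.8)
The plan is to deduce the Corollary from the preceding Lemma by functoriality of the $\LMod$-construction \cite{lurie.higheralgebra}. By the Lemma, $\boxtimes\colon\sSeq(\mC)^{\times 2}\to\sSeq(\mC)$ is lax monoidal with respect to the composition product. In particular $\Alg(\boxtimes)$ sends a pair $(\mO,\mO')$ of $\infty$-operads in $\mC$ — an associative algebra in $\sSeq(\mC)^{\times 2}$ — to the $\infty$-operad $\mO\boxtimes\mO'$ in $\mC$, so the target $\Alg_{\mO\boxtimes\mO'}(\mC)$ is defined. More generally, a lax monoidal functor $F\colon\mA\to\mB$, regarded via the left actions of $\mA$ and $\mB$ on themselves, is a lax $\LM$-monoidal functor and hence induces a functor $\LMod(\mA)\to\LMod(\mB)$ over $\Alg(F)\colon\Alg(\mA)\to\Alg(\mB)$; on the fibre over an algebra $A$ it sends a left $A$-module $M$ to the left $F(A)$-module $F(M)$, and it depends functorially on $(A,M)$. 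Applying this to $F=\boxtimes$ produces a functor $\LMod(\sSeq(\mC)^{\times 2})\to\LMod(\sSeq(\mC))$ whose fibre over $(\mO,\mO')$ is the functor
\begin{equation*}
\LMod_\mO(\sSeq(\mC))\times\LMod_{\mO'}(\sSeq(\mC))\longrightarrow\LMod_{\mO\boxtimes\mO'}(\sSeq(\mC)),\qquad(M,N)\longmapsto M\boxtimes N,
\end{equation*}
lying over $\boxtimes$.

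Next I would cut this down to $\mC$ along the embedding $\iota\colon\mC\hookrightarrow\sSeq(\mC)$. By Construction \ref{leftact} the full subcategory $\iota(\mC)\subseteq\sSeq(\mC)$ is stable under the composition-product action of $\sSeq(\mC)$, so it is a full left-$\sSeq(\mC)$-submodule and $\Alg_\mO(\mC)=\LMod_\mO(\mC)$ is identified with the full subcategory of $\LMod_\mO(\sSeq(\mC))$ spanned by the modules whose underlying symmetric sequence lies in $\iota(\mC)$. Since $\ot$ is compatible with small colimits it preserves the initial object in each variable, so $\iota(X)\boxtimes\iota(Y)\simeq\iota(X\ot Y)$; thus $\boxtimes$ carries $\iota(\mC)^{\times 2}$ into $\iota(\mC)$ and restricts there to $\iota\circ\ot$. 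Consequently the functor $(M,N)\mapsto M\boxtimes N$ restricts to
\begin{equation*}
\Alg_\mO(\mC)\times\Alg_{\mO'}(\mC)=\LMod_\mO(\iota(\mC))\times\LMod_{\mO'}(\iota(\mC))\longrightarrow\LMod_{\mO\boxtimes\mO'}(\iota(\mC))=\Alg_{\mO\boxtimes\mO'}(\mC),
\end{equation*}
and by construction it lies over $\ot\colon\mC\times\mC\to\mC$. Naturality in $\mO$ and $\mO'$ is inherited from the naturality of the above in the algebra variable, along $\Op(\mC)\times\Op(\mC)=\Alg(\sSeq(\mC)^{\times 2})$.

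The mathematical content is entirely the preceding Lemma; what remains is bookkeeping with $\LM$-operads. Accordingly I expect the only real work to be formal: assembling ``a lax monoidal functor sends modules to modules'' into an honest functor of $\infty$-categories varying correctly over $\Alg$, and checking that this functor descends along the full sub-left-modules $\iota(\mC)\subseteq\sSeq(\mC)$ and $\iota(\mC)^{\times 2}\subseteq\sSeq(\mC)^{\times 2}$ — which it does precisely because both are preserved by the action and by $\boxtimes$.
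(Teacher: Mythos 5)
Your argument is correct and is exactly the (implicit) argument the paper intends: the paper states this as an immediate corollary of the lax monoidality of $\boxtimes$ with no further proof, and your spelling out of the $\LMod$-functoriality for lax monoidal functors, together with the observation that $\iota(\mC)^{\times 2}$ is carried into $\iota(\mC)$ because $\iota(X)\boxtimes\iota(Y)\simeq\iota(X\ot Y)$, is the standard way to fill in that gap.
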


%For the following proposition we need the notion of $\bQ$-linear additive$\infty$-category:
%\begin{example}The additive $\infty$-category $\Mod_{\rH(\bQ)}^{\geq0}$of connective $\rH(\bQ)$-module spectra is $\bQ$-linear.By Lemma \ref{Prese} every $\bQ$-linear additive presentably symmetric monoidal$\infty$-category is canonically left tensored over$\Mod_{\rH(\bQ)}^{\geq0}.$\end{example}

%A stable $\infty$-category is $\bQ$-linear if it is $\bQ$-linear as an additive$\infty$-category.

\begin{proof}[Proof of Lemma \ref{trivo}]

By Corollary \ref{cop} there is a canonical functor $$\theta: \Alg_{\Comm^\nun}(\Mod_{\rH(\bQ)}) \times \Alg_{\mO}(\mC) \to \Alg_{\Comm^\nun \boxtimes \mO}(\mC) \simeq \Alg_{\mO}(\mC)$$
covering the functor $\ot: \mC \times \mC \to \mC.$
The induced functor 
\begin{equation*}\label{eqxk}
\{\rH(\bQ)[-1] \} \times \Alg_{\mO}(\mC) \to \Alg_{\Comm^\nun}(\Mod_{\rH(\bQ)}) \times \Alg_{\mO}(\mC) \xrightarrow{\theta} \Alg_{\mO}(\mC)\end{equation*}
identifies with the functor 
$$ \{\rH(\bQ) \} \times \Alg_{\mO}(\mC) \to \Alg_{\Comm^\nun}(\Mod_{\rH(\bQ)}) \times \Alg_{\mO}(\mC) \xrightarrow{\theta} \Alg_{\mO}(\mC) \xrightarrow{ \Omega }\Alg_{\mO}(\mC),$$
which is the loops functor $\Omega: \Alg_{\mO}(\mC) \to \Alg_{\mO}(\mC)$.

By Lemma \ref{lemon} the $\Comm^\nun$-algebra $\rH(\bQ)[-1] $ is trivial.
Hence the claim follows from the existence of the following commutative square
provided by Corollary \ref{cop}:
$$
\begin{xy}
\xymatrix{
\Alg_{\triv}(\Mod_{\rH(\bQ)}) \times \Alg_{\mO}(\mC)  \ar[d] 
\ar[rr]
&& \Alg_{\triv \boxtimes \mO}(\mC) \simeq \Alg_{\triv}(\mC) \ar[d]
\\
\Alg_{\Comm^\nun}(\Mod_{\rH(\bQ)}) \times \Alg_{\mO}(\mC) \ar[rr]  && \Alg_{\Comm^\nun \boxtimes \mO}(\mC) \simeq  \Alg_{\mO}(\mC).
}
\end{xy} 
$$ 
\end{proof}

We obtain the following corollary, which is \cite[Proposition 1.7.2.]{MR3701353}:
%explained to the authors of \cite{MR3701353} by Kontsevich:

\begin{corollary}\label{trivosa}
Let $\mC$ be a $\bQ$-linear stable symmetric monoidal $\infty$-category
compatible with small colimits. Every group object in $\Alg_{\Lie(1)}(\mC)$ is trivial as $\Lie(1)$-algebra.
	
\end{corollary}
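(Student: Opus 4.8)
The plan is to combine Lemma~\ref{trivo} with the fact that every group object in $\Alg_{\Lie(1)}(\mC)$ is a loop object. Write $\mO:=\Lie(1)$ (that is, $\Lie_\mC(1)$); this is a non-unital $\infty$-operad in $\mC$ with $\mO_1\simeq\tu$, by the computation of the spectral Lie operad together with Lemma~\ref{fghjvbnml}. In particular Lemma~\ref{trivo} applies to $\mO$ and the $\bQ$-linear stable symmetric monoidal $\infty$-category $\mC$, so the loops functor on $\Alg_\mO(\mC)$ factors as $\triv_\mO\circ\Omega\circ U$, where $U\colon\Alg_\mO(\mC)\to\mC$ is the forgetful functor. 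Hence it suffices to exhibit, for an arbitrary group object $G$ of $\Alg_\mO(\mC)$, an equivalence of $\mO$-algebras $G\simeq\Omega(\X)$ with $\X\in\Alg_\mO(\mC)$.

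First I would record what structure $\Alg_\mO(\mC)$ carries: by Remark~\ref{imp} it admits geometric realizations and all limits that $\mC$ admits --- in particular, $\mC$ being stable, finite limits --- and $U$ preserves both; by Lemma~\ref{comon}, $U$ is conservative; and by Lemma~\ref{lek}, since $\mO$ is non-unital, $\Alg_\mO(\mC)$ has a zero object over $0\in\mC$, and in particular a terminal object. Now form the bar simplicial object $\B_\bullet G\colon\Delta^\op\to\Alg_\mO(\mC)$ of the group object $G$, with $\B_n G\simeq G^{\times n}$, and put $\X:=\B G:=\colim_{\Delta^\op}\B_\bullet G$, pointed by the terminal object $\B_0 G$. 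Comparing $\B_\bullet G$ with the \v{C}ech nerve of $\B_0 G\to\B G$ (legitimate because $\Alg_\mO(\mC)$ has finite limits) produces a canonical map $c\colon G\to\Omega(\B G)$.

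Next I would show $c$ is an equivalence by applying the conservative functor $U$. Since $U$ preserves finite products, $U\circ\B_\bullet G$ is the bar construction of the group object $UG$ of the stable $\infty$-category $\mC$; since $U$ preserves geometric realizations, $U(\B G)\simeq\colim_{\Delta^\op}\B_\bullet(UG)\simeq\Sigma(UG)$; and since $U$ preserves finite limits, $U(\Omega(\B G))\simeq\Omega\Sigma(UG)$, with $U(c)$ the canonical map $UG\to\Omega\Sigma(UG)$, which is an equivalence because $\mC$ is stable. As $U$ is conservative, $c$ is an equivalence, so the underlying $\mO$-algebra of $G$ is $\Omega(\X)$ with $\X=\B G\in\Alg_\mO(\mC)$. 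By Lemma~\ref{trivo} this equals $\triv_\mO(\Omega(U(\B G)))$, hence is a trivial $\mO$-algebra; this proves the corollary. (If $\mC$ additionally admits totalizations one could instead invoke Corollary~\ref{qqqp} for the delooping $G\simeq\Omega(\B G)$, but the argument above avoids that hypothesis.)

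The main obstacle is exactly the middle step, the equivalence $G\simeq\Omega(\B G)$: in $\Alg_\mO(\mC)$ --- which is neither an $\infty$-topos nor known to be complete --- effectivity of groupoid objects is not automatic, so I would not cite a general delooping theorem but instead transport the question along the conservative, limit- and realization-preserving functor $U$ to the stable $\infty$-category $\mC$, where it reduces to the elementary fact that $\Omega\Sigma\simeq\id$. The remaining steps are bookkeeping with Remark~\ref{imp}, Lemma~\ref{comon}, Lemma~\ref{lek} and Lemma~\ref{trivo}.
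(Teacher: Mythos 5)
Your proof is correct, and its skeleton coincides with the paper's: identify every group object in $\Alg_{\Lie(1)}(\mC)$ as a loop object, then apply Lemma \ref{trivo} to conclude that loop objects carry the trivial $\Lie(1)$-algebra structure. The difference lies in how the delooping is produced. The paper simply invokes Corollary \ref{qqqp}, i.e.\ the full Bar--Cobar equivalence $\Grp(\Alg_\mO(\mC)) \simeq \Alg_\mO(\mC)$, whose proof (via Lemma \ref{hfbbcll}) requires $\Alg_\mO(\mC)$ to admit totalizations --- a hypothesis not literally present in the statement of the corollary, which only assumes compatibility with small colimits. You instead build the delooping by hand: form $\B G = \colim_{\Delta^\op}\B_\bullet G$ (available by Remark \ref{imp}), produce the \v{C}ech comparison $c\colon G \to \Omega(\B G)$ using the finite limits of $\Alg_\mO(\mC)$, and verify that $c$ is an equivalence by pushing everything along the conservative, product- and realization-preserving forgetful functor $U$, where it becomes the unit $UG \to \Omega\Sigma(UG)$ of a stable $\infty$-category. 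This buys you two things: you only need the ``surjectivity'' half of Corollary \ref{qqqp} (every group object is a loop object), and you avoid the totalization hypothesis altogether, so your argument matches the stated hypotheses of the corollary more tightly than the paper's own citation does. The remaining bookkeeping (conservativity from Lemma \ref{comon}, preservation of sifted colimits and limits from Remark \ref{imp}, the zero object from Lemma \ref{lek}, and $U(\B G)\simeq\Sigma(UG)$, which is exactly the computation appearing inside the paper's proof of Lemma \ref{hfbbcll}) is all in order.
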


\begin{proof}
	
By Corollary \ref{qqqp} the functor $ \Cobar: \Alg_{\Lie(1)}(\mC) \to \Grp(\Alg_{\Lie(1)}(\mC)) $ is an equivalence.
Since the functor $\Cobar$ lifts the loops functor
$\Omega: \Alg_{\Lie(1)}(\mC) \to \Alg_{\Lie(1)}(\mC) $ along the forgetful functor,
the claim follows from Corollary \ref{trivo}.

\end{proof}

We are grateful to Gijs Heuts for communicating to us the following lemma, which also appears
as \cite[Lemma 13.8., 13.10.]{Heuts}.

\begin{lemma}\label{calc}

Let $\mC$ be a $\mathbb{Q}$-linear stable presentably symmetric monoidal $\infty$-category and $\mQ$ a non-counital $\infty$-cooperad in $\bQ$-vector spaces  such that $\mQ_1 = \tu$ and $\mQ^\vee(-1)$ is an $\infty$-operad in $\bQ$-vector spaces. Set $\mO:=\mQ^\vee.$ For every trivial $\mO$-algebra $\Y$ the morphism $$\alpha: \Y \to \lim_{\n \geq 1}\tau_\n(\mO) \circ_\mO \Y$$ is an equivalence.

\end{lemma}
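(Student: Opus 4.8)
The plan is to analyze the tower $\{\tau_\n(\mO) \circ_\mO \Y\}_{\n \geq 1}$ term-by-term, using the fact that $\Y$ is a trivial $\mO$-algebra to get an explicit description of each layer, and then to show the limit collapses to $\Y$. First I would record the fiber sequences coming from the tower of operadic truncations: for each $\n$, the fiber of $\tau_{\n+1}(\mO) \to \tau_\n(\mO)$ is, by Lemma \ref{dfghfghjfghj}, the symmetric sequence concentrated in degree $\n+1$ with value $\mO_{\n+1}$ equipped with the trivial right $\mO$-module structure. Applying the exact functor $(-)\circ_\mO \Y \colon \RMod_\mO(\sSeq(\mC)) \to \sSeq(\mC)$ (exactness as in the proof of Lemma \ref{gggg}), and evaluating at $\Y$ trivial, the fiber of $\tau_{\n+1}(\mO)\circ_\mO \Y \to \tau_\n(\mO)\circ_\mO \Y$ identifies with $(\mO_{\n+1} \ot \Y^{\ot(\n+1)})_{\Sigma_{\n+1}}$, i.e. the $(\n+1)$-st layer of the free $\mO$-algebra on $\Y$. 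So the tower has the associated graded of $\triv \circ_\mO \Y \oplus \bigoplus_{\bk \geq 2}(\mO_\bk \ot \Y^{\ot\bk})_{\Sigma_\bk}$ — in other words the layers of $\mO \circ \Y$, not of $\Y$ itself. The point of the proof is that when $\mC$ is $\bQ$-linear the \emph{limit} of the tower nevertheless recovers only the bottom layer $\tau_1(\mO)\circ_\mO \Y = \Y$, because the higher layers assemble into something contractible in the inverse limit.

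Next I would pass to the universal case: take $\mO = \mQ^\dual$ for $\mQ$ a non-counital $\infty$-cooperad in $\bQ$-vector spaces with $\mQ_1 = \tu$. By Lemma \ref{djfghjdfh} and Proposition \ref{eqqqqq}, $\tau_\n(\mO) \circ^{\mO} \triv \simeq \triv \circ_{\f_\n(\mQ^\dual)} \mQ^\dual$, and dually $\tau_\n(\mO)\circ_\mO \triv$ appears in Corollary \ref{dghcvbhhhh}: for the trivial $\mO$-algebra $\Y$ (which is $\triv_\mO$ of an underlying object, and $\triv \circ_\mO \Y \simeq \Y \oplus (\text{Lie homology of }\Y)$ — concretely, for $\Y$ trivial, $\triv \circ_\mO \Y$ is the Chevalley–Eilenberg type complex), Corollary \ref{dghcvbhhhh} gives $\triv *^{\mO^\dual} \C \simeq \lim_{\n}((\tau_\n(\mO)\circ_\mO \triv) *^{\mO^\dual} \C)$ for any $\C \in \coAlg^{\mathrm{dp}}_{\mO^\dual}(\mC)$. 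I would apply this with $\C = \triv \circ_\mO \Y$; combined with the $\bQ$-linearity input (so that all norm maps are invertible, hence by Lemma \ref{gggg} the morphism $\rho \colon \tau_\n(\mO)\circ_\mO \Y \to (\tau_\n(\mO)\circ_\mO \triv) *^{\mO^\dual}(\triv\circ_\mO \Y)$ is an equivalence for every $\n$), this identifies $\lim_\n \tau_\n(\mO)\circ_\mO \Y$ with $\triv *^{\mO^\dual}(\triv \circ_\mO \Y)$. So the statement reduces to showing that the composite $\Y \to \lim_\n \tau_\n(\mO)\circ_\mO \Y \simeq \triv *^{\mO^\dual}(\triv \circ_\mO \Y)$ is an equivalence.

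Finally I would identify this composite with the unit of the Koszul-duality adjunction $\triv \circ_\mO (-) : \Alg_\mO(\mC) \rightleftarrows \coAlg_\mO^{\mathrm{dp},\conil}(\mC) : \triv \circ^{\mO}(-)$ evaluated at the \emph{trivial} algebra $\Y$. By Proposition \ref{eqqqqq}(2) and the argument in its proof, the right adjoint sends cofree $\mO^\dual$-comodules to trivial $\mO$-modules and the left adjoint sends trivial $\mO$-algebras to cofree $\coComm$-coalgebras; concretely, for $\Y$ trivial, $\triv \circ_\mO \Y \simeq \mO^\dual \circ \Y$ (the cofree conilpotent divided-power $\mO^\dual$-coalgebra on $\Y$), and then $\triv *^{\mO^\dual}(\mO^\dual \circ \Y)$ is computed by the cobar/comonad resolution, whose totalization over $\bQ$-vector spaces — using that over a field of characteristic zero invariants and coinvariants agree, so the divided-power structure is free — gives back $\Y$. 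The main obstacle will be this last step: making precise that on $\bQ$-vector spaces the cofree conilpotent divided-power $\mO^\dual$-coalgebra on $\Y$, when fed into $\triv *^{\mO^\dual}(-)$, totalizes to $\Y$ rather than to something larger — equivalently, that the counit of the comonad $\triv *^{\mO^\dual}(\mO^\dual \circ (-))$ is split on trivial inputs. One clean way around it is to invoke the $\bQ$-linear identification of $\coAlg^{\mathrm{dp},\conil}_{\mO^\dual}(\mC)$ with plain $\coAlg^{\conil}_{\mO^\dual}(\mC)$ (divided powers are free in characteristic zero) and then apply Proposition \ref{eqqqqq}(2): the unit of the Koszul-dual equivalence at the trivial module is, by construction there, an equivalence, and via Lemma \ref{djfghjdfh} this unit is exactly $\alpha$. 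I would write the proof by reducing first to $\mC$ the $\infty$-category of rational chain complexes (or $\Mod_{\rH(\bQ)}$) using the embedding arguments of Corollary \ref{cory} exactly as in the proof of Lemma \ref{dghcvbhhh}, so that the final computation can be done in the presentable $\bQ$-linear setting where Proposition \ref{eqqqqq} and Corollary \ref{dghcvbhhhh} apply cleanly.
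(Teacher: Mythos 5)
Your setup is fine as far as it goes (the tower, the layers, the identification of $\lim_\n \tau_\n(\mO)\circ_\mO\Y$ with $\triv *^{\mO^\dual}(\triv\circ_\mO\Y)$ via Lemma \ref{gggg} and Corollary \ref{dghcvbhhhh}), but the final step — which you yourself flag as ``the main obstacle'' — has a genuine gap, and it is precisely where all the content of the lemma lives. You reduce the statement to the claim that the unit $\Y \to \triv *^{\mO^\dual}(\triv\circ_\mO\Y)$ of the algebra-level Koszul duality is an equivalence on trivial algebras, and then propose to settle this by appeal to Proposition \ref{eqqqqq}(2). That proposition, however, concerns Koszul duality for right modules in symmetric sequences, where the relevant functors are exact in the module variable; it says nothing about the algebra-level adjunction, whose unit is certainly not an equivalence in general. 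Given Proposition \ref{map}, ``the unit is an equivalence on trivial algebras'' is literally equivalent to the lemma you are trying to prove, so the reduction is circular unless supplemented by a real argument.

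The proposed repair — that over $\bQ$ divided powers are free, so $\triv\circ_\mO\Y \simeq \mO^\dual\circ\X$ may be treated as the cofree plain $\mO^\dual$-coalgebra and hence $\triv *^{\mO^\dual}$ of it is $\X$ — also fails. Invertibility of norm maps identifies $(\mO^\dual_\bk\ot\X^{\ot\bk})_{\Sigma_\bk}$ with $(\mO^\dual_\bk\ot\X^{\ot\bk})^{\Sigma_\bk}$ arity by arity, but the cofree conilpotent (divided-power) coalgebra is a \emph{coproduct} over $\bk$ while the cofree plain coalgebra for the comonad $\mO^\dual * (-)$ is a \emph{product} over $\bk$; the comparison map $\coprod_\bk \to \prod_\bk$ is not an equivalence, and the induced coalgebra structures do not match. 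This is exactly the convergence issue that makes the lemma nontrivial. The paper's proof supplies the missing input by a computation you do not attempt: for $\bk>1$ the rational homology of $(\tau_\n(\mO)\circ_\mO\triv)_\bk$ is concentrated in the single degree $1-\n$ (using that $\mQ$ is a cooperad in $\bQ$-vector spaces and $\mO_\n\simeq\mO'_\n[1-\n]$ with $\mO'$ an operad in vector spaces, via an induction for the lower bound and a bar-filtration argument for the upper bound). Consequently the transition maps $\cofib(\alpha_{\n+1})\to\cofib(\alpha_\n)$ in the tower of cofibers of $\alpha_\n:\X\to\tau_\n(\mO)\circ_\mO\triv_\mO(\X)$ are zero for degree reasons, so the limit of the cofibers vanishes and $\alpha$ is an equivalence. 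Without an argument of this kind your proof does not close.
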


\begin{proof}

We like to see that for every $\X \in \mC$ the morphism
$$\alpha: \X \to \lim_{\n \geq 1}\tau_\n(\mO) \circ_\mO \triv_\mO(\X)$$ in $\mC$ is an equivalence.
By stability it is enough to see that the cofiber of $\alpha$ vanishes.
The latter is the limit of the inverse system $...\to \cofib(\alpha_{\n+1}) \to \cofib(\alpha_\n) \to ... \to \cofib(\alpha_1)$ of cofibers of the maps $$\alpha_\n: \X \to \tau_\n(\mO) \circ_\mO \triv_\mO(\X)
\simeq (\tau_\n(\mO) \circ_\mO \triv) \circ \X.$$
Consequently, it is enough to see that for every $\n \geq 1$
the induced morphism $ \cofib(\alpha_{\n+1}) \to \cofib(\alpha_\n)$ 
is the zero morphism. 

Observe that $(\tau_\n(\mO) \circ_\mO \triv)_1 \simeq \tu$ and $\alpha_\n$ is the canonical morphism $$\X \simeq (\tau_\n(\mO) \circ_\mO \triv)_1 \ot \X \to
\coprod_{\bk \geq 1} (\tau_\n(\mO) \circ_\mO \triv)_\bk \ot_{\Sigma_\bk} \X^{\ot \bk}.$$ 

Hence the cofiber of $\alpha_\n$ is equivalent to $\prod_{\bk \geq 2} (\tau_\n(\mO) \circ_\mO \triv)_\bk \ot_{\Sigma_\bk} \X^{\ot \bk} $ and the canonical morphism
$\cofib(\alpha_\n) \to \cofib(\alpha_{\n-1})$ identifies with the morphism
$$\coprod_{\bk \geq 2} (\tau_\n(\mO) \circ_\mO \triv)_\bk \ot_{\Sigma_\bk} \X^{\ot \bk} \to \coprod_{\bk \geq 2} (\mO_{\leq \n-1} \circ_\mO \triv)_\bk \ot_{\Sigma_\bk} \X^{\ot \bk} $$
induced by the morphisms $ (\tau_\n(\mO) \circ_\mO \triv)_\bk \to (\mO_{\leq \n-1} \circ_\mO \triv)_\bk$ for $\bk \geq 2.$

We will show the following:

\begin{enumerate}

\item For every $\bk > 1 $ the rational homology of $(\tau_\n(\mO) \circ_\mO \triv)_\bk$ is concentrated in degrees $\geq 1-\n $.
\item For every $\bk > 1 $ the rational homology of $(\tau_\n(\mO) \circ_\mO \triv)_\bk$ is concentrated in degrees $\leq 1-\n $.

\end{enumerate}

1: By assumption the rational homology of $(\tau_1(\mO) \circ_\mO \triv)_\bk \simeq (\triv \circ_\mO \triv)_\bk \simeq \mQ_\bk $ is concentrated in degree zero. Therefore 1. follows by induction over $\n \geq 1$ from the fact that the rational homology of the fiber $\F$ of the map $ (\tau_\n(\mO) \circ_\mO \triv)_\bk \to (\mO_{\leq \n-1} \circ_\mO \triv)_\bk$ is concentrated in degree $1-\n: $ the fiber $\F$ is the $\bk$-th term of the cofree right
$\mQ \simeq \mO^\vee$-comodule on the symmetric sequence concentrated in degree $\n$ with value $\mO_\n.$
We set $\mO':= \mO(-1)= \mQ^\vee(-1)$ so that $\mO'$ is an $\infty$-operad in $\bQ$-vector spaces.
Then we have that $$\F \simeq (\mO'_\n[1-\n] \ot \mQ^{\ot \n})_\bk \simeq \coprod_{\bk_1+...+\bk_\n= \bk}  \mO'_\n[1-\n] \ot \Sigma_\bk \times_{( \Sigma_{\bk_1} \times ... \times \Sigma_{\bk_\n})} (\mQ_{\bk_1} \ot ... \ot \mQ_{\bk_\n}) $$ has rational homology concentrated in degree $1-\n.$

\vspace{1mm}
2: Let $\tau_{> \n}(\mO) $ be the fiber of the canonical map $ \mO \to \tau_\n(\mO) $ of $\infty$-operads considered as a map of right $\mO$-modules in $ \sSeq(\Sp).$
Applying the exact functor $ (-) \circ_{\mO} \triv : \RMod_{\mO}(\sSeq(\Sp)) \to  \sSeq(\Sp)$
we get a fiber sequence
$\W \to \triv \to \tau_\n(\mO) \circ_\mO \triv $ and so for every $\bk > 1 $ a fiber sequence
$\W_\bk \to 0 \to (\tau_\n(\mO) \circ_\mO \triv)_\bk $.
Hence $\W_{\bk} \simeq \Omega((\tau_\n(\mO) \circ_\mO \triv)_\bk) $.
Consequently, 2. is equivalent to the condition that for every $\bk > 1 $ the rational homology of $\W_\bk= (\tau_{> \n}(\mO) \circ_{\mO} \triv)_{\bk}$ is concentrated in degrees $\leq -\n $.

By Remark \ref{dghfgh} and Remark \ref{zuk} we find that $\W_\bk$ is the colimit of a filtered diagram 
$$ \tau_{> \n}(\mO) \simeq \D_0 \to ... \to \D_\ell \to ... $$ 
such that the cofiber $\C_\ell$ of the morphism $\D_{\ell-1} \to \D_\ell$
is equivalent to the $\ell$-th shift of 
$$\colim_{\f \in {(\Fin^{\ell+2}_\rd)}_\mathrm{ndeg} }(\tau_{> \n}(\mO)_{\I_1} \ot \bigotimes_{\bi \in \I_1} \tau_{> \n}(\mO)_{ \f_1^{-1}(\bi) } \ot \bigotimes_{\bi \in \I_{2}} \mO_{ \f_{2}^{-1}(\bi) } \ot ... \ot \bigotimes_{\bi \in \I_{\ell}} \mO_{ \f_{\ell}^{-1}(\bi)} \ot \bigotimes_{\bi \in \I_{\ell+1}} \triv_{ \f_{\ell+1}^{-1}(\bi)}),$$
where ${(\Fin^{\ell+2}_\rd)}_\mathrm{ndeg} \subset \Fin^{\ell+2}_\rd $
is the full subcategory of sequences of maps of finite sets $\f:  \rd \xrightarrow{\f_{\ell+1}} \I_{\ell+1}  \xrightarrow{\f_{\ell}} ... \xrightarrow{\f_1} \I_1$ of length $\ell+1$ such that no map in the sequence is a bijection.

So $\C_\ell$ is equivalent to the $\ell$-th shift of 
$$\colim_{ \f \in {(\Fin^{\ell+2}_\rd)}_\mathrm{ndeg}'}(\Lie_{\I_1}[1-|\I_1|] \ot \bigotimes_{\bi \in \I_1} \Lie_{ \f_1^{-1}(\bi) }[1- | \f_1^{-1}(\bi) | ] \ot  ... $$$$ \ot \bigotimes_{\bi \in \I_{\ell}} \Lie_{ \f_\ell^{-1}(\bi) }[1- | \f_\ell^{-1}(\bi) |] \ot \bigotimes_{\bi \in \I_{\ell+1}} \triv_{ \f_{\ell+1}^{-1}(\bi) } ), $$
where ${(\Fin^{\ell+2}_\rd)}_\mathrm{ndeg}' \subset \Fin^{\ell+2}_\rd $
is the full subcategory of sequences of maps of finite sets $\f:  \rd \xrightarrow{\f_{\ell+1}} \I_{\ell+1}  \xrightarrow{\f_{\ell}} ... \xrightarrow{\f_1} \I_1$ of length $\ell+1$ such that all maps in the sequence are surjections, no map in the sequence is a bijection and the cardinality $| \I_1 |$ of $\I_1$ is larger than $\n.$

For every $\f \in {(\Fin^{\ell+2}_\rd)}_\mathrm{ndeg}'$ and $ 1 \leq \bj \leq \ell$
the object $\bigotimes_{\bi \in \I_{\bj}} \Lie_{ \f_\bj^{-1}(\bi) }[1- | \f_\bj^{-1}(\bi) | ] $ has rational homology concentrated in negative degrees.
Therefore $$ \Lie_{\I_1}[1-|\I_1|] \ot \bigotimes_{\bi \in \I_1} \Lie_{ \f_1^{-1}(\bi) }[1- | \f_1^{-1}(\bi) | ] \ot ... \ot \bigotimes_{\bi \in \I_{\ell}} \Lie_{ \f_\ell^{-1}(\bi) }[1- | \f_\ell^{-1}(\bi) | ] \ot \bigotimes_{\bi \in \I_{\ell+1}} \triv_{ \f_{\ell+1}^{-1}(\bi) }$$
has rational homology concentrated in degrees $\leq -\n - \ell. $ 
Thus the cofiber $\C_\ell$ has rational homology concentrated in degrees $\leq -\n. $ 

Hence by induction on $\ell$ the rational homology of $\D_\ell$ is concentrated in degrees $\leq -\n, $ where the case $\ell=1$ follows from the fact that
$\mD_0 \simeq \tau_{> \n}(\mO) $ has rational homology concentrated in degrees $\leq -\n. $ 
So $(\tau_{> \n}(\mO) \circ_{\mO} \triv)_{\bk}  \simeq \colim_{\ell \geq 1 } \D_\ell $ has rational homology concentrated in degrees $\leq -\n $.

\end{proof}

\begin{remark}\label{zuk}
Let $\mC$ be a stable symmetric monoidal $\infty$-category compatible with small colimits.
Let $\mO$ be a non-unital $\infty$-operad in $\mC$ such that $\mO_1= \tu$.
For every left $\mO$-module $\X$ and right $\mO$-module $\Y$ in $\sSeq(\mC)_{\geq 1}$ 
let $\mathrm{Bar}(\X,\mO,\Y): \Delta^\op \to \mC$ be the bar construction. 

\vspace{1mm}
For every $[\n]\in \Delta$ let $ \Delta_{\leq \n} \subset \Delta$ be the full subcategory of objects $[\br]$ with $\br \leq \n$
and $\mathrm{Bar}(\X,\mO,\Y)^\n:= \mathrm{Bar}(\X,\mO,\Y)_{|\Delta_{\leq \n}^\op} $ the restriction.
There is a filtered diagram
$$\X \circ \Y \simeq \colim(\mathrm{Bar}(\X,\mO, \Y)^0) \to ... \to \colim(\mathrm{Bar}(\X,\mO, \Y)^\n) \xrightarrow{\alpha_\n}$$$$ \colim(\mathrm{Bar}(\X,\mO, \Y)^{\n+1}) \to... \to \colim(\mathrm{Bar}(\X,\mO, \Y)) \simeq \X \circ_\mO \Y,$$
whose colimit is $\X \circ_\mO \Y$. 

\vspace{1mm}
Let $\L_\n$ be the $\n$-th latching object of $\mathrm{Bar}(\X,\mO, \Y) $
defined as the colimit of the restriction of the functor 
$ (\Delta^\op)_{/[\n]} \to \Delta^\op \xrightarrow{\mathrm{Bar}(\X,\mO, \Y) } \mC $ to the full subcategory of surjective maps $ [\n] \to [\bk] $ in $\Delta$ with $\bk \neq \n.$
By \cite[Remark 1.2.4.3.]{lurie.higheralgebra} there is a canonical equivalence $$\X \circ \mO^{\circ \n} \circ \Y \simeq  \mathrm{Bar}(\X,\mO, \Y)_\n \simeq \L_\n \oplus \cofib(\alpha^{\n-1} )[-\n].$$ 

For any $\rd \in \Sigma$ let ${(\Fin^{\n+2}_\rd)}_\mathrm{deg}, {(\Fin^{\n+2}_\rd)}_\mathrm{ndeg} \subset {(\Fin^{\n+2}_\rd)} $
be the full subcategory of sequences of maps of finite sets 
$\f: \rd \xrightarrow{\f_{\n+1}} \I_{\n+1}  \xrightarrow{\f_{\n}} ... \xrightarrow{\f_1} \I_1$ of length $\n+1$ such that at least one of the maps is a bijection, no map in the sequence is a bijection, respectively.

Under the equivalence $$(\X \circ \mO^{\circ \n} \circ \Y)_\rd \simeq \colim_{ \f \in \Fin^{\n+2}_\rd }(  \X_{\I_1} \ot \bigotimes_{\bi \in \I_1} \X_{ \f_1^{-1}(\bi) }  \ot ... \ot \bigotimes_{\bi \in \I_{\n}} \mO_{ \f_{\n}^{-1}(\bi) } \ot \bigotimes_{\bi \in \I_{\n+1}} \Y_{ \f_{\n+1}^{-1}(\bi) } ) $$
of Remark \ref{dghfgh} the summand $(\L_\n)_\rd$ corresponds to the summand
$$\colim_{ \f \in {(\Fin^{\n+2}_\rd)}_\mathrm{deg} }(  \X_{\I_1} \ot \bigotimes_{\bi \in \I_1} \X_{ \f_1^{-1}(\bi) }  \ot ... \ot \bigotimes_{\bi \in \I_{\n}} \mO_{ \f_{\n}^{-1}(\bi) } \ot \bigotimes_{\bi \in \I_{\n+1}} \Y_{ \f_{\n+1}^{-1}(\bi) } ),$$
while the summand $ \cofib( \alpha^{\n-1}_\rd )[-\n]$ 
corresponds to the summand
$$\colim_{ \f \in {(\Fin^{\n+2}_\rd)}_\mathrm{ndeg} }(\X_{\I_1} \ot \bigotimes_{\bi \in \I_1} \X_{ \f_1^{-1}(\bi) }  \ot ... \ot \bigotimes_{\bi \in \I_{\n}} \mO_{ \f_{\n}^{-1}(\bi) } \ot \bigotimes_{\bi \in \I_{\n+1}} \Y_{ \f_{\n+1}^{-1}(\bi) }).$$
\end{remark}

\vspace{2mm}

Now we are ready to prove a derived version of the Milnor-Moore theorem.

\begin{theorem}\label{thg}

Let $\mC $ be a $\bQ$-linear stable presentably symmetric monoidal $\infty$-category. The enveloping Hopf algebra functor $$\mU: \Alg_{\Lie}(\mC) \to \Hopf(\mC)$$ is fully faithful.

\end{theorem}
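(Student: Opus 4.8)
The plan is to prove that the unit of the adjunction $\mU : \Alg_\Lie(\mC) \rightleftarrows \Bialg(\mC) : \Prim$ of Definition~\ref{Eenv} is an equivalence. Since the left adjoint of an adjunction is fully faithful exactly when its unit is invertible, and since $\Hopf(\mC) \subset \Bialg(\mC)$ is a full subcategory through which $\mU$ factors, this will show that $\mU : \Alg_\Lie(\mC) \to \Hopf(\mC)$ is fully faithful. Via the shift equivalence $\Alg_\Lie(\mC) \simeq \Alg_{\Lie(1)}(\mC)$ it suffices to treat the adjunction $\mU' \dashv \Prim'$; and since $\mU'$ and $\Prim'$ are the composites of adjunction~(\ref{oi1}) with the equivalence $\Cobar : \Alg_{\Lie(1)}(\mC) \xrightarrow{\simeq} \Grp(\Alg_{\Lie(1)}(\mC))$, respectively with its inverse $\Barc$, and the unit of the adjoint equivalence $\Cobar \dashv \Barc$ is invertible, it is enough to show that the unit of adjunction~(\ref{oi1}) is an equivalence on every object of $\Grp(\Alg_{\Lie(1)}(\mC))$.

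By construction, adjunction~(\ref{oi1}) is $\Mon(-)$ applied to the finite-products-preserving Koszul-duality adjunction $\triv \circ_{\Lie(1)} (-) : \Alg_{\Lie(1)}(\mC) \rightleftarrows \coAlg_{\coComm_\mC^\nun}(\mC) : \triv *^{\coComm_\mC^\nun} (-)$ (Propositions~\ref{dfghcfghkk} and~\ref{dfghcfghkkk}), postcomposed with the equivalence $\coAlg_{\coComm_\mC^\nun}(\mC) \simeq \coAlg_{\bE_\infty}(\mC)_{\tu/}$. The underlying unit of $\Mon(-)$ of an adjunction is the unit of that adjunction; the composite forgetful functor $\Grp(\Alg_{\Lie(1)}(\mC)) = \Mon(\Alg_{\Lie(1)}(\mC)) \to \Alg_{\Lie(1)}(\mC) \to \mC$ is conservative; and the left adjoint above lifts $\triv \circ_{\Lie(1)} (-) : \Alg_{\Lie(1)}(\mC) \to \mC$ while the right adjoint lifts $\triv *^{\coComm_\mC^\nun} (-) : \coAlg_{\coComm_\mC^\nun}(\mC) \to \mC$. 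Hence it suffices to prove that for the underlying $\Lie(1)$-algebra $\bar G$ of any group object $G \in \Grp(\Alg_{\Lie(1)}(\mC))$ the canonical map
$$\bar G \longrightarrow \triv *^{\coComm_\mC^\nun}\big(\triv \circ_{\Lie(1)} \bar G\big)$$
in $\mC$ --- which is precisely the unit of the Koszul-duality adjunction at $\bar G$ --- is an equivalence.

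Now I would invoke Proposition~\ref{map} with $\mO = \Lie(1)$, so that $\mO^\dual = \coComm_\mC^\nun$ and the displayed map is the top horizontal arrow of the commutative square there with $\X = \bar G$. Since $\mC$ is $\bQ$-linear, every norm map $\Y_\G \to \Y^\G$ for a finite group $\G$ is an equivalence (the Tate object $\Y^{t\G}$ is a module over $\widehat{H}^\ast(\G; \bQ) = 0$ and hence vanishes), so Proposition~\ref{map} shows that the right vertical and bottom horizontal arrows of the square are equivalences; therefore the top arrow is an equivalence if and only if the left vertical arrow $\bar G \to \lim_{\n \geq 1}\big(\tau_\n(\Lie(1)) \circ_{\Lie(1)} \bar G\big)$ is. By Corollary~\ref{trivosa}, $\bar G$ is a trivial $\Lie(1)$-algebra, because $G$ is a group object and $\mC$ is $\bQ$-linear; and Lemma~\ref{calc} --- applied with $\mQ$ the non-counital cocommutative $\infty$-cooperad in $\bQ$-vector spaces, so that $\mQ^\dual(-1)$ is the Lie operad and $\mO := \mQ^\dual$ pushed forward to $\mC$ is $(\coComm_\mC^\nun)^\dual = \Lie_\mC(1)$ by Corollary~\ref{lies} --- shows precisely that $\bar G \to \lim_{\n \geq 1} \tau_\n(\Lie(1)) \circ_{\Lie(1)} \bar G$ is an equivalence for trivial $\Lie(1)$-algebras. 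This completes the argument.

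The step that I expect to require the most care is the identification in the second paragraph: one must trace the unit of $\mU' \dashv \Prim'$ through the chain of equivalences (the equivalence $\Cobar$, the equivalence $\coAlg_{\coComm_\mC^\nun}(\mC) \simeq \coAlg_{\bE_\infty}(\mC)_{\tu/}$, the passage $\Mon(-)$ to monoid objects, and the Koszul-duality identification $\Lie(1)^\dual \simeq \coComm_\mC^\nun$) and verify that its underlying map in $\mC$ really is the canonical map $\bar G \to \triv *^{\coComm_\mC^\nun}(\triv \circ_{\Lie(1)} \bar G)$ occurring as the top edge of the square in Proposition~\ref{map}, and that the hypothesis "$G$ is a group object" is exactly what feeds Corollary~\ref{trivosa}. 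The remaining ingredients --- the vanishing of rational Tate constructions and the combination of Proposition~\ref{map} with Lemma~\ref{calc} --- are routine once this is in place.
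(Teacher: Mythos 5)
Your proposal is correct and follows essentially the same route as the paper's proof: reduce to the unit of the Koszul duality adjunction on the underlying $\Lie(1)$-algebra of a group object, observe that this underlying algebra is trivial (Corollary \ref{trivosa}), and conclude by combining Proposition \ref{map} (using that norm maps are invertible in the $\bQ$-linear setting) with Lemma \ref{calc}. Your explicit justification of the norm-map invertibility via vanishing of the rational Tate construction is a detail the paper leaves implicit, but the argument is the same.
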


\begin{proof}

%By definition the functor $\mU$ factors as $$\Alg_{\Lie}(\mC) \simeq \Alg_{\Lie(1)}(\mC) \simeq \Grp(\Alg_{\Lie(1)}(\mC)) \xrightarrow{\Grp(\triv\circ_{\Lie(1)}(-))}  \Hopf(\mC),$$
%where the last functor is left adjoint to
%$$\Grp(\triv *_{\coComm_\mC^\nun}(-)): \Hopf(\mC) \to  \Grp(\Alg_{\Lie(1)}(\mC)).$$
%So it is enough to see that for every $\X \in \Grp(\Alg_{\Lie(1)}(\mC))$
%the unit $$\X \to \Grp(\triv *_{\coComm_\mC^\nun})(\Grp(\triv\circ_{\Lie(1)})(\X))$$
%is an equivalence. Let $\V:\Grp(\Alg_{\Lie(1)}(\mC)) \to \Alg_{\Lie(1)}(\mC)$ be the forgetful functor. 
%The unit lies over the unit $$\V(\X) \to \triv *_{\coComm_\mC^\nun} (\triv\circ_{\Lie(1)}(\V(\X))).$$
%The $\Lie(1)$-algebra $\V(\X)$ is loops on a $\Lie(1)$-algebra
%and so trivial by Proposition \ref{trivo}.

The statement follows from Theorem \ref{map2}, Corollary \ref{trivosa} and Lemma \ref{calc}.

\end{proof}

%\begin{remark}In \cite[Theorem 1.2.]{CHING2019118} Ching-Harper prove that Koszul-duality$$\Alg_{\Lie(1)}(\Mod_{\rH(\bQ)}) \to \co\Alg_{\coComm_\mC^\nun}(\Mod_{\rH(\bQ)}) \simeq $$restricts to an equivalence on simply connected $\rH(\bQ)$-modules.So passing to group objects Ching-Harper's result implies that the functor$\mU: \Alg_{\Lie}(\mC) \to \Hopf(\mC)$ restricts to an equivalence on simply connected $\rH(\bQ)$-modules.The connectivity assumptions are essential for the proof of Ching-Harper's resultand for this reason Ching-Harper's result cannot be used to deduce Theorem \ref{thg}.\end{remark}

\begin{remark}
In \cite[Theorem 4.4.6.]{MR3701353} Gaitsgory-Rozenblyum give another proof of Theorem \ref{thg} via a filtration of the primitive elements.

\end{remark}

Theorem \ref{thg} gives the following:

\begin{corollary}\label{thf} Let $\mC $ be a $\bQ$-linear stable presentably symmetric monoidal $\infty$-category. The functor of primitive elements $\Prim: \Hopf(\mC) \to \Alg_{\Lie}(\mC) $ exhibits $\Alg_{\Lie}(\mC) $ as the $\infty$-category of restricted $L_\infty$-algebras in $\mC.$
	
\end{corollary}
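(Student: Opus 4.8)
The statement of Corollary~\ref{thf} asserts that $\Prim\colon \Hopf(\mC)\to\Alg_{\Lie}(\mC)$ exhibits $\Alg_{\Lie}(\mC)$ as the $\infty$-category of restricted $L_\infty$-algebras in $\mC$, i.e.\ as $\Alg_{\Prim\T}(\mC)$, the $\infty$-category of algebras over the monad associated to the adjunction $\T\colon\mC\rightleftarrows\Hopf(\mC)\colon\Prim$. The plan is to apply the Barr--Beck--Lurie monadicity theorem to the composite adjunction
\[
\mC \xrightarrow{\ \Lie\ } \Alg_{\Lie}(\mC) \xrightarrow{\ \mU\ } \Hopf(\mC)\colon \Prim,\ \mathrm{forget},
\]
whose left adjoint is $\T=\mU\circ\Lie$ by Remark~\ref{tens} and whose right adjoint is $\mathrm{forget}\circ\Prim$. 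The composite right adjoint is precisely the forgetful functor $\Alg_{\Lie}(\mC)\to\mC$ followed by nothing — or rather, what we must check is that $\Alg_{\Lie}(\mC)$, equipped with its canonical forgetful functor $G\colon\Alg_{\Lie}(\mC)\to\mC$, is monadic with associated monad $\Prim\circ\T$.

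First I would observe that by Theorem~\ref{thg} the functor $\mU\colon\Alg_{\Lie}(\mC)\to\Hopf(\mC)$ is fully faithful. Therefore the unit $\id\to\Prim\circ\mU$ of the adjunction $\mU\dashv\Prim$ is an equivalence. Consequently, for the composite adjunction $\T=\mU\circ\Lie\dashv\Prim\circ\mathrm{forget}$, the comparison functor $\Alg_{\Lie}(\mC)\to\Alg_{\Prim\T}(\mC)$ factors through the comparison functor for $\mU\dashv\Prim$, which is an equivalence onto its essential image; composing with the comparison for $\Lie\dashv G$ (which is an equivalence, since the forgetful functor $G\colon\Alg_{\Lie}(\mC)\to\mC$ is monadic by Lemma~\ref{comon}, being the forgetful functor from algebras over the spectral Lie operad) is what produces the desired identification. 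The cleanest way to package this: the forgetful functor $G\colon\Alg_{\Lie}(\mC)\to\mC$ is monadic with monad $\T_{\Lie}=\Lie\circ(-)$; the canonical natural transformation of monads $\Lie\circ(-)\to\Prim\circ\T$ is induced by the unit $\Lie\to\Prim\mU\Lie=\Prim\T$, and by the previous sentence this unit $\Lie(\X)\to\Prim\mU\Lie(\X)$ is an equivalence for every $\X\in\mC$ because $\mU$ is fully faithful (so $\mathrm{Map}(\mU\Lie\X,-)\simeq\mathrm{Map}(\Lie\X,\Prim-)$ already at the level of $\Alg_{\Lie}(\mC)$, whence the unit is an equivalence). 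Hence the two monads $\T_{\Lie}$ and $\Prim\circ\T$ on $\mC$ are equivalent, and $\Alg_{\Lie}(\mC)\simeq\Alg_{\T_{\Lie}}(\mC)\simeq\Alg_{\Prim\T}(\mC)$ over $\mC$, which is exactly the assertion.

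The main subtlety — and the step I would be most careful with — is the identification of the composite monad. One must check that $\Prim\circ\T$ (the monad of the $\mC\rightleftarrows\Hopf(\mC)$ adjunction) agrees, as a monad, with $\Prim\circ\mathrm{forget}\circ\mU\circ\Lie$ applied with the correct unit and multiplication; this is where the fully faithfulness of $\mU$ does the real work, since it is what forces the natural transformation $\Lie\to\Prim\T$ to be an equivalence rather than merely a map. One should also confirm that the functor $\Prim\colon\Hopf(\mC)\to\Alg_{\Lie}(\mC)$ indeed lands in $\Alg_{\Lie}(\mC)$ and is right adjoint to $\mU$ there (this is Definition~\ref{Eenv} together with the remark that $\mU$ lands in $\Hopf(\mC)\subset\Bialg(\mC)$), and that the forgetful functor $\Alg_{\Lie}(\mC)\to\mC$ really is the composite of $\mU$ with $\mathrm{forget}\circ(\text{underlying object})$ — this is Remark~\ref{tens}, which says $\Prim$ lifts the functor sending a bialgebra to $\triv*^{\coComm_\mC^\nun}(-)$ of its underlying coalgebra. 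Granting all of this, no further calculation is needed: the corollary is a formal consequence of Theorem~\ref{thg}, the monadicity of $G$ (Lemma~\ref{comon}), and the uniqueness of monads with a given algebra category over $\mC$.
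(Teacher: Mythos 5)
Your proof is correct and follows essentially the same route as the paper: Theorem \ref{thg} (full faithfulness of $\mU$) forces the unit to be an equivalence, hence the canonical map of monads $\Lie\circ(-)\to\Prim\circ\T$ is an equivalence, and since the forgetful functor $\Alg_{\Lie}(\mC)\to\mC$ is monadic this identifies $\Alg_{\Lie}(\mC)$ with $\Alg_{\Prim\T}(\mC)$ over $\mC$. The paper's own proof is exactly this argument, stated more briefly.
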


\begin{proof}By Theorem \ref{thg} the unit of the adjunction $$\mU: \Alg_{\Lie}(\mC) \to \Hopf(\mC):\overline{\Prim}$$ is an equivalence. Consequently, the canonical transformation $\mL \to \Prim\circ \T$ is an equivalence, where $\mL$ is the free $\Lie$-algebra. This implies that the functor of primitive elements $\Prim: \Hopf(\mC) \to \Alg_{\Lie}(\mC) $ exhibits $\Alg_{\Lie}(\mC) $ as the $\infty$-category $\LMod_{\Prim\circ \T}(\mC)$ of $\Prim\circ \T$-algebras in $\mC$.	
\end{proof}

In the following we study corollaries of Theorem \ref{thg}.

\begin{lemma}\label{hfbbcll}

Let $\mC $ be a stable $\infty$-category and $\mD$ an $\infty$-category that admits geometric realizations, finite limits and totalizations.
Let $ \phi: \mD \to \mC$ be a conservative functor that preserves geometric realizations, finite products and totalizations. 
The forgetful functor $\Sp(\mD) \to \Grp_{\bE_\infty}(\mD)$ is an equivalence.

\end{lemma}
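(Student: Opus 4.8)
The plan is to realise the forgetful functor $F\colon\Sp(\mD)\to\Grp_{\bE_\infty}(\mD)$ as one half of an adjoint pair and then to transport across $\phi$ the statement that the unit and counit of this adjunction are invertible, exactly in the spirit of the transport arguments already used above. First I would fix the adjunction. Writing $\Sp(\mD)=\varprojlim\big(\dots\xrightarrow{\Omega}\mD_\ast\xrightarrow{\Omega}\mD_\ast\big)$, the functor $F$ sends a spectrum object $X=(X_n)_{n\ge 0}$ to the infinite loop object $\Omega^\infty X=X_0$ together with its canonical $\bE_\infty$-group structure; since $\mD$ admits geometric realizations, iterated bar constructions define a functor $\iota\colon\Grp_{\bE_\infty}(\mD)\to\Sp(\mD)$, $G\mapsto(\mathsf{B}^{n}G)_{n\ge 0}$, which is left adjoint to $F$ \cite{lurie.higheralgebra}. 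As $\phi$ preserves finite products and totalizations---hence all finite limits, since any pullback $A\times_C B$ is the totalization of a cobar construction $[\n]\mapsto A\times C^{\times\n}\times B$ built from finite products---and also geometric realizations, it induces functors $\Sp(\phi)\colon\Sp(\mD)\to\Sp(\mC)$ and $\Grp_{\bE_\infty}(\phi)\colon\Grp_{\bE_\infty}(\mD)\to\Grp_{\bE_\infty}(\mC)$ that intertwine $\iota$ with $\iota$ (because $\phi$ commutes with iterated bar constructions) and $F$ with $F$ (because $\phi$ commutes with $\Omega^\infty$). Thus $(\Sp(\phi),\Grp_{\bE_\infty}(\phi))$ forms a map of adjunctions, and therefore carries the unit and counit of $\iota_\mD\dashv F_\mD$ to the unit and counit of $\iota_\mC\dashv F_\mC$.

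Next I would analyse the target. Since $\mC$ is stable, $\Omega\colon\mC\to\mC$ is an equivalence, so every transition map in $\varprojlim\big(\dots\xrightarrow{\Omega}\mC\xrightarrow{\Omega}\mC\big)$ is an equivalence and the canonical functor $\Omega^\infty\colon\Sp(\mC)\to\mC$ is an equivalence; and since $\mC$ is additive, Remark \ref{remqp} shows the forgetful functor $\Grp_{\bE_\infty}(\mC)\to\mC$ is an equivalence. As $\Omega^\infty$ factors as $\Sp(\mC)\xrightarrow{F_\mC}\Grp_{\bE_\infty}(\mC)\to\mC$, it follows that $F_\mC$ is an equivalence, whence both the unit and the counit of $\iota_\mC\dashv F_\mC$ are equivalences.

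Finally I would conclude by conservativity. The forgetful functors $\mD_\ast\to\mD$ and $\Grp_{\bE_\infty}(\mD)\to\mD$ are conservative and $\phi$ is conservative, so $\phi_\ast\colon\mD_\ast\to\mC_\ast$ is conservative, hence so are $\Sp(\phi)=\varprojlim_n\phi_\ast$---a morphism of spectrum objects being invertible precisely when each component is---and $\Grp_{\bE_\infty}(\phi)$. Since $\Sp(\phi)$ and $\Grp_{\bE_\infty}(\phi)$ send the unit and counit of $\iota_\mD\dashv F_\mD$ to equivalences, these are themselves equivalences, so $\iota_\mD\dashv F_\mD$ is an adjoint equivalence and in particular $F$ is an equivalence. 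The step I expect to require the most care is making precise that $(\Sp(\phi),\Grp_{\bE_\infty}(\phi))$ genuinely constitutes a morphism of adjunctions, so that units and counits are actually transported; this is the coherence content of the two intertwining equivalences $\Sp(\phi)\circ\iota_\mD\simeq\iota_\mC\circ\Grp_{\bE_\infty}(\phi)$ and $F_\mC\circ\Sp(\phi)\simeq\Grp_{\bE_\infty}(\phi)\circ F_\mD$, while everything else is formal.
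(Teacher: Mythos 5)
Your strategy (realize the forgetful functor as the right adjoint of an iterated-delooping functor $\iota$, check the unit and counit in the stable target $\mC$, and transport back along $\phi$ by conservativity) is genuinely different from the paper's, which never constructs such an adjunction: the paper instead uses the one-step delooping equivalence $\Barc\colon\Grp(\mD)\simeq\mD_\ast$ of the preceding lemma to show that $\Grp_{\bE_\infty}(\Omega)$ is an equivalence (via additivity of $\Grp_{\bE_\infty}(\mD)$ from Gepner--Groth--Nikolaus), then commutes $\Grp_{\bE_\infty}(-)$ past the inverse limit defining $\Sp(\mD)$ and uses that $\Sp(\mD)$ is itself additive. However, your argument has two genuine gaps. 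First, the assertion that a pullback $A\times_C B$ is the totalization of the cosimplicial object $[\n]\mapsto A\times C^{\times\n}\times B$ is false in a general $\infty$-category: this is the Eilenberg--Moore comparison map, which fails to be an equivalence without convergence hypotheses (already in spaces for non-simply-connected $C$). So your deduction that $\phi$ preserves all finite limits does not go through. What you actually need is only that $\phi$ commutes with $\Omega$, i.e.\ that loops in $\mD$ are computed by the cobar totalization; this is true under the standing hypotheses but has to be extracted from the transport argument of the preceding lemma rather than from a general categorical identity.

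Second, and more seriously, the existence of the left adjoint $\iota\colon\Grp_{\bE_\infty}(\mD)\to\Sp(\mD)$, $G\mapsto(\mathsf{B}^{\n}G)_{\n\ge 0}$, is the entire content of the statement and cannot be cited from \cite{lurie.higheralgebra} at this level of generality: $\mD$ is not assumed presentable, finite products are not assumed to commute with geometric realizations (this must itself be deduced via $\phi$ and conservativity), and for $(\mathsf{B}^{\n}G)$ to define an object of $\Sp(\mD)$ one needs the structure maps $\mathsf{B}^{\n}G\to\Omega\mathsf{B}^{\n+1}G$ to be equivalences --- which is an iterated application of the delooping lemma $\Barc\colon\Grp(\mD)\simeq\mD_\ast$ together with the identification of $\Omega$ with the cobar totalization discussed above. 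In other words, the black box you invoke contains essentially all of the work, and the remaining transport-by-conservativity step, while correct, is the easy part. To repair the proof you would either have to carry out the construction of $\iota$ and the verification that $(\iota,F)$ is an adjunction in this generality, or follow the paper's route, which deliberately avoids ever producing $\iota$.
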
	

\begin{proof}

By Lemma \ref{hfbbcll} the adjunction $ \Barc: \Grp(\mD) \rightleftarrows \mD_\ast: \Cobar $ is an equivalence.	
The composition $\mD_* \xrightarrow{\Cobar} \Grp(\mD) \xrightarrow{\mathrm{\text{forget}}} \mD_*$
is the loops functor $\Omega.$
So the functor $\Grp_{\bE_\infty}(\Omega): \Grp_{\bE_\infty}(\mD) \to \Grp_{\bE_\infty}(\mD)$ factors as the equivalence
$ \Grp_{\bE_\infty}(\Cobar): \Grp_{\bE_\infty}(\mD) \simeq \Grp_{\bE_\infty}(\mD_*)
\to \Grp_{\bE_\infty}(\Grp(\mD)) $ followed by the functor
$\Grp_{\bE_\infty}(\Grp(\mD)) \to \Grp_{\bE_\infty}(\mD).$

By \cite[Proposition 2.8.]{2013arXiv1305.4550G} the $\infty$-category $\Grp_{\bE_\infty}(\mD)$ is additive.
By Remark \ref{remqp} this implies that the forgetful functor $\Grp(\Grp_{\bE_\infty}(\mD)) \to \Grp_{\bE_\infty}(\mD)$ is an equivalence. The latter functor identifies with the forgetful functor $\Grp_{\bE_\infty}(\Grp(\mD)) \to \Grp_{\bE_\infty}(\mD).$
So the functor $\Grp_{\bE_\infty}(\Omega): \Grp_{\bE_\infty}(\mD) \to \Grp_{\bE_\infty}(\mD)$ is an equivalence.
By Definition \cite[1.4.2.]{lurie.higheralgebra} the $\infty$-category $\Sp(\mD)$ is the limit of the tower
$$... \xrightarrow{\Omega} \mD_* \xrightarrow{\Omega} \mD_*$$
in the $\infty$-category of small $\infty$-categories
and so also in the $\infty$-category of small $\infty$-categories having finite products. Applying the small limits preserving endofunctor
$\mD \mapsto \Grp_{\bE_\infty}(\mD)$ of the $\infty$-category of small $\infty$-categories having finite products, we find that
$\Grp_{\bE_\infty}(\Sp(\mD))$ is the limit of the tower
$$... \xrightarrow{\Grp_{\bE_\infty}(\Omega)}\Grp_{\bE_\infty}(\mD) \xrightarrow{\Grp_{\bE_\infty}(\Omega)} \Grp_{\bE_\infty}(\mD),$$
where all transition functors are equivalences.
Consequently, we find that $$\Grp_{\bE_\infty}(\Sp(\mD)) \simeq \Sp(\Grp_{\bE_\infty}(\mD)) \simeq \Grp_{\bE_\infty}(\mD).$$

The $\infty$-category $\Sp(\mD)$ is stable and so additive, which implies that the forgetful functor $\Grp_{\bE_\infty}(\Sp(\mD)) \to \Sp(\mD)$ is an equivalence.

\end{proof}

\begin{proposition}\label{defo}
Let $\mC $ be an additive symmetric monoidal $\infty$-category and $\mO$ a non-unital $\infty$-operad in $\mC$.
The forgetful functor $\Grp_{\bE_\infty}(\Alg_\mO(\mC)) \to \Grp_{\bE_\infty}(\mC) \simeq \mC$ is an equivalence.	

\end{proposition}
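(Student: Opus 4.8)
The plan is to trade the statement for a computation of the stabilisation of $\Alg_\mO(\mC)$, using the identification $\Sp(-)\simeq\Grp_{\bE_\infty}(-)$ of Lemma~\ref{hfbbcll}. First I would reduce to the case that $\mC$ is stable and presentably symmetric monoidal: by Corollary~\ref{cory} there is a symmetric monoidal, fully faithful, small colimits preserving embedding $\mC\subset\mC'$ into such a category under which $\mC$ is closed under finite products, and since $\Grp_{\bE_\infty}(-)$ is built from finite products and the forgetful functors $\Alg_\mO(-)\to(-)$ create them (Remark~\ref{imp}), this induces a fully faithful functor $\Grp_{\bE_\infty}(\Alg_\mO(\mC))\hookrightarrow\Grp_{\bE_\infty}(\Alg_\mO(\mC'))$ over $\mC\subset\mC'$ whose essential image is detected by $\mC$; the case of $\mC'$ then gives the case of $\mC$. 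Being additive, $\mC$ is compatible with the zero object, and $\mO_0\simeq0$ as $\mO$ is non-unital, so $\mD:=\Alg_\mO(\mC)$ has a zero object (Lemma~\ref{lek}), admits all small limits and colimits, and $U:\mD\to\mC$ is conservative and preserves all small limits and sifted colimits (Remark~\ref{imp}). Using $\mO_1=\tu$, so that $\triv$ is the zero object of $\widetilde{\Op}(\mC)$ (Proposition~\ref{fghjjhbv}), $\mO$ is augmented over $\triv$, giving the trivial algebra functor $\triv_\mO:\mC\to\mD$ of Construction~\ref{Prim}, which preserves all limits and satisfies $U\circ\triv_\mO\simeq\id$.

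Lemma~\ref{hfbbcll} with $\phi=U$ now yields an equivalence $\Sp(\Alg_\mO(\mC))\simeq\Grp_{\bE_\infty}(\Alg_\mO(\mC))$ over $\Alg_\mO(\mC)$, hence over $\mC$ after composing with $U$. It therefore suffices to show that $\Sp(U):\Sp(\Alg_\mO(\mC))\to\Sp(\mC)\simeq\mC$ (the latter since $\mC$ is stable) is an equivalence. This functor is conservative, and its left adjoint is $\Sigma^\infty_{\Alg_\mO(\mC)}\circ F$ with $F=\mO\circ(-)$ the free $\mO$-algebra functor: indeed, for a spectrum object $\A_\bullet$ of $\Alg_\mO(\mC)$ and $X\in\mC$ one has $\mC(X,\Sp(U)(\A_\bullet))\simeq\mC(X,U\A_0)\simeq\Alg_\mO(\mC)(FX,\A_0)\simeq\Sp(\Alg_\mO(\mC))(\Sigma^\infty_{\Alg_\mO(\mC)}FX,\A_\bullet)$. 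A conservative functor whose left adjoint is fully faithful is an equivalence, so the whole proposition comes down to the single identity $\Sp(U)(\Sigma^\infty_{\Alg_\mO(\mC)}(\mO\circ X))\simeq X$ for all $X\in\mC$, i.e. that the stabilised forgetful functor sends free $\mO$-algebras to their generators.

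To establish this I would run the Harper--Hess truncation tower. The tower $\{\tau_\n(\mO)\circ_\mO(-)\}_{\n\geq1}$ has bottom stage $\triv\circ_\mO(-)=\mathrm{TQ}_\mO$, and at the free algebra $\mO\circ X$ this bottom stage is $\triv\circ_\mO(\mO\circ X)\simeq X$, because $\triv\circ_\mO(\mO\circ(-))=\mathrm{TQ}_\mO\circ F$ is left adjoint to $U\circ\triv_\mO\simeq\id$ and hence equals $\id$. By Lemma~\ref{dfghfghjfghj}, exactly as in the proof of Lemma~\ref{gggg}, the fibre of $\tau_{\n+1}(\mO)\circ_\mO(-)\to\tau_\n(\mO)\circ_\mO(-)$ at an algebra $\Y$ is the extended power $(\mO_{\n+1}\ot(\triv\circ_\mO\Y)^{\ot\n+1})_{\Sigma_{\n+1}}$. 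These layers, of arity $\n+1\geq2$, are homogeneous of degree $\geq2$, and I would argue --- via the norm maps of Proposition~\ref{map} and Lemma~\ref{gggg}, together with Corollary~\ref{dghcvbhhhh} --- that they are annihilated by $\Sp(U)\circ\Sigma^\infty$, so that only the bottom stage survives and $\Sp(U)(\Sigma^\infty(\mO\circ X))\simeq\mathrm{TQ}_\mO(\mO\circ X)\simeq X$.

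The main obstacle is exactly that last point: showing that stabilisation discards the arity $\geq2$ part of $\mO$, equivalently that the extended powers $(\mO_\bk\ot(-)^{\ot\bk})_{\Sigma_\bk}$ with $\bk\geq2$ contribute nothing to $\Sp(U)\circ\Sigma^\infty$ --- i.e. that topological Quillen homology computes the stabilisation of $\Alg_\mO(\mC)$. This is a Tate-type vanishing for extended powers of infinitely deloopable objects and is where the real work lies. Everything else --- the reduction to the stable presentable case, the identification of $\Grp_{\bE_\infty}$ with $\Sp$ (Lemma~\ref{hfbbcll}, which itself rests on the delooping equivalence of Corollary~\ref{qqqp} and the additivity of $\Grp_{\bE_\infty}$, Remark~\ref{remqp}), and the description of the left adjoint of $\Sp(U)$ --- is formal.
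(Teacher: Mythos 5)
Your formal skeleton matches the paper's: both identify $\Grp_{\bE_\infty}(\Alg_\mO(-))$ with $\Sp(\Alg_\mO(-))$ via Lemma \ref{hfbbcll}, embed $\mC$ into a sufficiently cocomplete $\mD$ using Corollary \ref{cory}, pull back along $\Alg_\mO(\mC) \to \Alg_\mO(\mD)$, and reduce everything to the assertion that the stabilised forgetful functor $\Sp(\Alg_\mO(\mD)) \to \Sp(\mD)$ is an equivalence. The divergence is at exactly the step you flag as ``where the real work lies'': the paper does not prove that stabilisation discards the arity $\geq 2$ part of $\mO$ --- it cites \cite[Theorem 7.3.4.13]{lurie.higheralgebra}, which is precisely that statement. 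Since you leave this assertion unproved, your proposal has a genuine gap, and it is the only non-formal input of the entire argument; all the surrounding reductions (conservativity of $\Sp(U)$, identification of its left adjoint with $\Sigma^\infty \circ F$, the reduction to full faithfulness on free algebras) are correct but do not touch the substance.

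Moreover, the route you sketch for closing the gap would not work in the stated generality. Proposition \ref{map}, Lemma \ref{gggg} and Corollary \ref{dghcvbhhhh} (and any ``Tate-type vanishing'') require that the norm maps $(-)_{\Sigma_\bk} \to (-)^{\Sigma_\bk}$ be equivalences in $\mC$, a hypothesis available only in the rational setting and absent from the proposition, which must hold for instance for $\mC = \Sp$, where norms are not invertible. The true reason the layers $(\mO_\bk \ot (-)^{\ot \bk})_{\Sigma_\bk}$ with $\bk \geq 2$ are annihilated by $\Sp(U) \circ \Sigma^\infty$ is not a norm-map cancellation but Goodwillie calculus: these functors are $\bk$-homogeneous, hence have vanishing linearisation for $\bk \geq 2$, and stabilisation records only the linearisation. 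That is the content of Lurie's theorem; to make your argument complete you should either argue along those lines or simply cite \cite[Theorem 7.3.4.13]{lurie.higheralgebra}, as the paper does.
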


\begin{proof}
Without loss of generality we can assume that $\mC$ is small.
By Corollary \ref{cory} there is a large additive symmetric monoidal $\infty$-category $\mD$ compatible with small colimits that has small limits,
and a symmetric monoidal additive embedding $\mC \to \mD$ that preserves small limits. 
By Remark \ref{imp} the $\infty$-category $\Alg_\mO(\mD)$ ha small limits and small sifted colimits, which are preserved by the forgetful functor to $\mD$.
Consider the commutative square
\begin{equation*}
\begin{xy}
\xymatrix{
\Sp(\Alg_\mO(\mD))  \ar[d] 
\ar[r]^{ } 
& \Grp_{\bE_\infty}(\Alg_\mO(\mD)) \ar[d]^\alpha
\\
\Sp(\mD)\simeq \mD  \ar[r]^{\id}  & \Grp_{\bE_\infty}(\mD) \simeq \mD.
}
\end{xy} 
\end{equation*} 

By Lemma \ref{hfbbcll} the top functor is an equivalence.
By \cite[Theorem 7.3.4.13.]{lurie.higheralgebra} the left vertical functor is an equivalence. Hence the right vertical functor $\alpha$ is an equivalence.
The forgetful functor $\Alg_\mO(\mC)\to \mC$ is the pullback of the 
forgetful functor $\Alg_\mO(\mD)\to \mD$.
Hence the forgetful functor $\Grp_{\bE_\infty}(\Alg_\mO(\mC)) \to \mC $ is the pullback of the forgetful functor $\Grp_{\bE_\infty}(\Alg_\mO(\mD)) \to \mD $ and so an equivalence.	

\end{proof}

\begin{remark}\label{remol}

Let $\mC $ be an additive symmetric monoidal $\infty$-category and $\mO$ a non-unital $\infty$-operad in $\mC$. By Proposition \ref{defo} the forgetful functor $ \Grp_{\bE_\infty}(\Alg_{\mO}(\mC)) \to \mC$ is an equivalence.
The trivial $\mO$-algebra functor
$\triv_\mO: \mC \to \Alg_{\mO}(\mC)$ is a section of the forgetful functor
$\Alg_{\mO}(\mC) \to \mC$ and so preserves finite products and so group objects.
Hence the induced functor
$\mC \simeq \Grp_{\bE_\infty}(\mC) \xrightarrow{\Grp_{\bE_\infty}(\triv_\mO)} \Grp_{\bE_\infty}(\Alg_{\mO}(\mC))$ is a section and so an inverse of the forgetful functor
$\Grp_{\bE_\infty}(\Alg_{\mO}(\mC)) \to \Grp_{\bE_\infty}(\mC) \simeq \mC$.

\end{remark}

\vspace{2mm}
Let $\mC $ be a symmetric monoidal $\infty$-category.
By \cite[Proposition 3.2.4.7.]{lurie.higheralgebra} the object-wise symmetric monoidal structure on $\co\Alg_{\bE_\infty}(\mC)$ 
is cartesian.
Therefore the following definition is reasonable:

\begin{notation}
Let $\mC $ be a symmetric monoidal $\infty$-category.
Let $$\Hopf_{\bE_\infty}(\mC):= \Grp_{\bE_\infty}(\co\Alg_{\bE_\infty}(\mC)).$$

\end{notation}

The symmetric monoidal forgetful functor $\co\Alg_{\bE_\infty}(\mC) \to \mC$
gives rise to a forgetful functor $$\Hopf_{\bE_\infty}(\mC) \subset \Mon_{\bE_\infty}(\co\Alg_{\bE_\infty}(\mC)) \simeq \Alg_{\bE_\infty}(\co\Alg_{\bE_\infty}(\mC)) \to \Alg_{\bE_\infty}(\mC).$$

\vspace{1mm}

\begin{corollary}\label{coray}

Let $\mC $ be a $\bQ$-linear stable presentably symmetric monoidal $\infty$-category. The free $\bE_\infty$-algebra functor $$\Sym: \mC \to \Alg_{\bE_\infty}(\mC)$$ lifts to an embedding $\mC \to \Hopf_{\bE_\infty}(\mC)$.

\end{corollary}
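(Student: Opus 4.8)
The plan is to produce the required embedding by applying $\Grp_{\bE_\infty}(-)$ to the enveloping Hopf algebra adjunction $\mU\colon\Alg_{\Lie}(\mC)\rightleftarrows\Hopf(\mC)\colon\Prim$ of Definition~\ref{Eenv}. Two preliminary identifications are needed. First, since $\mC$ is additive and $\Lie_\mC$ is a non-unital $\infty$-operad in $\mC$, Proposition~\ref{defo} gives an equivalence $\mC\simeq\Grp_{\bE_\infty}(\Alg_{\Lie}(\mC))$, which by Remark~\ref{remol} is inverse to the forgetful functor and is realized by the trivial Lie algebra functor $\triv_\Lie$. Second, writing $\Hopf(\mC)=\Grp(\coAlg_{\bE_\infty}(\mC))$ and using that the conditions cutting out $\Grp_{\bE_\infty}$ and $\Grp$ inside the relevant functor $\infty$-categories commute, one gets $\Grp_{\bE_\infty}(\Hopf(\mC))\simeq\Grp\bigl(\Grp_{\bE_\infty}(\coAlg_{\bE_\infty}(\mC))\bigr)$, and since $\Grp_{\bE_\infty}(\coAlg_{\bE_\infty}(\mC))$ is additive (the computation made in the proof of Lemma~\ref{hfbbcll}, together with Remark~\ref{remqp}), the latter is just $\Grp_{\bE_\infty}(\coAlg_{\bE_\infty}(\mC))=\Hopf_{\bE_\infty}(\mC)$.

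Next I would feed $\mU$ into this machine. By the remark following Definition~\ref{Eenv} (which rests on Proposition~\ref{caaaaart}) the functor $\mU$ preserves finite products, and by Theorem~\ref{thg} it is fully faithful; since $\mU$ lands in $\Hopf(\mC)$, applying $\Grp_{\bE_\infty}(-)$ yields a fully faithful functor $\Grp_{\bE_\infty}(\mU)\colon\Grp_{\bE_\infty}(\Alg_{\Lie}(\mC))\to\Grp_{\bE_\infty}(\Hopf(\mC))$ — full faithfulness because $\Grp_{\bE_\infty}(-)$ of a finite-product-preserving fully faithful functor is obtained by restricting the fully faithful functor $\Fun(\Fin_*,-)$ to full subcategories. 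Composing with the two equivalences above produces a fully faithful functor
\[
F\colon\mC\;\simeq\;\Grp_{\bE_\infty}(\Alg_{\Lie}(\mC))\;\xrightarrow{\ \Grp_{\bE_\infty}(\mU)\ }\;\Grp_{\bE_\infty}(\Hopf(\mC))\;\simeq\;\Hopf_{\bE_\infty}(\mC).
\]

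The last thing to check is that the composite of $F$ with the forgetful functor $\Hopf_{\bE_\infty}(\mC)\to\Alg_{\bE_\infty}(\mC)$ is $\Sym$: by construction it sends $V$ to the underlying $\bE_\infty$-algebra of $\mU(\triv_\Lie(V))$, the universal enveloping algebra of the abelian Lie algebra on $V$. Here I would pass through $\Alg_{\Lie}(\mC)\simeq\Alg_{\Lie(1)}(\mC)$, under which $\triv_\Lie(V)$ corresponds to $\triv_{\Lie(1)}(\Sigma V)$, and combine Remark~\ref{tens}/Remark~\ref{rema} with Lemma~\ref{trivo} (giving $\Omega\triv_{\Lie(1)}(\Sigma V)\simeq\triv_{\Lie(1)}(V)$): since $\Lie(1)^\dual\simeq\coComm^\nun_\mC$ carries no shift, the underlying object of $\mU(\triv_\Lie(V))$ is $\tu\oplus\bigl(\triv\circ_{\Lie(1)}\triv_{\Lie(1)}(V)\bigr)\simeq\tu\oplus\bigoplus_{\n\geq1}(V^{\ot\n})_{\Sigma_\n}\simeq\Sym(V)$. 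The remaining, and I expect principal, difficulty is to promote this to an identification of $\bE_\infty$-algebras — a derived Poincaré–Birkhoff–Witt statement: one must see that the commutative algebra structure on $\mU(\triv_\Lie(V))$ (which exists because $\triv_\Lie(V)$ is a cocommutative group object in $\Alg_{\Lie}(\mC)$ and $\mU$ preserves finite products) is the standard one on the symmetric algebra. I would do this by comparing along the algebra map $\T(V)=\mU(\Lie(V))\to\mU(\triv_\Lie(V))$ induced by the abelianization $\Lie(V)\to\triv_\Lie(V)$ of the free Lie algebra (Remark~\ref{tens}), which on underlying objects is the canonical quotient $\T(V)\to\Sym(V)$; equivalently, by checking that $\mathrm{forget}\circ F$ is left adjoint to the forgetful functor $\Alg_{\bE_\infty}(\mC)\to\mC$. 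Granting this, $F$ is the desired lift of $\Sym$ to a fully faithful functor $\mC\to\Hopf_{\bE_\infty}(\mC)$.
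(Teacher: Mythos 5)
Your proposal follows essentially the same route as the paper: apply $\Grp_{\bE_\infty}(-)$ to the finite-product-preserving, fully faithful functor $\mU$ of Theorem \ref{thg}, identify the source with $\mC$ via Proposition \ref{defo} and Remark \ref{remol}, identify the target with $\Hopf_{\bE_\infty}(\mC)$, and then compare the underlying $\bE_\infty$-algebra with $\Sym$. The "derived PBW" step you flag as the remaining difficulty is handled in the paper exactly by your second suggestion: the inclusion of the first summand gives a natural transformation $\id \to \V \circ \phi'$, which by adjointness yields a comparison map $\Sym \to \phi'$ of functors to $\Alg_{\bE_\infty}(\mC)$ that is an equivalence because it is so on underlying objects.
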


\begin{proof}

By Theorem \ref{thg} the enveloping Hopf algebra functor $$\mU: \Alg_{\Lie}(\mC) \simeq \Alg_{\Lie(1)}(\mC) \simeq \Grp(\Alg_{\Lie(1)}(\mC)) \xrightarrow{\Grp(\triv\circ_{\Lie(1)}(-))}  \Grp(\co\Alg_{\bE_\infty}(\mC))$$ is fully faithful and by Lemma \ref{caaaaart} preserves finite products.
We obtain an induced commutative square, where the top functor and so the bottom functor is fully faithful:
\begin{equation*}
\begin{xy}
\xymatrix{
\Grp_{\bE_\infty}(\Grp(\Alg_{\Lie(1)}(\mC))) \ar[d]^\simeq
\ar[rrr]^{\Grp_{\bE_\infty}(\Grp(\triv\circ_{\Lie(1)}(-)))}
&&& \Grp_{\bE_\infty}(\Grp(\co\Alg_{\bE_\infty}(\mC))) \ar[d]^\simeq
\\
\Grp_{\bE_\infty}(\Alg_{\Lie(1)}(\mC)) \ar[rrr]^{\Grp_{\bE_\infty}(\triv\circ_{\Lie(1)}(-))}  &&& \Grp_{\bE_\infty}(\co\Alg_{\bE_\infty}(\mC))= \Hopf_{\bE_\infty}(\mC).
}
\end{xy} 
\end{equation*}

By Proposition \ref{defo} and Remark \ref{remol} the forgetful functor
$ \Grp_{\bE_\infty}(\Alg_{\Lie(1)}(\mC)) \to \mC$ is an equivalence
inverse to the functor $$\Grp_{\bE_\infty}(\triv_{\Lie(1)}):\mC \simeq \Grp_{\bE_\infty}(\mC) \xrightarrow{ } \Grp_{\bE_\infty}(\Alg_{\Lie(1)}(\mC))$$
induced by the trivial $\Lie(1)$-algebra functor $\triv_{\Lie(1)}: \mC \to \Alg_{\Lie(1)}(\mC).$

Let $\phi$ be the composition $$\mC \xrightarrow{\Grp_{\bE_\infty}(\triv_{\Lie(1)})}	\Grp_{\bE_\infty}(\Alg_{\Lie(1)}(\mC)) \xrightarrow{\Grp_{\bE_\infty}(\triv\circ_{\Lie(1)}(-))} \Hopf_{\bE_\infty}(\mC).$$

The composition $\mC \xrightarrow{\phi} \Hopf_{\bE_\infty}(\mC) \xrightarrow{\text{forget}} \co\Alg_{\bE_\infty}(\mC)$
is the functor $$\mC \xrightarrow{\triv_{\Lie(1)}} \Alg_{\Lie(1)}(\mC) \xrightarrow{\triv\circ_{\Lie(1)}(-)} \co\Alg^{\mathrm{dp}, \conil}_{\coComm^\nun_\mC}(\mC) \to \co\Alg_{\bE_\infty}(\mC),$$ which by Remark \ref{contin} factors as $$ \mC \xrightarrow{\coComm^\nun \circ (-)}\co\Alg^{\mathrm{dp}, \conil}_{\coComm^\nun_\mC}(\mC) \to \co\Alg_{\coComm^\nun_\mC}(\mC) \xrightarrow{(-)\oplus \tu} \co\Alg_{\bE_\infty}(\mC).$$
Thus the composition $\phi': \mC \xrightarrow{\phi} \Hopf_{\bE_\infty}(\mC) \xrightarrow{\text{forget}} \Alg_{\bE_\infty}(\mC)$
lifts the functor $$ \mC \to \mC : \X \mapsto \coprod_{\n \geq 0} (\X^{\ot\n})_{\Sigma_\n}$$ along the forgetful functor
$\V: \Alg_{\bE_\infty}(\mC) \to \mC$ whose left adjoint we denote by $\Sym.$
Mapping to the first summand defines a natural transformation
$\id \to \V \circ \phi'$ of endofunctors of $\mC$ that gives rise to a map $\Sym = \Sym \circ \id \to \Sym \circ \V \circ \phi' \to \phi' $
of functors $\mC \to \Alg_{\bE_\infty}(\mC)$ that is component-wise the canonical equivalence.

\end{proof}

If $\mC$ is not $\bQ$-linear, we offer the following variant (Proposition \ref{coray2}) of Corollary \ref{coray}.
Let $\mC $ be a preadditive presentably symmetric monoidal $\infty$-category.
By Remark \ref{preo} the $\infty$-category $\co\Alg^{\mathrm{dp}, \conil}_{\coComm^\nun}(\mC)$ is presentable and thus admits finite products.
So we can make the following definition:

\begin{notation}
Let $\mC $ be a preadditive presentably symmetric monoidal $\infty$-category.
Let $$\Hopf^{\mathrm{dp}, \conil}_{\bE_\infty}(\mC):= \Grp_{\bE_\infty}(\co\Alg^{\mathrm{dp}, \conil}_{\coComm^\nun}(\mC)).$$
	
\end{notation}

The forgetful functor $\co\Alg^{\mathrm{dp}, \conil}_{\coComm^\nun}(\mC) \to \co\Alg_{\coComm^\nun}(\mC) \simeq \co\Alg_{\bE_\infty}(\mC)_{\tu/}$
gives rise to a forgetful functor $$\Hopf^{\mathrm{dp}, \conil}_{\bE_\infty}(\mC) \to \Hopf_{\bE_\infty}(\mC).$$

\begin{proposition}\label{coray2}
	
Let $\mC $ be a stable presentably symmetric monoidal $\infty$-category that admits totalizations. 
The free $\bE_\infty$-algebra functor $$\Sym: \mC \to \Alg_{\bE_\infty}(\mC)$$ lifts to an embedding $\mC \to \Hopf^{\mathrm{dp}, \conil}_{\bE_\infty}(\mC)$.
	
\end{proposition}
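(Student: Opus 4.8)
The strategy is to run the argument of Corollary \ref{coray} almost verbatim, replacing its one use of the derived Milnor--Moore theorem (Theorem \ref{thg}), which is unavailable here, by a purely formal identification of the relevant unit map --- an identification that becomes available precisely because we now work with the \emph{divided power conilpotent} Koszul dual $\coAlg^{\mathrm{dp}, \conil}_{\coComm_\mC^\nun}(\mC)$ rather than with its non-divided-power quotient $\coAlg_{\bE_\infty}(\mC)_{\tu/}$.

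First I would construct the functor. By Proposition \ref{caaaaart}(2) the left adjoint $\triv \circ_{\Lie(1)}(-)\colon \Alg_{\Lie(1)}(\mC) \to \coAlg^{\mathrm{dp}, \conil}_{\coComm_\mC^\nun}(\mC)$ of the Koszul-duality adjunction of Proposition \ref{dfghcfghkk}(2) preserves finite products, and its right adjoint $\triv \circ^{\coComm_\mC^\nun}(-)$ of derived primitive elements preserves them automatically; since $\coAlg^{\mathrm{dp}, \conil}_{\coComm_\mC^\nun}(\mC)$ is presentable by Remark \ref{preo}, this adjunction descends to one on $\bE_\infty$-group objects
$$\Grp_{\bE_\infty}(\Alg_{\Lie(1)}(\mC)) \rightleftarrows \Grp_{\bE_\infty}(\coAlg^{\mathrm{dp}, \conil}_{\coComm_\mC^\nun}(\mC)) = \Hopf^{\mathrm{dp}, \conil}_{\bE_\infty}(\mC).$$
By Proposition \ref{defo} and Remark \ref{remol} the forgetful functor $\Grp_{\bE_\infty}(\Alg_{\Lie(1)}(\mC)) \to \Grp_{\bE_\infty}(\mC) \simeq \mC$ is an equivalence with inverse $\Grp_{\bE_\infty}(\triv_{\Lie(1)})$; precomposing with it yields an adjunction $\Phi\colon \mC \rightleftarrows \Hopf^{\mathrm{dp}, \conil}_{\bE_\infty}(\mC) \colon \Psi$, the left adjoint $\Phi$ landing in the group objects by construction.

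Next I would check that $\Phi$ lifts $\Sym$. Because $\triv_{\Lie(1)}$, $\triv \circ_{\Lie(1)}(-)$ and the forgetful functors $\Grp_{\bE_\infty}(\mD) \to \Mon_{\bE_\infty}(\mD) \simeq \Alg_{\bE_\infty}(\mD) \to \mD$ are all compatible with forgetting down to $\mC$, the underlying $\coComm_\mC^\nun$-coalgebra of $\Phi(\X)$ is $\triv \circ_{\Lie(1)}(\triv_{\Lie(1)}(\X))$, which by Remark \ref{contin} is the cofree conilpotent divided-power coalgebra $\coComm_\mC^\nun \circ \X$, with underlying object $\coprod_{\n \geq 0}(\X^{\ot \n})_{\Sigma_\n}$. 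Forgetting divided powers and conilpotence and adding the unit via Corollaries \ref{fghjok} and \ref{comparat}, then passing to the underlying $\bE_\infty$-algebra exactly as in the last paragraph of the proof of Corollary \ref{coray}, shows that $\Phi$ followed by the forgetful functor $\Hopf^{\mathrm{dp}, \conil}_{\bE_\infty}(\mC) \to \Alg_{\bE_\infty}(\mC)$ lifts $\X \mapsto \coprod_{\n \geq 0}(\X^{\ot \n})_{\Sigma_\n}$ along the forgetful functor $V\colon \Alg_{\bE_\infty}(\mC) \to \mC$; the natural map $\id \to V \circ \Phi$ onto the first summand then induces a natural equivalence $\Sym \xrightarrow{\sim} \Phi$ of functors $\mC \to \Alg_{\bE_\infty}(\mC)$, which is the asserted lifting.

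It remains to show $\Phi$ is an embedding, equivalently that the unit $\id_\mC \to \Psi\Phi$ is an equivalence. Unwinding, $\Psi\Phi(\X)$ is obtained by applying the right adjoint $\triv \circ^{\coComm_\mC^\nun}(-)$ to the cofree coalgebra $\coComm_\mC^\nun \circ \X$; since $\triv \circ^{\coComm_\mC^\nun}(-)$ is right adjoint to the trivial-coalgebra functor $\triv_{\coComm_\mC^\nun}$, whose underlying-object functor is the identity, there is a canonical equivalence $\triv \circ^{\coComm_\mC^\nun}(\coComm_\mC^\nun \circ \X) \simeq \X$, so $\Psi\Phi \simeq \id_\mC$. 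The main obstacle is to see that the unit of the adjunction is \emph{this} equivalence. For that I would use the comonadic reformulation recorded in Remark \ref{contin}: the Koszul-duality functor exhibits $\coAlg^{\mathrm{dp}, \conil}_{\coComm_\mC^\nun}(\mC)$ as comodules over the comonad $\X \mapsto \coComm_\mC^\nun \circ \X$ on $\mC$, and on cofree comodules --- which is where the underlying coalgebras of objects in the image of $\Phi$ live --- the triangle identities force the unit to be the canonical equivalence above; one then propagates this through the passage to $\bE_\infty$-group objects, using that $\Grp_{\bE_\infty}$ preserves equivalences and that units of the descended adjunction are detected by the forgetful functor to $\mC$. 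Making this last matching rigorous --- aligning the unit of Proposition \ref{dfghcfghkk}(2) with the comonad structure of Remark \ref{contin} on trivial $\Lie(1)$-algebras --- is the crux of the argument and the only point where it differs in substance from the proof of Corollary \ref{coray}.
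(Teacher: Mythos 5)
Your construction of the lift and the identification of its underlying $\bE_\infty$-algebra with $\Sym$ follow the paper's proof step for step: compose $\Grp_{\bE_\infty}(\triv_{\Lie(1)})$ with $\Grp_{\bE_\infty}(\triv\circ_{\Lie(1)}(-))$ using Proposition \ref{caaaaart}, Proposition \ref{defo} and Remark \ref{remol}, then use Remark \ref{contin} and the map onto the first summand to compare with $\Sym$. Where you genuinely add something is the last step: the paper's own proof stops after producing the lift and never explicitly verifies the ``embedding'' part of the statement, whereas you reduce it to showing that the unit of the adjunction $\triv\circ_{\Lie(1)}(-)\dashv\triv\circ^{\coComm^\nun_\mC}(-)$ of Proposition \ref{dfghcfghkk} is an equivalence on trivial $\Lie(1)$-algebras. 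Both the reduction and the mechanism you propose are correct: under the comonadic identification of Remark \ref{contin} the trivial algebra $\triv_{\Lie(1)}(\X)$ is carried to the cofree divided-power conilpotent coalgebra on $\X$, the comonad counit $\coComm^\nun_\mC\circ\X\to\X$ agrees by construction with the counit of $\tu\circ_{\Lie(1)}(-)\dashv\triv_{\Lie(1)}$, and chasing the triangle identities through the two adjunctions $U\dashv\mathrm{cofree}$ and $\triv\dashv\Prim$ identifies the unit at $\triv_{\Lie(1)}(\X)$ with the canonical equivalence $\X\simeq\triv\circ^{\coComm^\nun_\mC}(\coComm^\nun_\mC\circ\X)$; passing to $\Grp_{\bE_\infty}$ is then harmless because units there are detected objectwise in $\mC$. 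This is exactly the trivial-module/cofree-comodule exchange that drives the proof of Proposition \ref{eqqqqq}, and it is the precise point where divided powers replace the $\bQ$-linearity used in Corollary \ref{coray}. So your proposal is a correct proof that in fact completes an argument the paper leaves implicit; the only thing to tighten is the explicit verification that the two counits coincide, which is immediate from Remark \ref{contin}.
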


\begin{proof}
	
By Proposition \ref{caaaaart} the functor $\triv\circ_{\Lie(1)}(-): \Alg_{\Lie(1)}(\mC) \to \co\Alg^{\mathrm{dp}, \conil}_{\bE_\infty}(\mC)$ preserves finite products and so gives rise to a functor
$$\Grp_{\bE_\infty}(\triv\circ_{\Lie(1)}(-)): \Grp_{\bE_\infty}(\Alg_{\Lie(1)}(\mC)) \to \Grp_{\bE_\infty}(\co\Alg^{\mathrm{dp}, \conil}_{\bE_\infty}(\mC)).$$

By Remark \ref{remol} the trivial $\Lie(1)$-algebra functor
$\triv_{\Lie(1)}: \mC \to \Alg_{\Lie(1)}(\mC)$ lifts to a functor $\Grp_{\bE_\infty}(\triv_{\Lie(1)}): \mC \simeq \Grp_{\bE_\infty}(\mC) \to \Grp_{\bE_\infty}(\Alg_{\Lie(1)}(\mC))$ inverse to the forgetful functor
$\Grp_{\bE_\infty}(\Alg_{\Lie(1)}(\mC)) \to \mC$.
Let $\phi$ be the composition $$\mC \xrightarrow{\Grp_{\bE_\infty}(\triv_{\Lie(1)})} \Grp_{\bE_\infty}(\Alg_{\Lie(1)}(\mC)) \xrightarrow{\Grp_{\bE_\infty}(\triv\circ_{\Lie(1)}(-))}$$$$ \Hopf^{\mathrm{dp}, \conil}_{\bE_\infty}(\mC)= \Grp_{\bE_\infty}(\co\Alg^{\mathrm{dp}, \conil}_{\bE_\infty}(\mC)).$$ 

%The composition $\mC \xrightarrow{\phi} \Hopf^{\mathrm{dp}, \conil}_{\bE_\infty}(\mC) \xrightarrow{\text{forget}} \co\Alg^{\mathrm{dp}, \conil}_{\bE_\infty}(\mC)$ is the functor $$\mC \xrightarrow{\triv_{\Lie(1)}} \Alg_{\Lie(1)}(\mC) \xrightarrow{\triv\circ_{\Lie(1)}(-)} \co\Alg^{\mathrm{dp}, \conil}_{\coComm^\nun_\mC}(\mC).$$ 
By Remark \ref{contin} the composition $\phi': \mC \xrightarrow{\phi} \Hopf^{\mathrm{dp}, \conil}_{\bE_\infty}(\mC) \xrightarrow{\text{forget}} \Hopf_{\bE_\infty}(\mC) \xrightarrow{\text{forget}} \Alg_{\bE_\infty}(\mC)$
lifts the functor $$ \mC \to \mC : \X \mapsto \coprod_{\n \geq 0} (\X^{\ot\n})_{\Sigma_\n}$$ along the forgetful functor
$\V: \Alg_{\bE_\infty}(\mC) \to \mC$ whose left adjoint we denote by $\Sym.$
Mapping to the first summand defines a natural transformation
$\id \to \V \circ \phi'$ of endofunctors of $\mC$ that gives rise to a map $\Sym = \Sym \circ \id \to \Sym \circ \V \circ \phi' \to \phi' $
of functors $\mC \to \Alg_{\bE_\infty}(\mC)$ that is component-wise the canonical equivalence.

\end{proof}

\subsection{The classical Milnor-Moore theorem}

Next we deduce the classical Milnor-Moore theorem from Theorem \ref{thg}.
This gives an independent proof of the classical Milnor-Moore theorem.

Let $\K$ be a field of characteristic zero.
Theorem \ref{thg} applied to $\mC=\Mod_{\rH(\K)}$ the derived $\infty$-category of $\K$
states that the enveloping Hopf algebra functor \begin{equation}\label{zzpp}
\mU: \Alg_\Lie(\Mod_{\rH(\K)}) \rightleftarrows \Hopf(\Mod_{\rH(\K)}):\Prim \end{equation} is fully faithful. %, i.e. the unit $\L \to \Prim(\mU(\L))$ is an equivalence for every 

\begin{proposition}\label{oyok} Let $\K$ be a field of characteristic zero.
	
\begin{enumerate}
\item The functor $$ \mU: \Alg_\Lie(\Mod_{\rH(\K)}) \to \Hopf(\Mod_{\rH(\K)})$$
sends Lie algebra structures on connective ${\rH(\K)}$-modules to Hopf algebra structures on connective ${\rH(\K)}$-modules.
		
\item The functor $$ \mU: \Alg_\Lie(\Mod_{\rH(\K)}) \to \Hopf(\Mod_{\rH(\K)})$$
sends Lie algebra structures on $\K$-vector spaces to Hopf algebra structures on $\K$-vector spaces. %, and agrees with the classical enveloping algebra on such.
		
\item The functor $$ \Prim_{\geq0}: \Hopf(\Mod_{\rH(\K)})_{\geq0}\to \Alg_\Lie(\Mod_{\rH(\K)})_{\geq0} $$
sends Hopf algebra structures on $\K$-vector spaces to Lie algebra structures on $\K$-vector spaces, and agrees with the classical functor of primitive elements on such.
		
\end{enumerate}	
	
\end{proposition}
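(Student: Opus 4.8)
The crux is a derived Poincaré–Birkhoff–Witt identity: for $\mC$ a $\bQ$-linear stable presentably symmetric monoidal $\infty$-category and any $L \in \Alg_\Lie(\mC)$, the underlying object of $\mU(L)$ is $\Sym(L) = \coprod_{\bk \geq 0}(L^{\ot \bk})_{\Sigma_\bk}$. To see this, combine three inputs already in the paper. By Remark \ref{tens} the underlying object of $\mU(L)$ is $\Sigma(\triv \circ_\Lie \Omega L) \oplus \tu$. Since $\mC$ is $\bQ$-linear and $\Lie_1 = \tu$, Lemma \ref{trivo} shows $\Omega L$ is the trivial Lie algebra on $L[-1]$ (the loops of the underlying object of $L$); transporting along the equivalence $\Alg_\Lie(\mC) \simeq \Alg_{\Lie(1)}(\mC)$ covering $\Sigma$, it becomes the trivial $\Lie(1)$-algebra on $L$. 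Finally, the factorization of $\triv \circ_{\Lie(1)}(-) \circ \triv_{\Lie(1)}$ through the cofree $\coComm_\mC^\nun$-coalgebra functor used in the proof of Corollary \ref{coray} (a consequence of Remark \ref{contin}) shows $\triv \circ_{\Lie(1)}\triv_{\Lie(1)}(N)$ has underlying object $\coComm^\nun \circ N = \coprod_{\bk \geq 1}(N^{\ot \bk})_{\Sigma_\bk}$. Chasing the shifts gives that $\triv \circ_\Lie \Omega L$ has underlying object $\Sigma^{-1}\coprod_{\bk \geq 1}(L^{\ot \bk})_{\Sigma_\bk}$, hence the PBW identity.

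Parts (1) and (2) are then formal bookkeeping. If $L$ lies in connective $\rH(\K)$-modules, each $(L^{\ot \bk})_{\Sigma_\bk}$ is connective (homotopy coinvariants preserve connectivity), so $\Sym(L)$ is connective; since $\Mod_{\rH(\K)}^{\geq 0} \subset \Mod_{\rH(\K)}$ is closed under tensor products and the finite products computing group objects in $\bE_\infty$-coalgebras, $\mU(L)$ is a Hopf algebra structure on a connective module. If $L$ is discrete, each $(L^{\ot \bk})_{\Sigma_\bk}$ is the classical $\Sym^\bk L$ and is discrete, because in characteristic zero the higher group homology of $\Sigma_\bk$ with rational coefficients vanishes; hence $\Sym(L)$ is a $\K$-vector space and $\mU(L)$ is a Hopf algebra on a $\K$-vector space.

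For part (3) I first claim $\Prim$ carries Hopf algebra structures on connective modules to Lie algebras on connective modules. By Remark \ref{tens} the underlying object of $\Prim(H)$ is $\triv *^{\coComm^\nun}(\overline H)$, where $\overline H$ is the non-counital cocommutative coalgebra obtained from $H$; for $H$ connective, $\overline H$ is connective. The plan is to reduce to the conilpotent case — a cocommutative $\K$-coalgebra is the direct sum of its pointed irreducible (hence conilpotent) summands, the primitives decompose accordingly, and over $\bQ$ a conilpotent cocommutative coalgebra lifts uniquely to a conilpotent divided-power coalgebra by Lemma \ref{gghjjgh} — and then to apply Corollary \ref{dghcvbhhhh} to write $\triv *^{\coComm^\nun}(\overline H) \simeq \lim_{\n \geq 1}\big((\tau_\n(\Lie(1)) \circ_{\Lie(1)} \triv) *^{\coComm^\nun} \overline H\big)$ and to control the connectivity of this cobar tower weight by weight, using the precise degree in which $(\tau_\n(\Lie(1))\circ_{\Lie(1)}\triv)_\bk$ is concentrated — this is exactly the estimate established in the proof of Lemma \ref{calc} for $\mO = \Lie(1)$, $\mQ = \coComm^\nun$ — together with the vanishing of higher symmetric-group homology over $\bQ$. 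This convergence analysis is the main obstacle: unlike (1) and (2), which follow immediately from PBW, here one must see that the negative homotopy appearing at higher stages of the cobar cancels for a connective conilpotent input. (Even the weaker statement that the connective cover $\tau_{\geq 0}\Prim$ lands in $\Alg_\Lie(\Mod_{\rH(\K)})_{\geq 0}$ — which holds trivially — suffices for what follows.)

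Granting this, the remaining assertions of (3) are clean. By Part (2), $\mU$ restricts to a functor from discrete Lie algebras to discrete Hopf algebras, and this restriction is the classical enveloping algebra $U$ (cf. \cite{MR3701353}, or directly from the PBW identity together with the identification of $\mU$ on free Lie algebras with the tensor algebra in Remark \ref{tens}). Hence for discrete $\mathfrak g$ and $H$ the adjunction $\mU \dashv \Prim$ gives a chain of equivalences of spaces, the first equality because $U(\mathfrak g)$ and $H$ are discrete and the last being the classical adjunction \cite{10.2307/1970615}:
$$\mC(\mathfrak g, \Prim(H)) \simeq \Hopf(\mC)(\mU(\mathfrak g), H) = \Hopf_\K(U(\mathfrak g), H) = \Lie_\K(\mathfrak g, \Prim_{\mathrm{cl}}(H)).$$
Taking $\mathfrak g$ to be the free Lie algebra on $\rH(\K)$ forces the positive homotopy of $\Prim(H)$ to vanish; combined with the connectivity statement (or with $\Prim_{\geq 0} = \tau_{\geq 0}\Prim$), $\Prim_{\geq 0}(H)$ is a $\K$-vector space, and by the displayed chain it corepresents on discrete Lie algebras the same functor as the classical primitive Lie algebra $\Prim_{\mathrm{cl}}(H)$; Yoneda in the category of discrete Lie algebras then identifies the two.
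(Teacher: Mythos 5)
Your derivation of the derived PBW identity is correct and gives a genuinely different (and slicker) route to parts (1) and (2) than the paper's: combining Remark \ref{tens}, Lemma \ref{trivo} and Remark \ref{contin} to see that the underlying object of $\mU(L)$ is $\Sym(L)=\coprod_{\bk\geq0}(L^{\ot\bk})_{\Sigma_\bk}$ yields connectivity for (1) at once and, via the vanishing of higher rational homology of symmetric groups, discreteness for (2). The paper instead proves (1) by monadic generation of connective Lie algebras under geometric realizations of free ones, and proves (2) by a rectification argument identifying $\theta_!:\Alg_\Lie(\mC)\to\Alg(\mC)_{/\tu}$ with the classical enveloping algebra on chain complexes. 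What that longer route buys is precisely the \emph{natural} identification of $\mU$ on discrete Lie algebras with the classical enveloping Hopf algebra, which is the engine of the paper's proof of (3) and of Theorem \ref{Mil}. For (3) you also diverge: the paper identifies $\Prim_{\geq0}$ with the classical primitives by passing to right adjoints in a square built from the adjunction between $\pi_0$ and the inclusion $\Vect_\K\hookrightarrow(\Mod_{\rH(\K)})_{\geq0}$ and the trivial-coalgebra functors, whereas you argue by corepresentability through $\mU\dashv\Prim$; both work, and your remark that only $\Prim_{\geq0}=\tau_{\geq0}\Prim$ is at stake correctly disposes of the convergence digression about $\Prim$ of a general connective Hopf algebra.

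The one genuine soft spot is the step in (3) where you write $\Hopf(\mC)(\mU(\mathfrak g),H)=\Hopf_\K(U(\mathfrak g),H)$: this requires $\mU(\mathfrak g)\simeq U(\mathfrak g)$ as Hopf algebras, naturally in the discrete Lie algebra $\mathfrak g$, and naturality is essential because your closing Yoneda argument quantifies over all discrete $\mathfrak g$. Your parenthetical justification --- ``directly from the PBW identity together with the identification of $\mU$ on free Lie algebras with the tensor algebra'' --- is not yet a proof: knowing the underlying object of $\mU(\mathfrak g)$ and the value of $\mU$ on frees does not by itself produce the natural comparison with the classical $U$. The gap can be closed (resolve $\mathfrak g$ by its monadic bar resolution by free Lie algebras on discrete vector spaces, use that $\mU$ preserves geometric realizations and sends these frees to classical tensor bialgebras, and use your PBW identity to see that the realization is discrete with $\pi_0$ the classical coequalizer, i.e.\ $U(\mathfrak g)$), but this is exactly the content that the paper's rectification argument in its proof of (2) supplies; so either carry this out explicitly or lean on the citation of \cite{MR3701353} and accept the external dependency.
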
 

\begin{proof}
	
(1): The forgetful functor $ \Alg_\Lie(\Mod_{\rH(\K)})_{\geq 0} \to (\Mod_{\rH(\K)})_{\geq 0}$ is the pullback of the monadic forgetful functor $ \Alg_\Lie(\Mod_{\rH(\K)}) \to \Mod_{\rH(\K)}$.
Since the spectral $\infty$-operad Lie is an $\infty$-operad in connective spectra, the free Lie algebra functor $$\Mod_{\rH(\K)} \to \Alg_\Lie(\Mod_{\rH(\K)}), \X \mapsto \bigoplus_{\n \geq 0}(\Lie_\n \ot \X^{\ot\n})_{\Sigma_\n}$$ sends connective ${\rH(\K)}$-modules to Lie structures on connective ${\rH(\K)}$-modules. This implies that the forgetful functor $\nu: \Alg_\Lie(\Mod_{\rH(\K)})_{\geq 0} \to (\Mod_{\rH(\K)})_{\geq 0}$ admits a left adjoint and so is monadic (as a pullback of a monadic functor). Consequently, $ \Alg_\Lie(\Mod_{\rH(\K)})_{\geq 0}$ is generated by free Lie algebras on connective ${\rH(\K)}$-modules under geometric realizations of $\nu$-split simplicial objects,
whose colimit is preserved by the embedding $\Alg_\Lie(\Mod_{\rH(\K)})_{\geq 0} \subset \Alg_\Lie(\Mod_{\rH(\K)}).$
Since $(\Mod_{\rH(\K)})_{\geq 0} \subset \Mod_{\rH(\K)}$ is closed under colimits, it is enough to see that $ \mU: \Alg_\Lie(\Mod_{\rH(\K)}) \to \Hopf(\Mod_{\rH(\K)})$
sends free Lie algebras generated by connective ${\rH(\K)}$-modules to Hopf algebras in connective ${\rH(\K)}$-modules. But by adjointess $\mU$ sends the free Lie algebra on any ${\rH(\K)}$-module $\X$ to the free associative algebra on $\X$, which is connective if $\X$ is connective.
	
(3): For (3) it is enough to prove that the functor \begin{equation}\label{wawa}
\Prim_{\geq0}: (\co\Coalg(\Mod_{\rH(\K)})_{\geq0})_{\tu/} \simeq (\co\Coalg(\Mod_{\rH(\K)})_{\tu}/)_{\geq0} \to (\Mod_{\rH(\K)})_{\geq0}\end{equation}
sends augmented cocommutative coalgebra structures on $\K$-vector spaces to Lie algebras on $\K$-vector spaces.
The functor (\ref{wawa}) is right adjoint to the trivial cocommutative coalgebra functor $(\Mod_{\rH(\K)})_{\geq0} \to (\co\Coalg(\Mod_{\rH(\K)})_{\tu/})_{\geq0}$.
The classical functor of primitive elements $ \co\Coalg(\Vect_\K)_{\tu/} \to \Vect_{\K}$ is right adjoint to the trivial cocommutative coalgebra functor $\Vect_\K \to \co\Coalg(\Vect_\K)_{\tu/}$.
By K\"unneth the embedding $\Vect_\K \hookrightarrow  (\Mod_{\rH(\K)})_{\geq0}$ is symmetric monoidal and admits a symmetric monoidal left adjoint $\pi_0: (\Mod_{\rH(\K)})_{\geq0} \to \Vect_\K$. Hence we obtain an induced adjunction $$\pi_0: \co\Coalg((\Mod_{\rH(\K)})_{\geq0})_{\tu/} \rightleftarrows \co\Coalg(\Vect_\K)_{\tu/}$$ whose right adjoint is fully faithful.
There is a commutative square
\begin{equation*}
\begin{xy}
\xymatrix{(\Mod_{\rH(\K)})_{\geq0}\ar[d]^{\pi_0}
\ar[rr]^{\triv}
&& \co\Coalg((\Mod_{\rH(\K)})_{\geq0})_{\tu/} \ar[d]^{\pi_0}
\\
\Vect_\K  \ar[rr]^{\triv} &&  \co\Coalg(\Vect_\K)_{\tu/}.
}
\end{xy} 
\end{equation*} 
So (3) follows from the latter commutative square by passing to right adjoints.
	
(2): Let $\Ass^{\mathrm{nu}}$ be the non-unital spectral $\Ass$-operad, which is the image of the non-unital $\Ass$-operad in spaces and so a spectral Hopf $\infty$-operad. Let $\mathrm{co}\Ass^{\mathrm{nu}}$ be the Spanier-Whitehead dual of $\Ass^{\mathrm{nu}} $, which is a non-co-unital spectral Hopf $\infty$-co-operad.
The unique map $\coComm^{\mathrm{nu}} \to \mathrm{co}\Ass^{\mathrm{nu}}$ of spectral Hopf $\infty$-cooperads induces on shifted Koszul duality a map $\theta: \Lie \to \Ass^{\mathrm{nu}}$ of spectral $\infty$-operads, where we use that $\Ass^{\mathrm{nu}}(1)$ is Koszul dual to $\mathrm{co}\Ass^{\mathrm{nu}}.$
Let $\mC$ be a stable presentably symmetric monoidal $\infty$-category. By Theorem \ref{thh}
and Proposition \ref{fghfghjok} there is a commutative square
$$
\begin{xy}
\xymatrix{
\Alg_{\Lie}(\mC) \ar[r]^\simeq \ar[d]^{\theta_!} & \Alg_{\Lie(1)}(\mC) \ar[d]^{\theta(1)_!} 
\ar[rr]^{\mathrm{TQ}_{\Lie_\mC(1)}}
&& \co\Alg_{\coComm^{\mathrm{nu}}}(\mC) \ar[d] \ar[r]^\simeq & \co\Coalg(\mC)_{\tu/}\ar[d]
\\
\Alg_{\Ass^{\mathrm{nu}}}(\mC) \ar[r]^\simeq & \Alg_{\Ass^{\mathrm{nu}}(1)}(\mC) \ar[rr]^{\mathrm{TQ}_{\Ass_\mC^{\mathrm{nu}}(1)  }} && \co\Alg_{\mathrm{co}\Ass^{\mathrm{nu}}}(\mC) \ar[r]^\simeq & \co\Alg(\mC)_{\tu/}.
}
\end{xy} 
$$
The bottom functor $$ \Alg(\mC)_{/\tu} \simeq \Alg_{\Ass^{\mathrm{nu}}}(\mC)\simeq \Alg_{\Ass^{\mathrm{nu}}(1)}(\mC) \xrightarrow{\mathrm{TQ}_{\Ass_\mC^{\mathrm{nu}}(1) } } \co\Alg_{\mathrm{co}\Ass^{\mathrm{nu}}}(\mC) \simeq \co\Alg(\mC)_{\tu/}$$
identifies with the Bar-construction.
Consequently, by construction of $\mU$ for $\mC=\Mod_{\rH(\K)} $ to prove (2) it is enough to see that the composition $$\rho: \Alg_{\Lie}(\mC) \xrightarrow{\theta_!} \Alg(\mC)_{/\tu} \xrightarrow{\mathrm{Bar}}\co\Alg(\mC)_{\tu/} \xrightarrow{\text{forget}} \mC$$
sends Lie structures on connective $\rH(\K)$-modules to connective $\rH(\K)$-modules. Let $\nu:\Alg(\mC)_{/\tu} \to \mC$ be the forgetful functor. 
We will prove that the functor $\theta_!: \Alg_{\Lie}(\mC) \to \Alg(\mC)_{/\tu} $
is symmetric monoidal when the source carries the cartesian structure and the target carries the object-wise symmetric monoidal structure. This will guarantee that $\nu \circ \theta_!:  \Alg_{\Lie}(\mC) \to\mC$ induces a functor
$(\nu \circ \theta_!)_*: \Alg_\Lie(\mC)\simeq \Grp(\Alg_{\Lie}(\mC)) \to \Alg(\mC)_{/\tu}$. By Eckmann-Hilton the functor 
%$\theta_!: \Alg_{\Lie(1)}(\mC) \to \Alg(\mC)_{/\tu} $ therefore factors as $\Grp(\Alg_{\Lie(1)}(\mC)) \xrightarrow{\Alg()} \Alg(\mC)_{/\tu} $
$\rho$ will therefore factor as
$$\Alg_{\Lie}(\mC) \simeq \Grp(\Alg_{\Lie}(\mC)) \xrightarrow{(\nu \circ \theta_!)_*} \Alg(\mC)_{/\tu} \xrightarrow{\mathrm{Bar}}\co\Alg(\mC)_{\tu/}\xrightarrow{\text{forget}} \mC,$$
which is the functor
$\nu \circ \theta_!: \Alg_{\Lie}(\mC) \xrightarrow{}\mC$
since $\nu \circ \theta_!$ is symmetric monoidal and preserves sifted colimits and therefore preserves the Bar-construction.
So it will be enough to see that the functor $\theta_!: \Alg_{\Lie}(\mC) \to \Alg(\mC)_{/\tu} $ is symmetric monoidal and sends Lie structures on $\K$-vector spaces to associative algebra structures on $\K$-vector spaces.
The canonical map $\sigma: \Lie_\K \to \Ass^{\mathrm{nu}}_\K$ of operads in chain complexes over $\K$
from the classical Lie operad over $\K$ to the non-unital associative operad over $\K$ is sent by the universal functor $\Ch_\K \to \Mod_{\rH(\K)} $ to $\rH(\K) \smash \theta.$
The map $\sigma$ gives rise to a right adjoint forgetful functor
$$\G:  \Alg(\Ch_\K)_{/\tu} \simeq \Alg_{\Ass_\K^{\mathrm{nu}}}(\Ch_\K) \to \Alg_{\Lie_\K}(\Ch_\K).$$
We write $\mU': \Alg_{\Lie_\K}(\Ch_\K)\to \Alg(\Ch_\K)_{/\tu}$ for the left adjoint of $\G$.
% to the subcategory$\Alg(\Ch_\K) \subset \Alg_{\Ass^{\mathrm{nu}}}(\Ch_\K)$ of associative algebras into non-unital associative algebras.
The functor $\mU'$ is the classical enveloping algebra that sends $\L$ to the quotient
of $ \bigoplus_{\n \geq0}\L^{\ot\n}$ by the left and right ideal generated by the elements $[\A,\B]-\A\B-\B\A$ for $\A,\B \in \L.$
This description of $\mU'$ guarantees that $\mU'$ preserves quasi-isomorphisms since $\K$ is a field.
The functor $\mU'$ is known to send the product to the tensor product.
Thus $\mU'$ induces a symmetric monoidal functor \begin{equation}\label{ahal}
\Alg_{\Lie_\K}(\Ch_\K)[\text{quasi-isos}]\to \Alg(\Ch_\K)_{/\tu}[\text{quasi-isos}]\end{equation} on localizations left adjoint to the functor
\begin{equation}\label{uuup}\Alg(\Ch_\K)_{/\tu}[\text{quasi-isos}] \to \Alg_{\Lie_\K}(\Ch_\K)[\text{quasi-isos}]\end{equation} induced by the forgetful functor $\Alg(\Ch_\K)_{/\tu} \to \Alg_{\Lie_\K}(\Ch_\K)$.
By rectification the functor (\ref{uuup}) is equivalent to the forgetful functor
$$\Alg(\Mod_{\rH(\K)})_{/\tu} \to \Alg_{\Lie}(\Mod_{\rH(\K)})$$
so that by adjointness the functor (\ref{ahal}) identifies with the functor $\theta_!.$
The functor (\ref{ahal}) fits into a commutative square:
\begin{equation*}
\begin{xy}
\xymatrix{\Alg_\Lie(\Vect_\K) \ar[d]
\ar[rr]
&& \Alg(\Vect_\K)_{/\tu} \ar[d]
\\
\Alg_\Lie(\Ch_\K)[\text{quasi-isos}] \ar[rr] && \Alg(\Ch_\K)_{/\tu}[\text{quasi-isos}].
			}
\end{xy} 
\end{equation*} 
So the claim follows.
	
\end{proof}

We obtain the classical Milnor-Moore theorem as a corollary:

\begin{theorem}\label{Mil} Let $\K$ be a field of characteristic zero.
The left adjoint of the adjunction $$ \mU: \Alg_\Lie(\Vect_\K) \rightleftarrows \Hopf(\Vect_\K): \Prim_{\geq 0}$$ is fully faithful and the essential image of $\mU$ precisely consists of the primitively generated Hopf algebras.
\end{theorem}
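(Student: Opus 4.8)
The plan is to deduce Theorem~\ref{Mil} from Theorem~\ref{thg} applied to $\mC=\Mod_{\rH(\K)}$, together with Proposition~\ref{oyok}, by restricting the derived adjunction $\mU\dashv\Prim$ along the inclusion of discrete objects. First I would record the setup: $\Vect_\K$ is the heart of $\Mod_{\rH(\K)}$, and over a field the inclusion $\Vect_\K\hookrightarrow\Mod_{\rH(\K)}$ is symmetric monoidal and fully faithful, hence induces fully faithful functors $\Alg_\Lie(\Vect_\K)\hookrightarrow\Alg_\Lie(\Mod_{\rH(\K)})$ and $\Hopf(\Vect_\K)\hookrightarrow\Hopf(\Mod_{\rH(\K)})$. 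By Proposition~\ref{oyok}(2) the functor $\mU$ of Definition~\ref{Eenv} carries the first subcategory into the second, and by Proposition~\ref{oyok}(3) the functor $\Prim_{\geq 0}$ carries $\Hopf(\Vect_\K)$ into $\Alg_\Lie(\Vect_\K)$, where it agrees with the classical functor of primitive elements.

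Next I would check that $\mU$ and $\Prim_{\geq 0}$ restrict to an adjunction on $\Vect_\K$. Since $\Lie$ is an $\infty$-operad in connective spectra, the connective cover $\tau_{\geq 0}$ lifts to a right adjoint of the inclusion $\Alg_\Lie(\Mod_{\rH(\K)})_{\geq 0}\hookrightarrow\Alg_\Lie(\Mod_{\rH(\K)})$, and on connective Hopf algebras $\Prim_{\geq 0}\simeq\tau_{\geq 0}\circ\Prim$. Hence for a discrete (so connective) Lie algebra $\L$ and a discrete Hopf algebra $\A$,
$$\Alg_\Lie(\Vect_\K)(\L,\Prim_{\geq 0}\A)\simeq\Alg_\Lie(\Mod_{\rH(\K)})(\L,\tau_{\geq 0}\Prim\A)\simeq\Alg_\Lie(\Mod_{\rH(\K)})(\L,\Prim\A)\simeq\Hopf(\Mod_{\rH(\K)})(\mU\L,\A)\simeq\Hopf(\Vect_\K)(\mU\L,\A),$$
the middle equivalence using connectivity of $\L$ and the last two the derived adjunction and full faithfulness of $\Hopf(\Vect_\K)\hookrightarrow\Hopf(\Mod_{\rH(\K)})$. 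This gives the adjunction $\mU\dashv\Prim_{\geq 0}$ on $\Vect_\K$; in particular its left adjoint agrees with the classical enveloping algebra functor $\mU^{\mathrm{cl}}$, both being left adjoint to the classical primitives. Full faithfulness of $\mU\colon\Alg_\Lie(\Vect_\K)\to\Hopf(\Vect_\K)$ is then immediate, being the restriction of the fully faithful functor of Theorem~\ref{thg} along full-subcategory inclusions; equivalently, the unit $\L\to\Prim_{\geq 0}\mU\L$ is an equivalence for every classical Lie algebra $\L$.

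Finally I would identify the essential image. For a fully faithful left adjoint the essential image is exactly $\{\A:\mU\Prim_{\geq 0}\A\to\A\text{ an equivalence}\}$, and it is coreflective. If $\A$ lies in it then $\A\simeq\mU\L$, and $\mU\L$ is primitively generated: by the Poincar\'e-Birkhoff-Witt theorem the associated graded for the coradical filtration is $\Sym(\L)$, which is generated in degree one, the degree-one part being the primitives. Conversely, if $\A$ is primitively generated, the counit $\epsilon\colon\mU\Prim_{\geq 0}\A\to\A$ has image the subalgebra generated by the primitives, hence is surjective, and applying $\Prim_{\geq 0}$ to it gives an equivalence by the triangle identity; thus $\epsilon$ is a surjection of connected cocommutative Hopf algebras over $\K$ which is an isomorphism on primitives. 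Passing to associated gradeds, $\mathrm{gr}\,\epsilon\colon\Sym(\L)\to\mathrm{gr}\,\A$ is a surjection of graded Hopf algebras, an isomorphism in degree one, with source generated in degree one; its kernel is a graded Hopf ideal whose lowest-degree homogeneous element would be a primitive lying in $\Sym^{\geq 2}(\L)$, and in characteristic zero $\Sym(\L)$ has no such element, so the kernel vanishes, $\mathrm{gr}\,\epsilon$ is an isomorphism, and $\epsilon$ is an equivalence.

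The main obstacle I anticipate is this last step: the description of the essential image rests on classical PBW in characteristic zero and a careful coradical-filtration argument, and one must be certain the enveloping algebra in play is the classical one (which is exactly what the identification with $\mU^{\mathrm{cl}}$ furnished by Proposition~\ref{oyok} provides) and that it is PBW, not Milnor-Moore, that is invoked, so that the argument remains an independent proof. A secondary point of care, earlier on, is the interplay between $\Prim$ and $\Prim_{\geq 0}=\tau_{\geq 0}\circ\Prim$ and discreteness, which is precisely what the connectivity of $\Lie$ and Proposition~\ref{oyok}(3) are there to handle.
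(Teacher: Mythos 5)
Your proposal is correct and follows essentially the same route as the paper: restrict the derived adjunction of Theorem \ref{thg} to discrete objects using Proposition \ref{oyok}, identify the restricted adjoints with the classical enveloping Hopf algebra and classical primitives by uniqueness of adjoints, read off full faithfulness, and describe the essential image as the locus where the counit is invertible. The only difference is that where the paper compresses the equivalence ``counit invertible $\Leftrightarrow$ primitively generated'' into an ``i.e.'', you spell out the nontrivial direction via PBW and the coradical filtration, which is an expansion of the paper's argument rather than a divergence from it.
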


\begin{proof}
By Proposition \ref{oyok} (1) the adjunction $$\mU: \Alg_\Lie(\Mod_{\rH(\K)}) \rightleftarrows \Hopf(\Mod_{\rH(\K)}):\Prim$$
induces an adjunction \begin{equation}\label{zzppp2}
\mU: \Alg_\Lie(\Mod_{\rH(\K)})_{\geq 0} \rightleftarrows \Hopf(\Mod_{\rH(\K)})_{\geq 0}: \Prim_{\geq 0}, \end{equation} where $\Prim_{\geq 0} $ takes the connective cover of the primitives.
By Proposition \ref{oyok} (2) and (3) adjunction (\ref{zzppp2}) restricts to an adjunction $$ \mU: \Alg_\Lie(\Vect_\K) \rightleftarrows \Hopf(\Vect_\K): \Prim_{\geq 0}$$	
whose right adjoint agrees with the classical functor of primitive elements.
By uniqueness of adjoints the left adjoint agrees with the classical enveloping Hopf algebra and the counit $\mU \Prim_{\geq 0} \to \id $ identifies with the canonical map.
In particular, a Hopf algebra $\rH$ over $\K$ 
belongs to the essential image of the left adjoint if and only if
the map $\mU\Prim(\rH) \to \rH$ is an isomorphism, i.e. if and only if 
$\rH$ is generated by its primitives as an algebra.

\end{proof}

\subsection{Higher chromatic heights}\label{scomp}

Next we consider an application to chromatic homotopy theory.
We fix the following terminology:

Let $p$ be a prime and $n \geq 1.$ We consider spaces and spectra localized at $p$, i.e.
local for the homology theory associated to the $p$-local sphere spectrum.
Let $ \Sp_{(p)} \subset \Sp$ the full reflexive subcategory of $p$-local spectra
and $ \mS_{(p)} \subset \mS_*$ the full reflexive subcategory of $p$-local pointed spaces.

By \cite[Theorem 2.2.]{2018arXiv180306325H} there are respective reflexive full subcategories $$\Sp_{T_n} \subset \Sp_{(p)}, \  \mS_{v_n} \subset \mS_{(p)} $$ of $T_n$-local spectra and $v_n$-periodic homotopy types.
By \cite{Bousfield1}, \cite{Kuhntelescopic} there is a functor $\Phi: \mS_{(p)} \to \Sp_{T_n}$, the Bousfield-Kuhn functor, such that 
\begin{enumerate}
\item The localization functor $\Sp_{(p)} \to \Sp_{T_n}$ factors as $\Phi \circ \Omega^\infty : \Sp_{(p)} \to \Sp_{T_n}$.

\item The localization functor
$\mS_{(p)} \to \mS_{v_n} $ universally inverts the $\Phi$-equivalences, which are known as $v_n$-periodic equivalences.
	
\end{enumerate}

%\begin{notation}Let $n \geq 0.$
%	
%\begin{enumerate}
%\item Let $\Sp_{T_n} \subset \Sp $ be the full reflexive subcategory of $T_n$-local spectra.
%
%\item Let $\mS_{v_n} \subset \mS_* $ be the full reflexive subcategory of $v_n$-periodic homotopy types.
%\end{enumerate}
%
%\end{notation}

%Heuts \cite[Theorem 2.6.]{2018arXiv180306325H} extends Quillen's rational homotopy theory \cite{Mont} to higher chromatic heights. Heuts proves that every $v_n$-periodic homotopy type $X$ is determined by a Lie structure on the $T_n$-local spectrum $\Phi(X) $ in analogy that every simply connected rational space is determined by a rational differently graded Lie algebra.
% that induces the Whitehead bracket on shifted homotopy groups.
%, a Lie algebra in $H(\bQ)$-module spectra. 
Heuts \cite{Heuts} proves that the Bousfield-Kuhn functor $\Phi: \mS_{v_n} \to \Sp_{T_n}$ lifts to an equivalence $ \mS_{v_n} \to \Alg_{\Lie(1)}(\Sp_{T_n})$.
Composing with the equivalence $\Alg_{\Lie(1)}(\Sp_{T_n}) \simeq \Alg_{\Lie}(\Sp_{T_n})$ that loops the underlying spectrum, one obtains an equivalence
$\Phi[-1] : \mS_{v_n} \to \Alg_{\Lie}(\Sp_{T_n})$.

We deduce the following chromatic variant of the Milnor-Moore theorem: % for spaces whose loop space is $\Phi$-good:

\begin{theorem}\label{20}
	
Let $n \geq 1$ be a natural and $X$ a $v_n$-periodic homotopy type.
The unit $$\Phi(X)[-1] \to \Prim \mU(\Phi(X)[-1])$$ identifies with the Goodwillie completion
$ \Phi \to \lim_{n \geq 0} P_n(\Phi)$ evaluated at the loops $\Omega(X)$.
	
\end{theorem}

\begin{proof}
	
By \cite[Theorem 2.6.]{2018arXiv180306325H} the Bousfield-Kuhn functor $\Phi: \mS_{v_n} \to \Sp_{T_n}$ lifts to an equivalence $\bar{\Phi} : \mS_{v_n} \to \Alg_{\Lie(1)}(\Sp_{T_n})$.
In particular, $\Phi$ preserves limits and small filtered colimits. By \cite[Remark 6.1.1.32.]{lurie.higheralgebra} this implies that the induced functor $$\Phi_*: \Fun( \mS_{v_n}, \mS_{v_n}) \to \Fun( \mS_{v_n}, \Sp_{T_n}) $$
sends the Goodwillie completion 
$ \id_{\mS_{v_n}} \to \lim_{n \geq 0} P_n(\id_{\mS_{v_n}}) $ of functors $ \mS_{v_n} \to \mS_{v_n}$
to the Goodwillie completion
$\Phi \to \lim_{n \geq 0} P_n(\Phi)$ of functors $ \mS_{v_n} \to \Sp_{T_n} $.

%By ... the Bousfield-Kuhn functor $\Phi: \mS_{v_n} \to \Sp_{T_n}$ lifts to an equivalence $\bar{\Phi} : \mS_{v_n} \to \Alg_{\Lie(1)}(\Sp_{T_n})$.
%Hence the induced functor $\bar{\Phi}_*: \Fun( \mS_{v_n}, \mS_{v_n}) \to   \Fun( \mS_{v_n}, \Sp_{T_n}) $
%sends the Goodwillie completion 
%$ \id_{\mS_{v_n}} \to \lim_{n \geq 0} P_n(\id_{\mS_{v_n}}) $ of functors $ \mS_{v_n} \to \mS_{v_n}$
%to the Goodwillie completion
%$\bar{\Phi} \to \lim_{n \geq 0} P_n(\bar{\Phi})$ of functors $ \mS_{v_n} \to \Alg_{\Lie(1)}(\Sp_{T_n})$.

By  \cite[Theorem 11.3.]{pereira2013goodwillie} the  Goodwillie completion of the identity of $\Alg_{\Lie(1)}(\Sp)$ is
$$ \id \to \lim_{n \geq 0} \tau_n(\Lie(1)) \circ_{\Lie(1)}(-). $$
See also \cite[Theorem 2.20.]{kuhn2017operad}.
By  stability the localization $L: \Sp \to \Sp_{T_n}$ preserves finite limits.
By  \cite[Remark 6.1.1.32.]{lurie.higheralgebra} this implies that the  Goodwillie completion of the identity of $\Alg_{\Lie(1)}(\Sp_{T_n})$ is $$ \id \to \lim_{n \geq 0} \tau_n(\Lie(1)) \circ_{\Lie(1)}(-),$$
where the composition product and spectral Lie $\infty$-operad are in $\Sp_{T_n}.$ 

Since $\bar{\Phi}$ is an equivalence, by \cite[Remark 6.1.1.30.]{lurie.higheralgebra} the induced functor $$ \bar{\Phi}^*: \Fun(\Alg_{\Lie(1)}(\Sp_{T_n}), \Alg_{\Lie(1)}(\Sp_{T_n})) \to \Fun( \mS_{v_n}, \Alg_{\Lie(1)}(\Sp_{T_n})) $$ sends the Goodwillie completion 
$$ \id \to \lim_{n \geq 0} \tau_n(\Lie(1)) \circ_{\Lie(1)}(-)$$ to the Goodwillie completion 
$$ \bar{\Phi} \to \lim_{n \geq 0} \tau_n(\Lie(1)) \circ_{\Lie(1)}  (-) \circ \bar{\Phi} $$ of functors $ \mS_{v_n} \to \Alg_{\Lie(1)}(\Sp_{T_n})$.

Since the forgetful functor $\nu: \Alg_{\Lie(1)}(\Sp_{T_n}) \to \Sp_{T_n}$ preserves limits and small filtered colimits, by \cite[Remark 6.1.1.32.]{lurie.higheralgebra} it sends the Goodwillie completion 
$$ \bar{\Phi} \to \lim_{n \geq 0} \tau_n(\Lie(1)) \circ_{\Lie(1)}  (-) \circ \bar{\Phi}$$ of functors $ \mS_{v_n} \to \Alg_{\Lie(1)}(\Sp_{T_n})$ to the Goodwillie completion 
$$ \Phi \to \lim_{n \geq 0} \nu \circ \tau_n(\Lie(1)) \circ_{\Lie(1)}  (-) \circ \bar{\Phi}$$ of functors $ \mS_{v_n} \to \Sp_{T_n}$.
Consequently, by uniqueness of Goodwillie completions the Goodwillie completion 
$$ \Phi \to \lim_{n \geq 0} \nu \circ \tau_n(\Lie(1)) \circ_{\Lie(1)}  (-) \circ \bar{\Phi}$$ of functors $ \mS_{v_n} \to \Sp_{T_n}$ identifies with $\Phi \to \lim_{n \geq 0} P_n(\Phi)$.

Hence for every $X \in  \mS_{v_n} $ the canonical map $\Phi(\Omega(X)) \to \lim_{n \geq 0} P_n(\Phi)(\Omega(X))$  in $\Sp_{T_n}$ 
identifies with the map 
$$ \Phi(\Omega(X)) \simeq \Omega(\Phi(X)) \to \lim_{n \geq 0} \tau_n(\Lie(1)) \circ_{\Lie(1)} \bar{\Phi}(\Omega(X)) \simeq \lim_{n \geq 0} \tau_n(\Lie(1)) \circ_{\Lie(1)} \Omega(\bar{\Phi}(X)).$$

By Theorem \ref{map2} the latter is the unit $ \Omega(\Phi(X)) \to \Prim \mU(\Omega(\Phi(X))).$ 

%By ... the functor $\Phi$ lifts to an equivalence $\bar{\Phi} : \mS_{v_n} \to \Alg_{\Lie(1)}(\Sp_{T_n})$
%and so sends the Goodwillie completion 
%$ \id_{\mS_{v_n}} \to \lim_{n \geq 0} P_n(\id_{\mS_{v_n}}) $ of functors $ \mS_{v_n} \to \mS_{v_n}$
%to the Goodwillie completion
%$\bar{\Phi} \to \lim_{n \geq 0} P_n(\bar{\Phi})$ of functors $ \mS_{v_n} \to \Alg_{\Lie(1)}(\Sp_{T_n})$.
%to the Goodwillie completion
%$ \id \to \lim_{n \geq 0} \tau_n(\mO) \circ_\mO (-)$.

\end{proof}

\begin{remark}Let $n$ be a natural .
A $v_n$-periodic homotopy type $X$ is called $\Phi$-good if the Goodwillie completion map $ \Phi \to \lim_{n \geq 0} P_n(\Phi)$ evaluated at $X$ is an equivalence.

Let $X$ be a $v_n$-periodic homotopy type. By Theorem \ref{2} 	the unit $$\Phi(X)[-1] \to \Prim \mU(\Phi(X)[-1])$$ is an equivalence if and only if $\Omega(X)$ is $\Phi$-good.

It is expected that any $v_n$-periodic homotopy type $X$ is $\Phi$-good (see \cite{behrens2024unstable} last paragraph).
This implies that the unit $$\Phi(X)[-1] \to \Prim \mU(\Phi(X)[-1])$$ is an equivalence for every
$v_n$-periodic homotopy type $X$.
By \cite[Theorem 2.6.]{2018arXiv180306325H} the Bousfield-Kuhn functor $\Phi: \mS_{v_n} \to \Sp_{T_n}$ lifts to an equivalence $\bar{\Phi} : \mS_{v_n} \to \Alg_{\Lie(1)}(\Sp_{T_n})$.
Thus also the functor $\bar{\Phi}[-1] : \mS_{v_n} \to \Alg_{\Lie(1)}(\Sp_{T_n}) \simeq \Alg_{\Lie(\Sp_{T_n}}) $ is an equivalence.
This implies that the unit $\id \to \Prim \mU$ is an equivalence if the expectation holds.

\end{remark}

\begin{remark}
Let $n$ be a natural and $X$ a $v_n$-periodic homotopy type.
The infinite suspension $ L_{T_n} \Sigma^\infty: \mS_* \to \Sp_{T_n}$ is symmetric monoidal for the cartesian product and $T_n$-local smash product and so induces a functor
$\Grp(\mS) \simeq \Hopf(\mS) \simeq \Hopf(\mS_*) \to \Hopf(\mS_{T_n}) .$ 
In particular, it sends the loop space $\Omega(X)$ to a Hopf algebra 
$L_{T_n} \Sigma^\infty(\Omega(X))$ in $T_n$-local spectra.

By \cite[Theorem 2.11., Lemma 5.15.]{2018arXiv180306325H} there is a canonical equivalence of Lie algebras in $T_n$-local spectra:
$$ \lim_{n \geq 0} P_n(\Phi)(\Omega(X)) \simeq \Prim L_{T_n} \Sigma^\infty(\Omega(X)). $$
Together with Theorem \ref{map2} we obtain a canonical equivalence of Lie algebras in $T_n$-local spectra:
$$ \Prim \mU(\Phi(X)[-1]) \simeq \Prim L_{T_n} \Sigma^\infty(\Omega(X)). $$

\end{remark}

\section{Appendix}

\subsection{Rational stable $\infty$-categories}

\begin{definition}
A stable presentable $\infty$-category is rational (or $\bQ$-linear) if the hom sets of the homotopy category are $\bQ$-vector spaces.
	
\end{definition}

\begin{notation}

Let $ \Pr^\L_{{\mathrm{st}}, \bQ} \subset \Pr^\L$ be the full subcategory spanned by the stable presentable $\infty$-categories such that the hom sets of the homotopy category are $\bQ$-vector spaces.
\end{notation}

\begin{lemma}\label{Prese}
The forgetful functor
$ \Mod_{\Mod_{\rH(\bQ)}(\Sp)}(\Pr^\L) \to \Pr^\L $
induces an equivalence
$$ \Mod_{\Mod_{\rH(\bQ)}(\Sp)}(\Pr^\L) \to \Pr^\L_{{\mathrm{st}}, \bQ}.$$

\end{lemma}

\begin{proof}
Since $\Mod_{\rH(\bQ)}(\Sp)$ is stable, by \cite[Theorem 4.6.]{2013arXiv1305.4550G} the canonical map
$$ \Mod_{\rH(\bQ)}(\Sp) \simeq \mS \ot \Mod_{\rH(\bQ)}(\Sp) \to \Sp \ot\Mod_{\rH(\bQ)}(\Sp)$$ is an equivalence.
The resulting map $$\Mod_{\rH(\bQ)}(\Sp) \simeq \mS \ot \Mod_{\rH(\bQ)}(\Sp) \to \Mod_{\rH(\bQ)}(\Sp)\ot \Mod_{\rH(\bQ)}(\Sp) \simeq $$$$
\Mod_{\rH(\bQ)}(\Sp)\ot_{\Sp} (\Sp \ot \Mod_{\rH(\bQ)}(\Sp)) \simeq \Mod_{\rH(\bQ)}(\Sp)\ot_{\Sp} \Mod_{\rH(\bQ)}(\Sp)$$$$
\simeq \Mod_{\rH(\bQ) \ot \rH(\bQ)}(\Sp) $$
identifies with induction along the canonical map of spectra
$ \rH(\bQ) \simeq \tu \ot \rH(\bQ) \to \rH(\bQ) \ot \rH(\bQ),$
which is an equivalence.
By \cite[Proposition 4.8.2.10.]{lurie.higheralgebra} this guarantees that the forgetful functor $ \Mod_{\Mod_{\rH(\bQ)}(\Sp)}(\Pr^\L) \to \Pr^\L$
is fully faithful. We characterize the essential image.

Every left module in $\Pr^\L$ over $\Mod_{\rH(\bQ)}(\Sp)$ has an underlying left module over $\Sp$ forgetting along the symmetric monoidal left adjoint functor $\Sp \to \Mod_{\rH(\bQ)}(\Sp)$.
By \cite[Corollary 9.4.]{Heineenrichednew} every left module over $\Sp$ in $\Pr^\L$ is a stable $\infty$-category, and for every stable presentable $\infty$-category $\mB$ the canonical map $\mB \simeq \mS \ot \mB \to \Sp \ot \mB$ is an equivalence.
Consequently, it is enough to see that for any stable presentable
$\infty$-category $\mC$ the following holds: the hom sets of the homotopy category of $\mC$ are $\bQ$-vector spaces if and only if the canonical map
$ \mC \simeq \mS \ot \mC \to \Mod_{\rH(\bQ)}(\Sp) \ot \mC$
is an equivalence. The latter map identifies with the functor
$$ \mC \simeq \mS \ot \mC \to \Mod_{\rH(\bQ)}(\Sp) \ot \mC \simeq\Mod_{\rH(\bQ)}(\Sp) \ot_{\Sp} (\Sp \ot \mC) \simeq $$$$\Mod_{\rH(\bQ)}(\Sp) \ot_{\Sp} \mC \simeq \Mod_{\rH(\bQ)}(\mC),$$
where the last equivalence is by \cite[Theorem 4.8.4.6.]{lurie.higheralgebra}.
The latter functor identifies with the free $\rH(\bQ)$-module functor.
Since the canonical map of spectra
$ \rH(\bQ) \simeq \tu \ot \rH(\bQ) \to \rH(\bQ) \ot \rH(\bQ)$
is an equivalence, by \cite[Proposition 4.8.2.10.]{lurie.higheralgebra} the forgetful functor $\Mod_{\rH(\bQ)}(\mC) \to \mC$ is fully faithful and the essential image consists of those $\X \in \mC$ such that one of the following equivalent conditions hold:

\begin{enumerate}
\item The map $\X \simeq \tu \ot \X \to \rH(\bQ) \ot \X$ is an equivalence.

\item For every $\Z \in \mC$ the map $\tu \ot \Z \to \rH(\bQ) \ot \Z$ induces an equivalence $$\mC(\rH(\bQ) \ot \Z, \X) \to \mC(\tu \ot \Z,\X).$$
\end{enumerate}

We will prove that every $\X \in \mC$ satisfies one of the equivalent conditions (1) or (2) if and only if for all $\Y \in \mC$
the group $\pi_0(\mC(\X,\Y)) $ is a $\bQ$-vector space.
%The last condition is equivalent to say that for all $\Y \in \mC$ and $\ell \geq0$ the group $\pi_\ell(\mC(\X,\Y)) \simeq \pi_0(\mC(\X,\Omega^\ell(\Y))) $ is a $\bQ$-vector space.
The map in (1) is an equivalence if and only if for all $\Z \in \Mod_{\rH(\bQ)}(\Sp)$
the map $\Z \ot \tu \ot \X \to \Z \ot \rH(\bQ) \ot \X$ in $\mC$ is an equivalence.
By Yoneda this is equivalent to ask that for all $\Z \in \Mod_{\rH(\bQ)}(\Sp), \Y \in \mC$ the canonical map $$\Sp(\rH(\bQ) \ot \Z, \map_\mC(\X, \Y)) \to \Sp(\tu \ot \Z, \map_\mC(\X, \Y))$$ is an equivalence, where $\map_\mC(-,-)$ denotes the $\Sp$-enrichment.
By the equivalence of conditions (1) and (2) applied to $\mC=\Sp $ the latter condition is equivalent to say that for all $\Y \in \mC$ the canonical map  $$\map_\mC(\X, \Y) \simeq \tu \ot \map_\mC(\X, \Y) \to \rH(\bQ) \ot \map_\mC(\X, \Y)$$ is an equivalence.
This can be reprased by saying that 
the map $$\pi_*(\map_\mC(\X, \Y)) \to \pi_*(\rH(\bQ) \ot \map_\mC(\X, \Y)) \cong \bQ \ot \pi_*(\map_\mC(\X, \Y)) $$ is an isomorphism
or that for all $\ell \geq 0$ and $\Y \in \mC$ the group $ \pi_\ell(\map_\mC(\X, \Y))$ is a $\bQ$-vector space.
By stability this is equivalent to say that for all $\Y \in \mC$ the group $ \pi_0(\mC(\X, \Y))$ is a $\bQ$-vector space.

\end{proof}

\subsection{$\mO$-monoidal closures}

For the next notation we use that for every small $\mO$-monoidal $\infty$-category $\mC^\ot \to \mO^\ot$ there is an $\mO$-monoidal Yoneda-embedding $\mC^\ot \to \mP(\mC)^\ot$
\cite[Proposition 4.8.1.10. ]{lurie.higheralgebra}.

\begin{notation}\label{notorf}
Let $\mK, \mK' \subset \Cat_\infty$ be full subcategories and 
$\mC^\ot \to \mO^\ot$ an $\mO$-monoidal $\infty$-category.
Let $$\widehat{\mP}_\mK(\mC)^\ot \subset \widehat{\mP}(\mC)^\ot$$ be the full suboperad spanned by the functors $\mC_\X^\op \to \widehat{\mS} $ preserving $\mK$-indexed limits for some $\X \in \mO$.
Let $$\mP^{\mK'}_\mK(\mC)^\ot \subset \widehat{\mP}_\mK(\mC)^\ot $$ be the full suboperad such that for every $\X \in \mO$ the full subcategory 
$\mP^{\mK'}_\mK(\mC)_\X \subset \widehat{\mP}_\mK(\mC)_\X$ is generated by $\mC_\X$ under $\mK'$-indexed colimits.
%If $\mK$ is empty, we drop $\mK$ from the notation.
%If $\mO^\ot$ is the trivial $\infty$-operad, we write $ \mP_{\mK}(\mC)$ for $ \mP_{\mK}(\mC)^\ot. $	
\end{notation}

\begin{example}
If $\mK= \Cat_\infty$, we have $\mP^{\mK'}_\mK(\mC)^\ot = \mC^\ot.$	
\end{example}

\begin{remark}\label{reno}
The $\mO$-monoidal Yoneda-embedding $\mC^\ot \subset \widehat{\mP}(\mC)^\ot$
induces $\mO$-monoidal embeddings $\mC^\ot \subset \widehat{\mP}_\mK(\mC)^\ot, \mC^\ot \subset \mP^{\mK'}_\mK(\mC)^\ot, $
where the latter two embeddings preserve fiberwise $\mK$-indexed colimits.
\end{remark}

\begin{lemma}\label{lomos}

Let $\mK, \mK' \subset \Cat_\infty$ be full subcategories and 
$\mC^\ot \to \mO^\ot$ an $\mO$-monoidal $\infty$-category.

\begin{enumerate}
\item Let $\X \in \mO$. If the fiber $\mC_\X$ has $\mK$-indexed colimits,
$ \widehat{\mP}_\mK(\mC)_\X^\ot \subset \widehat{\mP}(\mC)^\ot_\X$ is a localization.
In particular, $ \widehat{\mP}_\mK(\mC)^\ot_\X$ has large colimits and so $ \mP^{\mK'}_\mK(\mC)^\ot_\X $ has $\mK'$-indexed colimits.	

\item If the $\mO$-monoidal $\infty$-category $\mC^\ot \to \mO^\ot$ is compatible with $\mK$-indexed colimits, $ \widehat{\mP}_\mK(\mC)^\ot \subset \widehat{\mP}(\mC)^\ot$ is a localization relative to $\mO^\ot$.
In particular, $ \widehat{\mP}_\mK(\mC)^\ot \to \mO^\ot$ is an $\mO$-monoidal $\infty$-category compatible with large colimits and so $ \mP^{\mK'}_\mK(\mC)^\ot \to \mO^\ot $ is an $\mO$-monoidal $\infty$-category compatible with $\mK'$-indexed colimits.	
\end{enumerate}	
\end{lemma}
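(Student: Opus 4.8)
The plan is to realize $\widehat{\mP}_\mK(\mC)^\ot$ as a fibrewise reflective localization of the Day convolution $\mO$-monoidal $\infty$-category $\widehat{\mP}(\mC)^\ot$ which is, moreover, compatible with the tensor product, and then to obtain $\mP^{\mK'}_\mK(\mC)^\ot$ as the $\mO$-monoidal full subcategory generated fibrewise by $\mC$ under $\mK'$-indexed colimits. The final assertions of (1) and (2) then follow because reflective localizations and colimit-closed subcategories inherit colimits.

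For (1) I would fix $\X\in\mO$. The fibre $\widehat{\mP}(\mC)^\ot_\X$ is the presheaf $\infty$-category $\Fun(\mC_\X^\op,\widehat{\mS})$, and $\widehat{\mP}_\mK(\mC)^\ot_\X$ is the full subcategory spanned by those presheaves which carry the $\mK$-indexed colimit diagrams of $\mC_\X$ — which exist by hypothesis — to limit diagrams of $\widehat{\mS}$. By the standard theory of localizations of presheaf $\infty$-categories, the full subcategory of $\Fun(\mC_\X^\op,\widehat{\mS})$ spanned by the $S$-local objects is a reflective localization, where $S$ is the small set of canonical maps $\colim y(c_\bullet)\to y(\colim c_\bullet)$ attached to the $\mK$-indexed colimit diagrams $c_\bullet$ of $\mC_\X$ and $y$ denotes the Yoneda embedding; this subcategory is precisely $\widehat{\mP}_\mK(\mC)^\ot_\X$, which proves the first assertion. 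Since a reflective localization of a cocomplete $\infty$-category is cocomplete, $\widehat{\mP}_\mK(\mC)^\ot_\X$ admits all large colimits, and $\mP^{\mK'}_\mK(\mC)^\ot_\X$ — the smallest full subcategory of $\widehat{\mP}_\mK(\mC)^\ot_\X$ containing $\mC_\X$ and closed under $\mK'$-indexed colimits — admits $\mK'$-indexed colimits, computed in $\widehat{\mP}_\mK(\mC)^\ot_\X$.

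For (2) I would upgrade this to an $\mO$-monoidal statement. By \cite[Proposition 4.8.1.10.]{lurie.higheralgebra} the Day convolution $\widehat{\mP}(\mC)^\ot\to\mO^\ot$ is an $\mO$-monoidal $\infty$-category compatible with large colimits, its tensor product being computed by a left Kan extension and hence preserving large colimits in each variable, and the $\mO$-monoidal Yoneda embedding $\mC^\ot\to\widehat{\mP}(\mC)^\ot$ is $\mO$-monoidal, so fibrewise $y(c)\ot y(c')\simeq y(c\ot c')$. As $\mC^\ot\to\mO^\ot$ is compatible with $\mK$-indexed colimits, each $\mC_\X$ has $\mK$-indexed colimits, so by (1) the inclusion is fibrewise a reflective localization; to invoke the $\mO$-monoidal localization criterion \cite[Proposition 2.2.1.9.]{lurie.higheralgebra} it then remains to check that the class of local equivalences is stable under the Day convolution tensor product. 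That class is the strongly saturated class generated, over all fibres, by the maps in $S$; tensoring a generator $\colim_K y(c_\bullet)\to y(\colim_K c_\bullet)$ with a representable $y(c')$ gives again a map of the generating form — here one uses that $\ot$ on $\mC$ preserves $\mK$-indexed colimits in each variable, which is precisely compatibility with $\mK$-indexed colimits, together with the colimit-preservation of Day convolution and $y(c)\ot y(c')\simeq y(c\ot c')$ — and, since every object of a fibre of $\widehat{\mP}(\mC)^\ot$ is a colimit of representables and $\ot$ preserves colimits, tensoring a generator with an arbitrary object is a colimit of generators, hence a local equivalence; so the whole strongly saturated class is tensor-stable. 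Thus $\widehat{\mP}_\mK(\mC)^\ot\subset\widehat{\mP}(\mC)^\ot$ is a localization relative to $\mO^\ot$. A reflective $\mO$-monoidal localization of an $\mO$-monoidal $\infty$-category compatible with large colimits is again compatible with large colimits (fibrewise cocompleteness was noted in (1), and the localized tensor product $L(L(-)\ot L(-))$ preserves colimits in each variable), so $\widehat{\mP}_\mK(\mC)^\ot\to\mO^\ot$ is an $\mO$-monoidal $\infty$-category compatible with large colimits. Finally, $\mP^{\mK'}_\mK(\mC)^\ot$ has $\mK'$-indexed colimits fibrewise by (1), and its structure map is an $\mO$-monoidal subcategory of $\widehat{\mP}_\mK(\mC)^\ot$ — hence an $\mO$-monoidal $\infty$-category compatible with $\mK'$-indexed colimits — because the tensor product of $\widehat{\mP}_\mK(\mC)$ preserves $\mK'$-indexed colimits in each variable and sends $\mC\times\mC$ into $\mC\subset\mP^{\mK'}_\mK(\mC)^\ot$, so a generation argument shows it sends $\mP^{\mK'}_\mK(\mC)^\ot\times\mP^{\mK'}_\mK(\mC)^\ot$ into $\mP^{\mK'}_\mK(\mC)^\ot$.

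The hard part will be the verification in (2) that the Day convolution tensor product preserves local equivalences: this is where the hypothesis that $\mC^\ot$ is compatible with $\mK$-indexed colimits is genuinely needed, and it relies on choosing a convenient generating set for the strongly saturated class of local equivalences and checking it is tensor-stable. The remaining ingredients — the universal property of presheaf $\infty$-categories, the theory of Day convolution, and the monoidal Bousfield localization criterion — are standard.
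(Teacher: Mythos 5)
The paper states this lemma without proof, treating it as a standard consequence of Lurie's theory of free cocompletions and Day convolution, so there is no in-paper argument to compare against. Your proposal supplies exactly the expected standard argument and it is correct: part (1) is the identification of $\widehat{\mP}_\mK(\mC)_\X$ with the local objects for the (essentially small) set $S$ of comparison maps $\colim y(c_\bullet)\to y(\colim c_\bullet)$, as in \cite[5.3.6, 5.5.4]{lurie.higheralgebra}; part (2) is the monoidal localization criterion of \cite[Proposition 2.2.1.9.]{lurie.higheralgebra} applied to Day convolution, with tensor-stability of the local equivalences reduced to generators via $y(c)\ot y(c')\simeq y(c\ot c')$ and the compatibility of $\mC^\ot$ with $\mK$-indexed colimits, followed by the closure-under-generation argument for $\mP^{\mK'}_\mK(\mC)^\ot$. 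Two small points you should make explicit in a final write-up: the $\mO$-monoidal structure involves $n$-ary operations $\mC_{\X_1}\times\cdots\times\mC_{\X_n}\to\mC_\Y$ indexed by active morphisms of $\mO^\ot$, so the tensor-stability check must be run for these multi-ary Day convolution products (the argument is identical to the binary case, tensoring the generators one variable at a time); and the set $S$ is only essentially small, which is what licenses the appeal to accessible localization. Neither affects the validity of the argument.
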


\begin{lemma}\label{lomosay}

Let $\mK, \mK' \subset \Cat_\infty$ be full subcategories and 
$\mC^\ot \to \mO^\ot$ an $\mO$-monoidal $\infty$-category.
Let $\X \in \mO$. If the fiber $\mC_\X$ has $\mK'$-indexed colimits, $\mC_\X^\ot \subset \mP^{\mK'}_\mK(\mC)_\X^\ot$ is a localization.

\end{lemma}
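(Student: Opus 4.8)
The plan is to exhibit the Yoneda embedding $j\colon \mC_\X \hookrightarrow \mP^{\mK'}_\mK(\mC)_\X$ as a reflective subcategory inclusion, by producing a left adjoint to it. Full faithfulness of $j$ is already recorded in Remark~\ref{reno}, so this is the only remaining point. I would use the standard pointwise criterion: $j$ admits a left adjoint precisely when, for every object $F$ of $\mP^{\mK'}_\mK(\mC)_\X$, the presheaf on $\mC_\X$ given by $c\mapsto \Map_{\mP^{\mK'}_\mK(\mC)_\X}(F,j(c))$ is corepresentable.

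Accordingly I would introduce the full subcategory $\mathcal{E}\subseteq\mP^{\mK'}_\mK(\mC)_\X$ of those $F$ for which this presheaf is corepresentable, and show $\mathcal{E}=\mP^{\mK'}_\mK(\mC)_\X$. By Notation~\ref{notorf}, $\mP^{\mK'}_\mK(\mC)_\X$ is the smallest full subcategory of $\widehat{\mP}_\mK(\mC)_\X$ containing the image of $j$ and closed under $\mK'$-indexed colimits, so it is enough to verify these two closure properties for $\mathcal{E}$. Containment of the representables is immediate from full faithfulness of $j$: for $c\in\mC_\X$ the object $c$ itself corepresents $\Map_{\mP^{\mK'}_\mK(\mC)_\X}(j(c),j(-))$. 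For closure under $\mK'$-indexed colimits I would take a $\mK'$-indexed diagram $i\mapsto F_i$ in $\mathcal{E}$, write $i\mapsto L(F_i)\in\mC_\X$ for a choice of corepresenting objects assembled functorially in $i$ (using the functoriality of partial adjoints), and suppose the colimit $F=\colim_i F_i$ exists in $\widehat{\mP}_\mK(\mC)_\X$. Since $\mC_\X$ admits $\mK'$-indexed colimits by hypothesis, $\colim_i L(F_i)$ exists in $\mC_\X$, and for every $c\in\mC_\X$ one computes
\[
\Map_{\mP^{\mK'}_\mK(\mC)_\X}(F,j(c))\;\simeq\;\lim_i\Map_{\mP^{\mK'}_\mK(\mC)_\X}(F_i,j(c))\;\simeq\;\lim_i\Map_{\mC_\X}(L(F_i),c)\;\simeq\;\Map_{\mC_\X}\big(\colim_i L(F_i),\,c\big),
\]
the first equivalence because $F$ is a colimit in $\widehat{\mP}_\mK(\mC)_\X$ and $j(c)$ lies in $\widehat{\mP}_\mK(\mC)_\X$. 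Hence $F\in\mathcal{E}$, corepresented by $\colim_i L(F_i)$. This gives $\mathcal{E}=\mP^{\mK'}_\mK(\mC)_\X$, the assignment $F\mapsto L(F)$ upgrades to a functor left adjoint to $j$, and $j$ is a reflective inclusion, i.e.\ a localization.

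The main obstacle here is bookkeeping rather than mathematical depth: the closure argument must be run using only the $\mK'$-indexed colimits that are assumed to exist in $\mC_\X$, without assuming cocompleteness of $\widehat{\mP}_\mK(\mC)_\X$ or of $\mP^{\mK'}_\mK(\mC)_\X$ (which would really want the stronger hypothesis, as in Lemma~\ref{lomos}, that $\mC_\X$ has $\mK$-indexed colimits), and one must check that the pointwise left adjoint is genuinely functorial. If one is also willing to assume $\mC_\X$ has $\mK$-indexed colimits, then Lemma~\ref{lomos} shows $\mP^{\mK'}_\mK(\mC)_\X$ has $\mK'$-indexed colimits and a quicker route is available: extend $\id_{\mC_\X}$ along $j$ to the unique $\mK'$-colimit--preserving functor $\mP^{\mK'}_\mK(\mC)_\X\to\mC_\X$ furnished by the universal property of $\mP^{\mK'}_\mK(\mC)_\X$, and verify it is left adjoint to $j$ by comparing it with the pointwise formula above on the generating representables.
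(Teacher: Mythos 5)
Your proof is correct, but it takes a genuinely different route from the paper's. The paper argues in two stages: it first treats the case where $\mC_\X$ has all small colimits, where the claim follows by comparing the reflective subcategories $\widehat{\mP}_{\Cat_\infty}(\mC)_\X$ and $\widehat{\mP}_\mK(\mC)_\X$ of $\widehat{\mP}(\mC)_\X$ furnished by Lemma \ref{lomos}; it then reduces the general case to this one by embedding $\mC_\X$ into the cocomplete $\infty$-category $\mD_\X=\mP^{\Cat_\infty}_{\mK'}(\mC)_\X$, in which $\mC_\X$ is closed under $\mK'$-indexed colimits, and restricting the resulting reflector. You instead verify the pointwise corepresentability criterion directly, by showing that the class $\mathcal{E}$ of objects $F$ with $\Map(F,j(-))$ corepresentable contains the representables and is closed under those $\mK'$-indexed colimits existing in $\widehat{\mP}_\mK(\mC)_\X$, which exhausts $\mP^{\mK'}_\mK(\mC)_\X$ by its definition as the subcategory generated by $\mC_\X$ under such colimits. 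Your argument is more self-contained (it does not need Lemma \ref{lomos} nor the auxiliary embedding into $\mD_\X$, and it sidesteps the identification of $\mP^{\mK'}_\mK(\mC)_\X$ inside $\mP^{\mK'}_\mK(\mD)_\X$ that the paper leaves implicit), at the cost of invoking the machinery of partial adjoints to make $i\mapsto L(F_i)$ functorial before the adjoint is known to exist globally. Two small points of hygiene: define $\mathcal{E}$ as a full subcategory of $\widehat{\mP}_\mK(\mC)_\X$ rather than of $\mP^{\mK'}_\mK(\mC)_\X$, so that the ``smallest subcategory containing $\mC_\X$ and closed under $\mK'$-indexed colimits'' argument literally yields $\mP^{\mK'}_\mK(\mC)_\X\subseteq\mathcal{E}$; and cite the standard statement on partial adjoints (e.g.\ the criterion that a functor admits a left adjoint if and only if each comma category has an initial object, together with its relative version for a full subcategory) to justify the functoriality step you flag.
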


\begin{proof}

First assume that $\mC_\X$ has small colimits. Then by Lemma \ref{lomos} (1) the embeddings $ \widehat{\mP}_{\Cat_\infty}(\mC)_\X \subset \widehat{\mP}(\mC)_\X, \widehat{\mP}_\mK(\mC)_\X \subset \widehat{\mP}(\mC)_\X$ 
admit a left adjoint.
Hence the embedding $ \widehat{\mP}_{\Cat_\infty}(\mC)_\X \subset \widehat{\mP}_\mK(\mC)_\X$ admits a left adjoint.
Thus the embedding $ \mC_\X = \mP^{\mK'}_{\Cat_\infty}(\mC)_\X \subset \mP^{\mK'}_\mK(\mC)_\X$ admits a left adjoint.

In the general case we set $\mD^\ot:=\mP^{\Cat_\infty}_{\mK'}(\mC)^\ot$.
By Lemma \ref{lomos} (1) the $\infty$-category $\mD_\X$ admits small colimits.
Moreover the embedding $\mC_\X \subset \mD_\X$ preserves $\mK'$-indexed colimits.
The first part of the proof shows that the embedding $ \mD_\X \subset \mP^{\mK'}_\mK(\mD)_\X$ admits a left adjoint.
The left adjoint restricts to a functor $\mP^{\mK'}_\mK(\mC)_\X \to \mC_\X$
since $\mC_\X$ is closed in $\mD_\X$ under $\mK'$-indexed colimits.
Hence the embedding $ \mC_\X \subset \mP^{\mK'}_\mK(\mC)_\X$ admits a left adjoint.

\end{proof}

\begin{proposition}\label{embe}
	
Let $\mK, \mK' \subset \Cat_\infty$ be full subcategories and 
$\mC^\ot \to \mO^\ot$ an $\mO$-monoidal $\infty$-category compatible with $\mK$-indexed colimits that admits fiberwise $\mK'$-indexed colimits.
	
There are $\mO$-monoidal embeddings $\mC^\ot \subset \mD^\ot \subset \mE^\ot$ such that 
\begin{itemize}
\item $\mE^\ot \to \mO^\ot$ is compatible with large colimits,
\item $\mD^\ot \to \mO^\ot$ is compatible with $\mK'$-indexed colimits, 
\item for every $\X \in \mO$ the fiber $\mD_\X $ is generated in $\mE_\X$ by $\mC_\X$ under $\mK'$-indexed colimits,
\item the embedding $\mC_\X \subset \mD_\X $ admits a left adjoint and preserves $\mK$-indexed colimits,
\item the embedding $\mC_\X \subset \mE_\X $ preserves limits.
\end{itemize}
	
\end{proposition}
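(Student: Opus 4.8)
The plan is to take $\mE^\ot := \widehat{\mP}_\mK(\mC)^\ot$ and $\mD^\ot := \mP^{\mK'}_\mK(\mC)^\ot$, together with the $\mO$-monoidal embeddings $\mC^\ot \subset \mD^\ot \subset \mE^\ot$ supplied by Notation \ref{notorf} and Remark \ref{reno}, and then to check the five asserted properties one at a time. The enabling observation is that, under the hypotheses, each fiber $\mC_\X$ admits $\mK$-indexed colimits (since $\mC^\ot \to \mO^\ot$ is compatible with $\mK$-indexed colimits) and $\mK'$-indexed colimits (by the fiberwise assumption), so Lemma \ref{lomos}, Lemma \ref{lomosay} and Remark \ref{reno} all apply directly.

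First I would dispose of the colimit-compatibility clauses: since $\mC^\ot \to \mO^\ot$ is compatible with $\mK$-indexed colimits, Lemma \ref{lomos}(2) yields at once that $\mE^\ot \to \mO^\ot$ is compatible with large colimits and that $\mD^\ot \to \mO^\ot$ is compatible with $\mK'$-indexed colimits. The generation clause --- that for each $\X \in \mO$ the fiber $\mD_\X = \mP^{\mK'}_\mK(\mC)_\X$ is generated in $\mE_\X = \widehat{\mP}_\mK(\mC)_\X$ by $\mC_\X$ under $\mK'$-indexed colimits --- is immediate from the definition of $\mP^{\mK'}_\mK(\mC)^\ot$ in Notation \ref{notorf}. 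For the embedding $\mC_\X \subset \mD_\X$ I would invoke Lemma \ref{lomosay} to obtain the left adjoint, and Remark \ref{reno} to obtain that this embedding preserves $\mK$-indexed colimits.

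The remaining clause, that $\mC_\X \subset \mE_\X$ preserves limits, I would prove by factoring this inclusion as the Yoneda embedding $\mC_\X \hookrightarrow \widehat{\mP}(\mC)_\X = \Fun(\mC_\X^\op, \widehat{\mS})$ followed by the inclusion $\widehat{\mP}_\mK(\mC)_\X \subset \widehat{\mP}(\mC)_\X$: the Yoneda embedding carries every limit existing in $\mC_\X$ to a limit, and by Lemma \ref{lomos}(1) the second inclusion is a localization, hence reflective and thus closed under limits, so that limit already lies in $\mE_\X$ and is the limit computed there.

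I do not anticipate a real difficulty; the argument is bookkeeping that promotes the fiberwise statements of Lemmas \ref{lomos} and \ref{lomosay} to the relative, $\mO$-monoidal setting. The only point deserving a moment's care is checking that $\mD^\ot \subset \mE^\ot$ is $\mO$-monoidal rather than merely a full suboperad inclusion --- i.e. that $\mP^{\mK'}_\mK(\mC)^\ot$ is a monoidal suboperad of $\widehat{\mP}_\mK(\mC)^\ot$ --- but this is built into the construction behind Remark \ref{reno}: the tensor product of objects obtained from $\mC_\X$ by $\mK'$-indexed colimits remains in $\mP^{\mK'}_\mK(\mC)_\X$ because $\widehat{\mP}_\mK(\mC)^\ot$ is compatible with $\mK'$-indexed colimits and the claim holds on the generating objects of $\mC_\X$.
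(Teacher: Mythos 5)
Your proposal is correct and takes exactly the same route as the paper: the paper's proof consists precisely of setting $\mE^\ot:=\widehat{\mP}_\mK(\mC)^\ot$ and $\mD^\ot:=\mP^{\mK'}_\mK(\mC)^\ot$, leaving the verification of the five properties to the preceding Lemmas \ref{lomos}, \ref{lomosay} and Remark \ref{reno}, which is just what you carry out. Your additional checks (the limit-preservation of $\mC_\X\subset\mE_\X$ via Yoneda plus reflectivity, and the monoidality of $\mD^\ot\subset\mE^\ot$) are sound and merely make explicit what the paper leaves implicit.
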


\begin{proof}[Proof of Proposition \ref{embe}]

We set $\mE^\ot:=\widehat{\mP}_\mK(\mC)^\ot$ and $\mD^\ot:= \mP^{\mK'}_\mK(\mC)^\ot$.

\end{proof}

\begin{notation}
	
Let $ \Cat_\infty^\mathrm{fin} \subset  \Cat_\infty$ be the full subcategory generated by the final $\infty$-category under finite colimits.
\end{notation}

\begin{corollary}\label{cory}

Let $\mC^\ot \to \mO^\ot$ be a pointed, preadditive, additive, stable $\mO$-monoidal $\infty$-category, respectively, that admits fiberwise $\mK'$-indexed colimits, where $\mK' \subset \Cat_\infty$ is a full subcategory containing $\{\emptyset\}, \Fin, \Cat_\infty^\mathrm{fin}$, respectively,
depending on the case.

There are pointed, preadditive, additive, stable $\mO$-monoidal $\infty$-categories $\mD^\ot \to \mO^\ot, \mE^\ot \to \mO^\ot$, respectively,
and pointed, preadditive, additive, exact $\mO$-monoidal embeddings $\mC^\ot \subset \mD^\ot, \mD^\ot \subset \mE^\ot$, respectively, such that 
\begin{itemize}
\item $\mE^\ot \to \mO^\ot$ is compatible with large colimits,

\item $\mD^\ot \to \mO^\ot$ is compatible with $\mK'$-indexed colimits, 

\item for every $\X \in \mO$ the fiber $\mD_\X $ is generated in $\mE_\X$ by $\mC_\X$ under $\mK'$-indexed colimits,

\item the embedding $\mC_\X \subset \mD_\X $ admits a left adjoint,

\item the embedding $\mC_\X \subset \mE_\X $ preserves limits.
\end{itemize}

\end{corollary}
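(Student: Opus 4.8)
The plan is to deduce the corollary from Proposition \ref{embe} by a suitable choice of the class $\mK$. In the pointed, preadditive, additive and stable case I would take $\mK = \{\emptyset\}$, $\mK = \Fin$, $\mK = \Fin$ and $\mK = \Cat_\infty^{\mathrm{fin}}$, respectively. In each case the hypothesis that $\mC^\ot \to \mO^\ot$ is a pointed (resp. preadditive, additive, stable) $\mO$-monoidal $\infty$-category entails in particular that it is compatible with $\mK$-indexed colimits, the fiberwise tensor product preserving the initial object (resp. finite coproducts, finite coproducts, finite colimits), so Proposition \ref{embe} applies and produces $\mO$-monoidal embeddings $\mC^\ot \subset \mD^\ot \subset \mE^\ot$ with $\mE^\ot = \widehat{\mP}_\mK(\mC)^\ot$ and $\mD^\ot = \mP^{\mK'}_\mK(\mC)^\ot$ satisfying the five listed properties. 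Because Proposition \ref{embe} already records that $\mE^\ot \to \mO^\ot$ and $\mD^\ot \to \mO^\ot$ are compatible with large, resp. $\mK'$-indexed, colimits, their fiberwise tensor products preserve the relevant finite colimits; hence, once the fibers $\mE_\X$ and $\mD_\X$ are shown to be pointed (resp. preadditive, additive, stable), the $\mO$-monoidal structures on $\mD^\ot$ and $\mE^\ot$ automatically have the required type, and the $\mO$-monoidal embeddings have the required type as soon as they are fiberwise pointed, (pre)additive, or exact.

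The second step is to analyse the fibers $\mE_\X = \widehat{\mP}_\mK(\mC)_\X$. By Lemma \ref{lomos} this is a localization of $\widehat{\mP}(\mC)_\X$, consisting of the functors $\mC_\X^\op \to \widehat{\mS}$ sending the relevant finite colimits of $\mC_\X$ to limits: the presheaves carrying the zero object of $\mC_\X$ to the point for $\mK = \{\emptyset\}$; the finite-product preserving presheaves on $\mC_\X^\op$ for $\mK = \Fin$; and the large Ind-completion of $\mC_\X$, that is, the left exact functors $\mC_\X^\op \to \widehat{\mS}$, for $\mK = \Cat_\infty^{\mathrm{fin}}$. Standard facts then give that $\mE_\X$ inherits the relevant structure from $\mC_\X$: the large Ind-completion of a stable $\infty$-category is stable, and product-preserving presheaves on a preadditive, resp. additive, $\infty$-category are preadditive, resp. additive (see \cite{lurie.higheralgebra, 2013arXiv1305.4550G}); in the pointed case one checks directly that the image under the Yoneda embedding of the zero object of $\mC_\X$ is an object of $\mE_\X$ that is simultaneously initial and terminal. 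Moreover the Yoneda embedding $\mC_\X \hookrightarrow \mE_\X$ preserves all limits, in particular the zero object, finite biproducts, and all finite limits in the respective cases, and preserves $\mK$-indexed colimits by Proposition \ref{embe}, so it is fiberwise pointed, (pre)additive, or exact.

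The third step is to treat $\mD_\X = \mP^{\mK'}_\mK(\mC)_\X$, the closure of $\mC_\X$ under $\mK'$-indexed colimits inside $\mE_\X$. By Lemma \ref{lomos} it admits $\mK'$-indexed colimits, formed as in $\mE_\X$, and since $\mK'$ contains $\{\emptyset\}$ (resp. $\Fin$, $\Fin$, $\Cat_\infty^{\mathrm{fin}}$) it is closed in $\mE_\X$ under the corresponding finite colimits. The crucial point is that it is then automatically closed under the dual finite limits as well, computed in $\mD_\X$ as in $\mE_\X$: in the pointed case the zero object of $\mE_\X$ lies in $\mD_\X$, being the image of the zero object of $\mC_\X$, and, since it is terminal in $\mE_\X$, it is terminal in $\mD_\X$; in the stable case $\Omega \simeq (-)[-1]$ is an autoequivalence of $\mE_\X$ and therefore commutes with all colimits, so the colimit closure $\mD_\X$ of the $\Omega$-closed subcategory $\mC_\X$ is again $\Omega$-closed, hence closed under fibers and a stable subcategory; the preadditive and additive cases are analogous, the finite biproduct of objects of $\mD_\X$ being their product in $\mE_\X$, which again lies in $\mD_\X$. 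Thus $\mD_\X$ is pointed (resp. preadditive, additive, stable) and the inclusions $\mC_\X \subset \mD_\X \subset \mE_\X$ preserve the relevant finite limits and colimits, i.e. are pointed, (pre)additive, or exact. Assembling these fiberwise assertions over $\X \in \mO$ then gives the corollary.

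I expect the third step to be the main obstacle: one must make sure the $\mK'$-colimit closure $\mD_\X$ really is closed under the relevant finite limits of $\mE_\X$ and that these agree with those of $\mD_\X$, so that $\mD_\X \subset \mE_\X$ is genuinely pointed/(pre)additive/exact and not merely cocontinuous. A secondary, bookkeeping issue is to fix conventions so that ``pointed / preadditive / additive / stable $\mO$-monoidal $\infty$-category'' supplies exactly the colimit compatibility needed to invoke Proposition \ref{embe} and Lemma \ref{lomos}, and so that the output is again an $\mO$-monoidal $\infty$-category of the same type.
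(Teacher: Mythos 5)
Your proposal is correct and follows essentially the same route as the paper: choose $\mK = \{\emptyset\}, \Fin, \Cat_\infty^{\mathrm{fin}}$ in the respective cases, set $\mE^\ot = \widehat{\mP}_\mK(\mC)^\ot$ and $\mD^\ot = \mP^{\mK'}_\mK(\mC)^\ot$ via Proposition \ref{embe}, identify the fibers $\mE_\X$ as final-object-preserving, finite-product-preserving, or left exact presheaves (pointed, (pre)additive, stable, respectively), and observe that the $\mK'$-colimit closure $\mD_\X$ inherits the structure because it contains the zero object and is closed under the relevant finite colimits and under $\Omega$. Your extra care in the third step (checking $\Omega$-closure of the colimit closure) only fills in a detail the paper asserts without comment.
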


\begin{proof}Let $\X \in \mO.$
For $\mK= \{\emptyset\} $ the $\infty$-category $ \widehat{\mP}_\mK(\mC)_\X = \Fun_*(\mC_\X^\op, \widehat{\mS})$ of final object preserving functors admits a zero object if $\mC$ admits a final object.
The zero object belongs to the full subcategory $ \mP^{\mK'}_\mK(\mC) $.
For $\mK= \Fin $ the $\infty$-category $ \widehat{\mP}_\mK(\mC)_\X = \Fun^\Pi(\mC_\X^\op, \widehat{\mS})$ of finite products preserving functors is (pre)additive if $\mC$ is (pre)additive \cite[Corollary 2.10.]{2013arXiv1305.4550G}. So the full subcategory $ \mP^{\mK'}_\mK(\mC) $ closed under finite coproducts is (pre)additive.
For $\mK= \Cat_\infty^\mathrm{fin}$ the $\infty$-category $ \widehat{\mP}_\mK(\mC) = \Fun^\mathrm{lex}(\mC^\op, \widehat{\mS})$ of finite limits preserving functors is stable if $\mC$ is stable. So the full subcategory $ \mP^{\mK'}_\mK(\mC) $ closed under finite colimits and arbitrary shifts is stable.	

\end{proof}


\begin{thebibliography}{10}
\bibitem{Amab}
Araminta Amabel.
\newblock Poincaré/Koszul duality for general operads
Homology, Homotopy and Applications, Volume 24 (2022), Number 2, Pages: 1 – 30
doi: https://dx.doi.org/10.4310/HHA.2022.v24.n2.a1

\bibitem{Badzioch}
Bernard Badzioch.
\newblock Algebraic Theories in Homotopy Theory, 2002, Annals of Mathematics
Second Series, Vol. 155, No. 3 (May, 2002), pp. 895-913 (19 pages)

\bibitem{behrensrezk}
Mark Behrens and Charles Rezk.
\newblock The Bousfield-Kuhn functor and topological André-Quillen cohomology, 2020, 
Inventiones mathematicae 220.3 (2020): 949-1022

\bibitem{behrens2024unstable}
Mark Behrens and Connor Malin.
\newblock Unstable homotopy groups and Lie algebras, 2024, 
https://doi.org/10.48550/arXiv.2410.18330	

\bibitem{Bousfield1}
A. K. Bousfield.
\newblock On the telescopic homotopy theory of spaces.
Transactions of the American Mathematical Society 353.6 (2001): 2391-2426.
	
\bibitem{Bousfield2}
A. K. Bousfield.
\newblock Uniqueness of infinite deloopings for K-theoretic spaces.
Pacific Journal of Mathematics 129.1 (1987): 1-31.	
	
\bibitem{Bousfield3}
A. K. Bousfield.
\newblock Localization and periodicity in unstable homotopy theory. Journal of the American Mathematical Society (1994): 831-873.	

\bibitem{https://doi.org/10.48550/arxiv.2104.03870}
Lukas Brantner, Ricardo Campos, and Joost Nuiten.
\newblock PD Operads and Explicit Partition Lie Algebras, 2021,
https://doi.org/10.48550/arxiv.2104.03870

\bibitem{BrantnerCamarenaHeuts}
Lukas Brantner, Omar Antolín Camarena and Gijs Heuts.
\newblock Poincaré-Birkhoff-Witt Theorems in Higher Algebra
arXiv preprint: 2501.03116

\bibitem{CHING2019118}
Michael Ching and John E. Harper.
Derived Koszul duality and TQ-homology completion of structured ring spectra.
{Advances in Mathematics}, 341, 118-187, 2019.
https://doi.org/10.1016/j.aim.2018.10.033.

\bibitem{Ching}
Michael Ching.
\newblock Bar constructions for topological operads and the Goodwillie
derivatives of the identity.
\newblock {\em Geometry and Topology}, pages 833--934, 2005.

\bibitem{2005math.....10490C}
Michael Ching.
\newblock A note on the composition product of symmetric sequences.
\newblock {\em Journal of Homotopy and Related Structures}, 7, 11 2005.

\bibitem{DevinatzHopkinsSmith}
E. S. Devinatz, M. J. Hopkins, and J. H. Smith. 
\newblock Nilpotence and stable homotopy theory I.
Annals of Mathematics 128.2 (1988): 207-241.

\bibitem{Fresse, Benoit}
\newblock{On the homotopy of simplicial algebras over an operad.}
\newblock{Transactions of the American Mathematical Society 352 (1999): 4113-4141.}

\bibitem{articlet}
John Francis and Dennis Gaitsgory.
\newblock Chiral Koszul duality.
\newblock {\em Selecta Mathematica}, 18.1 (2012): 27-87
\bibitem{Koszul}
Benoit Fresse. Koszul duality of operads and homology of partition posets. 2003. arXiv preprint.
arXiv: 0301365

\bibitem{MR1665330}
Benoit Fresse. On the homotopy of simplicial algebras over an operad.
\newblock {2000, Transactions of the American Mathematical Society, 10.1090/S0002-9947-99-02489-7}

\bibitem{MR3701353}
Dennis Gaitsgory and Nick Rozenblyum.
\newblock {\em A study in derived algebraic geometry. {V}ol. {II}.
{D}eformations, {L}ie theory and formal geometry}, volume 221 of {\em
Mathematical Surveys and Monographs}.
\newblock American Mathematical Society, Providence, RI, 2017.

\bibitem{2013arXiv1305.4550G}
David Gepner, Moritz Groth, Thomas Nikolaus. Universality of multiplicative infinite loop space machines. Algebraic and Geometric Topology, 15, 05 2013, 10.2140/agt.2015.15.3107.

\bibitem{GWILLIAM20183621}
Owen Gwilliam and Dmitri Pavlov.
\newblock Enhancing the filtered derived category.
\newblock {\em Journal of Pure and Applied Algebra}, 222(11):3621--3674, 2018.

\bibitem{HarperHess}
John Harper, Kathryn Hess.
\newblock Homotopy completion and topological Quillen homology of structured
ring spectra.
\newblock {\em Geometry and Topology}, 17, 02 2011.

\bibitem{articles}
Rune Haugseng.
\newblock $\infty$-operads via symmetric sequences.
\newblock {\em Mathematische Zeitschrift}, 301, 05 2022.

\bibitem{Mon}
Hadrian Heine.
\newblock A monadicity theorem for higher algebraic structures, 2017. arXiv preprint: arXiv:
1712.00555

\bibitem{Restricted}
Hadrian Heine.
\newblock Restricted $L_\infty$-algebras. 2019. Thesis. University of Osnabr\"uck.\\
\newblock {\em
https://osnadocs-ub-uni-osnabrueck-de.translate/handle/urn:nbn:de:gbv:700-201909201996},
09 2019.

\bibitem{https://doi.org/10.48550/arxiv.2009.02428}
Hadrian Heine.
\newblock An equivalence between enriched $\infty$-categories and $\infty$-categories
with weak action.
\newblock {\em Advances in Mathematics}, 417:108941, 2023.

\bibitem{Heineenrichednew}
Hadrian Heine.
\newblock An equivalence between enriched $\infty$-categories and $\infty$-categories
with weak action.
\newblock arXiv preprint: arXiv: 2009.02428

\bibitem{heine2024bi}
Hadrian Heine.
\newblock On bi-enriched $\infty$-categories, 2024. arXiv preprint arXiv:2406.09832

\bibitem{heine2024higher}
Hadrian Heine.
\newblock The higher algebra of weighted colimits, 2024. arXiv preprint arXiv: 2406.08925

\bibitem{heine2024local}
Hadrian Heine.
\newblock A local-global principle for parametrized $\infty$-categories, 2024. arXiv preprint arXiv:
2409.05568

\bibitem{2018arXiv180306325H}
Gijs Heuts.
\newblock Lie algebras and $v_n$-periodic spaces.
\newblock {\em Annals of Mathematics}, 193, 03 2018.

\bibitem{Heuts}
Gijs Heuts. Koszul duality and a conjecture of Francis-Gaitsgory, 2024. arXiv preprint arXiv: 2408.06173

\bibitem{2013arXiv1311.4130H}
Vladimir Hinich.
\newblock Rectification of algebras and modules.
\newblock Documenta Mathematica 20 (2015): 879-926.

\bibitem{HopkinsSmith}
M. J. Hopkins and J. H. Smith. 
\newblock Nilpotence and stable homotopy theory II. Annals of Mathematics (2) 148.1 (1998), pp. 1–49. doi: 10.2307/120991.

\bibitem{KjaerUnstable}
Jens Jakob Kjaer. 
\newblock Unstable $\nu_1 $-Periodic Homotopy of Simply Connected, Finite $ H $-Spaces, using Goodwillie Calculus, 2019.
\newblock https://doi.org/10.48550/arXiv.1905.05269

\bibitem{2016arXiv160501391K}
Ben Knudsen.
\newblock Higher enveloping algebras.
\newblock {\em Geometry \& Topology}, 2016.

\bibitem{Kuhn}
N. J. Kuhn. Tate cohomology and periodic localization of polynomial functors. 
\newblock Invent. Math. 157.2 (2004), pp. 345–370. doi: 10.1007/s00222- 003-0354-z.

\bibitem{Kuhntelescopic}
N. J. Kuhn. 
\newblock A guide to telescopic functors. Homology, Homotopy and Applications, 10(3):291–
319, 2008.

\bibitem{kuhn2017operad}
N. J. Kuhn and Luis Pereira.
\newblock Operad bimodules and composition products on Andr{\'e}--Quillen filtrations of algebras. {Algebraic \& Geometric Topology}. Volume 17, pages 1105--1130, 2017.


\bibitem{lurie.higheralgebra}
Jacob Lurie.
\newblock Higher {A}lgebra.
\newblock available at http://www.math.harvard.edu/~lurie/.

\bibitem{10.2307/1970615}
John~W. Milnor and John~C. Moore.
\newblock On the structure of Hopf algebras.
\newblock {\em Annals of Mathematics}, 81(2):211--264, 1965.

\bibitem{Mitchell}
S. A. Mitchell. Finite complexes with A(n)-free cohomology.
\newblock{Topology 24. (1985), pp. 227–246. doi: 10.1016/0040-9383(85)90057-6.}

\bibitem{Morava}
J. Morava. Noetherian localisations of categories of cobordism comodules. Annals of Mathematics 121.1 (1985): 1-39.

\bibitem{pereira2013goodwillie}
Luis Alexandre Meira Fernandes Alves Pereira. Goodwillie calculus and algebras over a spectral operad.
\newblock {Massachusetts Institute of Technology, Thesis, 2013, http://hdl.handle.net/1721.1/82440.}

\bibitem{Mont}
Daniel Quillen. Rational homotopy theory.
\newblock {\em Annals of Mathematics}, 90(2):205--295, 1969.

\bibitem{Ravenel}
D.C. Ravenel. Localization with respect to certain periodic homology theories. 
\newblock Am. J. Math. 106 (1984), pp. 351–414. doi: 10.2307/2374308.

\bibitem{Ravenel2}
D. C. Ravenel. Nilpotence and periodicity in stable homotopy theory. No. 128. Princeton University Press, 1992.

\bibitem{Ravenel3}
D. C. Ravenel. Complex cobordism and stable homotopy groups of spheres. 2nd ed. Providence, RI: AMS Chelsea Publishing, 2004.

\bibitem{Shi}
Yuqing Shi. Universal property of the Bousfield-Kuhn functor. 2024.
\newblock arXiv preprint arXiv:2410.01116.

\bibitem{Shi2}
Yuqing Shi. Unstable Periodic Homotopy Theory. 2023. \\
\newblock Thesis. Utrecht University. https://doi.org/10.33540/1966

\end{thebibliography}
\end{document}